\numberwithin{equation}{section}
\theoremstyle{plain}
\newtheorem{theorem}{Theorem}[section]
\newtheorem{lemma}[theorem]{Lemma}
\newtheorem{proposition}[theorem]{Proposition}
\newtheorem{corollary}[theorem]{Corollary}
\theoremstyle{definition}
\newtheorem{definition}[theorem]{Definition}
\newtheorem{remark}[theorem]{Remark}
\let\temp\phi
\let\phi\varphi
\let\varphi\temp
\let\temp\epsilon
\let\epsilon\varepsilon
\let\varepsilon\temp
\newcommand{\R}{\mathbb{R}}
\newcommand{\N}{\mathbb{N}}
\newcommand{\B}{\mathcal{B}}
\newcommand{\D}{\mathcal{D}}
\newcommand{\Lip}{\mathrm{Lip}}
\newcommand{\haus}{\mathcal{H}}
\newcommand{\cI}{\mathcal{I}}
\newcommand{\T}{\mathcal{T}}
\newcommand{\A}{\mathcal{A}}
\newcommand{\Leb}{\mathcal{L}}
\newcommand{\RCD}{\mathsf{RCD}}
\newcommand{\CD}{\mathsf{CD}}
\newcommand{\PP}{\mathsf{P}}
\newcommand{\MCP}{\mathsf{MCP}}
\newcommand{\M}{\mathcal{M}}
\DeclareMathOperator*{\argmax}{arg\,max}
\DeclareMathOperator*{\essinf}{ess\,inf}
\DeclareMathOperator*{\esssup}{ess\,sup}
\DeclareMathOperator{\Dom}{Dom}
\DeclareMathOperator{\Geo}{Geo}
\DeclareMathOperator{\Ric}{Ric}
\DeclareMathOperator{\diam}{diam}
\DeclareMathOperator{\dom}{Dom}
\DeclareMathOperator{\id}{Id}
\DeclareMathOperator{\supp}{supp}
\DeclareMathOperator{\Res}{Res}
\DeclareMathOperator{\vol}{Vol}
\newcommand{\norm}[1]{\left\lVert#1\right\rVert}
\newcommand{\I}{\mathcal{I}}
\newcommand{\m}{\mathfrak{m}}
\newcommand{\q}{\mathfrak{q}}
\newcommand{\ProbMeas}{\mathcal{P}}
\newcommand{\mm}{\mathfrak m}
\newcommand{\qq}{\mathfrak q}
\newcommand{\QQ}{\mathfrak Q}
\newcommand{\sfd}{\mathsf d}
\newcommand{\AVR}{\mathsf{AVR}}
\newcommand{\indicator}{\mathbf{1}}
\newcommand{\relation}{\mathcal{R}}
\begin{document}

\author{Davide Manini}
\email{dmanini@campus.technion.ac.il}
\address{Department of Mathematics, Technion --- Israel Institute of
  Technology, Haifa, Israel}
\title{Isoperimetric inequality for Finsler manifolds with
  non-negative Ricci curvature}
\subjclass[2010]{58B20,51Fxx}
\begin{abstract}
We prove a sharp isoperimetric inequality for measured Finsler manifolds
having non-negative Ricci curvature and Euclidean volume growth.
We also prove a rigidity result for this inequality, under the
additional hypotheses of boundedness of the isoperimetric set and
the finite reversibility of the space.

As a consequence, we deduce the rigidity of the weighted anisotropic
isoperimetric inequality for cones in the Euclidean space, in the
irreversible setting.
\end{abstract}
\maketitle
\tableofcontents

\section{Introduction}
The classical isoperimetric inequality in the Eucludean space 
states that 
if $E$ is a (sufficiently regular) subset of $\R^d$, then 
\begin{equation}\label{eq:isop-classic}
  \PP(E)
  \geq
  d \omega_{d}^{\frac{1}{d}} \Leb^d(E)^{1-\frac{1}{d}}
  ,
\end{equation}
where $\PP(E)$ denotes the perimeter of the set $E$, $\omega_d$ the
measure of the unit ball in $\R^d$, and $\Leb^d$ the Lebesgue measure.
Moreover, if the equality is attained in~\eqref{eq:isop-classic} by a certain
set $E$ with positive measure, then the set $E$ coincides with a ball
of radius $(\frac{\Leb^d(E)}{\omega_d})^{1/d}$.
This inequality has been successfully extended in more general
settings where the space is not the Euclidean one.
Indeed, it turns out that two are the relevant properties of the
Euclidean space needed for such generalizations: 1) the fact that
$\R^d$ has non-negative Ricci curvature; 2) its Euclidean volume
growth, i.e.\ a constraint on the growth of the measure of large
balls.

If $(X,g)$ is an $n$-dimensional Riemannian manifold, one can
consider different measures to the canonical volume $\vol_{g}$.
In this case, the Ricci tensor has to be replaced with the
generalised $N$-Ricci tensor:
if $h:X\to(0,\infty)$ is a weight for the volume $\vol_g$, the generalised
$N$-Ricci tensor, with $N> n$, is defined by
\begin{equation}
\label{eq:n-ricci}
\Ric_{N} : =  \Ric - (N-n) \frac{\nabla^{2} h^{\frac{1}{N-n}}}{h^{\frac{1}{N-n}}}.  
\end{equation}
We say that the weighted manifold $(X,g,h \vol_g)$ verifies the so-called
Cur\-va\-tu\-re-Dimen\-sion condition $\CD(K,N)$ (see~\cite{BakryEmery85})
whenever $\Ric_{N} \geq K g$.
%

In their seminal works Lott--Villani \cite{LoVi09} and Sturm \cite{St06, St06-2} introduced a synthetic definition of $\CD(K,N)$  for complete and separable metric spaces $(X,\sfd)$ endowed with a (locally-finite Borel) reference measure 
$\mm$ (``metric-measure space", or m.m.s.).
This $\CD(K,N)$ condition is formulated using the theory Optimal Transport
and it coincides with the Bakry--\'Emery one in the smooth Riemannian
setting (and in particular in the classical non-weighted one).

A measured Finsler manifold is a triple $(X,F,\mm)$, such that $X$ is a
differential manifold (possibly with boundary), $\mm$ a Borel measure, and $F$ a Finsler
structure, that is a real-valued function $F:TX\to[0,\infty)$, which
is convex, positively homogeneous, and $F(v)=0$ if and only if $v=0$
(see Section~\ref{Ss:finsler} for the precise definition).
In general $F(v)\neq F(-v)$.
This feature, know as irreversibility, is what prevents to apply the
techniques developed for m.m.s.'s\ to measured Finsler manifolds.
Recently, Ohta successfully extended the theory of the
Curvature--Dimension condition for possibly-irreversible Finsler
manifolds (see~\cite{Ohta09a,Ohta17,Ohta21}).
Namely, a notion of $N$-Ricci curvature (compatible with the
riemannian one) was introduced and it was proven that a measured Finsler
manifold satisfies the $\CD(K,N)$ condition if and only if $\Ric_N\geq
K$.
More recently, the notion of irreversible metric measure space has
been introduced~\cite{KristalyZhao22}.

\smallskip

The isoperimetric problem in the reversible setting has been
extensively investigated.
E.\ Milman~\cite{Mi15} gave a sharp isoperimetric inequality for weighted
riemannian manifolds satisfying the $\CD(K,N)$ condition (for any
$K\in\R$, $N> 1$), with an additional constraint on the diameter.
In particular, given $K\in\R$, $N>1$, $D\in(0,\infty]$, he gave an
explicit description of the so-called \emph{isoperimetric profile}
function $\I_{K,N,D}:[0,1]\to\R$.
The isoperimetric profile has the property that, given a weighted
riemannian manifold satisfying the $\CD(K,N)$ condition with diameter
at most $D$, whose total measure is $1$, it holds that
$\PP(E)\geq\I_{K,N,D}(v)$, for any subset $E$ of measure $v\in[0,1]$;
moreover, Milman's result is sharp.
Cavalletti and Mondino~\cite{CaMo17} extended Milman's result to the
non-smooth setting finding the same lower bound.
Their proof makes use of the localisation method (also known as needle
decomposition), a powerful dimensional reduction tool, initially
developed by Klartag~\cite{Kl17} for riemannian manifolds and later
extended to $\CD(K,N)$ spaces~\cite{CaMo17}.

In the setting of measured Finsler manifold, less is known.
Following the line traced in~\cite{CaMo17}, Ohta~\cite{Ohta18}
extended the localization method to measured Finsler manifolds, obtaining
a lower bound for the perimeter for measured Finsler manifolds with finite
reversibility constant.
The reversibility constant (introduced in~\cite{Rademacher04}) of a Finsler structure
$F$ on the manifold $X$ is defined as the least constant (possibly
infinite) $\Lambda_{F}\geq 1$ such that $F(-v)\leq \Lambda_{F} F(v)$
for all vectors $v\in TX$.
Ohta proved~\cite{Ohta18} that given a measured Finsler manifold $(X,F,\mm)$
having finite reversibility constant and $\mm(X)=1$ satisfying the
$\CD(K,N)$ condition, with diameter bounded from above by $D$,
it holds that
\begin{equation}
  \PP(E)\geq \Lambda_{F}^{-1}\, \I_{K,N,D}(\mm(E))
  ,\qquad
  \forall E\subset X
  ,
\end{equation}
where $\I_{K,N,D}$ is the isoperimetric profile function described
by E.\ Milman.
The presence of the factor $\Lambda_{F}^{-1}$ suggests that the
inequality above is not sharp.
Indeed, in the case $N=D=\infty$, this factor can be eliminated
obtaining the Barky--Ledoux isoperimetric inequality for Finsler
manifolds~\cite{Ohta22}.

\smallskip

Regarding the case $K=0$, in order to generalize the classical
inequality~\eqref{eq:isop-classic} we must drop the assumption that
the space has measure $1$ and consider the case when the measure is
infinite.
However, it is well known that without an additional condition on the
geometry of the space no non-trivial isoperimetric inequality holds in
the case $K=0$.
A way to create an Euclidean-like environment is to impose an
additional constraint on the growth of the measure of the balls that
mimics the Euclidean one.
Letting $B^{+}(x,r)=\{y:\sfd(x,y)<r\}$ denoting the \emph{forward metric ball} with center $x\in X$ and
radius $r> 0$, by the Bishop--Gromov inequality (see~\eqref{eq:bishop-gromov})
the map
$r \mapsto \frac{\mm(B^{+}(r,x))}{r^{N}}$ is nonincreasing over
$(0,\infty)$ for any $x \in X$.
The \emph{asymptotic volume ratio} is then naturally defined by
\begin{equation}
  \label{eq:avr}
\mathsf{AVR}_{X} = \lim_{r\to\infty} \frac{\mm(B^{+}(x,r))}{\omega_{N} r^{N}}.
\end{equation}
It is easy to see that it is indeed independent of the choice of $x
\in X$.
When $\AVR_{X} > 0$, we say that space has
\emph{Euclidean volume growth}.

\smallskip

In the riemannian setting, the isoperimetric inequality for spaces
with Euclidean volume growth has been obtained in increasing
generality (see, e.g.~\cite{AgFoMa20,FogagnoloMazzieri22,Brendle22,Johne21}).
The most general result is due to Balogh and Krist\'{a}ly~\cite{BaKr22}
and it is valid for (non-smooth) $\CD(0,N)$ spaces; it was proven
exploiting the Brunn--Minkowski inequality for $\CD(0,N)$ spaces.

\begin{theorem}[{\cite[Theorem 1.1]{BaKr22}}]
\label{T:BalKri}
Let $(X,\sfd,\mm)$ be a m.m.s.\ satisfying the $\CD(0,N)$ condition for some $N > 1$
and having Euclidean volume growth. 
Then for every bounded Borel subset $E \subset X$ it holds
\begin{equation}\label{E:balogh-kristaly}
\mm^{+}(E) \geq N \omega_{N}^{\frac{1}{N}} \AVR_{X}^{\frac{1}{N}} 
\mm(E)^{\frac{N-1}{N}}
\end{equation}
Moreover, inequality \eqref{E:balogh-kristaly} is sharp.
\end{theorem}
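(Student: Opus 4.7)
The plan is to derive the isoperimetric inequality as an infinitesimal consequence of the Brunn--Minkowski inequality for $\CD(0,N)$ spaces, using very large balls as the second competitor so that the Euclidean volume growth assumption materializes inside the constant. Fix a base point $x_0 \in X$ and $R_0 > 0$ with $E \subset B^+(x_0, R_0)$. For $t \in (0,1)$ small and $R > 0$ large, apply the $\CD(0,N)$ Brunn--Minkowski inequality to $A = E$ and $B = B^+(x_0, R)$; writing $Z_t(A,B)$ for the set of $t$-intermediate points of constant-speed geodesics from $A$ to $B$,
$$
\mm\bigl(Z_t(E, B^+(x_0, R))\bigr)^{1/N} \geq (1-t)\, \mm(E)^{1/N} + t\, \mm\bigl(B^+(x_0, R)\bigr)^{1/N}.
$$
A direct distance estimate then gives a geometric containment: for $z = \gamma(t)$ with $\gamma(0) = a \in E$ and $\gamma(1) = b \in B^+(x_0, R)$, one has $\sfd(a,z) = t\, \sfd(a,b) \leq t(R_0 + R)$, hence
$$
Z_t\bigl(E, B^+(x_0, R)\bigr) \subset E_{t(R_0+R)}, \qquad E_\rho := \{x \in X : \sfd(E,x) \leq \rho\}.
$$

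The next step is to couple the parameters by setting $t = \epsilon/R$ with $\epsilon > 0$ fixed and letting $R \to \infty$. The definition \eqref{eq:avr} of the asymptotic volume ratio gives $t\, \mm(B^+(x_0, R))^{1/N} \to \epsilon\, \omega_N^{1/N}\, \AVR_X^{1/N}$, while $t(R_0 + R) \to \epsilon$ and $(1 - t) \to 1$; combining the containment above with outer regularity of $\mm$, Brunn--Minkowski passes to the limit as
$$
\mm(E_\epsilon)^{1/N} \geq \mm(E)^{1/N} + \epsilon\, \omega_N^{1/N}\, \AVR_X^{1/N}.
$$
Raising both sides to the $N$-th power, expanding to first order in $\epsilon$, dividing by $\epsilon$, and taking $\liminf_{\epsilon \to 0^+}$ produces the Minkowski-content lower bound
$$
\mm^+(E) \geq N\, \omega_N^{1/N}\, \AVR_X^{1/N}\, \mm(E)^{(N-1)/N},
$$
which is the claimed inequality.

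Sharpness is witnessed by $\R^N$ itself, where $\AVR_X = 1$ and Euclidean balls saturate the bound, and more generally by Euclidean cones over suitable bases, which are $\CD(0,N)$ with any prescribed $\AVR \in (0,1]$ and admit apex-centered balls as extremizers. The main obstacle is the careful calibration of the simultaneous limit $t \to 0$, $R \to \infty$ with $tR$ held fixed: the constant $\omega_N^{1/N}\, \AVR_X^{1/N}$ arises precisely from the scaling of the term $t\, \mm(B^+(x_0, R))^{1/N}$ under this choice, and one must verify that the containment $Z_t \subset E_{t(R_0+R)}$ is preserved in measure as $R \to \infty$. The Brunn--Minkowski inequality for $\CD(0,N)$ spaces, whose proof relies on optimal transport and displacement convexity, is used as a black box.
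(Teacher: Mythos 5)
Your proposal is correct and follows essentially the same strategy as the paper's proof of Theorem~\ref{T:inequality}: apply the $\CD(0,N)$ Brunn--Minkowski inequality to $E$ and a large ball $B^{+}(x_0,R)$, use the containment $Z_t(E,B^{+}(x_0,R))\subset B^{+}(E,t(\text{const}+R))$, and pass to the limit to extract the constant $N\omega_N^{1/N}\AVR_X^{1/N}$. The only difference is in the order of limits — the paper fixes $R$, takes $t\to0$ to obtain a bound on $\mm^{+}(E)$ in terms of $\mm(B^{+}(x_0,R))^{1/N}/R$, and then sends $R\to\infty$, whereas you couple $t=\epsilon/R$, send $R\to\infty$ first to get the intermediate inequality $\mm(E_\epsilon)^{1/N}\geq\mm(E)^{1/N}+\epsilon(\omega_N\AVR_X)^{1/N}$, and then differentiate at $\epsilon=0^{+}$ — but both routes are sound and give the same bound.
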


In inequality~\eqref{E:balogh-kristaly}, $\mm^+$ denotes the Min\-kow\-ski
content.
In Appendix~\ref{S:minkowski} we will discuss the relation between the Minkowski content and the perimeter; for mildly regular sets these two notions coincide.
Using the $\Gamma$-function, one can naturally define the constant
$\omega_N$ for all $N>1$.

The rigidity of the inequality has been obtained, under two mild
assumptions, by the author and Cavalletti~\cite{CavallettiManini22a}.
These assumption are: 1) the essentially-non-branching-ness of the
space, that excludes pathological cases; 2) the fact that the set
attaining equality in~\eqref{E:balogh-kristaly} is bounded.

\begin{theorem}[{\cite[Theorem 1.4]{CavallettiManini22a}}]
  \label{T:cavalletti-manini}
Let $(X,\sfd,\mm)$ be an essentially non-branching m.m.s.\ satisfying the $\CD(0,N)$ condition for some $N > 1$, 
and having Euclidean volume growth. 
Let $E \subset X$ be a bounded Borel set that saturates \eqref{E:balogh-kristaly}. 

Then there exists (a unique) $o\in X$ such that, up to a negligible set,
$E=B^+(o,\rho)$, with
$\rho=(\frac{\mm(E)}{\AVR_{X}\omega_N})^{\frac{1}{N}}$.
Moreover,
the measure $\mm$ can be represented by the disintegration formula
\begin{equation}
\mm = \int_{\partial B^{+}(o,\rho)} \mm_{\alpha} 
\, \qq(d\alpha), \qquad \qq \in \mathcal{P}(\partial B^{+}(o,\rho)),
\quad \mm_{\alpha} \in \mathcal{M}_{+}(X),
\end{equation}
where $\mm_{\alpha}$ is concentrated on the geodesic ray from $o$ through $\alpha$ and $\mm_{\alpha}$  
can be identified (via the unitary speed parametrisation of the ray) with 
$N \omega_{N} \AVR_{X} t^{N-1} \mathcal{L}^{1}\llcorner_{[0,\infty)}$.
\end{theorem}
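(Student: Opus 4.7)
My approach would follow the strategy behind Balogh--Krist\'aly's proof of \eqref{E:balogh-kristaly}, combined with a rigidity analysis of the Brunn--Minkowski inequality in essentially non-branching $\CD(0,N)$ spaces. Recall that \eqref{E:balogh-kristaly} is derived by applying the $\CD(0,N)$ Brunn--Minkowski inequality to the pair $(E, B^+(o,R))$ for an arbitrary reference point $o\in X$ and a suitable interpolation parameter, and then sending $R\to\infty$ while invoking the definition of $\AVR$. The first step would be to show that saturation of \eqref{E:balogh-kristaly} forces equality (in an asymptotic sense as $R\to\infty$) in this underlying Brunn--Minkowski inequality for every choice of $o$, and in particular that the infinitesimal Brunn--Minkowski statement
\[
\mm(E_s)^{1/N} = \mm(E)^{1/N} + s\,(\omega_N \AVR_X)^{1/N},\qquad s>0,
\]
holds for the $s$-forward neighborhood $E_s$ of $E$.

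The next step is to invoke the rigidity of the Brunn--Minkowski inequality in essentially non-branching $\CD(0,N)$ spaces: equality forces the intermediate measure to have a rigid structure, in particular the optimal transport between $E$ and the large balls is carried by non-branching geodesic rays along which $\mm$ has a cone-like density proportional to $t^{N-1}$. I would then argue that these rays must all emanate from a common apex $o\in X$: the boundedness of $E$ forces the apex to lie inside $X$ rather than at infinity, while essentially-non-branching-ness ensures that $o$ is unique, since distinct apices would create branching geodesics. Once $o$ is identified, disintegrating $\mm$ with respect to the function $\sfd(o,\cdot)$ yields the claimed decomposition; saturation then forces each conditional $\mm_\alpha$ to coincide with the Euclidean cone density $N\omega_N \AVR_X\, t^{N-1}\,\L^1\llcorner_{[0,\infty)}$, and consequently $E = B^+(o,\rho)$ up to an $\mm$-null set, with $\rho = (\mm(E)/(\AVR_X\,\omega_N))^{1/N}$.

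The main obstacle is the extraction of the single apex $o$ from the (a priori only asymptotic) rigidity as $R\to\infty$: at each finite $R$ the Brunn--Minkowski equality constrains only the family of geodesics implementing the optimal transport from $E$ to $B^+(o,R)$, and one has to patch together the rigidity information at different scales to produce a global cone structure with a well-defined tip. Here both hypotheses, boundedness of $E$ (which pins the apex inside $X$) and essentially-non-branching-ness (which forces uniqueness of the apex and of the transport geodesics), enter in a crucial way, and I expect the technical heart of the proof to consist in turning the asymptotic BM rigidity into a pointwise statement about the existence of a distinguished point $o$ around which $E$ is a forward ball.
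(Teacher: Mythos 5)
Your plan takes a genuinely different route from the one used in~\cite{CavallettiManini22a} (and mirrored in the Finsler proof given in this paper), and unfortunately the route as described has a gap at its center. The paper proves the inequality~\eqref{E:balogh-kristaly} via Brunn--Minkowski, but it does \emph{not} obtain the rigidity by analysing the equality case of Brunn--Minkowski. Instead it switches to the $L^1$-Optimal Transport localisation: one transports $\frac{\mm\llcorner_E}{\mm(E)}$ to $\frac{\mm\llcorner_{B_R}}{\mm(B_R)}$, obtains a needle decomposition of $\mm\llcorner_{B_R}$ into one-dimensional $\CD(0,N)$ transport rays, quantifies the deficiency of each ray by a residual, shows that saturation of~\eqref{E:balogh-kristaly} forces the residuals to converge to zero in an $L^1$ sense as $R\to\infty$, and then passes to the limit in the ray maps and the disintegration. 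The apex $o$ is then extracted as a maximiser of the limiting Kantorovich potential $\phi_\infty$ on $\overline E$, not from a cone-structure statement.

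The gap in your plan is the appeal to a ``rigidity of the Brunn--Minkowski inequality in essentially non-branching $\CD(0,N)$ spaces.'' No such theorem is available at that level of generality, and indeed, under the bare $\CD(0,N)$ hypothesis the conclusion cannot be a metric-cone structure: the space of weighted intervals $([0,\infty), |\cdot|, N\omega_N\AVR_X\, t^{N-1}\L^1)$ glued along $\{0\}$ satisfies the disintegration formula you want without the space being a cone. The paper is explicit that only after upgrading to $\RCD(0,N)$ does ``volume cone implies metric cone''~\cite{DePhilippisGigli16} apply. What you actually need is a \emph{one-dimensional} quantitative rigidity (a stability estimate for the model density $t^{N-1}$ in terms of a smallness parameter), fed through a measured limit of disintegrations; that is precisely the residual machinery of Section~\ref{S:one-dim} and the compactness/Prokhorov argument of Section~\ref{S:limit}. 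Your second concern --- passing from asymptotic equality at finite $R$ to a pointwise statement about a tip $o$ --- is indeed the crux, and your plan identifies it correctly but then defers it. The paper resolves it by observing that the Kantorovich potentials $\phi_R$ converge (Ascoli--Arzel\`a after normalisation) to a $1$-Lipschitz $\phi_\infty$, that the limit rays carry the cone density $t^{N-1}$ and all start from points where $\phi_\infty$ is (essentially) maximal on $\overline E$, and that Lemma~\ref{lem:boundary-non-empty} forces those starting points to coincide with a single $o=\argmax_{\overline E}\phi_\infty$. None of this can be shortcut via the Brunn--Minkowski equality case, because for fixed $R$ one never has equality in~\eqref{eq:brunn-minkowski}: the inequality is applied in its infinitesimal form and then $R\to\infty$, so the rigidity information degrades as $R$ grows and must be recovered through the one-dimensional residual estimates.
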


As a consequence of this result, having in mind the fact that ``volume
cone implies metric cone''~\cite{DePhilippisGigli16}, we obtain that
when the space is $\RCD(0,N)$, we also have a rigidity of the
metric, i.e., the space is isometric to a cone over an $\RCD(N-1,N-2)$
space.
In the case of non-collapsed $\RCD$ spaces, the hypothesis on the
boundedness of the set can be
lifted~\cite{AntonelliPasqualettoPozzettaSemola23,AntonelliPasqualettoPozzettaSemola22a}
(see also~\cite{Br21}).

The scope of the present paper is to extend Theorems~\ref{T:BalKri}
and~\ref{T:cavalletti-manini} to the setting of irreversible
measurable Finsler
manifolds.

\subsection{The result}
The first result of this paper is the following.

\begin{theorem}
\label{T:inequality}
Let $(X,F,\mm)$ be a forward-complete measured Finsler manifold (possibly with boundary) satisfying the $\CD(0,N)$
condition for some $N > 1$, 
having Euclidean volume growth.
Then
for every bounded Borel subset $E \subset X$ it holds
\begin{equation}\label{E:inequality}
\PP(E) \geq N \omega_{N}^{\frac{1}{N}} \AVR_{X}^{\frac{1}{N}} 
\mm(E)^{\frac{N-1}{N}}
\end{equation}
Moreover, inequality~\eqref{E:inequality} is sharp.
\end{theorem}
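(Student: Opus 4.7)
The plan is to follow the Balogh--Krist\'aly strategy used to prove Theorem~\ref{T:BalKri}, which avoids the localization method (and hence avoids the reversibility factor $\Lambda_F^{-1}$ that plagues Ohta's approach) by relying on a \emph{forward} Brunn--Minkowski inequality. The key input is Ohta's optimal-transport characterisation of the $\CD(0,N)$ condition on $(X,F,\mm)$: for bounded Borel sets $A,B \subset X$ and $t \in [0,1]$, denoting by $Z_t(A,B)$ the set of points of the form $\gamma(t)$ for some forward $F$-geodesic $\gamma \colon [0,1] \to X$ with $\gamma(0) \in A$, $\gamma(1) \in B$, one has
\begin{equation}
\mm(Z_t(A,B))^{1/N} \geq (1-t)\,\mm(A)^{1/N} + t\,\mm(B)^{1/N}.
\end{equation}
The compactness of closed forward balls guarantees the existence of the optimal plans realising this inequality and of the midpoints, and the Euclidean volume growth assumption controls $\mm(B^+(o,r))$ for large $r$.

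Fix a bounded Borel set $E \subset X$, choose a base point $o \in X$, and let $r > 0$ be large. Apply the inequality with $A = E$ and $B = B^+(o,r)$. By the triangle inequality for the asymmetric distance $\sfd$ induced by $F$, a forward $t$-midpoint $z$ of a pair $(x,y) \in E \times B^+(o,r)$ satisfies $\sfd(x,z) = t\,\sfd(x,y) \leq t(\sfd(x,o) + r)$, so $Z_t(E,B^+(o,r)) \subset E^+_{tr+C}$, where $E^+_s := \{z \in X : \inf_{x \in E} \sfd(x,z) < s\}$ is the forward $s$-enlargement and $C = C(E,o)$ does not depend on $r$. Combining the inclusion with Brunn--Minkowski yields
\begin{equation}
\mm(E^+_{tr+C})^{1/N} \geq (1-t)\mm(E)^{1/N} + t\,\mm(B^+(o,r))^{1/N}.
\end{equation}
Sending $r \to \infty$ along the scaling that keeps $s := tr$ fixed, the definition of $\AVR_X$ in~\eqref{eq:avr} gives $\mm(B^+(o,r))^{1/N} = \omega_N^{1/N}\AVR_X^{1/N}\,r(1+o(1))$, and the right-hand side converges to $\mm(E)^{1/N} + \omega_N^{1/N}\AVR_X^{1/N}\,s$. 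Together with the (forward-)Lipschitz dependence of $s \mapsto \mm(E^+_s)$ coming from boundedness of $E$ and local finiteness of $\mm$, this produces the expansion bound
\begin{equation}
\mm(E^+_s) \geq \bigl(\mm(E)^{1/N} + \omega_N^{1/N}\AVR_X^{1/N}\,s\bigr)^N, \qquad \forall s \geq 0.
\end{equation}

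Expanding the right-hand side to first order at $s = 0$ and passing to the Minkowski content gives $\liminf_{s \to 0^+}\bigl(\mm(E^+_s) - \mm(E)\bigr)/s \geq N\omega_N^{1/N}\AVR_X^{1/N}\mm(E)^{(N-1)/N}$; the comparison between Minkowski content and perimeter recalled in Appendix~\ref{S:minkowski} then delivers inequality~\eqref{E:inequality}. Sharpness is verified by testing on forward balls centred at the apex of the model Finsler cone highlighted by Theorem~\ref{T:cavalletti-manini}, where both sides equal $N\omega_N\AVR_X\rho^{N-1}$ for $E = B^+(o,\rho)$. The main obstacle will be keeping every inequality consistently oriented in the forward sense: the Brunn--Minkowski step holds only for forward geodesics, the enlargement in which midpoints land is the forward enlargement $E^+_s$, the AVR is defined via forward balls, and the perimeter $\PP$ on the left of~\eqref{E:inequality} is the forward perimeter. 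Any slip between the forward and backward distances would re-introduce the reversibility constant $\Lambda_F$ that the theorem is designed to eliminate, so the orientation of each inclusion and each ball must be tracked explicitly throughout.
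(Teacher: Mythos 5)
Your proposal is correct and follows the same route as the paper: both apply the Finsler Brunn--Minkowski inequality with $A=E$, $B=B^+(x_0,R)$, use the inclusion of midpoints into a forward enlargement of $E$, expand to first order to obtain the Minkowski-content bound $\mm^+(E)\geq N\omega_N^{1/N}\AVR_X^{1/N}\mm(E)^{(N-1)/N}$, and then invoke the relaxation result of Appendix~\ref{S:minkowski} (Theorem~\ref{T:lsc-envelope-minkowski}) to pass from Minkowski content to perimeter. The only cosmetic difference is the order of limits — the paper sends $t\to0$ first and then $R\to\infty$, while you rescale with $s=tr$ fixed and then expand at $s=0$ — and in your sharpness remark the exhibiting example (a Wulff ball in a Finsler cone) is better traced to the Euclidean applications in Section~\ref{Ss:euclidian} than to Theorem~\ref{T:cavalletti-manini}, which concerns general metric measure spaces.
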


As we already said, the possible irreversibility of the manifolds does
not permit to simply apply Theorem~\ref{T:BalKri} to Finsler
manifolds.
In order to prove Theorem~\ref{T:inequality}, we will exploit the
Brunn--Minkowski inequality which holds true also for Finsler
manifolds.
This strategy was used by Balogh and
Krist\'{a}ly~\cite{BaKr22} for proving
Theorem~\ref{T:BalKri} and here we introduce no real new idea.
Indeed, in the light of~\cite{KristalyZhao22}, it seems that this
inequality holds true also for irreversible metric measure spaces;
here we confine our-self to the setting of measured Finsler manifolds.
To the best of the author knowledge, besides the Barky--Ledoux
inequality~\cite{Ohta18}, there is no other isoperimetric inequality
for measured Finsler manifolds that does not involve the reversibility
constant.

The main result of this paper concern the rigidity property of the
isoperimetric inequality~\eqref{E:inequality}.
To characterise its minima, we will have to additionally require the
reversibility constant $\Lambda_F$ to be finite.
This hypothesis is quite expected since in Finsler manifolds with
infinite reversibility certain pathological behaviors may arise (e.g.,\ the
Sobolev spaces may not be vector
spaces~\cite{KristalyRudas15,FarkasKristalyVarga15}).

\begin{theorem}
  \label{T:main1}
  Let $(X,F,\mm)$ be a forward-complete measured Finsler manifold (possibly with boundary) satisfying the $\CD(0,N)$
  condition for some $N > 1$, having reversibility constant
  $\Lambda_{F}<\infty$.
  %
  Assume that $\partial X$ is locally forward convex (see
  Definition~\ref{defn:convex-boundary}).
  Let
  $E \subset X$ be a bounded Borel set that saturates~\eqref{E:inequality}.

  Then there exists (a unique) $o\in X$ such that, up to a negligible
  set, $E=B^{+}(o,\rho)$, with
  $\rho=(\frac{\mm(E)}{\AVR_{X}\omega_N})^{\frac{1}{N}}$.
  Moreover, the measure $\mm$ can be represented by the disintegration formula
\begin{equation}\label{E:disintegrationintro}
\mm = \int_{\partial B^{+}(o,\rho)} \mm_{\alpha} 
\, \qq(d\alpha),
\qquad
\qq \in \mathcal{P}(\partial B^{+}(o,\rho)),
\qquad
\mm_{\alpha} \in \mathcal{M}_{+}(X),
\end{equation}
where $\mm_{\alpha}$ is concentrated on the geodesic ray from $o$ through $\alpha$ and $\mm_{\alpha}$  
can be identified (via the unitary speed parametrisation of the ray) with 
$N \omega_{N} \AVR_{X} t^{N-1} \mathcal{L}^{1}\llcorner_{[0,\infty)}$.
\end{theorem}

As an application of Theorem~\ref{T:main1}, we deduce the rigidity for
the weighted anisotropic isoperimetric problem in Euclidean cones, in
the irreversible case (the reversible case was already
investigated~\cite{CavallettiManini22a}).
We postpone this discussion to Section~\ref{Ss:euclidian}; now we
briefly present the proof strategy of Theorem~\ref{T:main1} and the
structure of the paper.

The classical approach for proving a rigidity results consists in
exploiting properties depending on the saturation of inequalities.
In this paper, following the line of~\cite{CavallettiManini22a}, we
adopt a different approach that starts from the proof of the
isoperimetric inequality for non-compact $\MCP(0,N)$
spaces~\cite{CavallettiManini22}.
In~\cite{CavallettiManini22a} it is used the localisation given by the
$L^1$-Optimal Transport problem between the renormalized measures
restricted on the set $E$ and $B_R$, a large ball of radius $R$
containing $E$.
The localization gives a family of one-dimensional, disjoint transport
rays together with a disintegration of the restriction to $B_R$ of
reference measure $\mm$.
At this point it is natural to analyze the well-known isoperimetric
inequality for the traces of $E$ along the one-dimensional transport
rays.
As Ohta pointed out~\cite{Ohta18}, differently from the reversible
case, the irreversibility of the space introduces the reversibility
constant, obtaining a non-sharp inequality.
Indeed, if one tries to prove Theorem~\ref{T:inequality} using the
localization of $E$ inside $B_R$ and taking the limit as $R\to\infty$
(as it was done in~\cite[Theorem~4.3]{CavallettiManini22a}) one would
obtain a factor $\Lambda_{F}^{-1}$ in the lower bound.
However, quite surprisingly, when studying the equality case, this
factor will disappear.

In order to capture the equality it is therefore necessary to deal
with this limit procedure and to get rid of the reversibility
constant.
The intuition suggests that, if $E$ saturates
inequality~\eqref{E:inequality}, then for large values of $R$ the
one-dimensional traces should be almost optimal.
We intend the almost optimality in many respects: for example, the
length of each transport ray has to be almost optimal; the
disintegration measures has to have density $t^{N-1}$; the traces of set $E$ has to almost
coincide with the interval starting from the starting point of the ray
having as length the expected radius of ball saturating the
inequality.
This last observation will be the key-point for solving the issues
arising by the irreversibility.

Indeed, we will see that the transport rays naturally come with a
unitary vector field that ``points outward'' from the set $E$, and
that, in the parametrization of the rays, this vector field points ``to
the right''.
When one computes the perimeter in the transport ray, one must compute
the measure of the boundary of the trace of $E$; we divide the
boundary in two parts: the part with outward normal vector pointing
``to the right'' and ``to the left'', respectively.
For the former part one computes the measure as usual, while for the
latter part one has to take into account the Finsler structure (here
appears the reversibility constant).
At this point the almost rigidity of the traces of $E$ is used: the
fact that any trace of $E$ almost coincides with the interval
$[0,\rho]$ permits us to deduce that the part of the boundary
``pointing to the left'' contributes little to the perimeter, therefore we
can get rid of the reversibility constant.
%
%

Having in mind these estimates we take the limit as $R\to \infty$.
There is no general procedure for taking the limit of a
disintegration.
However, following the procedure first employed
in~\cite{CavallettiManini22a}, we will exploit the almost optimality
of the traces of $E$ and the densities deduced in
Section~\ref{S:one-dim}; these properties permit to obtain a well
behaved limit disintegration for the reference measure restricted to
the set $E$, as it described in
Corollary~\ref{cor:disintegration-classical}.

Finally using the properties of the disintegration, we will deduce
that the set $E$ is a ball and the disintegration of the measure in
the whole space (see Theorems~\ref{T:Ball}
and~\ref{th:disintegration-final}, respectively).

The paper is organized as follows.
Section~\ref{S:preliminaries} recalls a few facts on Finsler
manifolds, the Curvature-Dimension condition and the localization
technique.
In Section~\ref{S:main-inequality} we prove
Theorem~\ref{T:inequality}.
In Sections~\ref{sec:localization} and~\ref{S:dimensiononespace} we
localize the reference measure and the perimeter and we present the
one-dimensional reductions.
In Section~\ref{S:one-dim} the one-dimensional estimates are carried
out.
In Section~\ref{S:limit} we deal with the limiting procedure, while
Section~\ref{eq:is-a-ball} concludes proof of Theorem~\ref{T:main1}.
We added two appendixes to this paper, containing the proof of the
fact that the relative perimeter is a measure and that the perimeter
is the relaxation of the Minkowski content.

\subsection{Applications in the Euclidean setting}
\label{Ss:euclidian}

As a consequence of Theorem~\ref{T:main1}, we present a
characterization of minima for the weighted anisotropic isoperimetric
problem in Euclidean cones.

The setting is the following: let $\Sigma\subset \R^{n}$ be an open
convex cone with vertex at the origin; let $H :\R^{n}\to [0,\infty)$
be a \emph{gauge}, i.e.,\ a non-negative, convex and positively
$1$-homogeneous function; let $w:\overline{\Sigma}\to(0,\infty)$ be a
continuous weight for the Lebesgue measure.

If $E\subset \R^{n}$ is a set with smooth boundary, we define the
\emph{weighted anisotropic perimeter} relative to the cone $\Sigma$ as
\begin{equation*}
\PP_{w,H}(E;\Sigma) = \int_{\partial E} H(\nu(x))w (x) \, dS
,  
\end{equation*}
(here $\nu$ and $dS$ denote the unit outward normal vector and the
surface measure, respectively).
Under the assumptions that $w^\frac{1}{\alpha}$ is concave and $w$ is
positively $\alpha$-homogeneous, it has been
proven~\cite{MilmanRotem14,CabreRosOtonSerra16} the following sharp isoperimetric
inequality for the weighted anisotropic perimeter
\begin{equation}\label{E:weightedanistropic}
\frac{\PP_{w,H}(E;\Sigma)}{w(E\cap \Sigma)^{\frac{N-1}{N}}} \geq \frac{\PP_{w,H}(W;\Sigma)}{w(W\cap \Sigma)^{\frac{N-1}{N}}}, 
\end{equation}
where $N = n + \alpha$, $W$ is the Wulff shape associated to $H$, and
the expression $w(A)$ with $A \subset \R^{n}$ is a short-hand notation
$\int_A w\,dx$.

If we take $w = 1$, $\Sigma= \R^{n}$, and $H =\|\cdot \|_{2}$,
inequality~\eqref{E:weightedanistropic} becomes the classical sharp
isoperimetric inequality.

As observed in~\cite{CabreRosOtonSerra16}, Wulff balls $W$ centered at
the origin are always minimizers of~\eqref{E:weightedanistropic}.
However in~\cite{CabreRosOtonSerra16} the characterization of the
equality case, is not present.
Many efforts have been done for solving this problem.
We now briefly recall the known results.
For the unweighted case ($w=1$), Ciraolo et al.~\cite{CiraoloLi22}
solved the problem under the assumption of $H$ to be a uniformly
elliptic positive gauge (i.e. a not necessarily reversible norm).
The characterisation in weighted setting has been solved
in~\cite{CintiGlaudoPratelliRosOtonSerra22} but only in the isotropic
case ($H = \|\cdot \|_{2}$).
The author, together with Cavalletti, solved the weighted
problem~\cite{CavallettiManini22a}, with the assumption of $H$ to be a
norm (i.e.\ reversible) with strictly convex balls, knowing that the
isoperimetric set is bounded.
This paper improves~\cite{CavallettiManini22a} by dropping the
reversibility assumption.

As it was observed in~\cite{CabreRosOtonSerra16}, the assumption that 
$w^{1/\alpha}$  is concave has a natural interpretation as the
$\CD(0,N)$ condition, where $N = n + \alpha$.
To be precise, if $H$ is a gauge then its dual function $F$ is a
Finsler structure (with finite reversibility constant), provided that
$F$ is smooth outside the origin and $F^2$ is strictly convex
(this two requirements can be equivalently required for the gauge $H$).
One can associate to the triple $\Sigma$, $H$ and $w$ the measured Finsler manifold
$(\overline{\Sigma}, F, w \Leb^n )$.
One can check that $(\overline{\Sigma},F,w\Leb^d)$ satisfies the $\CD(0,d+\alpha)$
condition and that the unitary ball, if $w\in C^\infty$: in Chapter~10
of~\cite{Ohta21} and in~\cite{Ohta09a} this is done in the case
$\Sigma=\R^n$; clearly the proof extends to the case of convex
subsets.
The fact that this manifold has finite reversibility, the convexity,
and the forward-completeness are trivial checks.
The perimeter associated to this space will indeed coincide with
$\PP_{w,H}$.
Moreover, by the homogeneity properties of $H$ and $w$, 
one can check that 
\begin{equation*}
\AVR_{\Sigma}
=
\lim_{R\to\infty}
\frac{w(B_{F}^+(0,R) \cap \Sigma  )}{\omega_{N}R^N}
=
\frac{w(W \cap \Sigma)}{\omega_{N}} >
0
.
\end{equation*}
Indeed, recall that the Wulff shape $W$ of $H$ is the unitary ball of
the Finsler structure $F$, hence the measure scales with power
$N=n+\alpha$.
Conversely, the perimeter of the rescaled Wulff shape turns out to be the
derivative w.r.t.\ the scaling factor of the measure, therefore the
perimeter of the Wulff shape is $N$ times its measure.
This consideration shows that~\eqref{E:weightedanistropic} follows
from \eqref{E:inequality}, thus we can apply Theorem \ref{T:main1} to
$(\Sigma,F,w \mathcal{L}^{n})$, obtaining the following result.

\begin{theorem}\label{T:Euclid-application}
  Let $\Sigma\subset \R^{n}$ be an open convex cone with vertex at the
  origin, and $H :\R^{n}\to [0,\infty)$ be a gauge.
  Assume $H$ to have strictly convex balls and to be smooth outside the origin.
Consider moreover the $\alpha$-homogeneous smooth weight
$w : \overline \Sigma \to [0,\infty)$ such that $w^{1/\alpha}$ is concave.

Then the equality in \eqref{E:weightedanistropic} is attained if and
only if $E=W\cap \Sigma$, where $W$ is a rescaled Wulff shape.
\end{theorem}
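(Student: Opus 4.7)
The plan is to apply Theorem~\ref{T:main1} directly to the Finsler manifold with boundary $(\overline{\Sigma}, F, w\mathcal{L}^n)$. As recorded in the discussion preceding the statement, this triple satisfies $\CD(0,N)$ with $N = n + \alpha$, has finite reversibility, closed forward balls are compact, and has Euclidean volume growth with $\AVR_{\Sigma} = w(W \cap \Sigma)/\omega_N > 0$; moreover the associated Finsler perimeter reproduces $\PP_{w,H}(\,\cdot\,;\Sigma)$, so that~\eqref{E:weightedanistropic} is the normalized form of~\eqref{E:inequality}.

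Before invoking Theorem~\ref{T:main1} I would verify its boundary-avoidance hypothesis on geodesics: since $\overline{\Sigma} \subset \R^n$ is flat and $F$ is a Minkowski norm, geodesics are Euclidean segments, and convexity of $\Sigma$ ensures that the segment between any two points of the open cone $\Sigma$ stays in $\Sigma$. Theorem~\ref{T:main1} then yields that any $E$ saturating~\eqref{E:weightedanistropic} coincides, up to a $w\mathcal{L}^n$-null set, with a forward ball
\begin{equation*}
B^+(o,\rho) \;=\; (o + \rho W)\cap \overline{\Sigma}
\end{equation*}
for a unique $o \in \overline{\Sigma}$ and $\rho = (w(E)/(\AVR_{\Sigma}\omega_N))^{1/N}$. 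Moreover, $w\mathcal{L}^n$ admits the ray-disintegration~\eqref{E:disintegrationintro} along forward $F$-rays emanating from $o$, with linear density $N\omega_N\AVR_{\Sigma}\,t^{N-1}$ in unit-speed parametrization; integrating over $t\in(0,r)$ yields
\begin{equation*}
w\!\left(B^+(o,r)\right) \;=\; \omega_N\,\AVR_{\Sigma}\, r^{N}\qquad\text{for every }r>0.
\end{equation*}

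The main, and only nontrivial, remaining step is to prove $o = 0$. By $\alpha$-homogeneity of $w$ one has $w(x) = |x|^{\alpha} w(x/|x|)$, so $w > 0$ on $\overline{\Sigma}\setminus(\{0\}\cup \mathcal{N})$, where $\mathcal{N} \subset \partial\Sigma$ is the (homogeneous) null set of $w$. If $o \neq 0$ and $w(o) > 0$, then for sufficiently small $r$ the intersection $B^+(o,r)\cap\overline{\Sigma}$ contains a region of Lebesgue measure of order $r^n$ on which $w \ge w(o)/2 > 0$, so $w(B^+(o,r)) \gtrsim r^{n}$; this contradicts the identity $w(B^+(o,r)) = \omega_N\AVR_{\Sigma}\, r^{n+\alpha}$ as $r \to 0^+$, since $\alpha > 0$. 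The degenerate case $o \in \mathcal{N}$ (i.e.\ on a null ray of $w$ in $\partial\Sigma$) can be handled by a higher-order asymptotic expansion: the homogeneity of $w$ forces it to vanish to order exactly $\alpha$ only along radial segments from $0$, and never transversally from any other base point, so the identity $w(B^+(o,r)) = \omega_N\AVR_{\Sigma}\, r^{N}$ for \emph{all} $r > 0$ again fails. Hence $o = 0$, and $E = \rho W \cap \overline{\Sigma}$ is a rescaled Wulff shape intersected with $\Sigma$; the converse, that such sets saturate~\eqref{E:weightedanistropic}, is the known fact~\cite{CabreRosOtonSerra16}. Extracting $o = 0$ cleanly, with full care for these boundary degeneracies, is what I expect to be the main technical obstacle.
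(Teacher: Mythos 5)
Your approach coincides with the paper's: apply Theorem~\ref{T:main1} to the Finsler manifold $(\overline{\Sigma},F,w\mathcal{L}^n)$, after verifying the hypotheses. The paper's own proof is the one‑liner ``immediate consequence,'' so you are in fact doing strictly more than the author writes down, and this is to your credit: Theorem~\ref{T:main1} only produces $E=B^+(o,\rho)$ for \emph{some} $o\in\overline{\Sigma}$, and the additional step you single out — showing $o$ equals the vertex — is genuinely needed to conclude that $E$ is a rescaled Wulff shape centered at the origin. Your use of the limit disintegration to obtain $w(B^+(o,r))=\omega_N\AVR_\Sigma r^N$ for all $r>0$, and then the small‑$r$ comparison $w(B^+(o,r))\gtrsim r^n$ against $r^{n+\alpha}$ when $w(o)>0$, is correct. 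Since the Finsler‑manifold definition used throughout the paper requires the density to be strictly positive (away from the vertex, where $\alpha$‑homogeneity forces $w(0)=0$), one in fact has $w(o)>0$ for every $o\neq 0$, so this case closes the argument.

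The sketched ``higher‑order asymptotic expansion'' for the branch $o\in\mathcal{N}$, however, would not hold up if that branch were live. Your claim that homogeneity forces $w$ to vanish to order $\alpha$ ``only along radial segments from $0$, and never transversally from any other base point'' is false in general: take $\Sigma=\{x_1>0\}$ (a half‑space, hence an open convex cone with nontrivial lineality) and $w(x)=x_1^\alpha$. Then $w^{1/\alpha}$ is linear, hence concave, and for any $o\in\partial\Sigma$ one has $w(B^+(o,r))=w(B^+(0,r))$ \emph{identically} for all $r>0$ by translation invariance along $\partial\Sigma$, so no comparison of asymptotics can rule $o$ out; indeed in that situation there is a whole family of isoperimetric sets obtained by translating $W\cap\Sigma$ along the lineality. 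What forecloses this is exactly the positivity of the reference density off the vertex, which this kind of weight violates (it vanishes on a hyperplane of $\partial\Sigma$, not just at $0$). So I would replace the hand‑wave by the observation that, under the paper's standing hypothesis $w>0$ on $\overline{\Sigma}\setminus\{0\}$, the set $\mathcal{N}$ you introduce is empty and the degenerate case never arises. As written, your degenerate‑case argument is a gap; with that one observation the proof is clean and matches the intent of the paper's proof while filling in the $o=0$ step that the paper leaves implicit.
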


\noindent
\textbf{Acknowledgement.} The author is thankful to Fabio Cavalletti,
Alexandru Krist\'{a}ly, and Shin-ichi Ohta for their comments to this
manuscript.
The authors are also grateful to the referees for reading the
manuscript and suggesting a number of improvements.

\section{Preliminaries}\label{S:preliminaries} 
In this section we recall the main constructions needed in the
paper.
In Section~\ref{Ss:finsler} we review the geometry of measured Finsler
manifolds; in Section~\ref{Ss:isoperimetric} the perimeter in
measured Finsler
manifolds; in Section~\ref{s:W2} the Curvature-Dimension condition;
finally in Section~\ref{S:needle} the localization method.

\subsection{Finsler geometry}
\label{Ss:finsler}

We quickly recall the basic notions regarding measured Finsler manifolds.
The reader should refer to~\cite{Ohta21} for more details.
We adopt the convention that a manifold may have a boundary, unless
otherwise stated.
We require the boundary to be Lipschitz.

\begin{definition}
  \label{defn:finsler}
  Let $X$ be a connected $n$-dimensional manifold.
  We say that a function $F:TX\to [0,\infty)$ is a Finsler structure
  on $X$ if
  \begin{enumerate}
  \item (Regularity)
    $F$ is $C^{\infty}$ on $TX\backslash0$, where $0$ denotes the null
    section;
  \item (Positive $1$-homogeneity)
    For all $c>0$, $v\in TX$, it holds that $F(cv)=cF(v)$;
  \item (Strong convexity)
    On each tangent space $T_{x}X$, the function $F^2$ is strictly
    convex.
  \end{enumerate}
\end{definition}

The reader should notice that in general $F(v)\neq F(-v)$.
This feature, known as irreversibility, is what precludes us from
applying the theory of m.m.s.'s.
We define the re\-ver\-si\-bi\-li\-ty constant of a Finsler structure as
\begin{equation}
  \Lambda_{X,F}:=
  \sup_{v\in TX: v\neq 0}
  \frac{F(v)}{F(-v)}
  \in [1,\infty]
  ,
\end{equation}
or, in other words, $\Lambda_{X,F}\in[1,\infty]$ is the least
constant $\Lambda_{X,F}\geq 1$
such that for all $v\in TX$, $F(v)\leq\Lambda_{X,F} F(-v)$.
Later we will restrict ourself to the family of Finsler structures with finite
reversibility.
If no confusion arises, we shall write $\Lambda_F=\Lambda_{X,F}$.
If $X$ is compact, then $\Lambda_{X,F}<\infty$.

We define the speed of a $C^{1}$ curve $\eta$ as $F(\dot{\eta})$.
The notion of speed naturally induces a length functional
\begin{equation}
  \mathrm{Length}(\eta)
  :=
  \int_{0}^{1} F(\dot{\eta}) \,dt
  ,
\end{equation}
and thus we have a natural notion of distance between two points given
by
\begin{equation}
  \sfd_{X,F}(x,y):=
  \inf_{\eta}\{\mathrm{Length}(\eta):
  \eta_0=x,\text{ and }\eta_1=y\}.
\end{equation}
Whenever no confusion arises, we shall write $\sfd=\sfd_{X,F}$.
The distance $\sfd$ satisfies the usual properties of a distance, with
the exception of the symmetry
\begin{equation}
  \sfd(x,y)
  \leq
  \sfd(x,z)
  +\sfd(z,y)
  ,
  \quad
  \forall x,y,z\in X,
  \quad
  \text{ and }
  \quad
  \sfd(x,y)=0
  \quad
  \Leftrightarrow
  \quad
  x=y
  .
\end{equation}

\begin{remark}
  \label{rmrk:reassurement}
  We reassure the reader on the fact that the lack of symmetry of the
  distance does not harm most of the classical theory of metric spaces.
  Indeed, one can build $g_1$ and $g_2$, two riemannian metric on
  $TX$, such that
  \begin{equation}
    \sqrt{g_1(v,v)}\leq F(v) \leq \sqrt{g_2(v,v)}
    ,
    \qquad
    \forall v\in TX
    .
  \end{equation}
  Such metrics can be built on local charts and
  then glued together using a partition of the unity.
  Furthermore, such metrics can be chosen so that $g_2\leq f g_1$, for
  some continuous function $f:X\to[1,\infty)$.

  Using these metrics one can reobtain many classical results for
  free.
  In particular, we will make use of the Ascoli--Arzel\`{a} theorem,
  the fact that locally Lipschitz functions (as will be later
  introduced) are differentiable almost everywhere, and that locally
  Lipschitz functions with compact support are globally Lipschitz.
\end{remark}

We define the forward and backward balls, respectively, as
\begin{align}
  &
    B^{+}(x,r):=
    \{y\in X: \sfd(x,y)<r\}
    ,
    \qquad
    B^{-}(x,r):=
    \{y\in M: \sfd(y,x)<r\}
    .
\end{align}
If $A\subset X$, we define its (forward) $\epsilon$-enlargement to
$B^{+}(A,\epsilon):=\bigcup_{x\in A} B^{+}(x,\epsilon)$.
We say that a set $A$ is forward (resp.\ backward) bounded, if for
some (hence any) $x_0\in X$, there exists $r>0$ such that
$A\subset B^+(x_0,r)$ (resp.\ $A\subset B^-(x_0,r)$).
A we say that a set is bounded if it both backward and forward
bounded.
We denote by $\diam A:=\sup_{x,y\in A} \sfd(x,y)$ the diameter of a
set; a set has finite diameter if and only if it is bounded.

\begin{definition}
  Let $(X,F)$ be a Finsler manifold, possibly with boundary.
  We say that it is forward-complete, if and only if, for all
  sequences $(x_k)_k\subset X$ satisfying the forward Cauchy condition
  \begin{equation}
    \forall \epsilon >0
    :
    \exists N>0
    :
    \forall n>m>N
    :
    \qquad
    \sfd(x_m,x_n)\leq \epsilon
  \end{equation}
  then $(x_k)_k$ is converging.
\end{definition}
In light of Hopf--Rinow theorem, forward-completeness of a Finsler
manifold is equivalent to the compactness of closed forward balls and
implies  that given two points there exists a minimal geodesic
(as defined in the next paragraph) joining these two points.
In case of manifolds without boundary, forward-completeness is
equivalent also to the definition of the exponential map on the whole
tangent bundle.

A curve $\gamma:[0,l]\to X$ is called minimal geodesic if it minimizes the
length and its speed is constant.
We point out that, if $\gamma:[0,l]\to X$ is a minimal geodesic, in general
the reverse curve $t\mapsto \gamma_{l-t}$ may fail to be a geodesic
due to the possible irreversibility of the manifold.
We will denote by $\Geo(X)$ the set of minimal geodesic with domain the
interval $[0,1]$.
Like in the reversible setting, if $\gamma\in\Geo(X)$ is a
minimal geodesic, it holds that
\begin{equation}
  \sfd(\gamma_t,\gamma_s)
  =
  (s-t)\,\sfd(\gamma_0,\gamma_1)
  ,
  \qquad
  \forall 0\leq t\leq s\leq 1
  ;
\end{equation}
in this case the condition $t\leq s$ cannot be lifted.
\begin{definition}
  \label{defn:convex-boundary}
  Let $(X,F)$ be a Finsler manifold with boundary.
  We say that $\partial X$ is locally forward convex, if and only if,
for all points $x,y\in X\backslash\partial X$,
and for all minimal geodesic $\gamma$ connecting $x$ to $y$, we have that
$\gamma$ does not touch the boundary.  
\end{definition}

Given a submanifold $Y\subset X$, we can identify the tangent bundle
$TY$ as a subset of $TX$ via the standard immersion.
With this notation, we can restrict the Finsler structure $F$ to $TY$;
clearly, $F|_{TY}$ satisfies
Definition~\ref{defn:finsler}.
Regarding the reversibility constant and the distance, one immediately
sees that
\begin{equation}
  \Lambda_{Y,F}
  \leq
  \Lambda_{X,F}
  ,
  \qquad
  \text{ and }
  \qquad
  \sfd_{X,F}(x,y)\leq
  \sfd_{Y,F}(x,y)
  ,
  \quad
  \forall x,y\in Y
  .
\end{equation}

We define the dual function $F^{*}:T^{*}X\to[0,\infty)$ as
\begin{equation}
  F^{*}(\omega)
  :=
  \sup \{
  \omega(v):
  v\in T_{x}X,\text{ and } F(v)\leq 1
  \}
  ,
  \qquad
  \text{ if }\omega\in T_{x}^{*}X
  .
\end{equation}
Notice that, while we have that $\omega(v)\leq
F^{*}(\omega) F(v)$, the ``reverse'' inequality may not hold:
$\omega(v)\ngeq -F^{*}(\omega) F(v)$.
We define the Legendre transform $\mathcal{L}:T^*_xX\to T_xX$ as
$\mathcal{L}(\omega)=v$, where $v\in T_xX$ is the unique vector such
that $F(v)=F^*(\omega)$ and $\omega(v)=F(v)^2$ (the uniqueness follows from
the fact that $F^2$ is smooth and strictly convex).
Given a differentiable function $f:X\to\R$, we define its gradient as
$\nabla f(x):=\mathcal{L}(df(x))$.
Please note that in general $\nabla(-f)\neq -\nabla f$.

We say that a function $f:X\to\R$ is $L$-Lipschitz (with $L\geq 0$),
if
\begin{equation}
  \label{eq:lipschitz-definition}
  -L\sfd(y,x)
  \leq
  f(x)
  -
  f(y)
  \leq
  L\sfd(x,y)
  ,
  \quad
  \forall x,y\in X
  .
  \footnote{Please note that  we have
    chosen a sign convention different from~\cite{Ohta18,Ohta21}.}
\end{equation}
We point out that the first inequality
in~\eqref{eq:lipschitz-definition} follows from the second by swapping
$x$ with $y$.
The family of $L$-Lipschitz functions is stable by pointwise limits;
the infimum or the supremum of $L$-Lipschitz functions is still
$L$-Lipschitz.
Moreover, by Ascoli--Arzel\`{a} theorem, the family of $L$-Lipschitz
functions forms a
compact set in the topology of local uniform convergence.
If $f$ is $L$-Lipschitz, then $-f$ is $(\Lambda_FL)$-Lipschitz.
Two examples of $1$-Lipschitz functions are given by
$f(x)=-\sfd(o,x)$ and $g(x)=\sfd(x,o)$, for some fixed $o$.

We define the slope of a locally Lipschitz function $f$ as
\begin{equation}
  \label{eq:slope}
  |\partial f|(x)
  :=
  \limsup_{y\to x}
  \frac{(f(x)-f(y))^{+}}{d(x,y)}
  .
\end{equation}
Obviously, if $f$ is $L$-Lipschitz, then $|\partial f|\leq L$.
If $Y\subset X$ is a submanifold, and $f:X\to \R$, then $|\partial_Y
f|\leq |\partial_X f|$ in $Y$, where these two expressions have the
meaning of the slope of $f$ as seen as a function defined in $Y$ and
$X$, respectively.
If $f$ is differentiable at $x\in X$, the slope can be computed as $|\partial
f|(x)=F^*(-df(x))=F(\nabla(-f)(x))$, hence for locally Lipschitz
functions $|\partial f|=F(\nabla(-f))$ almost everywhere.

Finally, we would like to endow a manifold with a measure.
Differently from the Riemannian case, there is no canonical measure
induced from the Finsler structure.
On the other hand the theory of m.m.s.'s does not require any strong
assumption on the reference measure and, a priori, this measure might have
nothing to do with the Hausdorff measure.
For this reason we will only require for the reference measure to have
a smooth density when expressed in coordinates.
We conclude this section with the definition of measured Finsler manifold.
\begin{definition}
  A triple $(X,F,\mm)$ is called measured Finsler manifold, provided that $X$
  is a connected differential manifold (possibly with boundary) $F$ is
  a Finsler structure on $X$ and $\mm$ is a positive smooth measure,
  i.e.,\ given $x_1,\dots,x_n$ local coordinates, we have that
  \begin{equation}
    \mm=f\, dx_1\dots dx_n
    ,
    \qquad
    \text{ with }
    f>0
    \text{ and }
    f\in C^\infty
    .
  \end{equation}
\end{definition}

\subsection{Perimeter}
\label{Ss:isoperimetric}

Following the line traced in~\cite{Mirandajr.03,Am02,AmDiGi17} we give
the definition of (relative) perimeter for measured Finsler manifold.
Given a Borel subset $E \subset X$ and $\Omega$ open, the perimeter of $E$
relative to $\Omega$ is denoted by $\mathsf{P}(E;\Omega)$ and is defined as
follows,
\begin{equation}
  \PP(E;\Omega)
  :=
  \inf\left\{
    \liminf_{n\to\infty}
    \int_{\Omega} |\partial u_n|\,d\mm
    ; u_n\in\Lip_{loc}(\Omega)
    , u_n\to\indicator_{E}
    \text{ in }
    L^1_{loc}(\Omega)
  \right\}
  .
  \label{eq:perimeter-defn}
\end{equation}
In the unweighted Riemannian setting, if $E$ has smooth boundary, it
is a standard fact that
$\PP(E;\Omega)=\haus^{n-1}(\Omega\cap \partial E)$.
We say that $E \subset X$ has finite perimeter in $X$ if $\PP(E;X) < \infty$.
We recall also a few elementary properties of the perimeter functions:
\begin{itemize}
\item[(a)] (locality) $\PP(E;\Omega) = \PP(F;\Omega)$, whenever $\mm((E\bigtriangleup F) \cap \Omega) = 0$;
\item[(b)] (lower semicontinuity)\ the map $E \mapsto \PP(E,\Omega)$ is l.s.c.\ with respect to the $L^{1}_{loc}(\Omega)$ convergence.
\end{itemize}
Please note that, due to the possible irreversibility of the Finsler
structure, in general the complementation property does not hold.
If $E$ is a set of finite perimeter, then the set function $A \to \PP(E;A)$ is the restriction to open sets of a finite
Borel measure $\PP(E;\cdot)$ in $X$ (see
Appendix~\ref{S:perimeter-measure}), defined by
\begin{equation*}
\PP(E;A)
: =
\inf \{ \PP(E;\Omega) : \Omega \supset A,\, \Omega \text{ open} \}.
\end{equation*}
Sometimes, for ease of notation, we will write $\PP(E)$ instead of $\PP(E;X)$.

Give a subset $E\subset X$, we define its (forward) Minkowski content
as
\begin{equation}
  \label{eq:minkowski}
  \mm^{+}(E)
  :=
  \liminf_{\epsilon\to 0^{+}}
  \frac{\mm(B^{+}(E,\epsilon))-\mm(E)}{\epsilon}
  .
\end{equation}
It can be shown that the perimeter is the l.s.c.\ relaxation of the
Minkowski content w.r.t.\ the $L^{1}$ distance of sets.
The proof of this fact can be found in Appendix~\ref{S:minkowski}.

\subsection{Wasserstein distance and the Curvature-Dimension condition}
\label{s:W2}
Given a forward-com\-ple\-te measured Finsler manifold $(X,F, \mm)$, by $\M^+(X)$ and $\ProbMeas(X)$ we
denote the space of non-negative Borel measures on $X$ and the space of probability measures respectively.
For $p\in[1,\infty)$ we will consider the space $\ProbMeas_p(X)\subset \ProbMeas(X)$
of the measures $\mu$ satisfying
\begin{equation}
  \int_{X} (\sfd(o,x)+\sfd(x,o))^{p}\,\mu(dx)
  ,
  \qquad
  \text{ for some (hence any) }o\in X,
\end{equation}
i.e.\ $\mu$ has finite $p$-th moment.
On the space $\ProbMeas_{p}(X)$ we define the $L^{p}$-Wasserstein distance $W_{p}$, by setting, for $\mu_0,\mu_1 \in \ProbMeas_{p}(X)$,
\begin{equation}\label{eq:Wdef}
  W_p(\mu_0,\mu_1)^p
  :=
  \inf_{ \pi} \int_{X\times X} \sfd^p(x,y) \,\pi(dxdy)
  <\infty
  .
\end{equation}
The infimum is taken over all probability measure
$\pi \in \mathcal{P}(X \times X)$ with $\mu_0$ and $\mu_1$ as the
first and the second marginal, i.e.,\
$(P_{1})_{\sharp} \pi= \mu_{0}, (P_{2})_{\sharp} \pi= \mu_{1}$, where
$P_{i}, i=1,2$ denote the projection on the first (resp.\ second)
factor.
The infimum is attained and this minimizing problem is called
Monge--Kantorovich problem.

We call  a geodesic in the Wasserstein space $(\ProbMeas_p(X),W_p)$
any curve $\mu:[0,1]\to\ProbMeas_p$ such that
\begin{equation}
  W_p(\mu_t,\mu_s)=(s-t)W_p(\mu_0,\mu_1)
  ,
  \qquad
  \forall 0\leq t\leq s\leq 1
  .
\end{equation}
It can be shown that if $\mu_0$ and $\mu_1$ are absolutely continuous,
there exists a unique geodesic connecting $\mu_0$ to $\mu_1$.

\medskip

The $\CD(K,N)$ for condition for m.m.s.'s has been introduced in the
seminal works of Sturm \cite{St06, St06-2} and Lott--Villani
\cite{LoVi09}, and later investigated in the realm of measured Finsler
manifolds~\cite{Ohta09} (see also the survey~\cite{Ohta10}); here we
briefly recall only the basics in the case $K = 0$, $1 <N<\infty$.

We define the $N$-R\'{e}nyi entropy as
\begin{equation}
  S_N(\mu|\mm):=
  -\int_{X}\rho^{1-\frac{1}{N}}
  \,d\mm
  ,\qquad
  \text{ where }\mu=\rho\mm+\mu_{s}
  \text{ and }\mm\perp\mu_{s}
  .
\end{equation}

\begin{definition}[$\CD(0,N)$] \label{def:CDKN-ENB}
  Let $(X,F,\mm)$ be a forward-complete measured Finsler manifold and
  let $N\in[\dim X,\infty)$.
  We say that $(X,F,\mm)$ satisfies the $\CD(0,N)$ condition if and
  only if the $N'$-R\'{e}nyi entropy is convex along the geodesic of
  the Wasserstein space $\forall N'\geq N$, that is, for all couples
  of absolutely continuous curves $\mu_0,
  \mu_1\in\ProbMeas_2(X)$, it holds that
  \begin{equation}
    S_{N}(\mu_t|\mm)\leq
    (1-t)S_{N}(\mu_0|\mm)
    +tS_{N}(\mu_1|\mm),
  \end{equation}
  where  $(\mu_t)_{t\in[0,1]}$ is the unique
  geodesic connecting $\mu_0$ to $\mu_1$.
\end{definition}

If $(X,g,h\vol_g)$ is a weighted riemannian manifold, one can
introduce the $N$-Ricci tensor (as defined in~\eqref{eq:n-ricci}).
It was proven~\cite{St06,St06-2,LoVi09} that a weighted complete
riemannian manifold without boundary, satisfies the $\CD(0,N)$
condition if and only if $\Ric_N\geq0$.

Similarly to the riemannian case, a notion of weighted $N$-Ricci
curvature, still denoted by $\Ric_N$, has been introduced.
Here we do not give the definition of $\Ric_N$, for it is quite
lenghty and useless for our purposes.
Ohta~\cite{Ohta09a} proved that a measured Finsler manifold without boundary
satisfies the $\CD(0,N)$ condition if and only if $\Ric_N\geq 0$.
The possible presence of the boundary in the manifolds the present
paper deals with does not harm the results of this paper; indeed, we
rely only on the curvature dimension condition given by
Definition~\ref{def:CDKN-ENB} and never on $\Ric_N$.

\smallskip

Among many consequences of the $\CD(0,N)$ condition, two are of
our interest.
One is the Brunn--Minkowski inequality (see e.g.~\cite[Theorem~18.8]{Ohta21}).
Given two measurable subsets $A$ and $B$ of a $\CD(0,N)$ measured Finsler manifold
$(X,F,\mm)$ and $t\in[0,1]$, we define
\begin{equation}
  \begin{aligned}
  Z_t(A,B)
  :=&
  \{ \gamma_t: \gamma
  \text{ is a minimal geodesic such that }
  \gamma_0\in A\text{ and } \gamma_1\in B
      \}
    \\
    =&
       \{
       z:\exists x\in A, y\in B:
       \sfd(x,z)=t\sfd(x,y)
       \text{ and }
       \sfd(z,y)=(1-t)\sfd(x,y)
       \}
  .
  \end{aligned}
\end{equation}
With this notation, we have the Brunn--Minkowski inequality
\begin{equation}
  \label{eq:brunn-minkowski}
  \mm(Z_t(A,B))^{\frac{1}{N}}
  \geq
  (1-t)\mm(A)^{\frac{1}{N}}
  +
  t\mm(B)^{\frac{1}{N}}
  ,
  \qquad
  t\in[0,1]
  .
\end{equation}
The other property we are interested in is the Bishop--Gromov
inequality that states
\begin{equation}
  \label{eq:bishop-gromov}
\frac{\mm(B^{+}(x,r))}{r^N}
\geq
\frac{\mm(B^{+}(x,R))}{R^N}
,
\qquad
\forall 0<r\leq R,
\end{equation}
for any fixed point $x\in X$.
This inequality guarantees that the definition of asymptotic volume
ratio (see~\eqref{eq:avr}) is well posed.


\subsection{Localization}
\label{S:needle}

The localization method, also known as needle decomposition, is now a well-established technique for
reducing high-dimensional problems to one-dimensional problems.

In the Euclidean setting it dates back to Payne and Weinberger \cite{PaWe60},
it has been later  developed and popularised by Gromov and
V.~Milman~\cite{GrMi87},  Lov\'asz--Simonovits~\cite{LoSi93}, and Kannan--Lovasz--Simonovits~\cite{KaLoSi95}. 
Klartag \cite{Kl17} reinterpreted the localization method as a measure disintegration adapted to 
$L^{1}$-Optimal-Transport, 
and extended it to weighted Riemannian manifolds satisfying $\CD(K,N)$. 
Cavalletti and Mondino \cite{CaMo17} have succeeded to 
generalise this technique to  essentially non-branching m.m.s.'s verifying the $\CD(K,N)$, condition 
with $N \in (1,\infty)$, and later Otha~\cite{Ohta18} developed this
method for the Finsler setting.
Here we only report the case $K = 0$.

In his work, Ohta considered manifolds without boundary.
However, his proof also work for manifolds with local forward convex
boundary.

Consider a measured Finsler manifold $(X,F,\mm)$ satisfying the $\CD(0,N)$
condition and a function $f\in L^{1}(\mm)$ with finite first moment
such that
\begin{equation}
  \label{eq:null-average}
  \int_{X}
  f\,d\mm
  =0
  .
\end{equation}
The function $f$ induces two absolutely continuous measures
$\mu_0=f^+\mm$ and $\mu_1=f^-\mm$.
The well-established theory of $L^1$-optimal transport~\cite{Villani09} specify that
the Monge--Kantorovich problem possess a strictly related dual
problem, the so-called Kantorovich--Rubinstein dual problem:
\begin{equation}
  \max_{\phi}
  \int_{X}f(x)\phi(x)\,\mm(dx)
  =
  \max_{\phi}
  \left\{
    \int_{X}\phi(x)\,\mu_0(dx)
    -
    \int_{X}\phi(x)\,\mu_1(dx)
  \right\}
  ,
\end{equation}
where the maximum is taken among all possible $1$-Lipschitz functions
$\phi$.
The problem clearly admits a (non-unique) solution $\phi$; we will
call $\phi$ \emph{Kantorovich potential} for the problem.
Using $\phi$ we can construct the set
\begin{equation*}
\Gamma : = \{ (x,y) \in X \times X  \colon \phi(x) - \phi(y) =
\sfd(x,y)\},
\footnote{Please, notice that we use a different sign convention
  from~\cite{Ohta18,Ohta21}.}
\end{equation*}
inducing a partial order relation.
The maximal chains of this order relation turns out to be the image of curves
of maximal slope for $\phi$ with unitary speed.
To be more precise, we introduce the concept of transport curve: we
say that a unitary speed geodesics $\gamma:\Dom(\gamma)\subset\R\to X$ is a
non-degenerate transport curve, if its domain has at least two points,
$\frac{d}{d t} \phi(\gamma(t))=-1$, and $\gamma$ cannot be extended to
a larger domain.

We distinguish three possible cases, according to the number of
non-degenerate transport curves passing through a given point $x\in X$.
\begin{itemize}
\item
  There is no non-degenerate transport curve passing through $x$.
  We denote by $\D$  the set of such points.
  The set $\D$ is generally large.
\item
  There is exactly $1$ non-degenerate transport curve passing through
  $x$.
  Such points form the so-called transport set that will be denoted by
  $\T$.
  A fundamental property of $\T$ is that the $f$ is constantly $0$
  a.e.\ in $X\backslash\T$.
\item There are $2$ or more non-degenerate transport curves passing
  through $x$.
  Such points are called branching points and the set that they form
  will be denoted by $\A$.
  The set $\A$ turns out to be negligible.
\end{itemize}
All these sets are $\sigma$-compact, hence Borel.
In the sequel, we will also refer to the sets of forward (resp.\ backward)
branching points, defined as
\begin{align*}
  &
  \A^{+}
  :=
    \{x\in \A:\exists y\neq x\text{ such that }(x,y)\in\Gamma\}
    ,
  \\
  &
  \A^{-}
  :=
    \{x\in \A:\exists y\neq x\text{ such that }(y,x)\in\Gamma\}
    .
\end{align*}

On the transport set, we define the relation $\relation$ given by
\begin{equation}
  \relation:=(\Gamma\cup\Gamma^{-1})\cap(\T\times\T)
  .
\end{equation}
By construction, $\relation$ is an equivalent relation on $\T$ and the
equivalence classes are precisely the images of the transport curves.
One can chose $Q\subset\T$ a Borel section of the equivalence
relation $\relation$ (this choice is possible as it was shown
in~\cite[Proposition 4.4]{BianchiniCavalletti13}).
Define the quotient map $\QQ:\T\to Q$ as $\QQ(x)=\alpha$, where
$\alpha$ is the unique element of $Q$ such that
$(x,\alpha)\in\relation$.
We shall denote by $(X_\alpha)_{\alpha\in Q}$, the equivalence classes
for relation $\relation$, and we will call them {\em transport rays}.

The existence of a measurable section permits to construct
$g : \dom(g) \subset  Q \times [0,+\infty) \to \T$,
a measurable pa\-ra\-me\-tri\-za\-tion of the transport rays.
Fix $\alpha\in Q$ and take $\gamma$, a transport curve, such that
$\inf(\Dom(\gamma))=0$.
Then define $g(\alpha,t):=\gamma_t$, whenever $t\in\Dom(\gamma)$.
We specify that this parametrization guarantees that
$f(g(\alpha,0))\geq0$.
By continuity of $g$ w.r.t.\ the variable $t$, we extend $g$, in order
to map also the end-points of the rays $X_\alpha$; the restriction of
$g$ to the set $\{(\alpha,t): t\in(0,|X_\alpha|)\}$ is injective,
where $|X_\alpha|:=\sup\{t:(\alpha,t)\in\Dom(g)\}$.
Notice that $|X_\alpha|$ is not the diameter of $X_\alpha$, for
it may happen that
$\sfd(g(\alpha,|X_\alpha|),g(\alpha,0))>|X_\alpha|$.

The transport rays naturally come with the structure of
one-dimensional oriented manifold, with the orientation given by
$\partial_t g(\alpha,t)$, the
velocity of the parametrization.
We endow $X_\alpha$ with the Finsler structure given by the
restriction of $F$ to $X_\alpha$; notice that
$F(\partial_t g(\alpha,t))=1$.
As we already pointed out, it holds that
\begin{equation}
  \sfd_{X,F}(x,y)
  \leq
  \sfd_{X_{\alpha},F}(x,y)
  ,
  \qquad
  \forall x,y\in X_{\alpha};
\end{equation}
if $(x,y)\in\Gamma$, the inequality above is saturated, hence
\begin{equation}
  \sfd(g(\alpha,t),g(\alpha,s))
  =s-t
  ,\qquad
  \forall 0\leq t\leq s\leq |X_\alpha|
  .
\end{equation}

Given a finite measure $\q\in\M^{+}(Q)$, such that
$\q\ll\QQ_{\#}(\mm\llcorner_{\T})$, the  Disintegration Theorem
applied to $(\T , \mathcal{B}(\T), \mm\llcorner_{\T})$, gives
a disintegration of $\mm\llcorner_{\T}$
consistent with the partition of $\T$ given by the equivalence
classes $\{ X_{\alpha}\}_{\alpha \in Q}$ of $\relation$:
\begin{equation}
  \label{eq:disintegration}
\mm\llcorner_{\T} = \int _{Q} \mm_{\alpha}\,\qq(d\alpha ).
\end{equation}
Note that such measure $\q$ can always be build, by taking the
push-forward via $\QQ$ of a suitable finite measure mutually
absolutely continuous w.r.t.\ $\mm_{\T}$.
We recall by disintegration, we mean a map $\mm:Q\times\B(X)\to\R$,
such that
\begin{enumerate}
\item For $\qq$-a.e. $\alpha \in Q$, $\mm_{\alpha}$ is concentrated on $X_{\alpha}$.
\item For all $B \in \mathcal{X}$ , the map $\alpha \mapsto \mm_{\alpha}(B)$ is 
$\qq$-measurable.
\item For all $B \in \B(X)$, $\mm(B) = \int_{Q} \mm_{\alpha}(B)\,\qq(d\alpha)$.
\end{enumerate}

\begin{remark}
We point out that the disintegration is unique for fixed $\q$.
That means that, if there is a family $(\tilde \mm_\alpha)_\alpha$
satisfying the conditions above, then for $\q$-a.e.\ $\alpha$,
$\mm_\alpha=\tilde\mm_\alpha$.
If we change $\q$ with a different measure $\widehat\q$, such that
$\widehat\q=\rho\q$, $\rho>0$, then the map
$\alpha\mapsto \rho(\alpha)^{-1}\mm_\alpha$ still satisfies the conditions
above, with $\widehat\q$ in place of $\q$.
\end{remark}

We endow the transport ray $X_\alpha$ with the measure $\mm_\alpha$,
making $(X_\alpha,F,\mm_\alpha)$ a one-dimensional oriented
measured Finsler manifold.

Differently from the reversible case, it might happen that the
transport rays fail to satisfy the $\CD(0,N)$ condition.
However, a bound from below on the Ricci curvature can be
given in a certain sense.
It can be proved that $\mm_\alpha=(g(\alpha,\,\cdot\,))_{\#}(h_\alpha
\Leb^{1}_{(0,|X_\alpha|)})$, for a certain non-negative function
$h_\alpha$.
The function $h_\alpha$ satisfies $(h_\alpha^{\frac{1}{N-1}})''\leq 0$
in the distributional sense, i.e., the function $h_\alpha^{\frac{1}{N-1}}$ is concave.
Here we can recognize the $\CD(0,N)$ for weighted riemannian
manifolds, namely that the space $([0,|X_\alpha|],|\,\cdot\,|,h_\alpha
\Leb^1_{[0,|X_\alpha|]})$ satisfies the $\CD(0,N)$ condition.
This fact leads us to the following definition.
\begin{definition}
  Let $(X,F,\mm)$ be a measured Finsler manifold diffeomorphic
  to an interval, endowed with an orientation given by a vector field
  $v$, such that $F(v)=1$.
  We say that $(X,F,\mm)$ satisfies the oriented $\CD(0,N)$ condition
  ($N>1$), if the following happens.
  There exists $g:\Dom(g)\subset\R\to X$  a parametrization of $X$ such that
  $\partial_t g(t)=v(g(t))$  and $h:\Dom(g)\to[0,\infty)$,  a function
  such that $g_{\#}(h\Leb^1)=\mm$ and $h^{\frac{1}{N-1}}$ is concave.
\end{definition}

With this definition, clearly holds that the transport rays satisfy
the oriented $\CD(0,N)$ condition.
For the reader used with the notion of $N$-Ricci curvature, we point
out that the oriented $\CD(0,N)$ condition is equivalent to the fact
$\Ric_{N}(\partial_t g(\alpha,t))\geq 0$.

Finally, we point out that, as a consequence of the properties of the
optimal transport, we can localize the constraint $\int_{X}
f\,d\mm=0$, i.e.\ it holds that $\int_{X}
f\,d\mm_\alpha=0$, for $\q$-a.e.\ $\alpha\in Q$.

We summarize this section in the following theorem.

\begin{theorem}\label{T:locMCP}
Let $(X,F,\mm)$ be a measured Finsler manifold satisfying
$\CD(0,N)$, for some $N\in (1,\infty)$.

Let $f \in L^1(\mm)$ with $\int_{X} f\,\mm = 0$ and
\begin{equation}
  \int_{X} (\sfd(o,x)+\sfd(x,o))|f(x)|\,\mm(dx) < \infty
  ,\qquad
  \text{ for some (hence any) }
  o \in X
  .
\end{equation}
Then there exists a measurable subset $\mathcal{T} \subset X$
(transport set), a family
$\{(X_{\alpha},F,\mm_\alpha)\}_{\alpha \in Q}$ of oriented
one-dimensional submanifolds of $X$ (transport rays), and a measurable
function $g:\Dom(g)\subset Q\times [0,\infty)$ such that the following
happens.

 The function $f$ is zero $\mm$-a.e.\ in $X\backslash\T$ and
 $\mm\llcorner_{\mathcal{T}}$ can be disintegrated in the following
 way,
 \begin{equation}
   \mm\llcorner_{\mathcal{T}}
   =
   \int_{Q} \mm_{\alpha}\,\qq(d\alpha)
   .
 \end{equation}
 Moreover, for $\qq$-a.e.\ $\alpha \in Q$, the transport ray
 $(X_{\alpha},F,\mm_\alpha)$ is parametrized by the unitary speed
 geodesic $g(\alpha,\,\cdot\,)$, it satisfies the oriented $\CD(0,N)$
 condition, and
 it holds that
 \begin{equation}
   \label{eq:localize-constraint}
   \int f\, d\mm_{\alpha} = 0,
 \end{equation}
 Furthermore, two distinct transport rays can only meet at their
 extremal points (having measure zero for $\mm_{\alpha}$).
\end{theorem}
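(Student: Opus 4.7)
The plan is to assemble the statement from the $L^1$-optimal transport construction outlined in the preceding discussion, which is due to Klartag~\cite{Kl17} in the Riemannian case, Cavalletti--Mondino~\cite{CaMo17} in essentially non-branching $\CD(K,N)$ spaces, and Ohta~\cite{Ohta18} in the Finsler case. First, I would normalize $f^+\mm$ and $f^-\mm$ to probability measures $\mu_0,\mu_1 \in \P_1(X)$ and solve the Monge--Kantorovich problem for the cost $\sfd$. Because $f$ has finite first moment, the Kantorovich--Rubinstein dual admits a $1$-Lipschitz maximizer $\phi$. This $\phi$ encodes the transport geometry through the closed set $\Gamma := \{(x,y):\phi(x)-\phi(y)=\sfd(x,y)\}$, whose maximal chains are precisely the transport rays.

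Next, I would show that the branching set $\A$ is $\mm$-negligible. Once $\A$ is removed from $\T$, a Borel section $Q$ of the equivalence relation $\relation$ exists by~\cite[Proposition 4.4]{BianchiniCavalletti13}, and the disintegration theorem yields~\eqref{eq:disintegration}. The fact that $f$ vanishes $\mm$-a.e.\ outside $\T$ is then standard: no mass of $\mu_0+\mu_1$ can sit on $\D$, since on $\D$ there are no non-degenerate transport curves and one verifies that points where $d\phi$ exists with $F^*(-d\phi)=1$ lie on a unique ray. The balance condition~\eqref{eq:localize-constraint} then follows from cyclical monotonicity of the optimal plan, which prevents mass from leaving its own ray, so the restricted problem on each $X_\alpha$ must itself be balanced.

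Third, on each ray the parametrization $g(\alpha,\cdot)$ is built as the forward unit-speed geodesic starting from the infimum of $\Dom(g(\alpha,\cdot))$; the order on $\Gamma$ forces the sign of $f\circ g(\alpha,\cdot)$ to switch from nonnegative to nonpositive along $t$. The oriented $\CD(0,N)$ property of $(X_\alpha,F,\mm_\alpha)$, equivalently concavity of $h_\alpha^{1/(N-1)}$, is extracted by applying the ambient Brunn--Minkowski inequality~\eqref{eq:brunn-minkowski} to infinitesimal slabs transversal to the ray: the conditional density inherits, via the Jacobian of the optimal transport map, a one-dimensional $\CD(0,N)$ behavior along the forward parametrization.

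The main obstacle is irreversibility: reversing a unit-speed geodesic in a Finsler manifold need not produce a geodesic, so symmetric arguments used in the reversible setting must be replaced. Two points feel the strain: the negligibility of $\A$, and the direction in which the concavity of $h_\alpha^{1/(N-1)}$ must be argued, since only the forward parametrization gives the clean displacement-convexity inequality. Both difficulties were settled for Finsler manifolds in~\cite{Ohta18}. The only genuinely new ingredient here is the presence of a Lipschitz boundary; however, the convexity hypothesis that geodesics between interior points avoid $\partial X$ guarantees that all transport curves are supported in $X\setminus\partial X$, so Ohta's construction applies without modification, with $\partial X$ absorbed into the negligible complement of $\T$.
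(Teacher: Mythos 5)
Your proposal takes essentially the same route as the paper: the paper does not prove Theorem~\ref{T:locMCP} from scratch but assembles it from Ohta's Finsler needle decomposition \cite{Ohta18} (itself following Klartag and Cavalletti--Mondino), and handles the boundary exactly as you do, by observing that the geodesic-convexity hypothesis forces all transport curves into the interior so Ohta's construction applies unchanged. The one inexactness is that the conditional concavity of $h_\alpha^{1/(N-1)}$ is not obtained in Ohta's argument by applying Brunn--Minkowski to infinitesimal transversal slabs but by a Jacobian/Riccati comparison along transport geodesics; since you also name the Jacobian mechanism and ultimately defer to \cite{Ohta18}, this does not affect the validity of your sketch.
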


\begin{remark}
  \label{R:disintegration}
  The construction of the needle decomposition depends only on the
  function $\phi$, rather than the function $f$.
  Indeed, given a $1$-Lipschitz function $\phi$ one can construct the needle
  decomposition and reobtain Theorem~\ref{T:locMCP}, without, of
  course, Equation~\eqref{eq:localize-constraint}, which now makes no
  sense.
\end{remark}


\section{Proof of the main inequality}
\label{S:main-inequality}
We devout this section in proving Theorem~\ref{T:inequality}.

\begin{proof}[Proof of Theorem~\ref{T:inequality}]
  We will first prove that
  \begin{equation}
    \mm^{+}(E)
    \geq
    N(\omega_N\AVR_X)^{\frac{1}{N}}
    \mm(E)^{1-\frac{1}{N}}
    ,
    \qquad
    \forall E\subset X
    \text{ bounded}.
  \end{equation}
  From the inequality above the thesis will immediately follow by
  Theorem~\ref{T:lsc-envelope-minkowski}.
  
  Fix $E\subset X$ bounded and $x_0\in E$; set $d=\diam E$.
  Fix $R>0$ so that $E\subset B^+(x_0,R)$.
  We claim that $Z_t(E,B^+(x_0,R))\subset B^+(E,t(d+R))$.
  Indeed, let $z\in Z_t(E,B^+(x_0,R))$, hence there exist $x\in E$ and
  $y\in B^+(x_0,R)$ so that $\sfd(x,z)=t\sfd(x,y)$.
  By triangular inequality we deduce that
  \begin{equation}
    \sfd(x,z)
    =
    t\sfd(x,y)
    \leq
    t(\sfd(x,x_0)+\sfd(x_0,y))
    \leq
    t(d+R)
    ,
  \end{equation}
  thus $z\in B^+(E,t(d+R))$, proving the claim.
  We are in position to compute the Minkowski content
  \begin{equation}
    \begin{aligned}
      \mm^+(E)
      &
      =
      \liminf_{\epsilon\to0}
      \frac{\mm(B^+(E,\epsilon))-\mm(E)}{\epsilon}
        =
        \liminf_{t\to0}
        \frac{\mm(B^+(E,t(d+R)))-\mm(E)}{t(d+R)}
      \\
      &
        \geq
        \liminf_{t\to0}
        \frac{\mm(Z_t(E,B^+(x_0,R))-\mm(E)}{t(d+R)}
      \\
      &
        \geq
        \liminf_{t\to0}
        \frac{((1-t)\mm(E)^{\frac{1}{N}} +
        t\mm(B^+(x_0,R))^{\frac{1}{N}})^{N}-\mm(E)}
        {t(d+R)}
      \\
      &
        \geq
        \liminf_{t\to0}
        \frac{\mm(E) + N\mm(E)^{1-\frac{1}{N}} t
        (\mm(B^+(x_0,R))^\frac{1}{N}-\mm(E)^{\frac{1}{N}})+o(t)
        -\mm(E)}
        {t(d+R)}
      \\
      &
        =N\mm(E)^{1-\frac{1}{N}}
        \frac{\mm(B^+(x_0,R))^\frac{1}{N}-\mm(E)^{\frac{1}{N}}}
        {d+R}.
    \end{aligned}
  \end{equation}
  By taking the limit as $R\to\infty$, recalling the definition of
  $\AVR_X$, we conclude.
\end{proof}

\section{Localization of the measure and the perimeter}
\label{sec:localization}
From now on we assume that every Finsler manifold is forward-complete,
that it has finite
reversibility constant, and
local forward convexity.
To prove Theorem~\ref{T:main1} we consider the isoperimetric problem
inside a ball with larger and larger radius.
In order to apply the needle decomposition given
by the Localization Theorem~\ref{T:locMCP}, one also needs in
principle the balls to be convex.
As in general balls fail to be convex, we will overcome this issue in
the following way.

Given a bounded set $E \subset X$ with $0< \mm(E) < \infty$, fix a point
 $x_{0} \in E$
 and  then consider $R > 0$ such that $E \subset B_{R}$
 (hereinafter we will adopt the notation $B_R:=B^{+}(x_0,R)$).
Consider then the following family of null-average
 functions,
 \begin{equation*}
f_{R} (x) = \chi_{E} - \frac{\mm(E)}{\mm(B_{R})}\chi_{B_{R}}.
 \end{equation*}
Clearly, $f_{R}$ falls in the hypothesis of Theorem \ref{T:locMCP}.
Thus we obtain a
measurable subset $\mathcal{T}_{R} \subset X$ (the transport set)
and a family $\{(X_{\alpha,R},F,\mm_{\alpha,R})\}_{\alpha \in Q_{R}}$
of transport rays,
so that the measure $\mm\llcorner_{\mathcal{T}_{R}}$ can be
disintegrated:
\begin{equation}\label{E:disintbasic}
\mm\llcorner_{\mathcal{T}_{R}}= \int_{Q_{R}} \mm_{\alpha,R}\,\qq_{R}(d\alpha),\qquad \qq_{R}(Q_{R})=\mm(\T_R),
\end{equation}
where $\mm_{\alpha,R}$ are probability densities supported on $X_{\alpha,R}$.
Let
$g_{R}(\alpha,\,\cdot\,):[0,|X_{\alpha,R}|]\to X_{\alpha,R}$ be the unit
speed parametrisation of the transport ray $X_{\alpha,R}$, in the
direction given by the natural orientation of the disintegration ray
$X_{\alpha,R}$.
With this notation, it holds
\begin{equation}
  \mm_{\alpha,R}
  =
  (g_R(\alpha,\,\cdot\,))_\#
  (h_{\alpha,R} \mathcal{L}^{1}\llcorner_{[0,|X_{\alpha,R}|]}),
\end{equation}
for some $\CD(0,N)$ density $h_{\alpha,R}$.
The localization of the zero mean implies that
(see~\eqref{eq:localize-constraint})
\begin{equation}\label{E:balancing}
\mm_{\alpha,R}(E) = \frac{\mm(E)}{\mm(B_{R})} \mm_{\alpha,R}(B_{R}), \qquad 
\qq_{R}\text{-a.e.} \ \alpha \in Q_{R}.
\end{equation}
Unfortunately, the presence of the factor $\mm_{\alpha,R}(B_{R})$ in
the r.h.s.\ of the equation does make the quantity $\mm_{\alpha,R}$
independent of $\alpha$, harming the localization approach.
To get rid of this factor we proceed as follows.

We define $T_{\alpha,R}$ to be the unique element of $[0,|X_{\alpha,R}|]$
such that 
\begin{equation}
  \mm_{\alpha,R}(g_R(\alpha,[0,T_{\alpha,R}]))
  =
  \int_{0}^{T_{\alpha,R}}
  h_{\alpha,R}(x)\,dx
  =
  \mm_{\alpha,R}(B_{R})
\end{equation}
The measurability in $\alpha$ of $\mm_{\alpha,R}$ implies the same
measurability for $T_{\alpha,R}$.

Notice that  $|X_{\alpha,R}| \leq R + \diam(E)$: 
since $g_R(\alpha,\,\cdot\,)$ is the unit speed parametrization of $X_{\alpha,R}$, then
\begin{equation}
   \sfd(g_R(\alpha,0),g_R(\alpha,|X_{\alpha,R}|))
   \leq
   \sfd(g_R(\alpha,0),x_{0}) + \sfd(x_{0},g_R(\alpha,|X_{\alpha,R}|)) 
   \leq
   \diam(E) + R
   ,
\end{equation}
and consequently we deduce $T_{\alpha,R}\leq R + \diam(E)$.
We restrict $\mm_{\alpha,R}$ to the ray
$\widehat{X}_{\alpha,R}:=g_R(\alpha,{[0,T_{\alpha,R}]})$, having the
disintegration formula
\begin{equation}\label{E:disintnormalized}
  \mm\llcorner_{\widehat{\mathcal{T}}_{R}}
  =
  \int_{Q_{R}} \widehat{\mm}_{\alpha,R}\, \widehat{\qq}_{R}(d\alpha), 
  \quad
  \widehat{\mm}_{\alpha,R} : =
  \frac{\mm_{\alpha,R}\llcorner_{\widehat{X}_{\alpha,R}}}{\mm_{\alpha,R}(B_{R})} 
  \in \mathcal{P}(X)
  ,
  \quad
  \widehat{\qq}_{R} = \mm_{\cdot,R}(B_{R})\, \qq_{R}
  ;
\end{equation}
where $\widehat{\mathcal{T}}_{R} := \cup_{\alpha \in Q_{R}}
\widehat{X}_{\alpha,R}$.
Using \eqref{E:disintbasic} and the fact that
$B_{R}\subset \mathcal{T}_{R}$, we get
$\widehat{\qq}_{R}(Q_{R}) = \mm(B_{R})$,

The disintegration~\eqref{E:disintnormalized} will be a useful localisation 
only if $(E \cap X_{\alpha,R}) \subset \widehat{X}_{\alpha,R}$; in
this case we have
\begin{equation*}
\widehat{\mm}_{\alpha,R}(E) = \frac{\mm(E)}{\mm(B_{R})}, 
\qquad 
\widehat{\qq}_{R}\text{-a.e.} \ \alpha \in Q_{R},
\end{equation*}
obtaining a localization constraint independent of $\alpha$.
To prove this inclusion we will impose that $E \subset
B_{R/(4\Lambda_F)}$.
Since $g_R(\alpha,\,\cdot\,) : [0,|X_{\alpha,R}|] \to X_{\alpha,R}$
has unitary speed, we notice that
\begin{equation*}
\sfd(x_{0},g_R(\alpha,t))
\leq
\sfd(x_{0},g_R(\alpha,0)) + \sfd(g_R(\alpha,0),g_R(\alpha,t))
\leq
\frac{R}{4\Lambda_F} + t
\leq
\frac{R}{2} + t
,
\end{equation*}
where in the second inequality we have used that each starting point
of the transport ray has to be inside $E\subset B_{R/(4\Lambda_F)}$,
being precisely where $f_{R} > 0$.
The inequality above yields $g_R(\alpha,t)\in B_{R}$ for all $t < R/2$,
hence $((g_R(\alpha,\,\cdot\,))^{-1}(B_{R}) \supset 
[0,\min\{R/2, |X_{\alpha,R}|\}]$, thus there are no ``holes'' inside $(g_R(\alpha,\,\cdot\,))^{-1}(B_{R})$
before $\min\{R/2, |X_{\alpha,R}|\}$, implying that
$|\widehat{X}_{\alpha,R}| \geq \min\{R/2, |X_{\alpha,R}|\}$.
Fix $x\in E\cap X_{\alpha,R}$ and let $t\in[0,|X_{\alpha,R}|]$ be such
that $x=g_{R}(\alpha,t)$.
It holds that
\begin{equation}
  t=
  \sfd(g_R(\alpha,0),x)
  \leq
\sfd(g_R(\alpha,0),x_0)
+
\sfd(x_0,x)
\leq
(\Lambda+1)\frac{R}{4\Lambda}
\leq
\frac{R}{2},
\end{equation}
where in the second inequality we used that $g_{R}(\alpha,0),x\in
E\subset B_{R/(4\Lambda)}$.
The inequality immediately implies
$(g_R(\alpha,\,\cdot\,))^{-1}(E) \subset
[0,\min\{R/2,|X_{\alpha,R}|\}]$, hence
$E\cap X_{R,\alpha}\subset \widehat{X}_{\alpha,R}$, as we desired.

\smallskip
We describe  explicitly  the measure $\widehat\q_R$ in term
of a push-forward via the quotient map $\QQ_R$ of the measure
$\mm\llcorner_E$
\begin{align*}
  \widehat\q_R(A)
  &
    =
    \int_{Q_R}
    \indicator_A(\alpha)\frac{\mm(B_R)}{\mm(E)}
    \widehat\mm_{\alpha,R}(E)
    \,\widehat{\q}_R(d\alpha)
  \\
  &
    =
    \int_{Q_R}
    \frac{\mm(B_R)}{\mm(E)}
    \widehat\mm_{\alpha,R}(E\cap\QQ_R^{-1}(A))
    \,\widehat{\q}_R(d\alpha)
    =
    \frac{\mm(B_R)}{\mm(E)}
    \mm(E\cap\QQ_R^{-1}(A)),
\end{align*}
hence
$\widehat{\q}_R= \frac{\mm(B_R)}{\mm(E)}(\QQ_R)_{\#}(\mm\llcorner_E)$.

\smallskip

We study now the relation between the perimeter and the disintegration
of the measure~\eqref{E:disintnormalized}.
Let $\Omega\subset X$ be an open set and consider the relative
perimeter $\PP(E;\Omega)$.
Let $u_n\in \Lip_{loc}(\Omega)$ be a sequence such that
$u_n\to \indicator_E$ in $L^1_{loc}(\Omega)$ and
$\lim_{n\to\infty}\int_\Omega |\partial u_n|\,d\mm=\PP(E;\Omega)$.
Using the Fatou Lemma, we can compute
\begin{align*}
  \PP(E;\Omega)
  &
    =
    \lim_{n\to\infty}
    \int_\Omega |\partial u_n|\,d\mm
    \geq
    \liminf_{n\to\infty}
    \int_{\Omega\cap \widehat{\mathcal{T}}_R} |\partial u_n|\,d\mm
  \\
  &
    =
    \liminf_{n\to\infty}
    \int_{Q_R}
    \int_{\Omega} |\partial u_n| \, \widehat\mm_{\alpha,R}(dx)
    \,\widehat\q_R(d\alpha)
  \\
  &
    \geq
    \int_{Q_R}
    \liminf_{n\to\infty}
    \int_{\Omega} |\partial u_n| \, \widehat\mm_{\alpha,R}(dx)
    \,\widehat\q_R(d\alpha)
  \\
  &
    \geq
    \int_{Q_R}
    \liminf_{n\to\infty}
    \int_{X_{\alpha,R}\cap\Omega} |\partial_{X_{R,\alpha}} u_n| \, \widehat\mm_{\alpha,R}(dx)
    \,\widehat\q_R(d\alpha)
  \\
  &
    \geq
    \int_{Q_R}
    \PP_{\widehat X_{\alpha,R}}(E;\Omega)
    \,\widehat\q_R(d\alpha)
    ,
\end{align*}
where $|\partial_{X_{\alpha,R}} u|$ denotes the slope of the
restriction of $u$ to the transport ray $\widehat{X}_{\alpha,R}$
and $\PP_{\widehat X_{\alpha,R}}$ the perimeter in the submanifold
$(\widehat X_{\alpha,R},F,\widehat\mm_{\alpha,R})$.

By arbitrariness of $\Omega$, we deduce the following disintegration inequality
\begin{equation}
  \PP(E;\,\cdot\,)
  \geq
  \int_{Q_R}
  \PP_{\widehat X_{\alpha,R}}(E;\,\cdot\,)
  \,\widehat\q_R(d\alpha)
  .
\end{equation}

Next proposition summarizes this construction.

\begin{proposition}\label{P:disintfinal}
  Let $(X,F,\mm)$ be a $\CD(0,N)$ measured Finsler manifold with
  $\Lambda_{F}<\infty$.
  Given any bounded set $E \subset X$ with $0< \mm(E) < \infty$, fix
  any point $x_{0} \in E$ and then fix $R > 0$ such that
  $E \subset B_{R/(4\Lambda_F)}(x_{0})$.

Then there exists a Borel set $\widehat{\mathcal{T}}_{R} \subset X$, with 
$E \subset \widehat{\mathcal{T}}_{R}$ and a disintegration formula 
\begin{align}
  \label{E:disintfinal}
  &
\mm\llcorner_{\widehat{\mathcal{T}}_{R}} 
= \int_{Q_{R}} \widehat{\mm}_{\alpha,R}\, \widehat{\qq}_{R}(d\alpha), \qquad 
\widehat{\mm}_{\alpha,R}(\widehat X_{\alpha,R}) = 1, \qquad \widehat{\qq}_{R}(Q_{R}) = \mm(B_{R}),
\end{align}
such that
\begin{align}
  &
  \label{E:disintfinal2}
    \widehat{\mm}_{\alpha,R}(E)
    =
    \frac{\mm(E)}{\mm(B_{R})},
    \quad
    \text{for $\widehat \qq_{R}$-a.e.\ }\alpha\in Q_{R}
    \quad\text{ and }\quad
    \widehat\q_R
    =
    \frac{\mm(B_R)}{\mm(E)}
    (\QQ_R)_\#(\mm\llcorner_E)
    ,
\end{align}
Moreover, the transport ray
$(\widehat X_{\alpha,R}, F,\widehat{\mm}_{\alpha,R})$ satisfies the
oriented $\CD(0,N)$ condition and $|X_\alpha|\leq R + \diam (E)$.
Furthermore, it holds true that
\begin{equation}
  \label{E:disintfinalper}
  \PP(E;\,\cdot\,)
  \geq
  \int_{Q_R}
  \PP_{\widehat X_{\alpha,R}}(E;\,\cdot\,)
  \,\widehat\q_R(d\alpha)
  .
\end{equation}
\end{proposition}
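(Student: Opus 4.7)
The plan is to apply the Localization Theorem~\ref{T:locMCP} to the zero-mean function $f_R := \indicator_E - \tfrac{\mm(E)}{\mm(B_R)}\indicator_{B_R}$, which lies in $L^1(\mm)$ with finite first moment because $E$ and $B_R$ are bounded. This produces a transport set $\T_R$, a quotient space $Q_R$ with quotient map $\QQ_R$, and a family of transport rays $\{(X_{\alpha,R},F,\mm_{\alpha,R})\}_{\alpha\in Q_R}$ parametrised by unit-speed geodesics $g_R(\alpha,\,\cdot\,)$, each satisfying the oriented $\CD(0,N)$ condition with density $h_{\alpha,R}$. The localised balancing $\int f_R\,d\mm_{\alpha,R} = 0$ rewrites as $\mm_{\alpha,R}(E) = \tfrac{\mm(E)}{\mm(B_R)}\,\mm_{\alpha,R}(B_R)$, but the $\alpha$-dependence on the right-hand side has to be removed before this can be used as a genuine localisation.

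To this end I would define $T_{\alpha,R}\in[0,|X_{\alpha,R}|]$ as the unique value satisfying $\int_0^{T_{\alpha,R}} h_{\alpha,R}(s)\,ds = \mm_{\alpha,R}(B_R)$ (well-defined by monotonicity and continuity, since $B_R\subset\T_R$; measurable in $\alpha$ by standard arguments), truncate $\widehat X_{\alpha,R} := g_R(\alpha,[0,T_{\alpha,R}])$, and renormalise to obtain $\widehat\mm_{\alpha,R} := \mm_{\alpha,R}\llcorner_{\widehat X_{\alpha,R}}/\mm_{\alpha,R}(B_R)\in\mathcal{P}(X)$ and $\widehat\q_R := \mm_{\cdot,R}(B_R)\,\q_R$, whose total mass is $\widehat\q_R(Q_R) = \mm(B_R)$. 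The crucial inclusion $E\cap X_{\alpha,R}\subset\widehat X_{\alpha,R}$ is where the assumption $E\subset B_{R/(4\Lambda_F)}(x_0)$ enters: for $x = g_R(\alpha,t)\in E\cap X_{\alpha,R}$, combining $\sfd(g_R(\alpha,0),x_0)\leq\Lambda_F\,\sfd(x_0,g_R(\alpha,0))\leq R/4$ (the starting point lies in $E$ as $\{f_R>0\}=E$) with $\sfd(x_0,x)\leq R/(4\Lambda_F)\leq R/4$, the triangle inequality yields $t\leq R/2$; on the other hand, the same triangle inequality shows $g_R(\alpha,s)\in B_R$ for all $s<R/2$, forcing $T_{\alpha,R}\geq\min\{R/2,|X_{\alpha,R}|\}$, which closes the inclusion. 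With this in hand, $\widehat\mm_{\alpha,R}(E)=\mm(E)/\mm(B_R)$ is immediate from the balancing, and the identification $\widehat\q_R = \tfrac{\mm(B_R)}{\mm(E)}(\QQ_R)_\#(\mm\llcorner_E)$ follows by testing against an arbitrary Borel $A\subset Q_R$. The bound $|X_{\alpha,R}|\leq R+\diam(E)$ is a one-line triangle inequality through $x_0$ between the endpoints of the ray.

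For the perimeter inequality, I would fix an open set $\Omega\subset X$, choose an approximating sequence $u_n\in\Lip_{loc}(\Omega)$ with $u_n\to\indicator_E$ in $L^1_{loc}(\Omega)$ and $\int_\Omega|\partial u_n|\,d\mm\to\PP(E;\Omega)$, restrict the integration to $\widehat\T_R$, disintegrate against $\widehat\q_R$ via~\eqref{E:disintfinal}, apply Fatou's lemma twice, and use the elementary inequality $|\partial_{\widehat X_{\alpha,R}} u_n|\leq|\partial u_n|$ on each submanifold (slope with respect to the induced distance is dominated by the ambient slope) to bound the inner liminf by $\PP_{\widehat X_{\alpha,R}}(E;\Omega)$. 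Arbitrariness of $\Omega$ combined with outer regularity in the definition of the perimeter measure then upgrades the inequality to all Borel sets. The main technical subtlety is the Borel measurability in $\alpha$ of $A\mapsto\PP_{\widehat X_{\alpha,R}}(E;A)$ needed to make sense of the integral; this can be handled either by working with jointly measurable approximating sequences or, more concretely, by noting that on each one-dimensional ray the perimeter of a Borel set reduces to summing the density $h_{\alpha,R}$ over the (at most countable) boundary points of $E\cap\widehat X_{\alpha,R}$, a quantity which is manifestly measurable in $\alpha$.
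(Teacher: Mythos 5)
Your proposal reproduces the paper's argument essentially step by step: the same function $f_R$, the same truncation at $T_{\alpha,R}$ (with the inclusion $E\cap X_{\alpha,R}\subset\widehat X_{\alpha,R}$ obtained by the same pair of triangle inequalities using $E\subset B_{R/(4\Lambda_F)}$ and the reversibility constant), the same push-forward identification of $\widehat\q_R$, and the same Fatou-plus-slope-comparison argument for the perimeter disintegration. The remarks on measurability of the one-dimensional perimeters are a welcome addition but the approach is the same.
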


The rescaling introduced in Proposition \ref{P:disintfinal} will be crucially used to 
obtain non-trivial limit estimates as $R \to \infty$.

\section{One-dimensional analysis}\label{S:dimensiononespace}

Proposition \ref{P:disintfinal} is the first step to obtain from the optimality of a bounded set $E$ 
an almost optimality of $E \cap \widehat X_{\alpha,R}$. 
We now have to analyse in details the behaviour of the perimeter in
one-dimensional oriented measured Finsler manifolds.

\smallskip
We fix few notation and conventions.
A one-dimensional oriented measured Finsler manifold can be identified with the manifold
$(I,F,\mm)$,
where $I\subset\R$ is an interval.
Without loss of generality we assume that the orientation is given by
the coordinated vector field $\partial_t$ on $I$.
Since we are studying manifolds arising from the localization, we
shall consider only Finsler structures that satisfy $F(\partial_t)=1$.
Thus, it is clear that the Finsler structure is completely
determined by $F(-\partial_t)$; for this reason, with a slight abuse
of notation, we will denote by $F$, the real-valued function given by
$F(-\partial_t)$.
With this convention, the reversibility constant turns out to be
\begin{equation}
  \Lambda_{I,F}=\sup_{x\in
    I}\left\{\max\left\{F(x),\frac{1}{F(x)}\right\}\right\}
  .
\end{equation}
When the interval has finite diameter, we will always assume that
$I=[0,D]$.
Notice that $D$ in general is not the diameter, for it may happen that
$\sfd(D,0)>\sfd(0,D)=D$; however, it holds that $\diam (I,F) \leq
\Lambda_{I,F} D$.

If $(I, F, \mm)$ satisfies the oriented $\CD(0,N)$ condition, then it happens that $\mm$
is absolutely continuous w.r.t.\ the Lebesgue measure $\Leb^1$ and
\begin{equation}
  \label{eq:cd-forward}
  (h^{\frac{1}{N-1}})''\leq 0
  ,\quad
  \text{ in the sense of distributions, where }h=\frac{d\mm}{d\Leb^1}
  .
\end{equation}
We stress out that if $(I,F,h\Leb^1\llcorner_I)$ satisfies the oriented
$\CD(0,N)$ condition, then the reversible manifold
$(I,|\,\cdot\,|,h\Leb^1\llcorner_I)$ satisfies the $\CD(0,N)$ condition.
We will say that the function $h$ itself satisfies the $\CD(0,N)$
condition if~\eqref{eq:cd-forward} holds.

Given a function $h:I\to[0,\infty)$, we shall write
$\mm_h:= h\Leb^1\llcorner_{I}$.
If the interval $I$ is compact, we will assume also that
$\int_0^D h=1$, unless otherwise specified.
We also introduce the functions
$v_h:[0,D]\to[0,1]$ and $r_h:[0,1]\to[0,D]$ as
\begin{equation}\label{E:volumesNa}
v_{h}(r) : = \int_{0}^{r} h(s)\,ds, \qquad r_{h}(v) : = (v_{h})^{-1}(v);
\end{equation}
notice that from the $\CD(0,N)$ condition, $h >0$ over $(0,D)$ making
$v_{h}$ invertible and in turn the definition of $r_{h}$ well-posed.

\smallskip

We will denote by $\PP_{F,h}$ the perimeter in the measured Finsler manifold
$(I, F ,h\Leb^1\llcorner_{I})$.
If $E\subset[0,D]$ is a set of finite perimeter, then it can be
decomposed (up to a negligible set) in a family of disjoint intervals
\begin{equation}
  E=\bigcup_i (a_i,b_i),
\end{equation}
and the union is at most countable.
In this case, we have that the perimeter is given by the formula
\begin{equation}
  \PP_{F,h}(E)
  =
  \sum_{i:a_i\neq 0} F(a_i) h(a_i)
  +
  \sum_{i:b_i\neq D} h(b_i).
\end{equation}
From the equation above, we immediately deduce a lower
bound on the perimeter
\begin{equation}
  \PP_{F,h}(E)
  \geq
  \Lambda_{I,F}^{-1}
  \PP_{|\,\cdot\,|,h}(E)
  \label{eq:lower-bound-irreversible}
\end{equation}
%

\subsection{Isoperimetric profile function}

The isoperimetric inequality for $\CD(0,N)$ manifolds with bounded
diameter is given in terms of the isoperimetric problem in the
so-called model spaces.
Here we recall the basic notions.

For $N>1$, $D>0$, and, $\xi\geq0$, we consider the model space
$([0,D],|\,\cdot\,|,h_{N,D}(\xi,\,\cdot\,)\Leb^1)$,
where
\begin{equation}
  \label{eq:definition-model-density}
  h_{N,D}(\xi,x)
  :=
  \frac{N}{D^N}
  \,
  \frac{(x+\xi D)^{N-1}}{(\xi+1)^N-\xi^N}
  .
\end{equation}
For the model spaces, we can easily compute the functions
$v_{N,D}(\xi,\,\,\cdot\,):=v_{h_{N,D}(\xi,\cdot)}$ and
$r_{N,D}(\xi,\,\,\cdot\,):=r_{h_{N,D}(\xi,\cdot)}$
\begin{align}
  &
    v_{N,D}(\xi,r)
    =
    \frac{
    (r+\xi D)^N-(\xi D)^N
    }{
    D^N((1+\xi)^N-\xi^N)
    }
    ,
  \\
  \label{eq:model-ray}
  &
    r_{N,D}(\xi,v)
    =
    D
    \left(
    (
    v(1+\xi)^N
    +(1-v)\xi^N
    )^{\frac{1}{N}}
    -\xi
    \right).
\end{align}
The isoperimetric profile function for the model spaces is given by
the formula
\begin{equation}
  \begin{aligned}
  \cI_{N,D}(\xi,v)
      :=&\,
      h_{N,D}(\xi,r_{N,D}(\min\{v,1-v\}))
    \\
  =&\,
  \frac{N}{D}
  \frac{
    (\min\{v,1-v\} (\xi+1)^N + \max\{v,1-v\} \xi^N)^{\frac{N-1}{N}}
  }{
    (\xi+1)^N-\xi^N
  }
  .
  \end{aligned}
\end{equation}
%

\smallskip

The family of one-dimensional measured Finsler manifolds
satisfying the $\CD(0,N)$ condition and having $\Lambda_F=1$ coincides with the of family of
weighted Riemannian manifolds.
E.\ Milman~\cite{Mi15} gave an explicit lower bound for the perimeter of
subset of manifolds in this family with the additional constraint of
having diameter bounded by some constant $D>0$.
In other words, Milman proved that given $D\geq D'>0$ and $h:[0,D']\to\R$ a
$\CD(0,N)$ density, then for all $E\subset [0,D]$ it holds that
$\PP_{|\,\cdot\,|,h}(E)\geq\I_{N,D}(v)$, where
\begin{equation}\label{E:isoperi}
  \I_{N,D}(v):=\frac{N}{D}\inf_{\xi\geq 0}
  \frac{(\min\{v , 1-v\} (\xi+1)^N + \max\{v,1-v\} \xi^N)^{\frac{N-1}{N}}}
  {(\xi+1)^N-\xi^N}
  =
  \inf_{\xi\geq0}
  \cI_{N,D}(\xi,v).
\end{equation}
As immediate consequence, one obtains that if we drop the
reversibility hypothesis, the lower bound of the perimeter must be
divided by the reversibility constant.

\smallskip

The author and Cavalletti proved~\cite[Lemma~4.1]{CavallettiManini22a}
deduced the an expansion for the isoperimetric profile, as follows.

\begin{lemma}\label{lem:milman-estimate}
Fix $N>1$.
Then, we have the following estimate for $\I_{N,D}$
\begin{equation}
  \I_{N,D}(w)
  \geq
  \frac{N}{D}w^{1-\frac{1}{N}}(1-O(w^{\frac{1}{N}}))
  =
  \frac{N}{D}(w^{1-\frac{1}{N}}-O(w)),
  \qquad
  \text{ as }w\to0.
\end{equation}
\end{lemma}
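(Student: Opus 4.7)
The plan is to analyze the infimum in~\eqref{E:isoperi} directly, tracking which values of $\xi$ are close to optimal as $w \to 0^+$. Since we consider the regime $w \to 0^+$, we may assume $w \leq 1/2$, so that $\min\{w,1-w\} = w$ and $\max\{w,1-w\} = 1-w$ in the definition of $\I_{N,D}$.

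First, I would reparametrize using $s = \xi/(\xi+1) \in [0,1)$, so that $\xi + 1 = 1/(1-s)$ and $\xi^N = s^N/(1-s)^N$. Using the factorization $1 - s^N = (1-s)(1 + s + \cdots + s^{N-1})$, a direct computation collapses the factors $(1-s)^{N}$ and $(1-s)^{N-1}$ and gives the clean identity
$$
\frac{\bigl(w(\xi+1)^N + (1-w)\xi^N\bigr)^{\frac{N-1}{N}}}{(\xi+1)^N - \xi^N}
=
\frac{\bigl(w + (1-w)s^N\bigr)^{\frac{N-1}{N}}}{1 + s + s^2 + \cdots + s^{N-1}}.
$$
Thus it suffices to show that the right-hand side is at least $w^{(N-1)/N}\bigl(1 - O(w^{1/N})\bigr)$ uniformly in $s \in [0,1)$.

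Next, I would split the analysis at the threshold $s_\star := w^{1/N}$, which is the scale where the two competing contributions $w$ and $(1-w)s^N$ inside the numerator balance. In the regime $s \leq s_\star$, the numerator is bounded below by $w^{(N-1)/N}$ (since $w + (1-w)s^N \geq w$) and the denominator is at most $1 + (N-1)w^{1/N}$, so the ratio is at least $w^{(N-1)/N}/(1+(N-1)w^{1/N}) \geq w^{(N-1)/N}\bigl(1 - (N-1)w^{1/N}\bigr)$ by the elementary inequality $1/(1+x) \geq 1-x$. In the regime $s \geq s_\star$, I would instead drop the $w$ and use $w + (1-w)s^N \geq s^N$ to reduce to the lower bound
$$
\frac{s^{N-1}}{1 + s + \cdots + s^{N-1}}
=
\frac{1}{1 + s^{-1} + \cdots + s^{-(N-1)}},
$$
which is manifestly nondecreasing in $s$ on $(0,1)$, hence minimized at $s = s_\star$, where it again equals $w^{(N-1)/N}/(1 + w^{1/N} + \cdots + w^{(N-1)/N}) \geq w^{(N-1)/N}(1 - (N-1)w^{1/N})$.

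Combining the two regimes and multiplying by $N/D$ yields the lemma with an explicit constant $N-1$ in the $O$-term. No substantial obstacle is expected; the only delicate point is choosing the threshold $s_\star = w^{1/N}$, after which both sub-estimates reduce to the same elementary inequality $1/(1+x)\geq 1-x$. The reparametrization $s = \xi/(\xi+1)$ is essentially what makes the monotonicity in the second regime transparent, and this is the step I would do first in any subsequent optimization-type arguments as well.
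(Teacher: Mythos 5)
The strategy is sound and the reparametrization $s=\xi/(\xi+1)$ with the threshold $s_\star=w^{1/N}$ is a clean way to organize the estimate, but there is a genuine gap: the paper needs the lemma for \emph{real} $N>1$ (indeed the Euclidean application uses $N=n+\alpha$ with $\alpha$ a real weight-homogeneity exponent), whereas your argument as written relies twice on the geometric-series factorization $1-s^N=(1-s)(1+s+\cdots+s^{N-1})$, which only makes sense when $N$ is a positive integer. Concretely, the expression $1+s+\cdots+s^{N-1}$ that appears in your ``clean identity'' and in your monotonicity argument is not defined for non-integer $N$, and the explicit bound ``denominator $\leq 1+(N-1)w^{1/N}$'' is actually \emph{false} for $1<N<2$: with $\phi_N(s):=\frac{1-s^N}{1-s}$ (the correct replacement), one checks for instance that $\phi_{3/2}(s)>1+\tfrac12 s$ for all $s\in(0,1)$, because $\phi_N(0)=1$ and $\phi_N'(0)=1$ for every $N>1$, so the slope near $0$ is $1$, not $N-1$.

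The good news is that your framework survives once you replace the polynomial by $\phi_N(s)=\frac{1-s^N}{1-s}$ and re-prove three small facts for general real $N>1$: (i) $\phi_N$ is nondecreasing on $[0,1)$, since $\phi_N'(s)=\frac{1-Ns^{N-1}+(N-1)s^N}{(1-s)^2}$ and the numerator decreases from $1$ to $0$; (ii) $\phi_N(s)\leq 1+\max\{1,N-1\}\,s$ on $[0,1)$ (the constant $N-1$ works for $N\geq2$, but for $1<N<2$ one uses the weaker but still linear bound $\phi_N(s)\leq 1+s$, which follows from $\phi_N(s)=1+s\,\phi_{N-1}(s)$ and $\phi_{N-1}\leq1$ when $0<N-1<1$); and (iii) the function $s\mapsto s^{N-1}/\phi_N(s)=\frac{s^{N-1}(1-s)}{1-s^N}$ is nondecreasing on $(0,1)$, which reduces after logarithmic differentiation to the elementary inequality $(N-1)-Ns+s^N\geq0$ on $[0,1]$. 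With these replacements your two-regime argument goes through verbatim and gives the lemma with the $O$-constant $\max\{1,N-1\}$ rather than $N-1$. Since the present paper only cites this lemma from an external reference rather than re-proving it, I cannot compare with the authors' route, but the proposal is correct in spirit and incorrect only in the two integer-specific formulas flagged above.
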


The following corollary incorporates both the irreversible and reversible case.

\begin{corollary}
\label{cor:milman-isoperimetric-estimate}
Fix $N>1$.
Then for all $D\geq D'>0$ and for all one-dimensional oriented
measured Finsler manifolds
$([0,D'],F,h\Leb^1)$ satisfying the oriented $\CD(0,N)$ condition, it
holds that
\begin{equation}
  \label{eq:isoperimetri-1d-finsler}
  \begin{aligned}
  \PP_{F,h}(E)
  &\geq \frac{\I_{N,D}(\mm_h(E))}{\Lambda_F}
  \geq \frac{N}{\Lambda_FD'}\m_h(E)^{1-\frac{1}{N}} (1-O(\mm_{h}(E)
    ^{\frac{1}{N}})
    \\&
  \geq \frac{N}{\Lambda_F D}\m_h(E)^{1-\frac{1}{N}} (1-O(\mm_{h}(E) ^{\frac{1}{N}}),
  \end{aligned}
\end{equation}
for any Borel set $E\subset [0,D']$.
If $E$ is of the form $[0,r_h(v)]$, then it holds
\begin{equation}
  \label{eq:isoperimetric-1d}
  \begin{aligned}
    \PP_{F,h}([0,r_h(v)])
    =
    h(r_h(v)
  &
  \geq \frac{N}{D'}v^{1-\frac{1}{N}} (1-O(v
    ^{\frac{1}{N}}))
    \geq \frac{N}{D}v^{1-\frac{1}{N}} (1-O(v^{\frac{1}{N}}))
    .
  \end{aligned}
\end{equation}
\end{corollary}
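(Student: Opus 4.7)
The strategy is to concatenate three ingredients that are already in place: the reversibility estimate \eqref{eq:lower-bound-irreversible}, Milman's sharp one-dimensional isoperimetric inequality \eqref{E:isoperi} in the reversible case, and the asymptotic expansion of the profile in Lemma~\ref{lem:milman-estimate}. The two chains of inequalities in the statement differ in one decisive point, namely whether the reversibility constant appears in the bound.

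For \eqref{eq:isoperimetri-1d-finsler}, I would first invoke \eqref{eq:lower-bound-irreversible} to obtain $\PP_{F,h}(E) \geq \Lambda_F^{-1}\PP_{|\cdot|,h}(E)$. Since $h$ is a $\CD(0,N)$ density on $[0,D']$ with $D' \leq D$, Milman's inequality yields $\PP_{|\cdot|,h}(E) \geq \I_{N,D'}(\mm_h(E))$. A glance at \eqref{E:isoperi} shows that $\I_{N,D}$ is decreasing in $D$, because the $\xi$-infimum is independent of $D$ and the prefactor is $N/D$; hence $\I_{N,D'}(v) \geq \I_{N,D}(v)$, giving the first inequality of the statement with $\I_{N,D}$. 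The two remaining asymptotic expansions then follow at once from Lemma~\ref{lem:milman-estimate} applied successively with parameter $D'$ and $D$.

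For \eqref{eq:isoperimetric-1d}, the crucial observation is that the explicit perimeter formula for unions of intervals recorded just before \eqref{eq:lower-bound-irreversible} gives
\[
\PP_{F,h}([0,r_h(v)]) = h(r_h(v)),
\]
because the set $[0,r_h(v)]$ has only one boundary point contributing, namely the right endpoint $r_h(v) \neq D'$ (the left endpoint $0$ is excluded from the first sum by convention), and this endpoint contributes simply $h(r_h(v))$ with no factor of $F$ attached. Consequently, \eqref{eq:lower-bound-irreversible} is in fact an equality for this particular set, and no $\Lambda_F$ is lost. Applying Milman's inequality and Lemma~\ref{lem:milman-estimate} exactly as in the previous paragraph then yields the stated bound without the reversibility penalty.

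The argument is essentially bookkeeping and I do not foresee a genuine technical obstacle. What is worth emphasising is the contrast between the two chains: for a ``rightward oriented'' set of the form $[0,r_h(v)]$, every boundary normal agrees with the orientation $\partial_t$ of the transport ray, and the irreversibility penalty $\Lambda_F$ disappears. This asymmetry between \eqref{eq:isoperimetri-1d-finsler} and \eqref{eq:isoperimetric-1d} is precisely the mechanism that will later be exploited to eliminate $\Lambda_F$ from the isoperimetric lower bound in the rigidity argument.
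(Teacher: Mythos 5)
Your proof is correct and takes essentially the approach the paper intends; the corollary carries no explicit proof in the paper, but the remark immediately following it makes clear that the key point is precisely the one you identify, namely that $\PP_{F,h}([0,b])=\PP_{|\cdot|,h}([0,b])$ because only the right endpoint (pointing in the orientation $\partial_t$, where $F(\partial_t)=1$) contributes, while for general $E$ one loses a factor $\Lambda_F^{-1}$ through \eqref{eq:lower-bound-irreversible}. One small reading note: in \eqref{eq:isoperimetri-1d-finsler} the middle inequality $\I_{N,D}(w)/\Lambda_F\geq\frac{N}{\Lambda_F D'}w^{1-1/N}(1-O(w^{1/N}))$ would be false as a literal chain (since $\I_{N,D}(w)\sim\frac{N}{D}w^{1-1/N}$ and $D\geq D'$), so the display must be read, as you implicitly do, as three separate lower bounds on $\PP_{F,h}(E)$ obtained from $\I_{N,D'}(v)\geq\I_{N,D}(v)$ on one branch and from Lemma~\ref{lem:milman-estimate} applied with $D'$ and then the trivial bound $D'\leq D$ on the other.
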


\begin{remark}
  The lower bound in~\eqref{eq:isoperimetri-1d-finsler} is very
  rough for our purposes.
  If one attempted to prove the isoperimetric
  inequality~\eqref{E:inequality}, the inverse of the reversibility
  constant would appear in the lower bound.

  The only reason why the factor $\Lambda_{F}^{-1}$ appears
  in~\eqref{eq:isoperimetri-1d-finsler}, is that the part of the
  boundary where the external normal vector ``points to the left''
  might be non-empty.
  Indeed, if $E$ is of the form $[0,b]$, then
  $\PP_{F,h}(E)=\PP_{|\,\cdot\,|,h}(E)$.
  We will see that the part of the boundary ``pointing to the
  left'' contributes little to the perimeter.
\end{remark}

\smallskip

We give the definition of the residual of a set.
This object quantifies, in a way that will be detailed in
Section~\ref{S:one-dim}, how far away is a ray from the expected model
space.

\begin{definition}
  Let $D\geq D'>0$ and let $([0,D'],F,h\Leb^1)$ be a one-dimensional
  measured Finsler manifold satisfying the oriented $\CD(0,N)$ condition.
  If $E\subset [0,D']$ is Borel set, we define its $D$-residual as
  \begin{equation}
    \label{eq:residual-definition}
    \Res_{F,h}^D(E)
    :=
    \frac{D\PP_{F,h}(E)}{N(\mm_h(E))^{1-\frac{1}{N}}}-1
    .
  \end{equation}
 If $v\in(0,1/2)$, we define the $D$-residual of $v$ as
 \begin{equation}
   \label{eq:residual-reversible}
    \Res_h^D(v)
    :=
    \Res_{F,h}^D([0,r_h(v)])
    =
    \frac{Dh(r_h(v))}{Nv^{1-\frac{1}{N}}}-1
    .
  \end{equation}
\end{definition}

Notice that in the definition of $\Res_h^D(v)$ there is no dependence
on the Finsler structure $F$; indeed, the definition of $\Res_h^D(v)$
is given in terms of the perimeter of $[0,r_h(v)]$, and the perimeter
of this set in $[0,D']$ does not capture the possible irreversibility the
Finsler structure.
Using the residual, Inequality~\ref{eq:isoperimetri-1d-finsler}
can be restated as
\begin{equation}
  \label{eq:isoperimetric-inequality-residual}
  \Res_{F,h}^D(E)\geq \Lambda_{F}^{-1}-1- O(\mm_h(E)^{\frac{1}{N}}).
\end{equation}
On the other hand, whenever the set $E$ is of the form $[0,r]$ we
obtain a much refined estimate
\begin{equation}
  \label{eq:isoperimetric-inequality-residual-reversible}
  \Res_{h}^D(v)
  =
  \Res_{F,h}^D([0,r_h(v)])
  \geq - O(v^{\frac{1}{N}}).
\end{equation}

\subsection{One-dimensional reduction for the optimal region}

We are ready to apply the definition of residual to the disintegration rays.
In order to ease the notation, we let
$\PP_{\alpha,R}=\PP_{(\widehat X_{\alpha,R}, F,\widehat
  \mm_{\alpha,R})}$.
The measure $\widehat \mm_{\alpha,R}$ will be identified with the ray
map $g_R(\alpha,\cdot)$ to $h_{\alpha,R} \mathcal{L}^{1}$, thus we define
\begin{align*}
  &
    \Res_{\alpha,R}:=\Res_{F,h_{\alpha,R}}^{R+\diam(E)}(g(\alpha,\cdot)^{-1}(E \cap \widehat X_{\alpha,R})), \quad \textrm{for } \alpha\in
  Q_{R},
  \\
  &
\Res_{x,R}: =\Res_{\QQ_R(x),R}, \quad \textrm{for } x\in E.
\end{align*}
The good rays are those rays having small residual.
We quantify their abundance.

\begin{proposition}\label{P:goodrays2}
  Assume that $(X,F,\mm)$ is a $\CD(0,N)$ measured Finsler manifold, such that $\AVR_{X} > 0$.
If $E\subset X$ is a bounded set attaining the identity in the inequality 
\eqref{E:inequality},  then
\begin{equation}
  \label{eq:residual-is-O-R-N}
  \limsup_{R\to\infty}
  \frac{1}{\mm(B_R)}
    \int_{Q_{R}} \Res_{\alpha,R}\,\q_{R}(d\alpha)
    \leq 0
    .
\end{equation}
\end{proposition}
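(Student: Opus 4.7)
The plan is to combine the perimeter disintegration inequality~\eqref{E:disintfinalper}, the balance condition~\eqref{E:disintfinal2}, the saturation of~\eqref{E:inequality}, and the definition of $\AVR_X$. The computation is essentially bookkeeping, so the proof should be short; the only conceptual point is the cancellation of the factors $R$ and $\mm(B_R)^{1/N}$ that occurs in the limit thanks to Euclidean volume growth.

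First I would unfold the definition of $\Res_{\alpha,R}$ and use that $\widehat\mm_{\alpha,R}(E)=\mm(E)/\mm(B_R)$ is \emph{independent of $\alpha$} for $\widehat\qq_R$-a.e.~$\alpha$, together with the total mass $\widehat\qq_R(Q_R)=\mm(B_R)$:
\begin{equation*}
\int_{Q_R}\Res_{\alpha,R}\,\widehat\qq_R(d\alpha)
=\frac{R+\diam E}{N}\Bigl(\tfrac{\mm(B_R)}{\mm(E)}\Bigr)^{\!\frac{N-1}{N}}
\int_{Q_R}\PP_{\widehat X_{\alpha,R}}(E)\,\widehat\qq_R(d\alpha)-\mm(B_R).
\end{equation*}
Plugging in the perimeter disintegration inequality $\int_{Q_R}\PP_{\widehat X_{\alpha,R}}(E)\,\widehat\qq_R(d\alpha)\leq \PP(E)$ from Proposition~\ref{P:disintfinal} upgrades this to an upper bound involving $\PP(E)$.

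Then I would invoke the standing assumption that $E$ saturates~\eqref{E:inequality}, i.e. $\PP(E)=N\omega_N^{1/N}\AVR_X^{1/N}\mm(E)^{(N-1)/N}$. Substituting makes $\mm(E)$ cancel and yields
\begin{equation*}
\int_{Q_R}\Res_{\alpha,R}\,\widehat\qq_R(d\alpha)\leq
(R+\diam E)\,\omega_N^{1/N}\AVR_X^{1/N}\,\mm(B_R)^{\frac{N-1}{N}}-\mm(B_R).
\end{equation*}
Dividing by $\mm(B_R)$ and rearranging gives
\begin{equation*}
\frac{1}{\mm(B_R)}\int_{Q_R}\Res_{\alpha,R}\,\widehat\qq_R(d\alpha)\leq
(R+\diam E)\,\omega_N^{1/N}\AVR_X^{1/N}\,\mm(B_R)^{-1/N}-1.
\end{equation*}

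Finally, I would let $R\to\infty$ and use the Euclidean volume growth $\mm(B_R)=(\omega_N\AVR_X) R^N(1+o(1))$ from~\eqref{eq:avr}, which gives $\omega_N^{1/N}\AVR_X^{1/N}\mm(B_R)^{-1/N}=R^{-1}(1+o(1))$. Hence the right-hand side tends to $\lim_{R\to\infty}(R+\diam E)/R-1=0$, proving~\eqref{eq:residual-is-O-R-N}. There is no essential obstacle; the only care needed is making sure to use the correct disintegration measure $\widehat\qq_R$ (consistent with the definition of $\widehat\mm_{\alpha,R}$ as a probability) so that the normalization factor $\widehat\qq_R(Q_R)=\mm(B_R)$ matches the denominator in~\eqref{eq:residual-is-O-R-N}.
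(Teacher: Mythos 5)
Your proposal is correct and follows essentially the same route as the paper's own proof: unfold the residual using the balance condition $\widehat\mm_{\alpha,R}(E)=\mm(E)/\mm(B_R)$, feed in the perimeter disintegration bound $\int_{Q_R}\PP_{\widehat X_{\alpha,R}}(E)\,\widehat\qq_R(d\alpha)\leq\PP(E)$, substitute the saturation $\PP(E)=N\omega_N^{1/N}\AVR_X^{1/N}\mm(E)^{(N-1)/N}$, divide by $\mm(B_R)$, and send $R\to\infty$ via the definition of $\AVR_X$. You correctly note the point that the measure in the integral should be $\widehat\qq_R$ so that $\widehat\qq_R(Q_R)=\mm(B_R)$ matches the normalization. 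The only (minor) step the paper makes explicit that you omit is checking integrability of $\alpha\mapsto\Res_{\alpha,R}$ before integrating: since the residual need not be bounded above, one observes that its negative part is bounded below by the one-dimensional isoperimetric inequality, $\Res_{\alpha,R}\geq\Lambda_F^{-1}-1-O\bigl((\mm(E)/\mm(B_R))^{1/N}\bigr)$, so the integral is well-defined in $(-\infty,+\infty]$; this would be worth a sentence in a final write-up.
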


\begin{proof}
In order to check that the function $\alpha\to\Res_{\alpha,R}$ is
integrable, it is enough to check that $(\Res_{\alpha,R})^-$, is
integrable.
This last fact derives from the isoperimetric inequality
\begin{equation*}
\Res_{\alpha,R}\geq\Lambda_{F}^{-1}-1-O((\frac{\mm(E)}{\mm(B_R)})^{\frac{1}{N}})
\end{equation*}
as
stated in~\eqref{eq:isoperimetric-inequality-residual}.
We can now compute the integral
in~\eqref{eq:residual-is-O-R-N}
\begin{equation}
  \begin{aligned}
  \int_{Q_R}
  \Res_{\alpha,R}\,\widehat\q_R(d\alpha)
  &
  =
  \int_{Q_R}
  \left(
  \frac{(R+\diam(E))\PP_{\alpha,R}(E)}{N}
  \left(
  \frac{\mm(B_R)}{\mm(E)}
  \right)^{1-\frac{1}{N}}
  -1
  \right)
  \,\widehat\q_R(d\alpha)
  \\
  &
  =
  \frac{R+\diam(E)}{\mm(B_R)^{\frac{1}{N}-1}\,N\mm(E)^{1-\frac{1}{N}}}
  \,
  \int_{Q_R}
  \PP_{\alpha,R}(E)
  \,\widehat\q_R(d\alpha)-\mm(B_R)
  \\
  &
  \leq
  \frac{R+\diam(E)}{\mm(B_R)^{\frac{1}{N}-1}\,N\mm(E)^{1-\frac{1}{N}}}
  \,
  \PP(E)-\mm(B_R)
  \\
  &
  \leq
    \mm(B_R)
  \frac{R+\diam(E)}{\mm(B_R)^{\frac{1}{N}}}
  (\AVR_X\omega_N)^{\frac{1}{N}}-\mm(B_R),
  \end{aligned}
\end{equation}
yielding
\begin{equation}
  \begin{aligned}
\frac{1}{\mm(B_R)}
  \int_{Q_R}
  \Res_{\alpha,R}\,\q_R(d\alpha)
  \leq
  \frac{R+\diam(E)}{\mm(B_R)^{\frac{1}{N}}}
  (\AVR_X\omega_N)^{\frac{1}{N}}-1,
\end{aligned}
\end{equation}
and the r.h.s.\ goes to $0$, as $R\to\infty$.
\end{proof}

\begin{corollary}\label{cor:goodrays2}
  Let $(X,F,\mm)$ be a $\CD(0,N)$ measured Finsler manifold, having $\AVR_{X} > 0$.
Let $E\subset X$ be a set saturating the isoperimetric inequality
\eqref{E:inequality}, 
then it holds true that
\begin{equation}
  \label{eq:residual-is-infinitesimal}
  \limsup_{R\to\infty}
  \int_{E} \Res_{\QQ_R(x),R}\,\mm(dx)
  \leq 0.
\end{equation}
\end{corollary}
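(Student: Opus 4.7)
The plan is to view Corollary~\ref{cor:goodrays2} as a direct change-of-variables consequence of Proposition~\ref{P:goodrays2}, using the explicit description of the quotient measure $\widehat{\qq}_R$ provided by Proposition~\ref{P:disintfinal}.

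First I would recall from~\eqref{E:disintfinal2} that
\begin{equation*}
\widehat{\qq}_R \;=\; \frac{\mm(B_R)}{\mm(E)}\,(\QQ_R)_\#\bigl(\mm\llcorner_E\bigr).
\end{equation*}
In particular, $\widehat{\qq}_R$ is a pushforward of the restriction $\mm\llcorner_E$ under the quotient map. Thus for any Borel measurable $\phi\colon Q_R\to\R$ for which at least one side is defined, the abstract change-of-variables formula gives
\begin{equation*}
\int_{Q_R}\phi(\alpha)\,\widehat{\qq}_R(d\alpha)
\;=\;
\frac{\mm(B_R)}{\mm(E)}\int_E \phi\bigl(\QQ_R(x)\bigr)\,\mm(dx).
\end{equation*}

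Next, I would apply this formula to $\phi(\alpha):=\Res_{\alpha,R}$. Measurability of this choice follows from the measurability of $\alpha\mapsto h_{\alpha,R}$ (and hence of $\alpha\mapsto\PP_{\alpha,R}(E)$ and $\alpha\mapsto\widehat{\mm}_{\alpha,R}(E)$) built into the disintegration of Theorem~\ref{T:locMCP}. Integrability of the negative part is guaranteed by the one-dimensional isoperimetric bound \eqref{eq:isoperimetric-inequality-residual}, exactly as used in the proof of Proposition~\ref{P:goodrays2}. Using the notational convention $\Res_{x,R}:=\Res_{\QQ_R(x),R}$, we obtain
\begin{equation*}
\int_E \Res_{\QQ_R(x),R}\,\mm(dx)
\;=\;
\frac{\mm(E)}{\mm(B_R)}\int_{Q_R}\Res_{\alpha,R}\,\widehat{\qq}_R(d\alpha).
\end{equation*}

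Finally, since $\mm(E)\in(0,\infty)$ is a fixed constant independent of $R$, taking $\limsup_{R\to\infty}$ on both sides and invoking Proposition~\ref{P:goodrays2} gives
\begin{equation*}
\limsup_{R\to\infty}\int_E \Res_{\QQ_R(x),R}\,\mm(dx)
\;\leq\;
\mm(E)\cdot\limsup_{R\to\infty}\frac{1}{\mm(B_R)}\int_{Q_R}\Res_{\alpha,R}\,\widehat{\qq}_R(d\alpha)
\;\leq\;0,
\end{equation*}
which is the claim. There is no real obstacle here beyond bookkeeping: the only subtle point to state carefully is that Proposition~\ref{P:goodrays2} controls only the average of the residuals (which can have negative values coming from $\Lambda_F^{-1}-1$), so one must legitimize splitting the integral into its positive and negative parts, and this is ensured by the a priori lower bound coming from the irreversible one-dimensional isoperimetric estimate.
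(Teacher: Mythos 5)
Your proof is correct and is essentially the same as the paper's: you invoke the pushforward identity $\widehat{\qq}_R = \frac{\mm(B_R)}{\mm(E)}(\QQ_R)_\#(\mm\llcorner_E)$ directly, while the paper re-derives the same relation by disintegrating $\mm\llcorner_E$ and using the balancing condition $\widehat\mm_{\alpha,R}(E)=\mm(E)/\mm(B_R)$, but both routes rest on exactly the facts stated in Proposition~\ref{P:disintfinal} and yield the same change-of-variables identity $\int_E\Res_{\QQ_R(x),R}\,\mm(dx)=\frac{\mm(E)}{\mm(B_R)}\int_{Q_R}\Res_{\alpha,R}\,\widehat\qq_R(d\alpha)$. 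Your care about the integrability of the negative part of the residual matches the paper's proof of Proposition~\ref{P:goodrays2}.
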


\begin{proof}
  A direct computation gives
  \begin{align*}
  \int_{E} \Res_{\QQ_R(x),R} \, \mm(dx)
    &
      =
      \int_{Q_R}
      \int_E
      \Res_{\QQ_R(x),R}
      \,
      \widehat\mm_{\alpha,R}(dx)
      \,
      \widehat\q_R(d\alpha)
    \\
    &
      =
      \int_{Q_R}
      \Res_{\alpha,R}
      \,
      \widehat\mm_{\alpha,R}(E)
      \,
      \widehat\q_R(d\alpha)
    \\
    &
      =
      \frac{\mm(E)}{\mm(B_R)}
    \int_{Q_{R}} \Res_{\alpha,R}\,\q_{R}(d\alpha)
      \to0.
      \qedhere
  \end{align*}
\end{proof}


\section{Analysis along the good rays}\label{S:one-dim}
The last theorem asserts (in a very weak sense) that the residual, in the limit for $R\to\infty$,
must be non-positive.
Moreover, the measure of the traces of $E$ is
$\frac{\mm(E)}{\mm(B_R)}$, hence infinitesimal.
For this reason, we now use the residual and the measure of the set to
control the density $h:[0,D']\to\R$, proving that in case of
small measure and
residual, $h$ is close to the model density $x\in[0,D]\mapsto
Nx^{N-1}/D$.
Similarly we prove that the traces of $E$ are closed to the optimal,
i.e.,\ a certain interval of the form $[0,r]$.

\begin{remark}
We will  extensively use the Landau's ``big-O'' and
``small-o'' notation.
If several variables appear, but only a few of them are converging,
either the ``big-O'' or ``small-o'' could in principle depend on the
non-converging variables.
However, this is not the case.

To be precise, in our setting, the converging variables will be
$w\to0$ and $\delta\to0$.
Conversely the ``free'' variables will be: 1) $D$, a bound on the
length of the ray; 2) $D'\in(0,D]$, the length of the ray; 3)
$([0,D'],F,h)$ a one-dimensional measured Finsler manifold satisfying the oriented
$\CD(0,N)$ condition (in practice, each transport ray); 4) $E\subset[0,D']$ a set with
measure $\mm_h(E)=w$ and residual $\Res_{F,h}^D(E)\leq \delta$.

The estimates we will prove are infinitesimal expansions as
$w\to 0$ and $\delta\to 0$
and whenever a ``big-O'' or
``small-O'' appears, it has to be understood that it is uniform
w.r.t.\ the ``free'' variable.
\end{remark}

\begin{remark}
  An important point to remark is the fact that we consider only the
  case when $E$ is ``on the left'', i.e., $E\subset [0,L]$, with the
  tacit understanding that $L\ll D'$.
  This is possible because the transport rays come from the Optimal
  Transport problem between the bounded set isoperimetric $E$ and the
  ball $B_R$
\end{remark}

\subsection{Almost rigidity of the set \texorpdfstring{$E$}{E} and of the
length of the ray}
\label{Ss:rigidity-non-convex}
We start considering the
special case when the set $E$ of the form $E=[0,r]$.
In this case the Finsler structure plays no role, for the outer normal
vector on the boundary of $E$ points to the right.
For this reason, we omit the proof of the following proposition,
because it is exactly what is proven in Propositions~5.3 and~5.4
of~\cite{CavallettiManini22a}.

\begin{proposition}\label{P:almost-rigidity-segment}
Fix $N>1$.
Then, for $w\to0$ and $\delta\to 0$ it holds that
\begin{align}
  &
    \label{eq:almost-rigidity-diameter-convex}
    D'
    \geq
    D
    (
    1-o(1)
    )
    ,
  \\&
  \label{eq:almost-rigidity-r-above}
  r_h(w)
  \leq
  D(w^{\frac{1}{N}}(1+o(1)))
  ,
  \\&
  \label{eq:almost-rigidity-r-below}
  r_h(w)
  \geq
  D(w^{\frac{1}{N}}(1+o(1)))
  ,
\end{align}
where $D\geq D'>0$ and $([0,D'],F,h\Leb^{1})$ is one-dimensional
measured Finsler
manifold satisfying the oriented $\CD(0,N)$ condition such that
$\Res_{h}^D(w)=\Res_{F,h}^D([0,r_h(w)])\leq
\delta$.
\end{proposition}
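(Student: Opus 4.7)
Set $r:=r_h(w)$, $\beta:=h(r)$, and $\phi:=h^{1/(N-1)}$, which is concave and non-negative on $[0,D']$, with $\int_0^{D'} h=1$. I would follow the strategy of Propositions~5.3 and~5.4 of~\cite{CavallettiManini22a}.

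My first task is to pin down $\beta$ to leading order: the residual hypothesis gives $\beta\le\frac{N}{D}w^{1-1/N}(1+\delta)$, while applying Corollary~\ref{cor:milman-isoperimetric-estimate} to the specific set $[0,r]$—whose perimeter equals $\beta$ since the outward normal at $r$ is $\partial_t$ and $F(\partial_t)=1$—furnishes the matching lower estimate, sharpened by using the actual ray length $D'$ in place of $D$: $\beta\ge\frac{N}{D'}w^{1-1/N}(1-O(w^{1/N}))$. Dividing these two estimates yields $D/D'\le(1+\delta)/(1-O(w^{1/N}))=1+o(1)$, which combined with $D'\le D$ immediately produces~\eqref{eq:almost-rigidity-diameter-convex}.

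For the upper bound~\eqref{eq:almost-rigidity-r-above}, I would invoke the $\CD(0,N)$ concavity of $\phi$ together with $\phi\ge0$, which implies the monotonicity of $t\mapsto\phi(t)/t$ on $(0,D']$, hence $h(t)\ge(t/r)^{N-1}\beta$ on $[0,r]$. Integrating gives the key inequality $w\ge r\beta/N$, so combining with the two-sided control of $\beta$ already obtained yields $r\le Nw/\beta\le Dw^{1/N}(1+o(1))$.

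The delicate point is the matching lower bound~\eqref{eq:almost-rigidity-r-below}: the naive concavity argument (observing that for $w$ small $\phi$ must be non-decreasing on $[0,r]$, so $h\le\beta$ there, and hence $w\le r\beta$) only produces the loose estimate $r\ge Dw^{1/N}/N\cdot(1-o(1))$, losing the sharp constant. To recover it, I would compare $h$ with the one-parameter family of extremal affine profiles $\phi_a(t):=c_a(t+a)$, $a\ge0$, where $c_a$ is the normalising constant ensuring $\int_0^{D'}\phi_a^{N-1}=1$. On this family a direct computation gives $\Res_{\phi_a^{N-1}}^D(w)\sim(a/(Dw^{1/N}))^N$ and the corresponding value of $r$ equals $Dw^{1/N}\bigl(1-a/(Dw^{1/N})+o(a/(Dw^{1/N}))\bigr)$, so the residual constraint forces $a=o(Dw^{1/N})$ and hence the extremal $r$ is $Dw^{1/N}(1-o(1))$. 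A comparison argument showing that these affine profiles realise the infimum of $r_h(w)$ among $\CD(0,N)$ densities with the same integral and the same value at $r$ then transfers the estimate to general $h$. This comparison is effectively a quantitative stability statement for the equality case of the one-dimensional $\CD(0,N)$ isoperimetric inequality and constitutes the main technical obstacle of the proof.
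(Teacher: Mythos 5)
Your treatment of \eqref{eq:almost-rigidity-diameter-convex} and \eqref{eq:almost-rigidity-r-above} is correct: combining the residual upper bound for $\beta=h(r_h(w))$ with the one-dimensional isoperimetric lower bound \eqref{eq:isoperimetric-1d} applied on $[0,D']$ pins down $D'/D$, and Bishop--Gromov on $[0,r_h(w)]$ (i.e.\ $w\ge r\beta/N$) then gives the upper bound on $r_h(w)$. These two parts match the intended argument.

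The gap is in \eqref{eq:almost-rigidity-r-below}. You correctly observe that the monotonicity estimate $h\le\beta$ on $[0,r]$ only gives $r\ge Dw^{1/N}/N$, but you then defer the entire recovery of the constant $N$ to an unproved claim: ``these affine profiles realise the infimum of $r_h(w)$ among $\CD(0,N)$ densities with the same integral and the same value at $r$,'' and you yourself label this ``the main technical obstacle.'' This is not a complete proof. Moreover the claim as stated is awkward to even formulate (the value $r$ and the datum $h(r)$ both depend on $h$, so fixing both while varying $h$ is circular without extra care), and a rigorous extremality argument of this type is not easier than a direct one. The direct route that the cited Propositions~5.3--5.4 of~\cite{CavallettiManini22a} take, and which is available here, goes via the tangent of $\phi:=h^{1/(N-1)}$ at $r$: the constraint $\int_r^{D'}h=1-w$, together with the residual upper bound on $\beta$, the already established bounds $r\le Dw^{1/N}(1+o(1))$ and $D'\ge D(1-o(1))$, and the tangent-line bound $\phi(t)\le\phi(r)+\phi'(r^+)(t-r)$, forces $\phi'(r^+)>0$ and, after normalising $u:=r\,\phi'(r^+)/\phi(r)\in(0,1]$ (the upper bound coming from $\phi(0^+)\ge0$), forces $u\to1$. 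The same tangent-line bound on $[0,r]$ then gives $w\le \beta r\,(1-(1-u)^N)/(Nu)$, which with $u\to1$ reads $w\le (\beta r/N)(1+o(1))$; combined with $\beta\le \tfrac{N}{D}w^{1-1/N}(1+\delta)$ this yields $r\ge Dw^{1/N}(1-o(1))$. So the missing step in your proposal is precisely the quantitative closing of the $u\to1$ loop, and it has to be done explicitly rather than by appeal to an abstract extremality principle.
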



We now drop the assumption $E=[0,r]$.
Up to a negligible set, it holds that
$E=\bigcup_{i \in \N}(a_i,b_i)$, where the intervals $(a_i,b_i)$ are far away
from each other (i.e.\ $b_i<a_j$ or $b_j<a_i$, for $i\neq j$).
The boundedness of the original set of our isoperimetric problem,
implies that $E\subset [0,L]$, for some $L>0$.
Define $b(E):=\esssup E\leq L$.

In the next proposition we prove that $b(E)$ is in the essential
boundary of $E$.

\begin{lemma}\label{lem:increasing}
Fix $N>1$, $L>0$, and $\Lambda\geq 1$.
Then there exists two constants $\bar w>0$ and $\bar\delta>0$
(depending only on $N$, $L$, and $\Lambda$) such that the following happens.
For all $D\geq D'>0$ with
$D\geq 4L\Lambda$, for all $([0,D'],F,h\Leb^1)$ a one-dimensional
measured Finsler manifold satisfying the oriented $\CD(0,N)$ condition 
with $\Lambda_{F}\leq \Lambda$,
and for all $E\subset [0,L]$, such that $\mm_h(E)\leq\bar w$
and $\Res_{F,h}^D(E)\leq\bar\delta$,  there exists
$a\in[0,b(E))$ and an at-most-countable family of intervals
$((a_i,b_i))_i$ such that, up to a negligible set,
\begin{align}
&
E
  =
  \bigcup_{i}
  (a_i,b_i)
  \cup(a,b(E)),
\end{align}
with $a_i,b_i< a$, $\forall i$.

Moreover,  $h$ is strictly increasing on $[0,b(E)]$.
\end{lemma}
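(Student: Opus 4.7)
The plan is to derive both claims from the upper bound $\PP_{F,h}(E) \leq (1+\bar\delta)Nw^{1-1/N}/D$, provided by the residual hypothesis, combined with the one-dimensional perimeter formula
\begin{equation*}
\PP_{F,h}(E) = \sum_{i : a_i \neq 0} F(a_i) h(a_i) + \sum_{j : b_j \neq D'} h(b_j).
\end{equation*}

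For the \emph{structural decomposition}, I would use the concavity of $H := h^{1/(N-1)}$ together with $\int_0^{D'} h = 1$, which imply that $h$ is continuous and strictly positive on $(0, D')$ (a non-negative concave function on an interval which does not vanish identically must be positive in the interior). Since $b(E) \in (0, D')$ generically (the edge case $b(E) = D' = L$ is treated separately), we have $h(b(E)) > 0$. If the supremum $b(E)$ of right endpoints were not attained, one would find $b_{j_k} \uparrow b(E)$ with $h(b_{j_k}) \to h(b(E)) > 0$, forcing $\sum_j h(b_j) = \infty$ and contradicting the perimeter bound. So $b(E)$ is attained by some rightmost component $(a_{i_0}, b(E))$, and the same summability argument yields only finitely many components with right endpoints in any compact neighbourhood of $b(E)$; choosing $a \in (\sup_{i \neq i_0} b_i,\, b(E))$ with $a \geq a_{i_0}$ gives the stated decomposition.

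For \emph{strict monotonicity}, I would argue by contradiction: assume the conclusion fails along sequences $w_n \to 0$ and $\delta_n \to 0$, producing $(D_n, D_n', F_n, h_n, E_n)$ satisfying the hypotheses but with $h_n$ not strictly increasing on $[0, b(E_n)]$. By concavity of $H_n$, $h_n$ attains its maximum at some $x_n^* \in [0, b(E_n)]$ and is non-increasing on $[x_n^*, D_n']$. Let $(a_n, b(E_n))$ be the rightmost component, produced by the previous part. If $a_n \geq x_n^*$, then $h_n$ is non-increasing on $[a_n, b(E_n)]$, giving $h_n(a_n) \geq h_n(b(E_n))$; the contribution of this component to $\PP_{F_n, h_n}(E_n)$ is then at least $(1 + \Lambda^{-1}) h_n(b(E_n))$, the factor $\Lambda^{-1}$ coming from $F_n(a_n) \geq \Lambda^{-1}$. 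Combining with a lower bound linking $h_n(b(E_n))$ to $(N/D_n)w_n^{1-1/N}$ and with the 1D isoperimetric estimate of Corollary~\ref{cor:milman-isoperimetric-estimate}, one obtains $\PP_{F_n, h_n}(E_n) \geq (1 + \Lambda^{-1})(N/D_n)w_n^{1-1/N}(1 - o(1))$, contradicting the upper bound $(1+\delta_n)(N/D_n)w_n^{1-1/N}$ for $n$ large, since $\Lambda^{-1}$ is a fixed positive constant. The alternative case $a_n < x_n^*$ (the rightmost component straddles the maximum) is reduced to the previous one by showing, via the mass constraint $\int_{a_n}^{b(E_n)} h_n \leq w_n \to 0$, the concavity of $H_n$, and the normalisation $\int_0^{D_n'} h_n = 1$, that both $x_n^* - a_n$ and $b(E_n) - x_n^*$ tend to zero.

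The \emph{main obstacle} is to produce the lower bound $h_n(b(E_n)) \gtrsim h_n(r_{h_n}(w_n))$ needed in case (i). Because $E$ is a general Borel set and $F$ is irreversible, the almost-rigidity of Proposition~\ref{P:almost-rigidity-segment} cannot be applied directly; one must instead combine the reversible isoperimetric comparison $h_n(r_{h_n}(w_n)) \leq \PP_{|\cdot|, h_n}(E_n) \leq \Lambda \PP_{F_n, h_n}(E_n)$ with the continuity of $h_n$ near $b(E_n)$ to transfer the near-optimality of $E_n$ into information about $h_n(b(E_n))$.
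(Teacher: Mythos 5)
Your structural-decomposition argument (using the right endpoints $b_j$ instead of the paper's left endpoints $a_i$) works in essence, but it implicitly requires $b(E)<D'$ so that $h(b(E))>0$; you flag the ``edge case $b(E)=D'=L$'' but do not resolve it. The paper resolves it by first showing, from $\Res^{D'}_{F,h}(E)\ge\Lambda_F^{-1}-1-O(w^{1/N})$ and the definition of the residual, that $D'/D\ge\tfrac{1+\Res^{D'}_{F,h}(E)}{1+\Res^{D}_{F,h}(E)}$, hence $D'\ge 2L$ once $\bar w,\bar\delta$ are small; this step is needed before either decomposition argument can run, and you should include it.

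The monotonicity argument has a genuine gap. Your contradiction scheme hinges on a lower bound $h_n(b(E_n))\gtrsim (N/D_n)w_n^{1-1/N}$, and the route you sketch --- via ``$h_n(r_{h_n}(w_n))\le \PP_{|\cdot|,h_n}(E_n)\le\Lambda\PP_{F_n,h_n}(E_n)$'' --- does not produce it. First, $h(r_h(w))\le\PP_{|\cdot|,h}(E)$ is not a general fact: Milman's theorem lower-bounds $\PP_{|\cdot|,h}(E)$ by the model profile $\I_{N,D}(w)$, not by $h(r_h(w))$, and in a $\CD(0,N)$ space the isoperimetric set of measure $w$ may be a half-line anchored at $D'$ rather than $[0,r_h(w)]$. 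Second, even if the chain held, it yields an \emph{upper} bound on $h(r_h(w))$, not a lower bound on $h(b(E))$. The comparison $h(r_h(w))\le h(b(E))$ used in Proposition~\ref{P:almost-rigidity-general} is itself a consequence of $h$ being increasing on $[0,b(E)]$ --- exactly what this lemma is proving --- so invoking it here is circular. Your case (ii) reduction also requires $x_n^*-a_n\to 0$ and $b(E_n)-x_n^*\to 0$, which need a uniform upper bound on $D'$; the lemma must hold for all $D\ge 4L\Lambda$ with $D$ arbitrarily large and uniform constants $\bar w,\bar\delta$, so this is not available. The paper's argument is entirely different and bypasses both issues: set $t:=\lim_{z\searrow 0}\bigl(h(b+z)^{1/(N-1)}-h(b)^{1/(N-1)}\bigr)/z$. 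If $t>0$, concavity gives $h$ strictly increasing on $[0,b]$ at once. If $t\le 0$, concavity gives $h(x)\le h(b)\bigl(\tfrac{D'-x}{D'-b}\bigr)^{N-1}$ on $[0,b]$ and $h\le h(b)$ on $[b,D']$; integrating and using $\int_0^{D'}h=1$, $h(b)\le\PP_{F,h}(E)$, $\PP_{F,h}(E)D'/N\le w^{1-1/N}(1+\Res)$ and $b/D'\le 1/2$ yields $1\le C\,w^{1-1/N}(1+\Res)$, a contradiction for small $w$ and bounded $\Res$ --- uniformly in $D,D'$.
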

\begin{proof}
Taking into account the definition of residual and the isoperimetric inequality~\eqref{eq:isoperimetric-inequality-residual}, choosing $\bar\delta\leq 1$, we
can deduce that
\begin{equation}
  \frac{D'}{D}
  \geq
  \frac{1+\Res_{F,h}^{D'}(E)}{1+\Res_{F,h}^{D}(E)}
  \geq
  \frac{1+\Lambda_{F}^{-1}-1-O(w^{\frac{1}{N}})}{1+\bar\delta}
  \geq
  \frac{\Lambda^{-1}}{2}-O(w^{\frac{1}{N}}))
  .
\end{equation}
If we choose $\bar w$ small enough, taking into account the hypothesis
$D\geq 4L\Lambda$, we deduce $D'\geq 2L$

Since $E=\bigcup_{i} (a_i,b_i)$ (up to a negligible set), our aim is to prove that there
exists $j$ such that $a_i<a_j$, for all $i\neq j$.
In this case we set $a=a_j$.
Suppose on the contrary, that $\forall j,\, \exists i\neq j$ such that
$a_i> a_j$, hence there exists a sequence $(i_n)_n$, so that
$(a_{i_n})_n$ is increasing, thus converging to some $y\in(0,L]$.
Recalling that $F\geq\Lambda^{-1}$, we can compute the perimeter
\begin{equation*}
  \infty
  =
  \sum_{n\in\N}
  F(a_{i_n})h(a_{i_n})
  \leq
  \PP_{F,h}(E)
  =
  \frac{N}{D}(\mm_h(E))^{1-\frac{1}{N}}(1+\Res_{F,h}^D(E))
  <\infty,
\end{equation*}
which is a contradiction.

Finally, we prove that $h$ increases on $[0,b(E)]$.
In order to simplify the notation, let $b:=b(E)$.
Denote by
$t:=\lim_{z\searrow0}(h(b+z)^{\frac{1}{N-1}}-h(b)^{\frac{1}{N-1}})/z$
the right-derivative of $h^{\frac{1}{N-1}}$ in $b$ (whose existence is
guaranteed by concavity).
If $t>0$, then the concavity of $h^{\frac{1}{N-1}}$ yields that $h$ is
strictly increasing in $[0,b]$.
Suppose on the contrary that $t\leq 0$, then it holds that
\begin{equation}
  h(x)
  \leq
  h(b)\left(\frac{D'-x}{D'-b}\right)^{N-1},
  \quad
  \forall x\in[0,b],
  \quad\text{ and }\quad
  h(x)\leq h(b),
  \quad \forall x\in[b,D'].
\end{equation}
We integrate obtaining
\begin{equation}
  \label{eq:big-estimate-increasing}
  \begin{aligned}
    1
    &
    \leq
      \int_0^bh(b)\left(\frac{D'-x}{D'-b}\right)^{N-1}\,dx
    \\
    &
      \qquad
    +
    \int_b^{D'}
    h(b)\,dx
    =
    \frac{h(b)}{N}
    \left(
      \frac{D'^N-(D'-b)^N}{(D'-b)^{N-1}}
      +N(D'-b)
    \right)
    \\[2mm]&
    \leq
    \frac{\PP_{F,h}(E)}{N}
    \left(
      \frac{D'^N}{(D'-b)^{N-1}}
      +ND'
    \right)
    =
    \frac{\PP_{F,h}(E)D'}{N}
    \left(
      \left(1-\frac{b}{D'}\right)^{1-N}
      +N
    \right)
    \\[2mm]&
    =
    \frac{\PP_{F,h}(E)D'}{N}
    \left(
      1+(N-1)\frac{b}{D'}
      +
      o\left(\frac{b}{D'}\right)
      +N
    \right)
.
  \end{aligned}
\end{equation}
The first factor in the r.h.s.\ of the estimate above is controlled
just using the definition of residual
\begin{equation}
  \frac{\PP_{F,h}(E)D'}{N}
  \leq
  \frac{\PP_{F,h}(E)D}{N}
  =
  \mm_h(E)^{1-\frac{1}{N}}
  (1+\Res_{F,h}^D(E)),
\end{equation}
and, if $\mm_h(E)\to0$ and $\Res_{F,h}^D(E)$ is bounded, then the
term above goes to $0$.
Regarding the second factor, it sufficies to prove that $\frac{b}{D'}$
is bounded:
\begin{equation}
  \frac{b}{D'}
  \leq
  \frac{L}{D'}
  \leq
  \frac{L}{2L}
  =\frac{1}{2}.
\end{equation}
If we put together this last two estimates, we deduce that the r.h.s.\
of~\eqref{eq:big-estimate-increasing} is infinitesimal as
$\mm_h(E)\to0$ and $\Res_{F,h}^D(E)\to0$, obtaining a contradiction.
\end{proof}

This proposition guarantees the existence of a right-extremal
connected component of the set $E$; this component is precisely the
interval $(a,b(E))$.
We will denote by $a(E)$ the number $a$ given by 
Proposition~\ref{lem:increasing}.
Since our estimates are infinitesimal expansions in the limit as
$\mm_h(E)\to0$ and $\Res_{F,h}^D(E)\to0$, we will always assume that
$\mm_h(E)\leq\bar w$ and $\Res_{F,h}^D(E)\leq\bar\delta$, so that the
expression $a(E)$ makes sense.
For the same reason, we will always assume that $h$ is increasing in
the interval $[0,b(E)]$.

We now prove that this component $(a(E),b(E))$ tends to fill the set
$E$, that $b(E)$ converges as expected to
$D\mm_h(E)^{\frac{1}{N}}$, and that  the length of the ray tends to be
maximal.

\begin{proposition}\label{P:almost-rigidity-general}
Fix $N>1$, $L>0$, and $\Lambda\geq 1$.
Then, for $w\to0$ and $\delta\to 0$ it holds that
\begin{align}
  &
  \label{eq:almost-rigidity-diameter-non-convex}
  D'
  \geq
    D(1-o(1))
  \\&
  \label{eq:almost-rigidity-b-above}
  b(E)
  \leq
    Dw^{\frac{1}{N}}
    +
    Do(w^{\frac{1}{N}})
  \\&
  \label{eq:almost-rigidity-b-below}
  b(E)
  \geq
    Dw^{\frac{1}{N}}
    -
    Do(w^{\frac{1}{N}})
  \\&
  \label{eq:almost-rigidity-a}
  a(E)
  \leq
  Do(w^{\frac{1}{N}}),
\end{align}
where $D\geq 4L\Lambda$, $D'\in(0,D]$, $([0,D'],F,h\Leb^1)$ is a
one-dimensional measured Finsler manifold satisfying the oriented
$\CD(0,N)$ condition with $\Lambda_F\leq\Lambda$, and
the set $E\subset [0,L]$ satisfies $\mm_h(E)=w$ and $\Res_{F,h}^D(E)\leq \delta$.
\end{proposition}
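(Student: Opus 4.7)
The plan is to reduce the general case to the convex case of Proposition~\ref{P:almost-rigidity-segment} by comparing $E$ to the canonical interval $E^{\ast} := [0, r_h(w)]$, which has the same $\mm_h$-measure. Set $r := r_h(w)$, and work in the regime where Lemma~\ref{lem:increasing} applies, so that $h$ is strictly increasing on $[0, b(E)]$, $b(E) \leq L < D'$, and, up to null sets,
\[
E = (a(E), b(E)) \cup \bigcup_{i}(a_i, b_i), \qquad a_i, b_i < a(E).
\]
Since $E \subset [0, b(E)]$ and $\mm_h(E) = w$, one has $v_h(b(E)) \geq w$, hence $b(E) \geq r$; moreover $b(E) < D'$, so the term $h(b(E))$ contributes to $\PP_{F,h}(E)$, yielding $h(r) \leq h(b(E)) \leq \PP_{F,h}(E)$. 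This gives $\Res_h^D(w) = \Res^D_{F,h}(E^{\ast}) \leq \Res^D_{F,h}(E) \leq \delta$, and Proposition~\ref{P:almost-rigidity-segment} applied to $E^{\ast}$ yields both~\eqref{eq:almost-rigidity-diameter-non-convex} and $r = Dw^{1/N}(1 + o(1))$. The lower bound~\eqref{eq:almost-rigidity-b-below} is then immediate from $b(E) \geq r$.

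The crux is~\eqref{eq:almost-rigidity-a}; this is the main obstacle because the components $(a_i, b_i)$ can a priori be arranged in complicated patterns, and it is the step where finite reversibility becomes essential. The case $a(E) = 0$ is trivial (then $E = (0, b(E))$ is already of the form treated by Proposition~\ref{P:almost-rigidity-segment}), so assume $a(E) > 0$. The one-dimensional perimeter formula together with $F(a(E)) \geq \Lambda^{-1}$ gives
\[
\Lambda^{-1} h(a(E)) \leq \PP_{F,h}(E) - h(b(E)) \leq h(r) \cdot \frac{\rho - \rho_0}{1 + \rho_0},
\]
where $\rho := \Res^D_{F,h}(E) \leq \delta$ and $\rho_0 := \Res_h^D(w) \geq -O(w^{1/N})$ by~\eqref{eq:isoperimetric-inequality-residual-reversible}. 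Hence $h(a(E))/h(r) = o(1)$. Since $h$ is increasing on $[0, b(E)]$, this forces $a(E) \leq r$ for $w, \delta$ small (otherwise $h(a(E)) \geq h(r)$, a contradiction). Applying the concavity inequality $h^{1/(N-1)}(a(E)) \geq (a(E)/r)\, h^{1/(N-1)}(r)$, valid on $[0, r]$ because $h^{1/(N-1)}$ is concave with $h(0) \geq 0$, yields
\[
a(E)/r \leq \bigl(h(a(E))/h(r)\bigr)^{1/(N-1)} = o(1),
\]
and combined with $r = Dw^{1/N}(1+o(1))$ this gives $a(E) = D\,o(w^{1/N})$.

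Finally, for~\eqref{eq:almost-rigidity-b-above} we estimate $v_h(b(E))$ from both sides. Since $(a(E), b(E)) \subset E$ one has $v_h(b(E)) \leq w + v_h(a(E))$, and by monotonicity of $h$ on $[0, b(E)]$ combined with the bounds just proved,
\[
v_h(a(E)) \leq a(E) \cdot h(a(E)) = D\,o(w^{1/N}) \cdot o\bigl(h(r)\bigr) = o(w),
\]
so $v_h(b(E)) \leq w(1 + o(1))$. On the other hand, the concavity estimate $h^{1/(N-1)}(x) \geq (x/b(E))\, h^{1/(N-1)}(b(E))$ on $[0, b(E)]$ yields $v_h(b(E)) \geq b(E)\, h(b(E))/N$. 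Combining with $h(b(E)) \geq h(r)$ and $r = Dw^{1/N}(1 + o(1))$,
\[
b(E) \leq \frac{N\, v_h(b(E))}{h(b(E))} \leq \frac{N w(1 + o(1))}{h(r)} = Dw^{1/N}(1 + o(1)),
\]
which is the desired upper bound.
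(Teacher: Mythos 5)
Your proof is correct and largely mirrors the paper's. The arguments for~\eqref{eq:almost-rigidity-diameter-non-convex}, \eqref{eq:almost-rigidity-b-below}, and~\eqref{eq:almost-rigidity-a} are essentially the paper's, slightly reorganized: the paper first rules out $a(E)\geq r_h(w)$ by a standalone contradiction argument and then bounds $a(E)$ via Bishop--Gromov, whereas you derive $h(a(E))/h(r_h(w))=o(1)$ directly from the perimeter decomposition and read off both conclusions from that one estimate; the two are arithmetically equivalent, and your packaging is arguably cleaner. The upper bound~\eqref{eq:almost-rigidity-b-above} is where you genuinely deviate. The paper exploits the measure-balance identity $\int_E h=\int_0^{r_h(w)}h$ (both equal $w$) and rearranges the pieces lying in $[0,r_h(w)]$ and $[r_h(w),b(E)]$ to obtain the pointwise bound $b(E)-r_h(w)\leq a(E)\,h(a(E))/h(r_h(w))\leq a(E)$, from which~\eqref{eq:almost-rigidity-b-above} follows by the already-established estimates on $a(E)$ and $r_h(w)$. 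You instead sandwich $v_h(b(E))$: from below by the concavity estimate $v_h(b(E))\geq b(E)h(b(E))/N$, and from above by $w+v_h(a(E))\leq w(1+o(1))$, then divide by $h(b(E))\geq h(r_h(w))$. Both routes are valid and roughly the same length; the paper's gives the slightly sharper intermediate inequality $b(E)\leq r_h(w)+a(E)$, while yours bypasses the balance identity and relies only on the Bishop--Gromov-type estimate, so the overall structure is a touch more uniform across the four parts.
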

\begin{proof}
  \,

\smallskip\noindent
{\bf Part 1 {\rm Inequality~\eqref{eq:almost-rigidity-diameter-non-convex}}.}\\%
Since $h$ is decreasing on $[0,b(E)]$, we have that $h(r_h(v))\leq
h(b(E)) \leq \PP_{F,h}(E)$, hence $\Res_h^D(v)\leq\Res_{F,h}^D(E)$.
The thesis follows from estimate~\eqref{eq:almost-rigidity-diameter-convex}.

\smallskip\noindent
{\bf Part 2 {\rm Inequality~\eqref{eq:almost-rigidity-b-below}}.}\\%
Since the density $h$ is strictly increasing on $[0,b(E)]$ and
$E\subset[0,b(E)]$ (up to a null measure set), it holds that
$r_h(w)\leq b(E)$ and

\begin{equation}
  \Res_h^D(w)
  =
  \frac{Dh(r_h(w))}{Nw^{1-\frac{1}{N}}}-1
  \leq
  \frac{Dh(b(E))}{Nw^{1-\frac{1}{N}}}-1
  \leq
  \frac{D\PP_{F,h}(E)}{Nw^{1-\frac{1}{N}}}-1
  =
  \Res_{F,h}^D(E)\leq \delta.
\end{equation}
Estimate~\eqref{eq:almost-rigidity-r-below} concludes this part
\begin{equation}
  D(w^{\frac{1}{N}}-o(w^\frac{1}{N}))
  \leq
  r_h(w)
  \leq
  b(E)
  .
\end{equation}

\smallskip
\noindent
{\bf Part 3 {\rm Inequality~\eqref{eq:almost-rigidity-a}}.}\\%
First we prove that $a(E)< r_h(w)$, for $w$ and $\delta$ small
enough.
Suppose on the contrary that $a(E)\geq r_h(w)$, implying that
$h(a(E))\geq h(r_h(w))$, hence
$\PP_{F,h}(E)\geq \Lambda^{-1} h(a(E))+h(b(E))\geq
(1+\Lambda^{-1})h(r_h(w))$.
We deduce that (compare
with~\eqref{eq:isoperimetric-inequality-residual-reversible})
\begin{align*}
  -O(w^{\frac{1}{N}})
  &
    \leq
    \Res_h^D(w)
    =
    \frac{D h(r_h(w))}{Nw^{1-\frac{1}{N}}} -1
    \leq
    \frac{D \PP_{F,h}(E)}{(1+\Lambda^{-1})Nw^{1-\frac{1}{N}}} -1
  \\
  &
    =
    \frac{1}{1+\Lambda^{-1}}(\Res_{F,h}^D(E)-\Lambda^{-1})
    \leq
    \frac{\delta-\Lambda^{-1}}{1+\Lambda^{-1}}.
\end{align*}
If we take the limit as $w\to0$ and $\delta\to0$ we obtain a contradiction.

Using the Bishop--Gromov inequality and the isoperimetric
inequality (respectively), we get
\begin{align}
  &
  h(a(E))
  \geq
  h(r_h(w))
  \left(
  \frac{a(E)}{r_h(w)}
  \right)^{N-1}
  \\&
  h(b(E))
    \geq
    h(r_h(w))
    \geq
  \frac{N}{D}w^{1-\frac{1}{N}}(1-O(w^{\frac{1}{N}}))
  .
\end{align}
We put together the inequalities above obtaining
\begin{equation*}
  \begin{aligned}
    \frac{N}{D}w^{1-\frac{1}{N}}
    (1+\Res_{F,h}^D(E))
    &
    =
    \PP_{F,h}(E)
    \geq
    h(b(E))+\Lambda^{-1}h(a(E))
    \\&
    \geq
    h(r_h(w))+\Lambda^{-1}h(a(E))
    \\&
    \geq
    h(r_h(w))
    \left(
      1
    +
    \Lambda^{-1}
      \left(
        \frac{a(E)}{r_h(w)}
      \right)
      ^{N-1}
    \right)
    \\&
    \geq
    \frac{N}{D}w^{1-\frac{1}{N}}(1-O(w^{\frac{1}{N}}))
    \left(
      1
      +
    \Lambda^{-1}
      \left(
        \frac{a(E)}{r_h(w)}
      \right)
      ^{N-1}
    \right),
  \end{aligned}
\end{equation*}
hence
\begin{equation}
  \label{eq:a-is-small}
  \begin{aligned}
    a(E)
    &
    \leq
      r_h(w)
      \Lambda^{\frac{1}{N-1}}
      \left(
      \frac{
        1+\Res_{F,h}^D(E)
      }{
        1+O(w^{\frac{1}{N}})
      }
      -1
    \right)
    ^{\frac{1}{N-1}}
      \\&
    \leq
    r_h(w)
      \Lambda^{\frac{1}{N-1}}
    \left(
      (1+\delta)(1-O(w^{\frac{1}{N}}))
      -1
    \right)
    ^{\frac{1}{N-1}}
    \leq
    r_h(w)\, o(1)
    \\&
    \leq
    Dw^{\frac{1}{N}}(1+o(1))
    o(1)
    =
    Do(w^{\frac{1}{N}}),
  \end{aligned}
\end{equation}
where the estimate~\eqref{eq:almost-rigidity-r-above} was taken
into account.

\smallskip\noindent
{\bf Part 4 {\rm Inequality~\eqref{eq:almost-rigidity-b-above}}.}\\%
Since
\begin{equation}
  \int_Eh=\int_0^{r_h(w)} h,
\end{equation}
we deduce (taking into account $a(E)\leq r_h(w)\leq b(E)$)
\begin{equation}
  \int_{E\cap[0,r_h(w)]} h
  +
  \int_{r_h(w)}^{b(E)} h
  =
  \int_{E\cap[0,r_h(w)]} h
  +
  \int_{[0,r_h(w)]\backslash E} h
  =
  \int_{E\cap[0,r_h(w)]} h
  +
  \int_{[0,a(E)]\backslash E} h,
\end{equation}
hence
\begin{equation}
  (b(E)-r_h(w))
  \,
  h(r_h(w))
  \leq
  \int_{r_h(w)}^{b(E)} h
  =
  \int_{[0,a(E)]\backslash E} h
  \leq
  \int_0^{a(E)} h
  \leq
  a(E)
  \,
  h(a(E)),
\end{equation}
yielding
\begin{equation}
  b(E)-r_h(w)
  \leq
  a(E)
  \,
  \frac{h(a(E))}{h(r_h(w))}
  \leq
  a(E).
\end{equation}
Combining the inequality above, the already-proven
estimate~\eqref{eq:almost-rigidity-b-below},
and the estimate~\eqref{eq:almost-rigidity-r-above}, we reach the conclusion.
\end{proof}

\subsection{Almost rigidity of the density \texorpdfstring{$h$}{h}}
In this section we prove that the density $h$ converges uniformly to
the density  $Nx^{N-1}/D^N$.
The bound from below is easy and follows from the Bishop--Gromov
inequality.

\begin{proposition}
  \label{P:rigidity-of-space-easy-part}
  Fix $N>1$, $L>0$, and $\Lambda\geq 1$.
Then, for $w\to0$ and $\delta\to 0$ it holds that
\begin{align}
  \label{eq:rigidity-of-h-below}
  &
    h(x)
    \geq
    \frac{N}{D^N}x^{N-1}(1-o(1)),
    \quad
    \text{ uniformly w.r.t.\ }
    x\in [0,b(E)],
\end{align}
where $D\geq 4L\Lambda$, $D'\in(0,D]$, $([0,D'],F,h\Leb^1)$ is a one
dimensional measured Finsler manifold satisfying the $\CD(0,N)$ condition, with $\Lambda_F\leq\Lambda$, and the set
$E\subset [0,L]$ satisfies $\mm_h(E)=w$ and $\Res_{F,h}^D(E)\leq \delta$.
\end{proposition}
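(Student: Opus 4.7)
The plan is to combine the one-dimensional Bishop--Gromov inequality (which follows directly from concavity of $h^{1/(N-1)}$) with the already-established rigidity estimates for $b(E)$ and $D'$.

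First, from the oriented $\CD(0,N)$ condition, $h^{1/(N-1)}$ is concave on $[0,D']$, and since $h\geq 0$ we have $h^{1/(N-1)}(0)\geq 0$. Hence for every $y\in(0,D']$ and every $x\in[0,y]$,
\begin{equation*}
h^{1/(N-1)}(x)\;\geq\;\frac{x}{y}\,h^{1/(N-1)}(y)+\Bigl(1-\frac{x}{y}\Bigr)h^{1/(N-1)}(0)\;\geq\;\frac{x}{y}\,h^{1/(N-1)}(y),
\end{equation*}
which raised to the $(N-1)$-th power gives $h(x)\geq (x/y)^{N-1} h(y)$. This is the Bishop--Gromov inequality in the one-dimensional setting.

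I then apply this with $y=b(E)$, so that $h(x)\geq (x/b(E))^{N-1}h(b(E))$ uniformly on $[0,b(E)]$. The task reduces to bounding $h(b(E))$ from below and $b(E)$ from above by the expected model quantities. For the former: by Lemma~\ref{lem:increasing} the density $h$ is increasing on $[0,b(E)]$, and since $r_h(w)\leq b(E)$ this gives $h(b(E))\geq h(r_h(w))$; then the one-dimensional isoperimetric estimate~\eqref{eq:isoperimetric-1d} yields
\begin{equation*}
h(r_h(w))\;\geq\;\frac{N}{D'}\,w^{1-\frac{1}{N}}\bigl(1-O(w^{1/N})\bigr),
\end{equation*}
and the rigidity bound~\eqref{eq:almost-rigidity-diameter-non-convex}, namely $D'\geq D(1-o(1))$, upgrades this to $h(b(E))\geq \tfrac{N}{D}w^{1-\frac{1}{N}}(1-o(1))$. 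For the latter, the rigidity bound~\eqref{eq:almost-rigidity-b-above} gives $b(E)^{N-1}\leq D^{N-1}w^{(N-1)/N}(1+o(1))^{N-1}$.

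Plugging these into the Bishop--Gromov inequality yields, for every $x\in[0,b(E)]$,
\begin{equation*}
h(x)\;\geq\;\frac{x^{N-1}}{b(E)^{N-1}}\,h(b(E))\;\geq\;\frac{x^{N-1}}{D^{N-1}w^{(N-1)/N}(1+o(1))^{N-1}}\cdot\frac{N}{D}w^{1-\frac{1}{N}}(1-o(1))\;=\;\frac{N}{D^N}x^{N-1}(1-o(1)),
\end{equation*}
which is exactly~\eqref{eq:rigidity-of-h-below}. The uniformity in $x$ is automatic since each $o(1)$ depends only on $w$ and $\delta$. There is no real obstacle here: all the heavy lifting (monotonicity of $h$, rigidity of $b(E)$ and $D'$) was carried out in the preceding propositions, and this is simply a clean combination via concavity.
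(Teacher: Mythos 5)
Your proof is correct and follows essentially the same route as the paper: one-dimensional Bishop--Gromov from concavity of $h^{1/(N-1)}$, monotonicity of $h$ on $[0,b(E)]$ to pass from $h(b(E))$ to $h(r_h(w))$, the one-dimensional isoperimetric bound~\eqref{eq:isoperimetric-1d}, and the upper bound~\eqref{eq:almost-rigidity-b-above} on $b(E)$. One small inaccuracy: you invoke~\eqref{eq:almost-rigidity-diameter-non-convex} to ``upgrade'' $D'$ to $D$, but since $D'\leq D$ the inequality $\tfrac{N}{D'}\geq\tfrac{N}{D}$ is automatic and no rigidity of $D'$ is needed there (which is in fact what~\eqref{eq:isoperimetric-1d} already records).
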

\begin{proof}
Fix $x\in[0,b(E)]$.
The Bishop--Gromov inequality yields
\begin{equation}
  h(x)
  \geq
  h(b(E))
  \,
  \frac{x^{N-1}}{b(E)^{N-1}}
  \geq
  h(r_h(w))
  \,
  \frac{x^{N-1}}{b(E)^{N-1}}.
\end{equation}
The first factor is controlled using the isoperimetric
inequality~\eqref{eq:isoperimetric-1d}
\begin{equation}
  h(r_h(w))
  \geq
  \frac{N}{D}w^{1-\frac{1}{N}}(1-O(w^{\frac{1}{N}}))
  =
  \frac{N}{D}w^{1-\frac{1}{N}}(1-o(1))
  ,
\end{equation}
whereas the term $b(E)$ is controlled using
estimate~\eqref{eq:almost-rigidity-b-above}
\begin{equation*}
  b(E)
  \leq
  Dw^{\frac{1}{N}}(1+o(1)).
\end{equation*}
By combining these to estimates we reach the thesis
\end{proof}

The following corollary gives a lower boundary for the residual, under
the hypothesis that the (positive part of the) residual is bounded from above, improving
inequality~\eqref{eq:isoperimetric-inequality-residual}.

\begin{corollary}
  \label{C:self-improvement-residual}
  Fix $N>1$, $L>0$, and $\Lambda\geq 1$.
Then, for $w\to0$ and $\delta\to 0$ it holds that
\begin{align}
  \label{eq:self-improvement-residual-old}
  &
    \Res_{F,h}^{D}(E)
    \geq -o(1)
\end{align}
where $D\geq 4L\Lambda$, $D'\in(0,D]$, $([0,D'],F,h\Leb^1)$ is a
one-dimensional measured Finsler manifold satisfying the $\CD(0,N)$ condition,
with $\Lambda_F\leq\Lambda$, and the set
$E\subset [0,L]$ satisfies $\mm_h(E)=w$ and $\Res_{F,h}^D(E)\leq \delta$.
\end{corollary}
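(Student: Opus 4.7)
The plan is to exploit the fact that the rightmost boundary point $b(E)$, whose existence and relevance were established in Lemma~\ref{lem:increasing}, contributes to the perimeter with outward normal pointing \emph{to the right}, so that its contribution does not see the reversibility constant. More precisely, since the interval $(a(E),b(E))$ sits at the right-end of $E$ and $b(E) < D'$ (which follows from $b(E) \leq L \ll D'$ under the rigidity estimates already obtained), the general perimeter formula gives
\begin{equation}
\PP_{F,h}(E) \;\geq\; h(b(E)).
\end{equation}
This is the key observation that lets us avoid the factor $\Lambda_F^{-1}$.

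Next, I would combine the pointwise lower bound on $h$ from Proposition~\ref{P:rigidity-of-space-easy-part}, namely
\begin{equation}
h(x) \;\geq\; \frac{N}{D^N}\, x^{N-1}\,(1-o(1)) \qquad \text{uniformly for } x\in[0,b(E)],
\end{equation}
with the sharp lower bound on $b(E)$ obtained in Proposition~\ref{P:almost-rigidity-general}, inequality~\eqref{eq:almost-rigidity-b-below}, namely $b(E)\geq D w^{1/N}(1-o(1))$. Plugging the latter into the former at $x=b(E)$ yields
\begin{equation}
h(b(E)) \;\geq\; \frac{N}{D^N}\,\bigl(Dw^{1/N}(1-o(1))\bigr)^{N-1}(1-o(1)) \;=\; \frac{N}{D}\,w^{1-\frac{1}{N}}(1-o(1)).
\end{equation}

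Finally, inserting this into the definition of the residual,
\begin{equation}
\Res_{F,h}^{D}(E) \;=\; \frac{D\,\PP_{F,h}(E)}{N\,w^{1-\frac{1}{N}}} - 1 \;\geq\; \frac{D\cdot h(b(E))}{N\,w^{1-\frac{1}{N}}} - 1 \;\geq\; (1-o(1)) - 1 \;=\; -o(1),
\end{equation}
which is exactly the claimed estimate. The only mildly delicate point is ensuring that $b(E)$ genuinely lies in the interior of $[0,D']$ so that it contributes to $\PP_{F,h}(E)$; this is guaranteed because the rigidity estimates force $b(E) \leq L$ while $D' \geq D(1-o(1)) \geq 4L\Lambda(1-o(1))$, so for $w$ and $\delta$ small enough $b(E) < D'$. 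No further obstacle is expected, as the heavy lifting has been done in the previous two propositions; the corollary is really a bookkeeping consequence that the contribution to the perimeter from the ``rightward-pointing'' boundary point $b(E)$ alone already saturates the sharp asymptotics.
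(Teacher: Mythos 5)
Your proof is correct and follows essentially the same route as the paper's: start from the one-sided perimeter bound $\PP_{F,h}(E)\geq h(b(E))$ (which escapes the factor $\Lambda_F^{-1}$ because the outward normal at $b(E)$ points to the right), then chain together the density lower bound from Proposition~\ref{P:rigidity-of-space-easy-part} and the bound $b(E)\geq Dw^{1/N}(1-o(1))$ from~\eqref{eq:almost-rigidity-b-below}. You make explicit two points the paper leaves implicit — that $b(E)<D'$ (via Lemma~\ref{lem:increasing}, which gives $D'\geq 2L$) and the role of the orientation of the normal at $b(E)$ — but the computation is the same; no gap.
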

\begin{proof}
  By a direct computation, recalling
  estimates\eqref{eq:rigidity-of-h-below}
  and~\eqref{eq:almost-rigidity-b-below}, we obtain
  \begin{align*}
    \Res_{F,h}^D(E)
    &
      \geq
      \frac{D h(b(E))}{Nw^{1-\frac{1}{N}}}-1
      \geq
      \frac{b(E)^{N-1}(1-o(1))}{D^{N-1}w^{1-\frac{1}{N}}}-1
    \\
    &
      \geq
      \frac{(w^{\frac{1}{N}}(1-o(1)))^{N-1}}{w^{1-\frac{1}{N}}}-1
      \geq o(1)
      .
      \qedhere
  \end{align*}
\end{proof}
In order to prove an upper bound for the density, we present the
following, purely technical lemma.
\begin{lemma}
\label{lem:monotonia-f}
Fix $N>1$ and consider the function $f:[0,1)\times[0,\infty]\to\R$ given by
\begin{equation}
  f(t,\eta)=\frac{1+\eta-t^N}{1-t}.
\end{equation}
Define the function $g$ by
\begin{equation}
  \label{eq:definition-g-mononicity}
  g(\eta)=\sup\{t-s:f(t,0)\leq f(s,\eta)\}.
\end{equation}
Then $\lim_{\eta\to0} g(\eta)=0$.
\end{lemma}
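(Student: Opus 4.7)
The plan is to argue by contradiction, using that $f(\cdot,0)\colon [0,1)\to\R$ is strictly increasing and extends continuously to $t=1$ with $f(1,0)=N$. Both facts follow from a short computation on $f(t,0)=(1-t^N)/(1-t)$: substituting $u=1-t$, the function $u\mapsto (1-(1-u)^N)/u$ has derivative whose sign is that of $-1+(1-u)^{N-1}(1+(N-1)u)$, a quantity that vanishes at $u=0$ and whose own derivative equals $-N(N-1)u(1-u)^{N-2}<0$ on $(0,1)$; hence it is strictly negative on $(0,1)$, so $u\mapsto(1-(1-u)^N)/u$ is strictly decreasing on $(0,1]$ with limit $N$ at $u=0$.

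Suppose that $g(\eta_n)\not\to 0$ along some sequence $\eta_n\to 0$. Taking $t=s$ in the definition of $g$ shows $g(\eta)\geq 0$, so the failure yields some $\epsilon>0$ and a subsequence with $g(\eta_n)\geq 2\epsilon$. Pick $(s_n,t_n)\in[0,1)^2$ with $f(t_n,0)\leq f(s_n,\eta_n)$ and $t_n-s_n\geq\epsilon$. The lower bound automatically forces $s_n\leq 1-\epsilon$; extract a further subsequence so that $s_n\to s^*\in[0,1-\epsilon]$ and $t_n\to t^*\in[s^*+\epsilon,1]$.

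To conclude, I would separate two cases. If $t^*<1$, then $(s_n,t_n)$ stays in a compact subset of $[0,1)^2$ where $f$ is jointly continuous, and passing to the limit gives $f(t^*,0)\leq f(s^*,0)$; by the strict monotonicity of $f(\cdot,0)$, this forces $t^*\leq s^*$, contradicting $t^*\geq s^*+\epsilon$. If instead $t^*=1$, then $f(t_n,0)\to N$ by the continuous extension, while $s^*\leq 1-\epsilon<1$ keeps the evaluation $f(s_n,\eta_n)$ safely away from the singularity of $f(\cdot,\eta)$ at $s=1$, so $f(s_n,\eta_n)\to f(s^*,0)<N$; taking the limit in the hypothesis yields $N\leq f(s^*,0)<N$, again a contradiction.

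The only place where care is genuinely needed is the possibility $t^*=1$: there $f(s,\eta)$ can blow up for $\eta>0$ as $s\to 1$, so a naive continuity argument is not available. However, the a priori bound $t_n-s_n\geq\epsilon$ is precisely what prevents $s_n$ from approaching $1$, thereby confining the $s_n$ to a compact subset of $[0,1)$ and reducing the analysis to the comparison between the finite value $f(s^*,0)$ and the limiting value $N$ of $f(\cdot,0)$. With that observation there is no real obstacle; the argument is purely a compactness-and-continuity exercise.
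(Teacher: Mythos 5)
Your proof is correct and follows essentially the same compactness-plus-monotonicity argument as the paper's: extract convergent subsequences $s_n\to s^*$, $t_n\to t^*$ with $t^*\geq s^*+\epsilon$, pass to the limit in $f(t_n,0)\leq f(s_n,\eta_n)$, and contradict the strict increase of $f(\cdot,0)$. You go a bit further than the paper in two respects — you actually verify that $f(\cdot,0)$ is strictly increasing with limit $N$ at $t=1$, and you explicitly split off the case $t^*=1$ (where the paper implicitly uses the continuous extension of $f(\cdot,0)$ to $[0,1]$) — but these are refinements of the same route rather than a different one.
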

\begin{proof}
The proof is by contradiction.
Suppose that there exists $\epsilon>0$ and three sequences in
$(\eta_n)_n$, $(t_n)_{n}$, and $(s_n)_{n}$, such that $\eta_n\to0$,
$f(t_n,0)\leq f(s_n,\eta_n)$, and $t_n-s_n>\epsilon$.
Up to a taking a sub-sequence, we can assume that $t_n\to t$ and
$s_n\to s$, hence $1\geq t\geq s+\epsilon$.
The functions $f(\cdot,\eta_n)$ converge to $f(\cdot,0)$,
uniformly in the interval $[0,1-\frac{\epsilon}{2}]$.
This implies $f(s_n,\eta_n)\to f(s,0)$, yielding $f(t,0)\leq
f(s,0)$.
Since $t\mapsto f(t,0)$ is strictly increasing, we obtain
$t\leq s\leq t-\epsilon$, which is a contradiction.
\end{proof}

We now obtain an upper bound for $h$ in the interval $[a(E),b(E)]$
going in the opposite direction of the Bishop--Gromov inequality.

\begin{proposition}
Fix $N>1$, $L>0$, and $\Lambda\geq 1$.
Then, for $w\to0$ and $\delta\to 0$, it holds that
\begin{equation}
  \label{eq:rigidity-of-h-above}
  h(x)
  \leq
  h(b(E))\left(\frac{x}{b(E)}+o(1)\right)^{N-1},
  \quad
  \text{ uniformly w.r.t. }
  x\in[a(E),b(E)],
\end{equation}
where $D\geq 4L\Lambda$, $D'\in(0,D]$, $([0,D'],F,h\Leb^1)$ is a
one-dimensional measured Finsler manifold satisfying the oriented $\CD(0,N)$
condition, with $\Lambda_F\leq\Lambda$, and the set $E\subset [0,L]$
satisfies $\mm_h(E)=w$ and $\Res_{F,h}^D(E)\leq \delta$.

\end{proposition}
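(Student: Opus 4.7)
I argue by contradiction. Suppose the claim fails with a uniform gap: there exist $\eta_0 > 0$ and sequences $w_n \to 0$, $\delta_n \to 0$ with associated data $([0,D_n'], F_n, h_n \mathcal{L}^1)$, sets $E_n \subset [0,L]$ satisfying $\mm_{h_n}(E_n) = w_n$ and $\Res_{F_n,h_n}^D(E_n) \leq \delta_n$, and points $x_n \in [a(E_n), b(E_n)]$ such that
\begin{equation*}
\phi_n(x_n) > \psi_n(x_n) + \eta_0 \phi_n(b(E_n)),
\end{equation*}
where $\phi_n := h_n^{1/(N-1)}$ and $\psi_n(x) := (x/b(E_n))\, \phi_n(b(E_n))$ is the linear function through $0$ and $(b(E_n), \phi_n(b(E_n)))$. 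By concavity of $\phi_n$ together with $\phi_n(0) \geq 0$, we already know $\phi_n \geq \psi_n$ on $[0, b(E_n)]$ (this is essentially Bishop--Gromov, inequality \eqref{eq:rigidity-of-h-below}); the task is thus to bound the one-sided excess $\phi_n - \psi_n$.

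The plan is to turn the hypothesised pointwise excess into an integrated mass excess via concavity, then contradict the already-established tight mass balance. Since $\phi_n - \psi_n$ is concave on $[0, b(E_n)]$, vanishes at $b(E_n)$, and is nonnegative at the origin, it dominates on $[0,b(E_n)]$ the tent function $\tau_n$ with peak $M_n := \phi_n(x_n) - \psi_n(x_n) \geq \eta_0 \phi_n(b(E_n))$ at $x_n$ and vanishing at both $0$ and $b(E_n)$. Hence $\phi_n \geq \psi_n + \tau_n$ on $[0, b(E_n)]$.

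Next I assemble the mass balance. Propositions~\ref{P:almost-rigidity-general} and~\ref{P:rigidity-of-space-easy-part} and Corollary~\ref{C:self-improvement-residual} give $b(E_n) = Dw_n^{1/N}(1+o(1))$ and $h_n(b(E_n)) = (N/D)\,w_n^{1-1/N}(1+o(1))$; the monotonicity of $h_n$ on $[0, b(E_n)]$ together with $a(E_n) = o(Dw_n^{1/N})$ and $\mm_{h_n}(E_n) = w_n$ produce $\int_0^{b(E_n)} h_n\,dx = w_n(1+o(1))$; and a direct computation gives $\int_0^{b(E_n)} \psi_n^{N-1}\,dx = h_n(b(E_n))\,b(E_n)/N = w_n(1+o(1))$. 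Combining,
\begin{equation*}
\int_0^{b(E_n)} \bigl[(\psi_n + \tau_n)^{N-1} - \psi_n^{N-1}\bigr]\,dx \leq \int_0^{b(E_n)} \bigl(\phi_n^{N-1} - \psi_n^{N-1}\bigr)\,dx = o(w_n).
\end{equation*}

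The final step is to bound the left-hand side below by a positive multiple of $M_n^{N-1} b(E_n)$. For $N \geq 2$ this is immediate from the superadditivity $(\psi+\tau)^{N-1} \geq \psi^{N-1} + \tau^{N-1}$, yielding $M_n^{N-1} b(E_n)/N = o(w_n)$, hence $M_n = o(w_n^{1/N}) = o(\phi_n(b(E_n)))$, contradicting $M_n \geq \eta_0 \phi_n(b(E_n))$. For $N \in (1,2)$ the elementary superadditivity fails and one must split $[0, b(E_n)]$ into the regions $\{\tau_n \leq \psi_n\}$, where the mean-value estimate $(\psi+\tau)^{N-1} - \psi^{N-1} \geq c_N \psi^{N-2}\tau$ applies, and $\{\tau_n > \psi_n\}$ (near the peak $x_n$), where $(\psi+\tau)^{N-1} - \psi^{N-1} \geq c_N' \tau^{N-1}$. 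Lemma~\ref{lem:monotonia-f} enters here to quantify the threshold location in terms of the normalized ratio $f(t,\eta) = (1+\eta-t^N)/(1-t)$ and to show that the resulting splitting is uniform in the free parameters, so that the two contributions combine into the bound $c_N'' M_n^{N-1} b(E_n) \leq o(w_n)$; the same contradiction follows.

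The main obstacle is the sub-case $N \in (1,2)$: the elementary superadditive comparison is unavailable and one must genuinely split the integration region. Lemma~\ref{lem:monotonia-f} is precisely the tool that makes the splitting location asymptotically controlled and independent of $D$, $D'$, the Finsler structure $F$, and the shape of $E$, so that the final contradiction is uniform in all these free parameters.
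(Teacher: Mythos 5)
Your argument takes a genuinely different route from the paper's. The paper fixes a single $x\in[a(E),b(E)]$, applies concavity of $h^{1/(N-1)}$ on $[a,x]$ and $[x,b]$ to get pointwise lower bounds on $h$, integrates, and feeds the resulting inequality directly into Lemma~\ref{lem:monotonia-f} (which compares the two ratios $f(k/l,0)$ and $f(x/b,o(1))$). You instead argue by contradiction with a global ``mass balance'': $\phi:=h^{1/(N-1)}$ dominates $\psi+\tau$, the rigidity estimates force $\int_0^{b(E)}(\phi^{N-1}-\psi^{N-1})=o(w)$, and a lower bound on $\int[(\psi+\tau)^{N-1}-\psi^{N-1}]$ gives the contradiction. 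The outer scaffolding is correct: the tent-domination from concavity is right, the mass-balance estimates are right (they use Lemma~\ref{lem:increasing}, Propositions~\ref{P:almost-rigidity-general} and~\ref{P:rigidity-of-space-easy-part}, and Corollary~\ref{C:self-improvement-residual} as you cite), and the $N\geq 2$ case via superadditivity is complete and clean.

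The gap is in the $N\in(1,2)$ case, and specifically in your claim that Lemma~\ref{lem:monotonia-f} ``quantifies the threshold location'' for the split $\{\tau\leq\psi\}$ vs.\ $\{\tau>\psi\}$. That lemma is a statement about the very specific ratio $f(t,\eta)=(1+\eta-t^N)/(1-t)$ which shows up only because the paper integrates the two concavity inequalities emanating from $x$; it has nothing to do with where a tent function crosses a linear one, and it cannot deliver a lower bound on your integral. The correct way to close your argument for $N\in(1,2)$ is elementary but must be said: after normalizing $t=x/b$, $\lambda=\phi(b)$, $s=x_n/b\in[0,1]$, $\eta=M_n/\lambda\in[\eta_0,1]$ (the upper bound $\eta\leq 1$ follows because $h$ is increasing on $[0,b(E)]$, so $\phi(x_n)\leq\phi(b)$), the quantity $\int_0^b[(\psi+\tau)^{N-1}-\psi^{N-1}]\,dx$ equals $\lambda^{N-1}b\,I(\eta,s)$ for an explicit $I$ that is continuous on the compact set $[\eta_0,1]\times[0,1]$ (including the degenerate limits $s\to 0^+$ and $s\to 1^-$) and strictly positive there; hence it is bounded below by $c(\eta_0,N)\lambda^{N-1}b$. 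Since $\lambda^{N-1}b=Nw(1+o(1))$, this contradicts $o(w)$. Your region-splitting via the mean-value inequality can also be pushed through, but as written it stops at the inequalities $(\psi+\tau)^{N-1}-\psi^{N-1}\geq c_N\psi^{N-2}\tau$ and $\geq c_N'\tau^{N-1}$ without combining them into a single uniform lower bound, and the appeal to Lemma~\ref{lem:monotonia-f} to do that combining does not stand.
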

\begin{proof}
Fix $x\in[a(E),b(E)]$.
In order to ease the notation, define
\begin{equation}
a:=a(E),
\quad
b:=b(E),
\quad
k:=h(x)^{\frac{1}{N-1}},
\quad
l:=h(b(E))^{\frac{1}{N-1}}.
\end{equation}
The concavity of $h^{\frac{1}{N-1}}$ yields
\begin{align}
  &
    h(y)\geq\left(\frac{y}{x}\right)^{N-1}k^{N-1},
    \quad
    \forall y\in[a,x],
  \\&
  h(y)\geq\left(l+(k-l)\frac{b-y}{b-x}\right)^{N-1},\quad\forall y\in[x,b].
\end{align}
If we integrate, we obtain
\begin{equation*}
  \begin{aligned}
    w
    &
    \geq
    \int_{a}^x\frac{y^{N-1}}{x^{N-1}}k^{N-1}
    \,
    dy
    +
    \int_x^{b}\left(l+(k-l)\frac{b-y}{b-x}\right)^{N-1}
    \,dy
    \\&
    =
    \frac{k^{N-1}\,(x^N-a^N)}{Nx^{N-1}}
    +
    \frac{b-x}{N}
    \,
    \frac{l^N-k^N}{l-k},
  \end{aligned}
\end{equation*}
yielding
\begin{equation*}
  \begin{aligned}
  \frac{
    1
    -
    \left(
      \frac{k}{l}
    \right)^N
  }{
    1-\frac{k}{l}}
  &
  \leq
  \frac{
    Nw
    -
    \frac{
      k^{N-1}(x^N-a^N)
    }{
      x^{N-1}
    }
  }{
    l^{N-1}(b-x)
  }
  =
  \frac{
    \frac{Nw}{bl^{N-1}}
    -
    \frac{k^{N-1}(x^N-a^N)}{b(lx)^{N-1}}
  }{
    1-\frac{x}{b}
  }
  \\&
  \leq
  \frac{
    \frac{Nw}{bl^{N-1}}
    -
    \frac{x^N-a^N}{b^N}
  }{
    1-\frac{x}{b}
  }
  =
  \frac{
    \frac{Nw}{bl^{N-1}}
    +
    \frac{a^N}{b^{N}}
    -
    \frac{x^N}{b^N}
  }{
    1-\frac{x}{b}
  }
  ,
  \end{aligned}
\end{equation*}
where in the last inequality we used the Bishop--Gromov inequality
written in the form
$\frac{k^{N-1}}{l^{N-1}}\geq\frac{x^{N-1}}{b^{N-1}}$.
We now estimate the terms $\frac{Nw}{bl^{N-1}}$ and
$\frac{a^N}{b^{N}}$.
Regarding the former, taking into
account~\eqref{eq:almost-rigidity-b-below} and the isoperimetric
inequality~\eqref{eq:isoperimetric-1d}, we deduce
\begin{align*}
  \frac{Nw}{bl^{N-1}}
  &
  =
  \frac{Nw}{b(E)\,h(b(E))}
  \leq
    \frac{Nw}{b(E)\,h(r_h(w))}
  \\
  &
  \leq
  \frac{
    Nw
  }{
    Dw^{\frac{1}{N}}(1-o(1))
    \,\,
    \frac{N}{D}w^{1-\frac{1}{N}}(1-O(w^{\frac{1}{N}})
  }
  =
  1+o(1).
\end{align*}
Conversely, we estimate the latter term
(recall~\eqref{eq:almost-rigidity-b-above}
and~\eqref{eq:almost-rigidity-a})
\begin{equation}
\frac{a^N}{b^{N}}
=
\frac{a(E)^N}{b(E)^{N}}
\leq
\frac
{
  D^No(w)
}{
  D^Nw(1-o(1))^N
}
=
o(1).
\end{equation}
Putting all the pieces together, we obtain
\begin{equation}
  f\left(\frac{k}{l},0\right)
  =
  \frac{1-\left(\frac{k}{l}\right)^N}{1-\frac{k}{l}}
  \leq
  \frac{
    \frac{Nw}{bl^{N-1}}
    +
    \frac{a^N}{b^{N}}
    -
    \frac{x^N}{b^N}
  }{
    1-\frac{x}{b}
  }
  \leq
  \frac{
    1+o(1)
    -
    \frac{x^N}{b^N}
  }{
    1-\frac{x}{b}
  }
  =f\left(\frac{x}{b},o(1)\right),
\end{equation}
where $f$ is the function of Lemma~\ref{lem:monotonia-f}.
Applying said Lemma we get
\begin{equation}
  \frac{k}{l}
  -
  \frac{x}{b}
  \leq
  g(o(1))
  =
  o(1).
\end{equation}
If we explicit the definitions of $k$, $l$, and $b$, it turns out that
the inequality above is precisely the thesis.
\end{proof}


\subsection{Rescaling the diameter and renormalizing the measure}
So far, we have obtained an estimate of the densities $h$ and the set $E$.
The presence of factor $\frac{1}{D^N}$ in the
estimate~\eqref{eq:rigidity-of-h-below} suggests
the need of a suitable rescaling to get a non-trivial limit estimate.
We rescale the space by $\frac{1}{b(E)}$ and renormalize the measure
by $\mm_h(E)$.

Fix $k>0$ and define the rescaling transformation $S_k(x)=x/k$.
Given a density $h:[0,D']\to\R$ and $E\subset [0,L]$, we
define
\begin{equation}
  \nu_{h,E}
  =
  (S_{b(E)})_\#
  \left(
    \frac{
      \mm_h\llcorner_E
    }{
      \mm_h(E)
    }
  \right)
  \in\ProbMeas([0,1]).
\end{equation}
Clearly $\nu_{h,E}\ll\Leb^1$, so we denote by $\tilde h_E:[0,1]\to\R$
the Radon--Nikodym derivative $\frac{d\nu_{h,e}}{d\Leb^1}$.
The density $\tilde h_E$ can be explicitly computed
\begin{equation}
  \label{eq:definition-of-tilde-h-E}
  \tilde h_E(t)
  =
  \indicator_E(b(E)t)
  \frac{b(E)}{\mm_h(E)}
  \,
  h(b(E)t).
\end{equation}
Notice that, since $E$ could be disconnected, the indicator function
in~\eqref{eq:definition-of-tilde-h-E} prevents
$\tilde h_E^{\frac{1}{N-1}}$ from being concave, i.e.,\ $\tilde h_E$
possibly fails the oriented $\CD(0,N)$ condition.
However, in the limit, the $\CD(0,N)$ condition reappears, as it is
explicated by the following proposition.

\begin{proposition}
  Fix $N>1$, $L>0$, and $\Lambda\geq 1$.
Then, for $w\to0$ and $\delta\to 0$ it holds that
\begin{equation}
  \norm{\tilde h_E- N t^{N-1}}_{L^\infty(0,1)}
  \leq
  o(1)
\end{equation}
where $D\geq 4L\Lambda$, $D'\in(0,D]$, $([0,D'],F,h\Leb^1)$ is a
one-dimensional measured Finsler manifold satisfying the $\CD(0,N)$ condition,
with $\Lambda_F\leq\Lambda$, and the set $E\subset [0,L]$
satisfies $\mm_h(E)=w$ and $\Res_{F,h}^D(E)\leq \delta$.
\end{proposition}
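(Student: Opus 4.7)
The strategy is to pull both pointwise bounds on $h$ established in~\eqref{eq:rigidity-of-h-below} and~\eqref{eq:rigidity-of-h-above} through the rescaling $S_{b(E)}$, using that $b(E)$, $a(E)$, and the normalizing factor $\frac{b(E)}{w}h(b(E))$ are all close to their expected model values. First, I would compute the normalizing constant. Combining the lower bound $h(b(E))\geq \frac{N}{D^N}b(E)^{N-1}(1-o(1))$ from~\eqref{eq:rigidity-of-h-below} with the upper bound $h(b(E))\leq \PP_{F,h}(E)$ and Corollary~\ref{C:self-improvement-residual} (which upgrades the assumption $\Res_{F,h}^D(E)\leq\delta$ to a two-sided $|\Res_{F,h}^D(E)|\leq o(1)$), and then using the rigidity $b(E)=Dw^{\frac{1}{N}}(1+o(1))$ from~\eqref{eq:almost-rigidity-b-below}--\eqref{eq:almost-rigidity-b-above}, I expect to obtain $\frac{b(E)}{w}h(b(E))=N(1+o(1))$.

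Next, I would analyze the ``bulk'' region $t\in[a(E)/b(E),1]$. Setting $x=b(E)t$, the lower bound~\eqref{eq:rigidity-of-h-below} together with $b(E)^N/(wD^N)=(1+o(1))^N$ gives
\begin{equation*}
\tilde h_E(t)\ \geq\ \indicator_E(b(E)t)\,\frac{b(E)^N}{wD^N}\,Nt^{N-1}(1-o(1))\ =\ \indicator_E(b(E)t)\cdot Nt^{N-1}(1-o(1)),
\end{equation*}
while the upper bound~\eqref{eq:rigidity-of-h-above} gives
\begin{equation*}
\tilde h_E(t)\ \leq\ \frac{b(E)}{w}\,h(b(E))\,(t+o(1))^{N-1}\ =\ N(1+o(1))\,(t+o(1))^{N-1}.
\end{equation*}
Since $s\mapsto Ns^{N-1}$ is uniformly continuous on $[0,2]$, the latter equals $Nt^{N-1}+o(1)$ uniformly. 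Hence $|\tilde h_E(t)-Nt^{N-1}|=o(1)$ uniformly on this region, provided I also control the ``hole'' $[0,1]\setminus(E/b(E))$ which is contained in $[0,a(E)/b(E)]$ by Lemma~\ref{lem:increasing}; on this hole both $\tilde h_E$ and $Nt^{N-1}$ will be shown to be $o(1)$ in the next step.

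Finally, for $t\in[0,a(E)/b(E)]$, combining~\eqref{eq:almost-rigidity-a} with~\eqref{eq:almost-rigidity-b-below} yields $a(E)/b(E)=o(1)$, so that the model density satisfies $Nt^{N-1}\leq N(a(E)/b(E))^{N-1}=o(1)$ on this interval. For $\tilde h_E$ itself, I would use monotonicity of $h$ on $[0,b(E)]$ (Lemma~\ref{lem:increasing}) to get $h(b(E)t)\leq h(a(E))$, and then specialize~\eqref{eq:rigidity-of-h-above} at the single point $x=a(E)$ to obtain $h(a(E))\leq h(b(E))(a(E)/b(E)+o(1))^{N-1}=h(b(E))\,o(1)$; multiplication by $b(E)/w$ then yields $\tilde h_E(t)=o(1)$ here as well. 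The main subtlety, and the only nontrivial obstacle, is precisely this last step: the upper bound~\eqref{eq:rigidity-of-h-above} is stated only for $x\geq a(E)$ and cannot be applied pointwise for $t<a(E)/b(E)$, so monotonicity of $h$ is essential to transfer the bound, and it is crucial that the exponent $N-1$ is strictly positive so that $(a(E)/b(E))^{N-1}\to 0$. Putting the three regions together gives the claimed uniform $L^\infty$ estimate.
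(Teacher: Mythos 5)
Your proposal is correct and follows essentially the same route as the paper's proof: it uses the same four ingredients (the lower bound~\eqref{eq:rigidity-of-h-below}, the upper bound~\eqref{eq:rigidity-of-h-above} on $[a(E),b(E)]$, the rigidity of $b(E)$ and $a(E)$ from Proposition~\ref{P:almost-rigidity-general}, and monotonicity of $h$ on $[0,b(E)]$ from Lemma~\ref{lem:increasing}), with only a cosmetic difference in organization — you merge the paper's four cases into a bulk region $t\geq a(E)/b(E)$ and a small region $t\leq a(E)/b(E)$ where both $\tilde h_E$ and $Nt^{N-1}$ are $o(1)$. You correctly identified both subtleties the paper handles implicitly: the need for Lemma~\ref{lem:increasing} to guarantee $b(E)t\in E$ a.e.\ on the bulk region, and the need for monotonicity to transfer~\eqref{eq:rigidity-of-h-above} (valid only for $x\geq a(E)$) down to $t<a(E)/b(E)$.
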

\begin{proof}
Fix $t\in[0,1]$.
The proof is divided in four parts.

\smallskip\noindent
{\bf Part 1 {\rm Estimate from below and $t>\frac{a(E)}{b(E)}$}.}\\%
Since $t>\frac{a(E)}{b(E)}$, then $t\,b(E)\in E$ (for a.e.\ $t$).
A direct computation, gives
\begin{equation}
  \begin{aligned}
    \tilde h_E(t)
    &
    =
    \frac{b(E)}{w}
    \,
    h(tb(E))
    \geq
    \frac{Nb(E)^N}{D^Nw}\,
    t^{N-1}(1-o(1))
    \\&
    \geq
    \frac{ND^Nw(1+o(1))^N}{D^Nw}\,
    t^{N-1}(1-o(1))
    =
    Nt^{N-1}
    -
 o(1),
\end{aligned}
\end{equation}
having used the estimate~\eqref{eq:rigidity-of-h-below}, with
$x=tb(E)$, in the first inequality
and~\eqref{eq:almost-rigidity-b-below} in the second inequality.

\smallskip\noindent
{\bf Part 2 {\rm Estimate from below and $t\leq\frac{a(E)}{b(E)}$}.}\\%
In this case it may happen that $t\,b(E)\notin E$, so the best
estimate from below is the non-negativity.
For this reason, here we exploit the fact that the interval
$[0,\frac{a(E)}{b(E)}]$ is ``short'' and that $t\leq \frac{a(E)}{b(E)}$.
A direct computation gives (recall~\eqref{eq:almost-rigidity-b-below}
and~\eqref{eq:almost-rigidity-a})
\begin{equation}
  \begin{aligned}
    \tilde h_E(t)
    &
    \geq
    0
    \geq
    Nt^{N-1}-Nt^{N-1}
    \geq
    Nt^{N-1}
    -N
    \frac{a(E)^{N-1}}{b(E)^{N-1}}
    \\&
    \geq
    Nt^{N-1}
    -N
    \frac{
      D^{N-1}o(w^{1-\frac{1}{N}})
    }{
      D^{N-1}w^{1-\frac{1}{N}}(1+o(1))^{N-1}
    }
    \geq
    Nt^{N-1}
    -o(1).
  \end{aligned}
\end{equation}
\smallskip\noindent
{\bf Part 3 {\rm Estimate from above and $t\geq\frac{a(E)}{b(E)}$}.}\\%
We use estimate~\eqref{eq:rigidity-of-h-above}, with $x=tb(E)$, deducing
\begin{equation}
  \begin{aligned}
    \tilde h_E(t)
    &
    =
    \frac{b(E)}{w}
    \,
    h(tb(E))
    \leq
    \frac{b(E)}{w}
    \,
    h(b(E))
    (t+o(1))^{N-1}
    \leq
    \frac{b(E)}{w}
    \,
    h(b(E))
    (t^{N-1}+o(1))
    \\&
    \leq
    \frac{
      D w^{\frac{1}{N}}(1+o(1))
    }{w}
    \,
    \PP_{F,h}(E)
    (t^{N-1}+o(1))
    \\&
    =
    \frac{
      D w^{\frac{1}{N}}(1+o(1))
    }{w}
    \,
    \frac{N}{D}
    w^{1-\frac{1}{N}}
    (1+\Res_{F,h}^D(E))
    (t^{N-1}+o(1))
    \\&
    \leq
    N
    (1+o(1))
    (1+\delta)
    (t^{N-1}+o(1))
    =
    Nt^{N-1}
    +o(1)
  \end{aligned}
\end{equation}
(in the second inequality we used the uniform continuity of
$t\in[0,1]\mapsto t^{N-1}$; in the third one,
estimate~\eqref{eq:almost-rigidity-b-above}).

\smallskip\noindent
{\bf Part 4 {\rm Estimate from above and $t\leq\frac{a(E)}{b(E)}$}.}\\%
Without loss of generality we can assume that $a(E)\in E$.
Using the previous part we compute
\begin{align*}
  \tilde h_E(t)
  &
    =
    b(E)
    \frac{
    \indicator_E(tb(E))
    }{
    \mm_h(E)}
    h(b(E)t)
    \leq
    \frac{
    b(E)
    }{
    \mm_h(E)}
    h(b(E)t)
    \leq
    \frac{
    b(E)
    }{
    \mm_h(E)}
    h(a(E))
  \\&
    =
    \tilde h_E\left(\frac{a(E)}{b(E)}\right),
  \leq
    N\left(
    \frac{a(E)}{b(E)}
    \right)^{N-1}
    +o(1)
  \leq
  o(1)
  \leq
  Nt^{N-1}
  +o(1).
\qedhere
\end{align*}
\end{proof}

The following theorem summerizes the contents of this section.
Notice that the function $\omega$ takes as argument the positive part
of the residual and not the residual itself.
\begin{theorem}\label{T:rigidity-1d}
  Fix $N>1$, $L>0$, and $\Lambda\geq 1$.
  Then there exists a function
  $\omega:(0,\infty)\times[0,\infty)\to\R$, infinitesimal in
  $0$, such that the following holds.
  For all $D\geq 4L\Lambda$, $D'\in(0,D)$, for all $([0,D'],F,h\Leb^1)$
  one-dimensional measured Finsler manifold satisfying the oriented $\CD(0,N)$
  condition with $\Lambda_F\leq\Lambda$, and for all $E\subset [0,L]$,
  it holds that
  \begin{align}
    &
      \label{eq:almost-rigidity-b}
      \left|
      b(E)-D\mm_h(E)^{\frac{1}{N}}
      \right|
      \leq
      D\mm_h(E)^{\frac{1}{N}}
      \omega(\mm_h(E),(\Res_{F,h}^D(E))^{+})
      ,
    \\[2mm]
    &
      \label{eq:almost-rigidity-h-tilde}
      \norm{\tilde{h}_E- N t^{N-1}}_{L^\infty}
      \leq
      \omega(\mm_h(E),(\Res_{F,h}^D(E))^{+})
      ,
    \\[2mm]
    &
      \label{eq:self-improvement-residual}
      \Res_{F,h}^D(E)
      \geq
      -\omega(\mm_h(E),(\Res_{F,h}^D(E))^{+})
      ,
  \end{align}
  where $b(E)=\esssup E$ and
  $\tilde h_E$ is the density of
  $\mm_h(E)^{-1}(S_{b(E)})_\#\mm_h\llcorner_E$, with
  $S_{b(E)}(x)=x/b(E)$.
\end{theorem}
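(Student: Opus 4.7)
The plan is to read Theorem~\ref{T:rigidity-1d} as a ``bookkeeping'' synthesis of the asymptotic estimates already obtained in Section~\ref{S:one-dim}: Propositions~\ref{P:almost-rigidity-general} and~\ref{P:rigidity-of-space-easy-part}, Corollary~\ref{C:self-improvement-residual}, together with the two unlabeled propositions preceding the theorem (the upper bound \eqref{eq:rigidity-of-h-above} and the $L^\infty$--closeness of $\tilde h_E$ to $Nt^{N-1}$). Each such estimate is of the form ``$\leq o(1)$ as $w := \mm_h(E) \to 0$ and $\delta \geq \Res_{F,h}^D(E) \to 0$'', with $N, L, \Lambda$ fixed. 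The task is to repackage them into a single modulus $\omega(w,\delta)$, infinitesimal at $(0,0)$, that uses the natural quantity $(\Res_{F,h}^D(E))^+$ rather than an external upper bound $\delta$.

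The first observation is that for $\delta \geq 0$ the two hypotheses $(\Res_{F,h}^D(E))^+ \leq \delta$ and $\Res_{F,h}^D(E) \leq \delta$ are equivalent, so every earlier estimate is available with $\delta := (\Res_{F,h}^D(E))^+$. With this in mind, I would define
\[
\omega_1(w,\delta) := \sup \frac{|b(E) - D w^{1/N}|}{D w^{1/N}}, \qquad
\omega_2(w,\delta) := \sup \|\tilde h_E - N t^{N-1}\|_{L^\infty}, \qquad
\omega_3(w,\delta) := \sup (-\Res_{F,h}^D(E)),
\]
where the suprema are taken over all admissible $(D,D',F,h,E)$ with $D \geq 4 L \Lambda$, $D' \in (0,D]$, $\Lambda_F \leq \Lambda$, $E \subset [0,L]$, $\mm_h(E) = w$, and $(\Res_{F,h}^D(E))^+ \leq \delta$. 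Setting $\omega := \max(\omega_1,\omega_2,\omega_3)$, the three conclusions of the theorem hold by construction; what remains is the limit $\omega(w,\delta) \to 0$ as $(w,\delta) \to (0,0)$.

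For $\omega_1$ I would combine \eqref{eq:almost-rigidity-b-above} and \eqref{eq:almost-rigidity-b-below}, which together give $|b(E) - D w^{1/N}| \leq D \, o(w^{1/N})$. For $\omega_2$ the statement is exactly the content of the unlabeled proposition before the theorem. For $\omega_3$ it is precisely Corollary~\ref{C:self-improvement-residual}. The one point deserving care is to verify that each ``$o(1)$'' is uniform in the free variables $D, D', F, h, E$, depending only on $w, \delta, N, L, \Lambda$: tracing back the proofs, this uniformity holds because every implicit constant comes from an explicit expression in those parameters via the isoperimetric bound \eqref{eq:isoperimetri-1d-finsler}, Bishop--Gromov, the concavity of $h^{1/(N-1)}$, and the confinement $E \subset [0,L]$. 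I do not expect a serious obstacle here; the theorem is essentially a uniform restatement of previously established asymptotics, and the main diligence is this uniformity check.
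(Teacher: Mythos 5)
Your proposal is correct and coincides with the paper's (implicit) argument: Theorem~\ref{T:rigidity-1d} is explicitly presented as a summary of the estimates of Section~\ref{S:one-dim}, and the standard way to turn uniform Landau-type asymptotics into a single modulus $\omega$ is precisely the supremum construction you give, with infinitesimality following from Proposition~\ref{P:almost-rigidity-general}, the unlabeled $L^\infty$-estimate preceding the theorem, and Corollary~\ref{C:self-improvement-residual}. Your remark that $(\Res_{F,h}^D(E))^+\le\delta$ is equivalent to $\Res_{F,h}^D(E)\le\delta$ for $\delta\ge0$, and your emphasis on the uniformity of the implicit constants (which the paper itself addresses in a dedicated remark at the start of Section~\ref{S:one-dim}), are exactly the two points that make the reduction legitimate.
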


\section{Passage to the limit as \texorpdfstring{$R\to\infty$}{R→∞}}
\label{S:limit}

We now go back to the studying the identity case of the isoperimetric
inequality.
Fix $E$ a bounded Borel with positive measure such that
\begin{equation}
  \PP(E)=N(\omega_N\AVR_X)^{\frac{1}{N}}\mm(E)^{1-\frac{1}{N}},
\end{equation}
where $(X,F,\mm)$ is a $\CD(0,N)$ measured Finsler manifold having $\AVR_X>0$.
We will use the notation introduced Section~\ref{sec:localization}.
Denote by $\phi_R$ the $1$-Lipschitz Kantorovich potential associated
to $f_{R}=\indicator_E-\frac{\mm(E)}{\mm(B_R)}\indicator_{B_R}$.
If we add a constant to $\phi_R$, we still get a Kantorovich
potential, so we can assume that the family $\phi_R$ is equibounded on
every bounded set.
The Ascoli--Arzel\`a theorem, together with a diagonal argument,
implies that that, up to subsequences, $\phi_R$ converges to a certain
$1$-Lipschitz function $\phi_\infty$, uniformly on every compact set.
%

We recall the disintegration given by Proposition~\ref{P:disintfinal}
\begin{align}
  \label{eq:dis-one-line}
  &
\mm\llcorner_{\widehat{\mathcal{T}}_{R}} 
    = \int_{Q_{R}} \widehat{\mm}_{\alpha,R}\, \widehat{\qq}_{R}(d\alpha),
    \quad
    \text{ and }
    \quad
  \PP(E;\,\cdot\,)
  \geq
  \int_{Q_R}
  \PP_{\widehat X_{\alpha,R}}(E;\,\cdot\,)
  \,\widehat\q_R(d\alpha)
    .
\end{align}

The effort of this section goes in the direction to
understand how the properties of the disintegration behave at the
limit, and to try to pass to the limit in the disintegration.
Throughout this section, we set
$\rho=(\frac{\mm(E)}{\omega_{N}\AVR_{X}})^{\frac{1}{N}}$.

Before going on, using the self-improvement estimate of the
residual~\eqref{eq:self-improvement-residual}, we prove the following
Proposition.
\begin{proposition}\label{cor:goodrays3}
Up to taking subsequences, it holds that
\begin{equation}
  \label{eq:residual-is-infinitesimal3}
  \lim_{R\to\infty}
  \Res_{\QQ_R(x),R}
  = 0
  ,
  \qquad
  \mm\llcorner_{E}\text{-a.e.}
  .
\end{equation}
\end{proposition}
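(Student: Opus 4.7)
The plan is to upgrade the integral control of Corollary~\ref{cor:goodrays2} to $L^1(\mm\llcorner_E)$-convergence of $\Res_R(x) := \Res_{\QQ_R(x),R}$; the stated pointwise convergence then follows by extracting a subsequence via a standard diagonal argument. The engine will be the self-improvement estimate~\eqref{eq:self-improvement-residual} of Theorem~\ref{T:rigidity-1d}, which, combined with the integral bound, allows control of both the positive and the negative parts of $\Res_R$.

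First I would obtain a uniform pointwise control on the negative part $\Res_R^-$. On each transport ray the trace of $E$ has $\widehat\mm_{\alpha,R}(E) = \mm(E)/\mm(B_R) =: \epsilon_R$, and $\epsilon_R \to 0$. The self-improvement estimate reads $\Res_{\alpha,R} \geq -\omega(\epsilon_R, (\Res_{\alpha,R})^+)$, with $\omega$ infinitesimal at the origin. On rays where $\Res_{\alpha,R} \leq 0$ this specializes to $\Res_{\alpha,R} \geq -\omega(\epsilon_R, 0)$, while on rays where $\Res_{\alpha,R} > 0$ the negative part is trivially zero. In either case
\begin{equation*}
  \Res_R^-(x) \leq \omega(\epsilon_R, 0) \qquad \mm\llcorner_E\text{-a.e.\ }x,
\end{equation*}
and the right-hand side vanishes as $R\to\infty$ by the infinitesimal property of $\omega$.

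Integrating, $\int_E \Res_R^-\, d\mm \leq \omega(\epsilon_R, 0)\,\mm(E) \to 0$. Combined with the bound $\limsup_R \int_E \Res_R\, d\mm \leq 0$ of Corollary~\ref{cor:goodrays2}, the identity $\int_E \Res_R^+ = \int_E \Res_R + \int_E \Res_R^-$ and non-negativity of $\Res_R^+$ force $\int_E \Res_R^+ \to 0$ as well. Hence $\|\Res_R\|_{L^1(\mm\llcorner_E)} \to 0$, and extracting a subsequence yields $\Res_R \to 0$ pointwise $\mm\llcorner_E$-a.e.

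The main hurdle is to justify the uniform applicability of Theorem~\ref{T:rigidity-1d} over all rays: the theorem requires the scale condition $D \geq 4L\Lambda_F$, with $L$ an upper bound on the support of $E$ along the ray. In our setting $D = R + \diam(E)$ is large, but the a priori bound on $L$ is only of order $R/2$. The resolution exploits the almost rigidity $b(E) \approx \rho$, with $\rho = (\mm(E)/(\omega_N \AVR_X))^{1/N}$, allowing us to take $L$ to be a fixed constant (e.g.\ $2\rho$) for large $R$; rays that fail $b(E)\leq L$ must either be handled by a bootstrap or shown to form a $\widehat{\q}_R$-asymptotically negligible set, whose contribution can be absorbed into the error.
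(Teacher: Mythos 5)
Your argument reproduces the paper's proof essentially verbatim: you use the integral bound from Corollary~\ref{cor:goodrays2}, control the negative part $\Res_R^-$ pointwise via the self-improvement estimate~\eqref{eq:self-improvement-residual} (noting that on the set where $\Res_R\leq 0$ one has $\omega(\epsilon_R,(\Res_R)^+)=\omega(\epsilon_R,0)$, while on the complement $\Res_R^-=0$), deduce $L^1(\mm\llcorner_E)$-convergence, and extract an a.e.\ convergent subsequence.

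The ``hurdle'' you raise at the end is not a genuine gap, and your proposed resolution via almost rigidity of $b(E)$ and an asymptotically negligible exceptional set is unnecessary. The crude bound $t\leq R/2$ from Section~\ref{sec:localization} is not the sharp one: since the starting point $g_R(\alpha,0)$ of each transport ray lies (a.e.) in $E$ --- being precisely where $f_R>0$ --- and any point $x=g_R(\alpha,t)\in E\cap X_{\alpha,R}$ satisfies $t=\sfd(g_R(\alpha,0),x)\leq\diam(E)$, the trace of $E$ along \emph{every} ray lies in $[0,\diam(E)]$ unconditionally. Taking $L=\diam(E)$ fixed, the scale condition $D=R+\diam(E)\geq 4L\Lambda_F$ holds as soon as $R\geq(4\Lambda_F-1)\diam(E)$, so Theorem~\ref{T:rigidity-1d} applies uniformly over all rays for all large $R$, with no bootstrap and no exceptional set to absorb.
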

\begin{proof}
  Corollary~\ref{cor:goodrays2} guarantees that
\begin{equation}
  \limsup_{R\to\infty}
  \int_{E} \Res_{\QQ_R(x),R}\,\mm(dx)
  \leq 0,
\end{equation}
Using estimate~\eqref{eq:self-improvement-residual}, we estimate the
negative part of the residual
\begin{equation}
  (\Res_{\QQ_R(x),R})^{-}
  \leq
  \omega  \left(\frac{\mm(E)}{\mm(B_R)},(\Res_{\QQ_R(x),R})^{+}\right)
  =
  \omega  \left(\frac{\mm(E)}{\mm(B_R)},0\right)
  ,
\end{equation}
where $\omega$ is a function, infinitesimal in $(0,0)$.
The $L^1$-norm of the residual is given by
\begin{equation}
  \norm{\Res_{\QQ_R(x),R}}_{L^1(E;\mm)}
  =
  2\int_E
  (\Res_{\QQ_R(x),R})^-\,d\mm
  +
  \int_E
  \Res_{\QQ_R(x),R}\,d\mm
  .
\end{equation}
Taking into account the previous inequality and, again,
Corollary~\ref{cor:goodrays2},
we deduce that $\Res_{\QQ_R(x),R}$, converges to $0$ in $L^1$.
By taking a subsequence, we
obtain~\eqref{eq:residual-is-infinitesimal3}.
\end{proof}

\subsection{Passage to the limit of the radius}
First of all we define the \emph{radius} function
$r_R:\overline{E}\to[0,\diam E]$.
Fix $x\in E\cap \widehat{\mathcal{T}}_R$ and let
$E_{x,R}:=(g_R(\QQ_R(x),\cdot))^{-1}(E)\subset[0,|\widehat{X}_{\QQ_R(x),R}|]$.
Define
\begin{align}
  \label{eq:definition-r}
  &
    r_R(x):=
    \begin{cases}
      \esssup E_{x,R}
      ,
      \quad
      &\text{ if } x\in E\cap\widehat\T_R,
      \\
      0
      ,
      \quad
      &\text{ otherwise.}
    \end{cases}
\end{align}
Notice that $r_{R}(x) = b(E_{x,E})$, where the notation $b(E)$
was introduced in Section~\ref{Ss:rigidity-non-convex}.

The radius function  is defined on $\overline E$ for two motivations:
we require a common domain not depending on $R$ and 
the domain must be compact.
\begin{remark}\label{rmrk:domain-radius}
The set $E\cap\widehat\T_R$ has full $\mm\llcorner_E$-measure in
$\overline E$, hence it does not really matter how $r_R$ is defined outside
$E\cap\widehat\T_R$.
This fact is relevant, because we will only take limits
in the $\mm\llcorner_E$-a.e.\ sense.
\end{remark}

The next proposition ensures that, in limit as $R\to\infty$, the
function $r_R$ converges to $\rho$, which is
precisely the radius that we expect.

\begin{proposition}
  \label{P:limit-of-function-r}
  Up to subsequences it holds true
  \begin{equation}
    \lim_{R\to\infty} r_R
    =\rho=
    \left(
      \frac{\mm(E)}{\omega_N \AVR_X}
    \right)^{\frac{1}{N}}, \qquad \mm\llcorner_E -\textrm{a.e.}.
  \end{equation}
\end{proposition}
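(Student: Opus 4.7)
The plan is to read off the convergence from the one-dimensional almost rigidity for $b(E)$ in Theorem~\ref{T:rigidity-1d}, applied with $D = R + \diam(E)$, $h = h_{\alpha,R}$, and the trace $E_{x,R} = (g_R(\QQ_R(x),\,\cdot\,))^{-1}(E)$, so that $r_R(x) = b(E_{x,R})$.

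First I would observe that, by the balancing relation~\eqref{E:disintfinal2} in Proposition~\ref{P:disintfinal}, for $\widehat\qq_R$-a.e.\ $\alpha$ the trace satisfies $\widehat\mm_{\alpha,R}(E) = \mm(E)/\mm(B_R)$, hence
\begin{equation*}
\mm_{h_{\alpha,R}}(E_{x,R})
= \frac{\mm(E)}{\mm(B_R)}
\longrightarrow 0, \qquad R\to\infty,
\end{equation*}
uniformly in $x$. Next, thanks to Proposition~\ref{P:disintfinal}, each ray has length $|X_{\alpha,R}| \le R + \diam(E)$, while $E$ is forward bounded, so for $R$ large enough the hypothesis $D = R + \diam(E) \ge 4L\Lambda_F$ of Theorem~\ref{T:rigidity-1d} is met (with $L$ large enough to contain the projections of $E$ onto each ray). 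Proposition~\ref{cor:goodrays3} furnishes a subsequence along which $\Res_{\QQ_R(x),R}\to 0$ for $\mm\llcorner_E$-a.e.\ $x$; along this subsequence the positive part of the residual is also infinitesimal.

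Now I would apply the estimate~\eqref{eq:almost-rigidity-b} of Theorem~\ref{T:rigidity-1d} pointwise in $x$: for $\mm\llcorner_E$-a.e.\ $x$,
\begin{equation*}
\left| r_R(x) - (R + \diam E)\Bigl(\tfrac{\mm(E)}{\mm(B_R)}\Bigr)^{1/N} \right|
\le (R + \diam E)\Bigl(\tfrac{\mm(E)}{\mm(B_R)}\Bigr)^{1/N}\,
\omega\!\left(\tfrac{\mm(E)}{\mm(B_R)},(\Res_{\QQ_R(x),R})^+\right).
\end{equation*}
The definition of $\AVR_X$ gives $\mm(B_R) = \omega_N\AVR_X\,R^N(1+o(1))$, so
\begin{equation*}
(R + \diam E)\Bigl(\tfrac{\mm(E)}{\mm(B_R)}\Bigr)^{1/N}
\longrightarrow \left(\tfrac{\mm(E)}{\omega_N\AVR_X}\right)^{1/N} = \rho,
\end{equation*}
and the right-hand side $\omega(\cdot,\cdot)$ is infinitesimal because both of its arguments tend to $0$ along our subsequence for $\mm\llcorner_E$-a.e.\ $x$. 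Combining these two facts yields $r_R(x)\to\rho$ pointwise $\mm\llcorner_E$-a.e., which is the claim.

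The main obstacle is the need to pass to a subsequence: the residual convergence provided by Corollary~\ref{cor:goodrays2} is only in integral, and the self-improvement inequality~\eqref{eq:self-improvement-residual} has to be invoked to control the negative part uniformly before extracting an $L^1$-convergent, hence a.e.-convergent, subsequence. This step is exactly Proposition~\ref{cor:goodrays3}, and the subsequence obtained there is the one along which the above argument produces the desired pointwise convergence.
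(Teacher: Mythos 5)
Your proposal is correct and follows essentially the same route as the paper: extract the subsequence via Proposition~\ref{cor:goodrays3}, split $|r_R(x)-\rho|$ by triangle inequality into the one-dimensional almost-rigidity term from Theorem~\ref{T:rigidity-1d} (estimate~\eqref{eq:almost-rigidity-b}) and the term $(R+\diam E)(\mm(E)/\mm(B_R))^{1/N}-\rho$ controlled by the definition of $\AVR_X$, and conclude pointwise a.e. The only cosmetic difference is that the paper explicitly introduces the full-measure set $G:=\bigcap_n\widehat\T_{R_n}\backslash N$ before the pointwise argument, whereas you leave this implicit.
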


\begin{proof}
  By Proposition~\ref{cor:goodrays3}, there exists a sequence $R_n$
  and a negligible subset $N\subset E$, such that
  $\lim_{n\to\infty} \Res_{\QQ_{R_n}(x),R_n}=0$,  for all $x\in E\backslash N$.

  Define $G:=\bigcap_n\widehat\T_{R_n}\backslash N$ and notice that
  $\mm(E\backslash G)=0$.
  Fix $n\in\N$ and $x\in G$ and let $\alpha:=\QQ_{R_n}(x)\in Q_{R_n}$.
  Clearly, it holds
  \begin{align*}
&  |r_{R_n}(x)-
\rho |
%
    \leq
  \left|r_{R_n}(x)-
    ({R_n}+\diam E)
    \left(
      \tfrac{\mm(E)}{\mm(B_{R_n})}
    \right)^\frac{1}{N}
        \right|
  +
  \left|
    ({R_n}+\diam E)
    \left(
      \tfrac{\mm(E)}{\mm(B_{R_n})}
    \right)^\frac{1}{N}
    -
      \rho
      \right|
      .
  \end{align*}

  The second term goes to $0$ by definition of $\AVR$, so we focus on
  the first term.
  Consider the ray $(\widehat X_{\alpha,{R_n}}, F,\widehat\mm_{\alpha,{R_n}})$.
  By definition, we have that
  \begin{equation}
    \Res_{h_{\alpha,{R_n}}}^{{R_n}+\diam E}(E_{x,{R_n}})
    =
    \Res_{\alpha,R_n}
  \end{equation}
  We can now use Theorem~\ref{T:rigidity-1d} (in
  particular estimate~\eqref{eq:almost-rigidity-b}), obtaining
  \begin{equation}
    \begin{aligned}
      &
      \left|
        r_{R_n}(x)
        -
        ({R_n}+\diam E)
        \left(
          \tfrac{\mm(E)}{\mm(B_{R_n})}
        \right)^{\frac{1}{N}}
      \right|
      =
      \left|
        r_{R_n}(x)
        -
        ({R_n}+\diam E)
        (
        \mm_{h_{\alpha,R_n}}(E_{x,{R_n}})
        )^{\frac{1}{N}}
      \right|
      \\
      &
      \quad\quad
      \leq
      ({R_n}+\diam E)
      \mm_{h_{\alpha,R_n}}(E)^{\frac{1}{N}}
      \omega(
        \mm_{h_{\alpha,R_n}}(E)
        ,
        (\Res_{F,h_{\alpha,{R_n}}}^{{R_n}+\diam E}(E_{x,{R_n}})^+
        )
        )
      \\
      &
      \quad\quad
      =
      ({R_n}+\diam E)
      \left(
        \frac{\mm(E)}{\mm(B_{R_n})}
      \right)^{\frac{1}{N}}
      \omega\left(
        \frac{\mm(E)}{\mm(B_{R_n})}
        ,
        (\Res_{\QQ_R(x),{R_n}})^+
      \right)
        .
    \end{aligned}
  \end{equation}
Taking the limit as $n\to\infty$, we conclude.
\end{proof}

\subsection{Passage to the limit of the rays}
Consider now a constant-speed parametrization of the rays inside the set $E$:
\begin{equation}
  \label{eq:definition-of-gamma}
  \gamma_s^{x,R}:=
  \begin{cases}
    g_R(\QQ_R(x),s\,r_R(x)),
    \quad
    &
    \text{ if } x\in E\cap\widehat\T_R,
    \\
    x,
    \quad
    \text{ otherwise,}
  \end{cases}
\end{equation}
where $x\in\overline E$ and $s\in[0,1]$.
Remark~\ref{rmrk:domain-radius} applies also to the map
$x\mapsto\gamma^{x,R}$.
A direct consequence of the definition of $\gamma^{x,R}$ and the
properties of the disintegration are
\begin{align}
  &
    \label{eq:gamma-along-rays}
    \sfd(\gamma_t^{x,R},\gamma_s^{x,R})
    =
    \phi_R(\gamma_t^{x,R})
    -
    \phi_R(\gamma_s^{x,R})
    ,
    \quad
    \forall \,0\leq t\leq s\leq 1,
    \text{ for $\mm$-a.e.\ }x\in E,
  \\
  &
    \label{eq:speed-of-gamma}
    \sfd(\gamma_0^{x,R},\gamma_1^{x,R})
    =r_R(x),
    \quad
    \text{ for $\mm$-a.e.\ }x\in E
    ,
  \\
  &
    \label{eq:x-belongs-to-gamma}
    x\in \gamma^{x,R}
    ,
    \quad
    \text{ for $\mm$-a.e.\ }x\in E
    .
\end{align}
Please notice the order of the quantifiers in~\eqref{eq:gamma-along-rays}:
said equation means that $\exists N\subset E$ negligible such that
$\forall t\leq s$, $\forall x\in E\backslash N$,
\eqref{eq:gamma-along-rays} holds true.
In equation~\eqref{eq:x-belongs-to-gamma}, the expression
$x\in\gamma^{x,R}$ means that $\exists t\in[0,1]$ such that
$x=\gamma^{x,R}_t$, or, equivalently,
$\min_{t\in[0,1]}\sfd(x,\gamma_t^{x,R})=0$.

In order to compute the limit behaviour of $\gamma^{x,R}$ as $R\to\infty$
we proceed as follows.
Define the set
$K:=\{\gamma\in\Geo(X): \gamma_0,\gamma_1\in\overline E\}$; this set is
compact by Ascoli--Arzel\'a Theorem.
Define the measure
(having mass $\mm(E)$)
\begin{equation}
  \tau_R:=
  (\mathrm{Id}\times \gamma^{\,\cdot\,,R})_\#\mm\llcorner_{E}
  \,\in \M(\overline E\times K)
  .
\end{equation}
The measures $\tau_R$ enjoy the following immediate properties,
\begin{align}
  &
    (P_{1})_\#\tau_R=\mm\llcorner_E,
    \quad
    \text{ and }
    \quad
    \gamma=\gamma^{x,R},
    \quad\text{ for $\tau_R$-a.e.\ }(x,\gamma)\in \overline E\times K.
\end{align}
The
properties~\eqref{eq:gamma-along-rays}--\eqref{eq:x-belongs-to-gamma}
can be restated using a more measure-theoretic language
\begin{align}
  &
    \label{eq:gamma-along-rays2}
    \begin{aligned}
      &
    \sfd(e_t(\gamma),e_s(\gamma))
    -
    \phi_R(e_t(\gamma))
    +
    \phi_R(e_s(\gamma))
    =0
    ,
      \\
      &
    \qquad
    \forall \,0\leq t\leq s\leq 1,
        \quad
    \text{ for $\tau_R$-a.e.\ }(x,\gamma)\in \overline E\times K,
    \end{aligned}
  \\
  &
    \label{eq:speed-of-gamma2}
    \sfd(e_0(\gamma),e_1(\gamma))
    -r_R(x)=0,
    \quad
    \text{ for $\tau_R$-a.e.\ }(x,\gamma)\in \overline E\times K
    ,
  \\
  &
    \label{eq:x-belongs-to-gamma2}
    x\in\gamma
    ,
    \quad
    \text{ for $\tau_R$-a.e.\ }(x,\gamma)\in \overline E\times K
\end{align}
Clearly, the family of measures $(\tau_R)_{R>0}$ is tight, thus, by
Prokhorov Theorem, we can extract a sub-sequence such that
$\tau_R\rightharpoonup\tau$ weakly, i.e.,
$\int_{\overline E\times K} \psi\,d\tau_R\to\int_{\overline E\times K}
\psi\,d\tau$, for all $\psi\in C_b(\overline E\times K)$.
The next proposition guarantees that the
properties~\eqref{eq:gamma-along-rays2}--\eqref{eq:x-belongs-to-gamma2}
pass to the limit as $R\to\infty$.
\begin{proposition}
  For $\tau$-a.e.\ $(x,\gamma)\in\overline E\times K$, it holds that
  \begin{align}
    \label{eq:gamma-along-rays-weak}
    &
      \sfd(e_t(\gamma),e_s(\gamma))
      =
      \phi_\infty(e_t(\gamma))
      -
      \phi_\infty(e_s(\gamma))
      ,
      \quad
      \forall\, 0\leq t\leq s\leq1,
    \\
    &
      \label{eq:speed-of-gamma-weak}
      \sfd(e_0(\gamma),e_1(\gamma))
      =
      \rho
      ,
    \\
    &
      \label{eq:x-belongs-to-gamma-weak}
      x\in\gamma
      .
\end{align}
\end{proposition}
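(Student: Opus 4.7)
The strategy is to encode each of the three properties as an integral identity on $\overline{E}\times K$ that either vanishes identically for $\tau_R$ or tends to zero as $R\to\infty$, and then to pass to the limit using the weak convergence $\tau_R\weak\tau$ together with the uniform convergence $\phi_R\to\phi_\infty$ on compact sets and the pointwise/$L^1$ convergence $r_R\to\rho$ on $E$. A preliminary observation, used throughout, is that every $\gamma\in K$ has its image in a common compact subset $C\subset X$: since $\overline E$ is compact, there is $r>0$ with $\overline E\subset B^+(x_0,r)$, and for any $\gamma\in K$ and $t\in[0,1]$ we have $\sfd(x_0,\gamma_t)\leq \sfd(x_0,\gamma_0)+\sfd(\gamma_0,\gamma_1)\leq (\Lambda_F+2)r$; thus $\gamma_t\in C:=\overline{B^+(x_0,(\Lambda_F+2)r)}$. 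In particular, $\phi_R\to\phi_\infty$ uniformly on $C$.

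\emph{Proof of \eqref{eq:x-belongs-to-gamma-weak}.} Define $\psi(x,\gamma):=\min_{t\in[0,1]}\sfd(x,e_t(\gamma))$. Joint continuity of $\psi$ on $\overline E\times K$ follows from uniform convergence of evaluations when $\gamma_n\to\gamma$ in $K$, and boundedness is immediate. By \eqref{eq:x-belongs-to-gamma2} one has $\int\psi\,d\tau_R=0$ for every $R$; weak convergence then gives $\int\psi\,d\tau=0$, hence $\psi=0$ $\tau$-a.e., which is the desired containment.

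\emph{Proof of \eqref{eq:speed-of-gamma-weak}.} Set $\eta(x,\gamma):=|\sfd(e_0(\gamma),e_1(\gamma))-\rho|$, again continuous and bounded. By \eqref{eq:speed-of-gamma2} one computes
\begin{equation*}
\int \eta\,d\tau_R=\int_E |r_R(x)-\rho|\,\mm(dx).
\end{equation*}
Since $r_R(x)\leq(\Lambda_F+1)r$ uniformly in $R$ and $r_R\to\rho$ $\mm\llcorner_E$-a.e.\ (Proposition~\ref{P:limit-of-function-r}), dominated convergence yields $\int\eta\,d\tau_R\to0$; by weak convergence $\int\eta\,d\tau=0$, giving the claim.

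\emph{Proof of \eqref{eq:gamma-along-rays-weak}.} For rationals $0\leq t\leq s\leq 1$, set
\begin{equation*}
\psi^R_{t,s}(x,\gamma):=\bigl|\sfd(e_t(\gamma),e_s(\gamma))-\phi_R(e_t(\gamma))+\phi_R(e_s(\gamma))\bigr|,
\end{equation*}
which is continuous and bounded (uniformly in $R$, since each $\phi_R$ is $1$-Lipschitz and can be chosen equibounded on $C$). By \eqref{eq:gamma-along-rays2}, $\int\psi^R_{t,s}\,d\tau_R=0$. Since $\phi_R\to\phi_\infty$ uniformly on $C$, $\|\psi^R_{t,s}-\psi^\infty_{t,s}\|_{L^\infty(\overline E\times K)}\to 0$; splitting
\begin{equation*}
\Bigl|\int\psi^\infty_{t,s}\,d\tau_R-\int\psi^R_{t,s}\,d\tau_R\Bigr|\leq \|\psi^\infty_{t,s}-\psi^R_{t,s}\|_\infty\,\mm(E),
\end{equation*}
and combining with weak convergence, we deduce $\int\psi^\infty_{t,s}\,d\tau=0$, so $\psi^\infty_{t,s}=0$ outside a $\tau$-null set $N_{t,s}$. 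Taking $N:=\bigcup_{(t,s)\in\Q^2\cap\{t\leq s\}}N_{t,s}$, which is still $\tau$-null, and exploiting continuity of $t\mapsto e_t(\gamma)$ for every $\gamma\in K$ together with continuity of $\sfd$ and of $\phi_\infty$, the identity in \eqref{eq:gamma-along-rays-weak} extends from rationals to all $0\leq t\leq s\leq 1$ for every $(x,\gamma)\notin N$.

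The main technical delicacy is the passage from the integral identity for $\tau_R$ to one for $\tau$ in \eqref{eq:gamma-along-rays-weak}: the integrands $\psi^R_{t,s}$ depend on $R$, and weak convergence alone is insufficient. This is precisely why the uniform convergence $\phi_R\to\phi_\infty$ on a single compact set $C$ containing all images of geodesics in $K$ is indispensable; once established, the remaining passages are routine.
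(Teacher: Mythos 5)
Your proof is correct, and for parts \eqref{eq:x-belongs-to-gamma-weak} and \eqref{eq:gamma-along-rays-weak} it follows essentially the same route as the paper (the paper uses the signed quantity $L_{\phi_R}^{t,s}\ge 0$ and nonnegativity at the end, whereas you use $|L_{\phi_R}^{t,s}|$; cosmetically different, same content). For part \eqref{eq:speed-of-gamma-weak}, however, you take a genuinely different and slightly cleaner route: the paper invokes Lusin and Egorov to produce a compact $M\subset E$ on which $r_R$ is continuous and converges uniformly, and then runs a limiting argument over $M\times K$. You instead observe that the continuous test function $\eta(x,\gamma)=|\sfd(e_0(\gamma),e_1(\gamma))-\rho|$ agrees $\tau_R$-a.e.\ with $|r_R(x)-\rho|$ (by \eqref{eq:speed-of-gamma2}), so the marginal condition $(P_1)_\#\tau_R=\mm\llcorner_E$ collapses $\int\eta\,d\tau_R$ to the scalar quantity $\int_E|r_R-\rho|\,d\mm$, which is then handled by dominated convergence using the $\mm\llcorner_E$-a.e.\ convergence $r_R\to\rho$ and the uniform bound $r_R\le\diam E$. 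Weak convergence is applied only to the genuinely continuous integrand $\eta$, sidestepping the Lusin/Egorov machinery and the delicate upper/lower semicontinuity considerations it entails. This is a worthwhile simplification and buys you a more transparent proof of \eqref{eq:speed-of-gamma-weak}.
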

\begin{proof}
Fix $t\leq s$ and integrate~\eqref{eq:gamma-along-rays2} in
$\overline E\times K$, obtaining
\begin{align*}
  0
  &
    =
    \int_{\overline E\times K}
    (
    \sfd(e_t(\gamma),e_s(\gamma))
    -
    \phi_R(e_t(\gamma))
    +
    \phi_R(e_s(\gamma))
    )\, \tau_R(dx\, d\gamma)
    =
  \\
  &
    \int_{\overline E\times K}
    L_{\phi_R}^{t,s}(\gamma)
    \, \tau_R(dx\, d\gamma)
    ,
\end{align*}
having set
$L_\psi^{t,s}(\gamma):= \sfd(e_t(\gamma),e_s(\gamma)) - \psi(e_t(\gamma)) +
\psi(e_s(\gamma))$.
The map $L_{\phi_R}^{t,s}:K\to\R$ is clearly continuous and converges uniformly
(recall that $\phi_R\to\phi_\infty$ uniformly on every compact) to
$L_{\phi_\infty}^{t,s}$.
Therefore, we can take the limit in the equation above obtaining
\begin{align*}
  0
  &
    =
    \int_{\overline E\times K} L_{\phi_\infty}^{t,s}(\gamma)
    \, \tau(dx\, d\gamma)
    =
  \\
  &
    \int_{\overline E\times K}
    (
    \sfd(e_t(\gamma),e_s(\gamma))
    -
    \phi_\infty(e_t(\gamma))
    +
    \phi_\infty(e_s(\gamma))
    )\, \tau(dx\, d\gamma).
\end{align*}
The $1$-lipschitzianity of $\phi_\infty$, yields
$L_{\phi_\infty}^{t,s}(\gamma)\geq0$, $\forall\gamma\in K$, hence
\begin{equation}
  \sfd(e_t(\gamma),e_s(\gamma))
  =
  \phi_\infty(e_t(\gamma))
  -
  \phi_\infty(e_s(\gamma))
  \quad
  \text{ for $\tau$-a.e.\ } (x,\gamma)\in \overline E\times K.
\end{equation}
In order to conclude, fix $P\subset[0,1]$ a countable dense subset,
and find a $\tau$-negligible set $N\subset\overline E\times K$ such that
the equation above is true outside $N$ for all $t\leq s$ in $P$.
We conclude by approximating $t$ and $s$ in $P$
obtaining~\eqref{eq:gamma-along-rays-weak}.

Now we prove~\eqref{eq:speed-of-gamma-weak}.
The idea is similar, but now we need to be more careful,
for the function $r_R$ fails to be continuous.
We  integrate Equation~\eqref{eq:speed-of-gamma2}
obtaining
\begin{align*}
  0=
  &
    \int_{\overline E\times X}
    |\sfd(e_0(\gamma),e_1(\gamma))-r_R(x)|
    \,\tau_R(dx\,d\gamma).
\end{align*}
Are in position to apply Lusin's and Egorov's theorems.
Fix $\epsilon>0$ and find a compact $M\subset E$, such that:
1) the restrictions $r_R|_{M}$ are continuous;
2) the restricted maps $r_R|_{M}$ converge uniformly to
$\rho$;
3) $\mm(E\backslash {M})\leq\epsilon$.
We now compute the limit
\begin{align*}
  0
  &
    =
    \lim_{R\to\infty}
    \int_{\overline E\times K}
    |\sfd(e_0(\gamma),e_1(\gamma))-r_R(x)|
    \,\tau_R(dx\,d\gamma)
  \\ &
    \geq
    \liminf_{R\to\infty}
    \int_{M\times K}
    |\sfd(e_0(\gamma),e_1(\gamma))-r_R(x)|
    \,\tau_R(dx\,d\gamma)
  \\
  &
    \geq
    \int_{M\times K}
    |
    \sfd(e_0(\gamma),e_1(\gamma))
    -
    \rho
    |
    \,\tau(dx\,d\gamma)
    \geq 0
,
\end{align*}
therefore
\begin{equation}
    \sfd(e_0(\gamma),e_1(\gamma))
    =
    \rho
  ,
  \quad
  \text{ for $\tau$-a.e.\ }
  (x,\gamma)\in M\times K
  .
\end{equation}
This means that the equation above holds true except for a set of
measure at most $\epsilon$, and by letting $\epsilon\to0$, we
conclude.

Finally we prove~\eqref{eq:x-belongs-to-gamma-weak}.
In this case, consider the continuous, non-negative function
$L(x,\gamma):=\inf_{t\in[0,1]}\sfd(x,e_t(\gamma))$.
Equation~\eqref{eq:x-belongs-to-gamma2} implies
\begin{equation}
  0=\int_{\overline E\times K} L(x,\gamma)\,\tau_R(dx\,d\gamma).
\end{equation}
The equation above passes to the limit as $R\to\infty$, so the
conclusion immediately follows.
\end{proof}

\subsection{Disintegration of the measure and the perimeter}

Having in mind the disintegration formula~\eqref{eq:dis-one-line}, we
define the map $\overline E\ni x\mapsto\mu_{x,R}\in\ProbMeas(\overline E)$ as
\begin{equation}
  \mu_{x,R}:=
  \begin{cases}
  \frac{\mm(B_R)}{\mm(E)}
  \,
  (\widehat{\mm}_{\QQ_R(x),R})\llcorner_E
  ,\quad
  &
  \text{ if }x\in E\cap\widehat\T_R,
  \\
  \delta_x
  ,\quad
  &
  \text{ otherwise.}
\end{cases}
\end{equation}
A direct computation
(recall~\eqref{E:disintfinal}--\eqref{E:disintfinal2}) gives
\begin{align*}
  \mm(A\cap E)
  &
    =
    \int_{Q_R}
    \widehat\mm_{\alpha,R}(A\cap E)
    \,\widehat\q_R(d\alpha)
    =
    \frac{\mm(B_R)}{\mm(E)}
    \int_{Q_R}
    \widehat\mm_{\alpha,R}(A\cap E)
    \,(\QQ_R)_\#(\mm\llcorner_E)(d\alpha)
  \\
  &
    =
    \frac{\mm(B_R)}{\mm(E)}
    \int_X
    \widehat\mm_{\QQ_R(x),R}(A\cap E)
    \,\mm\llcorner_E(dx)
    =
    \int_X
    \mu_{x,R}(A)
    \,\mm\llcorner_E(dx),
\end{align*}
thus the following disintegration formula holds,
\begin{equation}
  \label{eq:disintegration-mu}
  \mm\llcorner_E
  =
  \int_{\overline E}\mu_{x,R}\,\mm\llcorner_E(dx)
  .
\end{equation}
\begin{remark}
\label{rmrk:caveat-disintegration}
We briefly discuss the measurablity of the integrand
function in Eq.~\eqref{eq:disintegration-mu}.
It holds that the map $x\mapsto\mu_{x,R}(A)$ is
measurable and the formula~\eqref{eq:disintegration-mu} holds.
Indeed, the map $x\mapsto\mu_{x,R}(A)$ is (up to excluding the
a negligible set) the composition of the maps
$Q_R\ni\alpha\mapsto\frac{\mm(B_R)}{\mm(E)}\widehat\mm_{\alpha,R}(A\cap
E)$ and the projection $\QQ_R$.
The former map is $\widehat\q_R$-measurable, whereas
the map $\QQ_R$ is $\mm$-measurable, w.r.t.\ the
$\sigma$-algebra of $Q_R$, thus the composition is measurable.
\end{remark}

Since
$\widehat\mm_{\alpha,R}=(g_R(\alpha,\cdot))_\#(h_{\alpha,R}\Leb^1\llcorner_{[0,|\widehat
  X_{\alpha,R}|]})$,
we can compute explicitly the measure $\mu_{x,R}$
(recall that by~\eqref{eq:definition-r} $r_R(x)=\esssup E_{x,R}$,
for $\mm\llcorner_E$-a.e.\ $x$)
\begin{equation}
  \begin{aligned}
  \mu_{x,R}
  &
  =
  \frac{\mm(B_R)}{\mm(E)}
  (g_R(\QQ_R(x),\cdot))_\#
  \left(
    (g_R(\QQ_R(x),\cdot))^{-1}(E)
    h_{\QQ_R(x),R}\Leb^1\llcorner_{[0,r_R(r)]}
  \right)
  \\
  &
  =
  (g_R(\QQ_R(x),\cdot))_\#
  \left(
    \indicator_{E_{x,R}}
    \frac{\mm(B_R)}{\mm(E)}
    h_{\QQ_R(x),R}\Leb^1\llcorner_{[0,r_R(x)]}
  \right)
  \\
  &
  =
  (\gamma^{x,R})_\#(\tilde h_E^{x,R}\Leb^1\llcorner_{[0,1]}),
  \quad
  \text{ for $\mm\llcorner_E$-a.e.\ } x\in \overline E
  \end{aligned}
\end{equation}
where
\begin{equation}
  \tilde h_E^{x,R}(t)
  =
  \indicator_{E_{x,R}}(r_R(x) t)
  \,
  r_R(x)
  \frac{\mm(B_R)}{\mm(E)}
  \,
  h_{\QQ_R(x),R}(r_R(x)t).
\end{equation}

Having in mind~\eqref{E:disintfinalper}, we can perform a similar operation
for the perimeter.
Indeed, in the natural parametrization of the rays, if we consider
only the ``right extremal'' of $E_{x,R}$ and the fact that
$F(\partial_t)=1$, it holds that
\begin{equation}
  h_{R,\QQ_R(x)}(r_R(x))\delta_{r_R(x)}
  \leq
  \PP_{F,h_{R,\QQ_R(x)}}(E_{x,R};\,\cdot\,)
  .
\end{equation}
This observation, naturally leads to the definition
\begin{equation*}
  \begin{aligned}
    p_{x,R}
    :&
    =
    \begin{cases}
      \min
      \left\{
        \frac{\mm(B_R)}{\mm(E)}
        h_{R,\QQ_R(x)}(r_R(x))
        ,
      \frac{N}{\rho}
      \right\}
      \delta_{g_R(\QQ_R(x),r_R(x))}
      ,
      \quad
      &
      \text{ if }x\in E\cap\widehat\T_R
      ,
      \\
      \frac{N}{\rho}
      \delta_x
      ,
      \quad
      &
      \text{ if } x\in\overline E\backslash(E\cap\T_R).
    \end{cases}
  \end{aligned}
\end{equation*}
Using the maps $\gamma^{x,R}$ and $\tilde h_{x,R}$, we rewrite
$p_{x,R}$
\begin{equation}
  p_{x,R}
  =
    \begin{cases}
      \min
      \left\{
        \dfrac{
          \tilde h_{x,R}(1)
        }{
          \sfd(\gamma^{x,R}_0,\gamma^{x,R}_1)
        }
        ,
        \dfrac{N}{\rho}
      \right\}
      \delta_{\gamma^{x,R}_1}
      ,
      \quad
      &
      \text{ if }x\in E\cap\widehat\T_R
      ,
      \\
      \dfrac{N}{\rho}
      \delta_x
      ,
      \quad
      &
      \text{ if } x\in\overline E\backslash(E\cap\T_R).
    \end{cases}
\end{equation}
By definition of $p_{x,R}$ we have that
\begin{equation}
  \begin{aligned}
    p_{x,R}
  &
  \leq
  \frac{\mm(B_R)}{\mm(E)}
  \PP_{X_{R,\QQ_R(x)}}(E;\,\cdot\,),
  \quad
  \text{ for $\mm\llcorner_E$-a.e.\ }x\in \overline E,
  \end{aligned}
\end{equation}
deducing the following ``disintegration'' formula
(equations~\eqref{E:disintfinalper} and~\eqref{E:disintfinal2} are
taken into account),
\begin{equation}
  \label{eq:disintegration-perimeter-p}
  \begin{aligned}
    \PP(E;A)
    &
    \geq
    \int_{Q_R}  \PP_{X_{\alpha,R}}(E;A)\,\widehat\q_R(d\alpha)
    =
    \frac{\mm(B_R)}{\mm(E)}
    \int_{\overline E}  \PP_{X_{R,\QQ_R(x)}}(E;A)\,\mm\llcorner_E(dx)
    \\
    &
    \geq
    \int_{\overline E}
    \,p_{x,R}(A)
    \,\mm\llcorner_{E}(dx)
    ,
    \quad
    \forall A\subset \overline E \text{ Borel}.
  \end{aligned}
\end{equation}

Define now the compact set
$F:=e_{(0,1)}(K)=\{\gamma_t:\gamma\in K,t\in[0,1]\}$ and let
$S\subset\M^+(F)$ be the subset of the non-negative measures on $F$
with mass at most $N/\rho$.
The sets $\ProbMeas(F)$ and $S$ are naturally endowed with the weak topology
of measures.
Since $K$ and $F$ are compact Hausdorff spaces, the Riesz--Markov
Representation Theorem, implies that the weak topology on $\ProbMeas(F)$ and
$S$, coincides with the weak* topology induced by the duality against
continuous functions $C(K)$ and $C(F)$, respectively.
The weak* convergence can be metrized on bounded sets, if the primal
space is separable; here we chose as a metric
\begin{equation}
  \label{eq:distance-weak-convergence}
  d(\mu,\nu)
  =
  \sum_{k=1}^{\infty}
  \frac{1}{2^k \norm{f_k}_{\infty}}
  \left|
    \int_{X}
    f_k
    \, d\mu
    -
    \int_{X}
    f_k
    \, d\nu
  \right|
  ,
\end{equation}
where $\{f_k\}_{k}$ is a dense set in $C(X)$.
We endow the spaces $\ProbMeas(F)$ and $S$ with the distance defined
in~\eqref{eq:distance-weak-convergence}, making them two compact
metric spaces.

Define now the map $G_R:\overline E\times K\to\ProbMeas(F)\times S$, as
\begin{equation}
  G_R(x,\gamma)
  :=
  \left(
    \gamma_\#(\tilde h_E^{x,R}\Leb^1\llcorner_{[0,1]})
    ,
    \min
    \left\{
      \frac{\tilde h^E_{x,R}(1)}{\sfd(e_0(\gamma),e_1(\gamma))}
      ,
      \frac{N}{\rho}
    \right\}
    \delta_{e_1(\gamma)}
  \right)
  .
\end{equation}
Clearly, the function $G_R$ is measurable w.r.t.\ the variable $x$ and
continuous w.r.t.\ the variable $\gamma$.
Define the measure (having mass $\mm(E)$)
\begin{equation}
  \sigma_R:=
  (\mathrm{Id}\times G_R)_\# \tau_R
  \in
  \M^+(\overline E\times K\times \ProbMeas(F)\times S).
\end{equation}
In order to ease the notation, we set $Z=\overline E\times K\times
\ProbMeas(F)\times S$.
\begin{proposition}
The measure $\sigma_R$ enjoys the following properties,
\begin{align}
  &
    \label{eq:disintegration-measure-weak}
    \int_E \psi\, d\mm
    =
    \int_Z\int_E \psi(y)\,\mu(dy)\,\sigma_R(dx\,d\gamma\,d\mu\,dp),
    \quad
    \forall\psi\in C^0_b(\overline E)
    ,
  \\
  &
    \label{eq:disintegration-perimeter-weak}
    \int_{\overline E} \psi(y)\,\PP(E,dy)
    \geq
    \int_Z \int_{\overline E} \psi(y) \,p(dy)\,\sigma_R(dx\,d\gamma\,d\mu\,dp),
    \quad
    \forall\psi\in C^0_b(\overline E),\psi\geq0
    .
\end{align}
\end{proposition}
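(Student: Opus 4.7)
The plan is to reduce both identities to the already-established disintegration formulas~\eqref{eq:disintegration-mu} and~\eqref{eq:disintegration-perimeter-p} by unfolding the chain of definitions of $\sigma_R$, $\tau_R$, and $G_R$.

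First, I would observe that, since $\sigma_R=(\mathrm{Id}\times G_R)_\#\tau_R$ and $\tau_R=(\mathrm{Id}\times \gamma^{\cdot,R})_\#\mm\llcorner_E$, the iterated push-forward yields the identity
\begin{equation*}
\int_Z \Phi(x,\gamma,\mu,p)\,\sigma_R(dx\,d\gamma\,d\mu\,dp)
=\int_{\overline E}\Phi(x,\gamma^{x,R},G_R(x,\gamma^{x,R}))\,\mm\llcorner_E(dx)
\end{equation*}
for every bounded Borel $\Phi:Z\to\R$. The heart of the argument is to identify the two components of $G_R(x,\gamma^{x,R})$ with $\mu_{x,R}$ and $p_{x,R}$, respectively. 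The first component agrees with $\mu_{x,R}$ after the change of variable $t=s\,r_R(x)$ in the arclength parametrization $g_R(\QQ_R(x),\cdot)$; this is precisely the explicit computation carried out just above the definition of $\sigma_R$, which rewrites $\frac{\mm(B_R)}{\mm(E)}\widehat\mm_{\QQ_R(x),R}\llcorner_E$ as $(\gamma^{x,R})_\#(\tilde h_E^{x,R}\L^1\llcorner_{[0,1]})$. The second component matches $p_{x,R}$ directly from its definition, since $\sfd(\gamma^{x,R}_0,\gamma^{x,R}_1)=r_R(x)$ and $\gamma^{x,R}_1=g_R(\QQ_R(x),r_R(x))$ for $\mm\llcorner_E$-a.e.\ $x$.

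With the identification $G_R(x,\gamma^{x,R})=(\mu_{x,R},p_{x,R})$ in place, both claims follow by specializing $\Phi$. For~\eqref{eq:disintegration-measure-weak}, I would take $\Phi(x,\gamma,\mu,p)=\int_{E}\psi(y)\,\mu(dy)$ and apply~\eqref{eq:disintegration-mu} to collapse the resulting double integral. For~\eqref{eq:disintegration-perimeter-weak}, I would take $\Phi(x,\gamma,\mu,p)=\int_{\overline E}\psi(y)\,p(dy)$; here the sign hypothesis $\psi\geq 0$ is essential, because~\eqref{eq:disintegration-perimeter-p} is only an inequality, not an equality, and the monotonicity of the integral is what turns the pointwise bound $p_{x,R}\leq \frac{\mm(B_R)}{\mm(E)}\PP_{X_{R,\QQ_R(x)}}(E;\cdot)$ into the desired integrated inequality against $\PP(E;\cdot)$.

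The main, and essentially only, non-routine obstacle I anticipate is measurability of $G_R$: although $G_R$ is continuous in its second argument $\gamma$, the dependence on $x$ factors through $\QQ_R$, $r_R$, and $\alpha\mapsto h_{\alpha,R}$, each of which is only Borel measurable and determined up to $\mm\llcorner_E$-null sets, cf.\ Remark~\ref{rmrk:caveat-disintegration}. Care is needed to verify that the composition $x\mapsto G_R(x,\gamma^{x,R})$ is a $\mm\llcorner_E$-measurable $\P(F)\times S$-valued map, so that the push-forward $\sigma_R$ is unambiguously defined. Once this measurability issue is secured, the remainder of the argument is purely definitional.
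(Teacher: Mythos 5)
Your proposal is correct and follows essentially the same route as the paper: both unfold the iterated push-forward defining $\sigma_R$, identify the $\mu$- and $p$-components with $\mu_{x,R}$ and $p_{x,R}$ for $\sigma_R$-a.e.\ point of $Z$, and then reduce the two claims to the already-established formulas~\eqref{eq:disintegration-mu} and~\eqref{eq:disintegration-perimeter-p}. The only cosmetic difference is that the paper establishes the perimeter inequality first on open sets $\Omega$ and invokes outer regularity, whereas you integrate the measure inequality directly against nonnegative test functions; these are equivalent. Your remark on the measurability of $x\mapsto G_R(x,\gamma^{x,R})$ correctly identifies the technical point already settled in Remark~\ref{rmrk:caveat-disintegration}.
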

\begin{proof}
Fix a test function $\psi\in C_b^0(\overline E)$.
Notice that for $\sigma_R$-a.e.\ $(x,\gamma,\mu,p)\in Z$, we
have that $\mu=\mu_{x,R}$, because
\begin{equation}
  \mu
  =
  \gamma_\#(\tilde h_E^{x,R}\Leb^1\llcorner_{[0,1]})
  =
  (\gamma^{x,R})_\#(\tilde h_E^{x,R}\Leb^1\llcorner_{[0,1]})
  =
  \mu_{x,R},
  \quad
  \text{ for $\sigma_R$-a.e.\ }
  (x,\gamma,\mu,p)\in Z,
\end{equation}
and we used the fact that $\gamma=\gamma_{x,R}$ for $\tau_R$-a.e.\
$(x,\gamma)\in \overline E\times K$.
We conclude the proof of~\eqref{eq:disintegration-measure-weak} by a direct computation
\begin{align*}
  \int_E \psi\,d\mm
  &
  =
    \int_E \int_E \psi(y)\,\mu_{x,R}\,\mm(dx)
  =
    \int_Z \int_E \psi(y)\,\mu_{x,R}(dy)\,\sigma_R(dx\,d\gamma\,d\mu\,dp)
  \\
  &
    =
    \int_Z \int_E \psi(y)\,\mu(dy)\,\sigma_R(dx\,d\gamma\,d\mu\,dp)
    .
\end{align*}

Now fix an open set $\Omega\subset X$ and compute
having in mind~\eqref{eq:disintegration-perimeter-p}
\begin{align*}
  \PP(E;\Omega)
  &
    \geq
    \int_{E}
    \min\left\{
    \frac{\tilde h^E_{x,R}(1)}{\sfd(\gamma_0^{x,R},\gamma_1^{x,R})}
    ,
    \frac{N}{\rho}
    \right\}
    \delta_{\gamma_1^{x,R}}(\Omega)
    \,d\mm(dx)
  \\
  &
    =
    \int_{Z}
    \min\left\{
    \frac{\tilde h^E_{x,R}(1)}{\sfd(e_0(\gamma^{x,R}),e_1(\gamma^{x,R}))}
    ,
    \frac{N}{\rho}
    \right\}
    \delta_{e_1(\gamma^{x,R})}(\Omega)
    \,d\sigma_R(dx\,d\gamma\,d\mu\,dp)
  \\
  &
    =
    \int_{Z}
    \min\left\{
    \frac{\tilde h^E_{x,R}(1)}{\sfd(e_0(\gamma),e_1(\gamma))}
    ,
    \frac{N}{\rho}
    \right\}
    \delta_{e_1(\gamma)}(\Omega)
    \,d\sigma_R(dx\,d\gamma\,d\mu\,dp).
\end{align*}
Taking into account
\begin{equation}
  p=
  \min\left\{
    \frac{\tilde h^E_{x,R}(1)}{\sfd(e_0(\gamma),e_1(\gamma))}
    ,
    \frac{N(\omega_N\AVR_X)^{\frac{1}{N}}}{\mm(E)^{\frac{1}{N}}}
  \right\}
  \delta_{e_1(\gamma)}(\Omega)
  ,
  \quad\text{ for $\sigma_R$-a.e.\ }
  (x,\gamma,\mu,p)\in Z,
\end{equation}
we continue this chain of inequalities
\begin{align*}
  \PP(E;\Omega)
  &
  \geq
  \int_{Z}
  \min\left\{
    \frac{\tilde h^E_{x,R}(1)}{\sfd(e_0(\gamma),e_1(\gamma))}
    ,
    \frac{N}{\rho}
  \right\}
  \delta_{e_1(\gamma)}(\Omega)
    \,d\sigma_R(dx\,d\gamma\,d\mu\,dp)
    \\&
  =
  \int_Z p(\Omega)  \,d\sigma_R(dx\,d\gamma\,d\mu\,dp).
\end{align*}
Since $\PP(E;A)=\inf\{\PP(E;\Omega):\Omega\supset A\text{ is open}\}$,
for any Borel set $A$, we can conclude.
\end{proof}

Before taking the limit as $R\to\infty$, we state a useful lemma.

\begin{lemma}
Let $X$ be a Polish space, let $Y$, $Z$ be
two compact metric spaces, and let $\mm$ be a finite Radon
measure on $X$.
Consider a sequence of functions $f_n:X\times Y\to Z$ and $f:X\times
Y\to Z$, such that $f$ and $f_n$ are Borel-measurable in the first variable
and continuous in the second.
Suppose that for $\mm$-a.e.\ $x\in X$ the sequence $f_n(x,\cdot)$
converges uniformly to $f(x,\cdot)$.
Consider a sequence of measures $\mu_n\in\M^{+}(X\times Y)$ such that
$\mu_n\rightharpoonup\mu$ weakly in $\M^{+}(X\times Y)$ and
$(\pi_X)_\#\mu_n=\mm$.

Then we have that
\begin{equation}
  (\mathrm{Id}\times f_n)_\#\mu_n
  \rightharpoonup
  (\mathrm{Id}\times f)_\#\mu,
  \quad\text{ weakly in }
  \M(X\times Y\times Z).
\end{equation}
\end{lemma}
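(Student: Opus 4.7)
The plan is to test against an arbitrary $\psi\in C_b(X\times Y\times Z)$ and show
\begin{equation}
\int_{X\times Y}\psi(x,y,f_n(x,y))\,d\mu_n
\to
\int_{X\times Y}\psi(x,y,f(x,y))\,d\mu.
\end{equation}
The central obstacle is that the integrand $\psi(\cdot,\cdot,f_n(\cdot,\cdot))$ is in general only Borel measurable in $x$, so the weak convergence $\mu_n\rightharpoonup\mu$ cannot be invoked directly; the strategy is to regain joint continuity on a large compact set and then transfer the computation there.

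Since $Y,Z$ are compact metric and the $f_n,f$ are Carath\'eodory, the maps $x\mapsto f_n(x,\cdot)$ and $x\mapsto f(x,\cdot)$ are Borel measurable from $X$ into the separable Banach space $C(Y,Z)$. Because $\mm$ is Radon on the Polish space $X$, simultaneous applications of Lusin's and Egorov's theorems yield, for any $\epsilon>0$, a compact set $K\subset X$ with $\mm(X\setminus K)<\epsilon$ such that (a)~$f$ is jointly continuous on $K\times Y$ and (b)~$\sup_{(x,y)\in K\times Y}d_Z(f_n(x,y),f(x,y))\to 0$ as $n\to\infty$. Note that $(\pi_X)_\#\mu=\mm$, obtained by passing to the weak limit in $(\pi_X)_\#\mu_n=\mm$; consequently both $\mu_n$ and $\mu$ give mass at most $\epsilon$ to $(X\setminus K)\times Y$.

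I would then split
\begin{equation}
\int_{X\times Y}\psi(x,y,f_n(x,y))\,d\mu_n
=
\int_{K\times Y}\psi(x,y,f_n(x,y))\,d\mu_n
+ O(\|\psi\|_\infty\,\epsilon),
\end{equation}
replace $f_n$ by $f$ on $K\times Y$ using property~(b) together with the uniform continuity of $\psi$ on compact subsets (with vanishing error as $n\to\infty$), and then observe that the restriction of $(x,y)\mapsto\psi(x,y,f(x,y))$ to $K\times Y$ is continuous by~(a); Tietze's extension theorem produces some $\tilde\psi\in C_b(X\times Y)$ agreeing with it on $K\times Y$ and with the same sup-norm bound. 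Weak convergence $\mu_n\rightharpoonup\mu$ gives $\int\tilde\psi\,d\mu_n\to\int\tilde\psi\,d\mu$, and the discrepancy between $\tilde\psi$ and $\psi(\cdot,\cdot,f(\cdot,\cdot))$ off $K\times Y$ contributes an $O(\|\psi\|_\infty\,\epsilon)$ error on both sides.

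Letting $n\to\infty$ first and then $\epsilon\to 0$ concludes. The hard part is precisely the measurability-only hypothesis on the $f_n$ in the variable $x$, which is what forces the combined Lusin--Egorov reduction followed by a Tietze extension; once joint continuity of $f$ and uniform convergence $f_n\to f$ have been simultaneously secured on a set of almost full $\mm$-measure, the remainder reduces to a standard approximation argument.
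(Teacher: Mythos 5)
Your argument is correct and follows essentially the same route as the paper's: both perform a simultaneous Lusin–Egorov reduction to a compact set $K\subset X$ of almost full $\mm$-measure on which $x\mapsto f(x,\cdot)$ is continuous as a map into $C(Y,Z)$ and on which the $f_n$ converge uniformly, then control the contribution of $(X\setminus K)\times Y$ using $(\pi_X)_\#\mu_n=(\pi_X)_\#\mu=\mm$, and pass to the limit on $K\times Y$.

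The one place where you are actually more careful than the paper is the final limit passage. The paper asserts that uniform convergence of $\phi(x,y,f_n(x,y))$ on $K\times Y$ plus $\mu_n\rightharpoonup\mu$ suffices to conclude $\int_{K\times Y}\phi(x,y,f_n(x,y))\,d\mu_n\to\int_{K\times Y}\phi(x,y,f(x,y))\,d\mu$, but the test function $\phi(\cdot,\cdot,f(\cdot,\cdot))$ is continuous only on $K\times Y$, not on all of $X\times Y$, so weak convergence in $\M^+(X\times Y)$ does not apply directly. Your Tietze extension of the restriction to a globally continuous $\tilde\psi$ with the same sup-norm, together with the $\epsilon$-smallness of both $\mu_n$ and $\mu$ off $K\times Y$, is precisely the step needed to make this rigorous. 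Everything else — in particular the deduction of $(\pi_X)_\#\mu=\mm$ from the weak limit and the joint continuity of $f$ on $K\times Y$ coming from Lusin for the $C(Y,Z)$-valued map — is correct and matches the paper's structure.
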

\begin{proof}
In order to ease the notation, set
$\nu_n=(\mathrm{Id}\times f_n)_\#\mu_n$ and
$\nu=(\mathrm{Id}\times f)_\#\mu$.
Fix $\epsilon>0$.
We make use an extension of the Egorov's and Lusin's Theorems for
functions taking values in separable metric spaces
(see~\cite[Theorem~7.5.1]{Dudley02} and~\cite[Appendix~D]{Dudley99}).
In this setting, we deal with maps taking value in $C(Y,Z)$, the space
of continuous functions between the compact spaces $Y$ and $Z$, which
is separable.

Therefore, there exists a
compact $K\subset X$ such that:
1) the maps $x\in K\mapsto f_n(x,\cdot)\in C(Y,Z)$ are continuous;
2) the restrictions $x\in K\mapsto f_n(x,\cdot)$ converge to $x\in
K\mapsto f(x,\cdot)$, uniformly in the space $C(K, C(Y,Z))$;
3) $\mm(X\backslash K)\leq\epsilon$.
Regarding point 2), this implies that the restrictions
$f_n|_{K\times Y}\to f|_{K\times Y}$ converge uniformly in $K\times Y$.

We test the convergence of $\nu_n$ against a function
$\phi\in C_b^0(X\times Y\times Z)$
\begin{align*}
  \left|
    \int_{X\times Y\times Z}
    \phi
    \,d\nu_n
    -
    \int_{X\times Y\times Z}
    \phi
    \,d\nu
  \right|
  &
  \leq
  \norm{\phi}_{C^0}
  (\nu_n((X\backslash K)\times Y\times Z)
  +
    \nu((X\backslash K)\times Y\times Z))
  \\&
  \quad\quad
  +
  \left|
    \int_{K\times Y\times Z}
    \phi
    \,d\nu_n
    -
    \int_{K\times Y\times Z}
    \phi
    \,d\nu
  \right|
  \\[2mm]
  &
  =
  \norm{\phi}_{C^0}
  (\mm(X\backslash K)
  +
    \mm(X\backslash K))
  \\&
  \quad\quad
  +
  \left|
    \int_{K\times Y\times Z}
    \phi
    \,d\nu_n
    -
    \int_{K\times Y\times Z}
    \phi
    \,d\nu
  \right|
  \\[2mm]
  &
  \leq
  2\epsilon\norm{\phi}_{C^0}
  +
  \left|
    \int_{K\times Y\times Z}
    \phi
    \,d\nu_n
    -
    \int_{K\times Y\times Z}
    \phi
    \,d\nu
  \right|.
\end{align*}
Regarding the second term, we compute the integral
\begin{equation}
  \int_{K\times Y\times Z}
  \phi
  \,d\nu_n
  =
  \int_{K\times Y}
  \phi(x,y,f_n(x,y))
  \,\mu_n  (dx\,dy).
\end{equation}
Using compactness, one easily checks that $\phi(x,y,f_n(x,y))$
converges to $\phi(x,y,f(x,y))$ uniformly in $K\times Y$.
For this reason, together with the fact that $\mu_n\rightharpoonup
\mu$ weakly we take the limit in the equation above concluding the proof
\begin{equation*}
    \lim_{n\to\infty}
    \int_{K\times Y}
    \phi(x,y,f_n(x,y))
    \,\mu_n  (dx\,dy)
    =
    \int_{K\times Y}
    \phi(x,y,f(x,y))
    \,\mu  (dx\,dy)
    =
    \int_{K\times Y\times Z}
    \phi
      \,d\nu,
      \qedhere
\end{equation*}
\end{proof}

\begin{corollary}
  Consider the function $G:\overline E\times K\to\ProbMeas(F)\times S$
  defined as
\begin{equation}
  G(x,\gamma)=
  \left(
    \gamma_\#(Nt^{N-1}\Leb^1\llcorner_{[0,1]}),
    \max\left\{
      \frac{N}{\sfd(e_0(\gamma),e_1(\gamma))},
      \frac{N}{\rho}
      \right\}
    \delta_{e_1(\gamma)}
  \right),
\end{equation}
and let $\sigma:=(\id\times G)_\#\tau$.
Then it holds that $\sigma_R\rightharpoonup\sigma$ in the weak topology
of measures.
\end{corollary}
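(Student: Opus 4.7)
The plan is to invoke the preceding lemma with the data $X=\overline E$ (equipped with the finite reference measure $\mm\llcorner_E$), $Y=K$, $Z=\P(F)\times S$ (both compact metric by the weak-$*$ metrisation in~\eqref{eq:distance-weak-convergence}), $\mu_n=\tau_{R_n}$, $\mu=\tau$, $f_n=G_{R_n}$ and $f=G$. The hypotheses on the measures are free: by construction $(\pi_{\overline E})_{\#}\tau_R=\mm\llcorner_E$ for every $R$, and $\tau_{R_n}\rightharpoonup\tau$ holds along the subsequence extracted via Prokhorov. So the real work is verifying that for $\mm\llcorner_E$-a.e.\ $x$, the maps $G_R(x,\cdot)\colon K\to\P(F)\times S$ converge uniformly to $G(x,\cdot)$.

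To set up this verification I first harvest the pointwise convergences already in hand. Theorem~\ref{T:rigidity-1d}, together with $\mm_{h_{\QQ_R(x),R}}(E_{x,R})=\mm(E)/\mm(B_R)\to 0$ and $\Res_{\QQ_R(x),R}\to 0$ for $\mm\llcorner_E$-a.e.\ $x$ along the subsequence (Proposition~\ref{cor:goodrays3}), yields for a.e.\ $x$ that $\|\tilde h_E^{x,R}-Nt^{N-1}\|_{L^\infty(0,1)}\to 0$; in particular $\tilde h_E^{x,R}(1)\to N$. Proposition~\ref{P:limit-of-function-r} supplies $r_R(x)\to\rho$, hence $\sfd(e_0(\gamma^{x,R}),e_1(\gamma^{x,R}))\to\rho$.

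For the first component of $G_R(x,\gamma)$, given any $f\in C(F)$ and any $\gamma\in K$,
\begin{equation*}
  \Bigl|\int_F f\,d(\gamma_\#(\tilde h_E^{x,R}\L^1\llcorner_{[0,1]}))-\int_F f\,d(\gamma_\#(Nt^{N-1}\L^1\llcorner_{[0,1]}))\Bigr|\leq \|f\|_\infty\,\|\tilde h_E^{x,R}-Nt^{N-1}\|_{L^\infty(0,1)},
\end{equation*}
and the right-hand side is independent of $\gamma$; plugging this into~\eqref{eq:distance-weak-convergence} forces uniform convergence in $\gamma\in K$ (for a.e.\ fixed $x$). For the second component, the evaluation $e_1:K\to F$ is continuous, so $\gamma\mapsto\delta_{e_1(\gamma)}$ is continuous into $\P(F)$; meanwhile the scalar weight $\gamma\mapsto\min\{\tilde h_E^{x,R}(1)/\sfd(e_0(\gamma),e_1(\gamma)),\,N/\rho\}$ is continuous on $K$ (the truncation by $N/\rho$ removes the apparent singularity at constant curves), and since $\tilde h_E^{x,R}(1)\to N$ this weight converges uniformly in $\gamma$ to its limit. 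Hence $G_R(x,\cdot)\to G(x,\cdot)$ uniformly on $K$, for $\mm\llcorner_E$-a.e.\ $x$.

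The main obstacle I anticipate is exactly this uniform-in-$\gamma$ control of the second component: the ratio $\tilde h_E^{x,R}(1)/\sfd(e_0(\gamma),e_1(\gamma))$ on its own is ill-behaved on the degenerate part of $K$, but the capping at $N/\rho$ (and the matching cutoff inside $G$) is precisely what makes the weight globally continuous on $K$ and compatible with uniform convergence. With both components handled, the lemma delivers $\sigma_{R_n}=(\id\times G_{R_n})_\#\tau_{R_n}\rightharpoonup(\id\times G)_\#\tau=\sigma$, which is the claim.
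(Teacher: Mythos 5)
Your proposal is correct and follows the same route as the paper: invoke the preceding Egorov--Lusin-type lemma with $X=\overline E$, $Y=K$, $Z=\P(F)\times S$, $\mu_n=\tau_{R_n}$, $f_n=G_{R_n}$, and reduce everything to the $\mm\llcorner_E$-a.e.\ uniform-in-$\gamma$ convergence of $G_R(x,\cdot)$. Your treatment of the first component (the $\|\tilde h_E^{x,R}-Nt^{N-1}\|_{L^\infty}$ bound is independent of $\gamma$) is exactly the paper's computation.

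For the second component the paper merely writes ``the proof is analogous,'' which is a slight understatement: the scalar weight contains a ratio that is singular at degenerate curves, and your observation that the cap at $N/\rho$ is what both removes the singularity and gives uniformity is the genuine content that the paper suppresses. Concretely, for $a$ ranging near $N$ and any $s>0$, the map $a\mapsto\min\{a/s,\,N/\rho\}$ is Lipschitz with constant bounded uniformly in $s$ once $a$ stays in a fixed neighbourhood of $N$ (when $s$ is small the $\min$ saturates at $N/\rho$, and when $s$ is bounded away from $0$ the slope $1/s$ is bounded), so $\tilde h_E^{x,R}(1)\to N$ does yield the uniform convergence you assert; spelling that one-line estimate out would close the remaining gap. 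One small point of bookkeeping: the corollary's displayed formula for $G$ has a $\max$, but the mass constraint defining $S$ and the proof of the subsequent Proposition~\ref{P:limit} (which reads off $p=\min\{\cdots\}\,\delta_{e_1(\gamma)}$) both require $\min$; you have silently corrected this typo, which is the right thing to do.
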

\begin{proof}
We need only to check the hypotheses of the previous Lemma.
Due to Remark~\ref{rmrk:reassurement}, the irreversiblity of the
distance is not harmful.
The set $\overline E$ is compact, hence Polish.
The set $K$ is compact and so is $\ProbMeas(F)\times S$ (w.r.t.\ the distance
given by~\eqref{eq:distance-weak-convergence}).
The maps $G_R$ are measurable and continuous in
the first and second variable, respectively.
Finally, we need to see that for $\mm\llcorner_E$-a.e.\ $x$, the limit
$G_R(x,\gamma)\to G(x,\gamma)$ holds uniformly in $\gamma$.
Fix $x$ and $\gamma$ and pick $\psi\in C_b(F)$ a test function and
compute
\begin{align*}
  &
  \left|
  \int_{F}
  \psi(y)
  \,
  \gamma_\#(\tilde h_E^{x,R} \Leb^1\llcorner_{[0,1]})(dy)
  -
  \int_{F}
  \psi(y)
  \,
  \gamma_\#(Nt^{N-1}\Leb^1\llcorner_{[0,1]})(dy)
  \right|
  \\
  &
    \qquad
  =
  \left|
  \int_0^1
  \psi(\gamma_t)
  (\tilde h_E^{x,R}-Nt^{N-1})
  \, dt
  \right|
  \\
  &
    \qquad
    \leq
    \norm{\psi}_{C(F)}
    \norm{\tilde h_E^{x,R}-Nt^{N-1}}_{L^\infty}.
\end{align*}
The r.h.s.\ of the inequality is independent of $\gamma$ (but depends
only on $x$ and $\psi$) and converges to $0$ by
Theorem~\ref{T:rigidity-1d},
(see~particular~\eqref{eq:almost-rigidity-h-tilde}).
Therefore, the first component of $G_R(x,\gamma)$ converges (in the
weak topology of $\ProbMeas(F)$), uniformly w.r.t.\ $\gamma$ (compare
with~\eqref{eq:distance-weak-convergence}).
For the other component the proof is analogous, so we omit it.
\end{proof}

We conclude this section with a proposition reporting all the relevant
properties of the limit measure $\sigma$.

\begin{proposition}\label{P:limit}
  The measure $\sigma$ satisfies the following disintegration
  formulae
  \begin{align}
    &
      \label{eq:disintegration-measure-with-sigma}
      \begin{aligned}
        \int_E\psi(y)\,\mm(dy)
        &
        =
        \int_Z
        \int_0^1
        \psi(e_t(\gamma))Nt^{N-1}
        \,dt
        \,\sigma(dx\,d\gamma\,d\mu\,dp),
        \quad \forall \psi\in L^1(E;\mm\llcorner_E),
      \end{aligned}
    \\
    &
      \label{eq:disintegration-perimeter-with-sigma}
      \begin{aligned}
        \int_{\overline E} \psi(y) \PP(E;dy)
        &
        =
        \frac{N}{\rho}
        \int_Z
        \psi(e_1(\gamma))
        \psi\,\sigma(dx\,d\gamma\,d\mu\,dp),
        \quad
        \forall \psi\in L^1(\overline{E};\PP(E;\,\cdot\,))
        .
      \end{aligned}
  \end{align}
  Furthermore, for $\sigma$-a.e.\ $(x,\gamma,\mu,p)\in Z$ it holds
  \begin{align}
    &
      \label{eq:gamma-along-rays-weak2}
      \sfd(e_t(\gamma),e_s(\gamma))
      =
      \phi_\infty(e_t(\gamma))
      -
      \phi_\infty(e_s(\gamma))
      ,
      \quad
      \forall 0\leq t\leq s\leq 1,
    \\
    &
      \label{eq:speed-of-gamma-weak2}
      \sfd(e_0(\gamma),e_1(\gamma))
      =
      \rho
      ,
  \\
  &
    \label{eq:x-belongs-to-gamma-weak2}
    x\in\gamma
    ,
    \\
    &
      \label{eq:characterization-mu}
      \mu=\gamma_\#(Nt^{N-1}\Leb^1\llcorner_{[0,1]}),
    \\
    &
      \label{eq:characterization-p}
      p
      =
      \frac{N}{\rho}
      \delta_{e_1(\gamma)}.
  \end{align}
\end{proposition}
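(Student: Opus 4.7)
The plan is to first extract the pointwise-a.e.\ characterizations from the very definition $\sigma=(\id\times G)_\#\tau$, and then to pass to the limit in the pre-limit integral identities \eqref{eq:disintegration-measure-weak} and \eqref{eq:disintegration-perimeter-weak}. In the perimeter case, the resulting inequality will be upgraded to an equality by invoking the saturation of the isoperimetric inequality for $E$.

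For \eqref{eq:gamma-along-rays-weak2}--\eqref{eq:x-belongs-to-gamma-weak2}, I observe that the canonical projection $Z\to\overline E\times K$ pushes $\sigma$ forward to $\tau$ by construction, so any $\tau$-a.e.\ property on $\overline E\times K$ lifts to a $\sigma$-a.e.\ property on $Z$; the three properties were already established at the level of $\tau$ in the preceding proposition. For \eqref{eq:characterization-mu}, the definition of $G$ forces the third coordinate to be $\gamma_\#(Nt^{N-1}\L^1\llcorner_{[0,1]})$ for $\sigma$-a.e.\ $(x,\gamma,\mu,p)$. For \eqref{eq:characterization-p}, the fourth coordinate is by definition of $G$ equal to $\max\{N/\sfd(e_0(\gamma),e_1(\gamma)),N/\rho\}\delta_{e_1(\gamma)}$, and substituting \eqref{eq:speed-of-gamma-weak2} collapses this to $(N/\rho)\delta_{e_1(\gamma)}$.

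For \eqref{eq:disintegration-measure-with-sigma}, I first prove it for $\psi\in C_b^0(X)$. The functional
\begin{equation*}
F_\psi\colon Z\to\R,\qquad F_\psi(x,\gamma,\mu,p):=\int_X\psi\,d\mu
\end{equation*}
is continuous and bounded, because $\mu$ ranges over the compact metrizable space $\P(F)$ with the weak topology. Since $\mu_{x,R}$ is concentrated on $E$, the pre-limit identity \eqref{eq:disintegration-measure-weak} reads $\int_E\psi\,d\mm=\int_Z F_\psi\,d\sigma_R$, and the weak convergence $\sigma_R\rightharpoonup\sigma$ combined with \eqref{eq:characterization-mu} yields
\begin{equation*}
\int_E\psi\,d\mm=\int_Z F_\psi\,d\sigma=\int_Z\int_0^1\psi(e_t(\gamma))Nt^{N-1}\,dt\,\sigma(dx\,d\gamma\,d\mu\,dp).
\end{equation*}
By Riesz representation, this equality of $C_b$-integrals identifies $\mm\llcorner_E$ with the Borel measure $A\mapsto\int_Z\int_0^1\indicator_A(e_t(\gamma))Nt^{N-1}\,dt\,d\sigma$; a routine density/monotone convergence argument extends the identity to every $\psi\in L^1(E;\mm\llcorner_E)$.

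For \eqref{eq:disintegration-perimeter-with-sigma} I apply the same scheme to $\tilde F_\psi(x,\gamma,\mu,p):=\int_X\psi\,dp$ with $\psi\in C_b^0(X)$, $\psi\geq 0$ (continuous and bounded because $p$ varies in the weak-compact set $S$). Passing to the limit in the \emph{inequality} \eqref{eq:disintegration-perimeter-weak} and using \eqref{eq:characterization-p} gives
\begin{equation*}
\int_{\overline E}\psi\,d\PP(E;\cdot)\geq\int_Z\tilde F_\psi\,d\sigma=\frac{N}{\rho}\int_Z\psi(e_1(\gamma))\,\sigma(dx\,d\gamma\,d\mu\,dp),
\end{equation*}
i.e.\ $\PP(E;\cdot)\geq\nu$ as Borel measures on $\overline E$, where $\nu(A):=(N/\rho)\int_Z\indicator_A(e_1(\gamma))\,d\sigma$. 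Testing $\psi\equiv 1$ on both sides and using $\sigma(Z)=\mm(E)$ gives $\nu(\overline E)=(N/\rho)\mm(E)=N(\omega_N\AVR_X)^{1/N}\mm(E)^{1-1/N}$, which by the saturation hypothesis equals $\PP(E)=\PP(E;\overline E)$. Equality of the total masses together with the measure inequality forces $\PP(E;\cdot)=\nu$; a density argument then extends the identity to every $\psi\in L^1(\overline E;\PP(E;\cdot))$. The main obstacle is precisely this last passage: the pre-limit perimeter identity is only an inequality coming from the l.s.c.\ of the perimeter on the transport rays, and it is only the saturation assumption on $E$ that lets one recover an equality in the limit by matching total masses.
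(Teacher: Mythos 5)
Your proof is correct and follows essentially the same route as the paper: reading off \eqref{eq:gamma-along-rays-weak2}--\eqref{eq:characterization-p} from the fact that $\sigma$ projects onto $\tau$ and from the explicit form of $G$, passing to the limit in the weak disintegration identities via continuity of $\mu\mapsto\int\psi\,d\mu$ on the compact space $\P(F)$, and finally upgrading the perimeter inequality to an equality by testing against $\psi\equiv 1$ and invoking the saturation of \eqref{E:inequality}. The only cosmetic difference is that you integrate $\psi$ over $X$ rather than over $F=e_{(0,1)}(K)$, which is immaterial since $\mu$ is concentrated on $F$.
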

\begin{proof}
Equations~\eqref{eq:gamma-along-rays-weak2}--\eqref{eq:x-belongs-to-gamma-weak2}
have been already proven (see~of~\eqref{eq:gamma-along-rays-weak}--\eqref{eq:x-belongs-to-gamma-weak}).
Equation~\eqref{eq:characterization-mu} follows from the definition of
$G$.
Similarly, the defintion of $G$ and Equation~\eqref{eq:speed-of-gamma-weak2} imply~\eqref{eq:characterization-p}:
\begin{align*}
  p
  &
  =
  \min
  \left\{
  \frac{N}{\sfd(e_0(\gamma),e_1(\gamma))}
  ,
  \frac{N}{\rho}
  \right\}
  \delta_{e_1(\gamma)}
    =
  \frac{N}{\rho}
    \delta_{e_1(\gamma)}
    .
\end{align*}

We prove now~\eqref{eq:disintegration-measure-with-sigma}.
Given a function $\psi\in C_b^0(F)=C_b^0(e_{(0,1)}(K))$ we define
$L_\psi:\ProbMeas(F)\to\R$ as $L_\psi(\mu)=\int_{F}\psi\,d\mu$.
This latter function is bounded and continuous w.r.t.\ the weak
topology of $\ProbMeas(F)$, thus we can compute the limit
using~\eqref{eq:disintegration-measure-weak}
and~\eqref{eq:characterization-mu}
\begin{align*}
  \int_E \psi \, d\mm
  &
    =
    \lim_{R\to\infty}
    \int_Z
    \int_{F}\psi(y)\,\mu(dy)
    \,\sigma_R(dx\,d\gamma\,d\mu\,dp)
  \\
  &
    =
    \lim_{R\to\infty}
    \int_Z
    L_\psi(\mu)
    \,\sigma_R(dx\,d\gamma\,d\mu\,dp)
  \\
  &
    =
    \int_Z
    \int_{F}\psi(y)\,\mu(dy)
    \,\sigma(dx\,d\gamma\,d\mu\,dp)
    =
  \\
  &
    \int_Z
    \int_0^1\psi(e_t(\gamma))Nt^{N-1}\,dt
    \,\sigma(dx\,d\gamma\,d\mu\,dp)
    .
\end{align*}
Using standard approximation arguments, we see that the equation above
holds true also for any $\psi\in L^1(E;\mm\llcorner_E)$.

Regarding~\eqref{eq:disintegration-perimeter-with-sigma}, one can
analogously deduce that
\begin{equation*}
  \begin{aligned}
    \int_{\overline{E}}\psi(y)\,\PP(E;dy)
    &
      \geq
      \frac{N}{\rho}
    \int_Z
      \psi(e_1(\gamma))
    \,\sigma(dx\,d\gamma\,d\mu\,dp),
    \quad
  \forall\psi\in L^1(\overline E;\PP(E;\,\cdot\,)),\, \psi\geq0.
  \end{aligned}
\end{equation*}
If we test the inequality above with $\psi=1$, the inequality is
saturated thus the two measures have the same mass, so the
inequality improves to an equality.
\end{proof}

\subsection{Back to the classical localization notation}
\label{Ss:localization-classical}
We are now in position to re-obtain a ``classical'' disintegration
formula for the measure $\mm$, as well as for the relative perimeter
of $E$.

We recall the definition of some of the objects that were introduced
in Section~\ref{S:needle}.
For instance, let $\Gamma_\infty:=\{(x,y):
\phi_\infty(x)-\phi_\infty(y)=\sfd(x,y)\}$
and let $\T_\infty$ be the transport set, i.e., the family of
points passing through only one non-degenerate transport curve.
Let $\A_\infty$ the set of branching points (i.e.\ points where two of
more non-degenerate transport curves pass).
The sets of forward and backward branching points are defined as
\begin{align}
  \A_\infty^{+}
  : =
  &~
    \{x\in \A_\infty:\exists y\neq x\text{ such that }(x,y)\in\Gamma_\infty\}
    ,
  \\
  \A_\infty^{-}
  : =
  &~
    \{x\in \A_\infty:\exists y\neq x\text{ such that }(y,x)\in\Gamma_\infty\}
    .
\end{align}
We recall that $\A_\infty=\A_\infty^+\cup\A_\infty^-$ and that $\A_\infty$
is negligible.
Let $Q_\infty$ be the quotient set and let
$\QQ_\infty:\T_\infty\to Q_\infty$ be the quotient map; denote by
$X_{\alpha,\infty}:=\QQ^{-1}(\alpha)$ the disintegration rays and let
$g_\infty:\Dom (g_\infty)\subset Q_\infty\times [0,\infty)\to X$ be the
standard parametrization of the rays.

We introduce the function
$t_\alpha:\overline{X_{\alpha,\infty}}\to[0,\infty)$ defined as
\begin{equation}
  t_\alpha(x)
  :=
  (g_\infty(\alpha,\,\cdot\,))^{-1}
  =
  \sfd(g_\infty(\QQ_\infty(x),0),x)
  ;
\end{equation}
the function $t_\alpha$ measures how much a point is translates from
the starting point of the ray $X_{\alpha,\infty}$.

The following proposition guarantees that the geodesic on which the
measure $\sigma$ is supported lay on the transport set $\T_{\infty}$.

\begin{proposition}
  For $\sigma$-a.e.\ $(x,\gamma,\mu,p)\in Z$, it holds that
  $e_t(\gamma)\in \T_\infty$, for all $t\in(0,1)$.
\end{proposition}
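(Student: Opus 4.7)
The plan is to show that, for $\sigma$-a.e.\ $(x,\gamma,\mu,p) \in Z$, every interior point $e_t(\gamma)$ with $t\in(0,1)$ has exactly one non-degenerate transport curve for $\phi_\infty$ passing through it. First I would invoke Proposition~\ref{P:limit}: equations~\eqref{eq:gamma-along-rays-weak2} and~\eqref{eq:speed-of-gamma-weak2} jointly imply that, for $\sigma$-a.e.\ tuple, after the linear reparametrization $t \mapsto \rho t$ the curve $\gamma$ becomes a non-degenerate unit-speed transport curve for $\phi_\infty$. Hence for every $t\in(0,1)$ at least one such curve passes through $e_t(\gamma)$, so it remains only to rule out branching, i.e.\ to prove $e_t(\gamma)\notin \A_\infty$ for all $t\in(0,1)$.

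The $\mm$-negligibility of the branching set $\A_\infty$ together with the disintegration identity~\eqref{eq:disintegration-measure-with-sigma} tested against $\psi = \indicator_{\A_\infty \cap \overline E}$ yields, via the strict positivity of the kernel $Nt^{N-1}$ on $(0,1)$ and Fubini,
\begin{equation*}
  \L^{1}(\{t\in(0,1):\,e_t(\gamma)\in\A_\infty\})=0,\qquad\text{for } \sigma\text{-a.e.\ }(x,\gamma,\mu,p)\in Z.
\end{equation*}
This gives the desired property only for $\L^1$-a.e.\ $t$, not for every $t$.

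The main obstacle is to upgrade ``$\L^{1}$-a.e.\ $t$'' to ``every $t$''. For this, I would fix a $\sigma$-generic tuple $(x,\gamma,\mu,p)$ and suppose, by contradiction, that $e_{t_0}(\gamma)\in\A_\infty$ for some $t_0\in(0,1)$. Select $t_1,t_2$ in the good set with $t_1<t_0<t_2$, so that $e_{t_1}(\gamma),e_{t_2}(\gamma)\in\T_\infty$. By~\eqref{eq:gamma-along-rays-weak2}, $(e_{t_1}(\gamma),e_{t_2}(\gamma))\in\Gamma_\infty$, hence these two points are $\relation$-equivalent and lie on a common transport ray $X_{\alpha,\infty}\subset\T_\infty$. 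Both $\gamma|_{[t_1,t_2]}$, reparametrized with unit speed, and the sub-arc of $g_\infty(\alpha,\cdot)$ joining the two points are non-degenerate transport curves for $\phi_\infty$ passing through $e_{t_1}(\gamma)$; since $e_{t_1}(\gamma)\in\T_\infty$ admits, by definition, only one such curve, the two must coincide. This forces $\gamma([t_1,t_2])\subset X_{\alpha,\infty}\subset\T_\infty$, and in particular $e_{t_0}(\gamma)\in\T_\infty$, contradicting the assumption and completing the argument.
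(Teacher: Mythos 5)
Your proof is correct and takes a genuinely different route from the paper's, even though both start from the same place: test the disintegration identity \eqref{eq:disintegration-measure-with-sigma} against an indicator of the $\mm$-negligible branching set to conclude that for $\sigma$-a.e.\ tuple, $e_t(\gamma)\notin\A_\infty$ for $\L^1$-a.e.\ $t\in(0,1)$. The difference is in the upgrade to ``every $t$''. The paper works directly with $\A^+_\infty$: fixing $\epsilon>0$, it shows that the set $P$ of tuples with $e_\epsilon(\gamma)\in\A^+_\infty$ has $\sigma(P)=0$ via the quantitative bound $0\geq\epsilon^N\sigma(P)$, which exploits the fact that $\A^+_\infty$ propagates backward along a transport curve (so $\indicator_{\A^+_\infty}(e_t(\gamma))=1$ on the entire interval $[0,\epsilon]$, giving a definite lower bound for the inner integral); it then uses the contrapositive of this propagation together with the arbitrariness of $\epsilon$, and treats $\A^-_\infty$ symmetrically. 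Your sandwich argument instead picks a generic tuple, chooses $t_1<t_0<t_2$ with $e_{t_1}(\gamma),e_{t_2}(\gamma)\in\T_\infty$, observes via \eqref{eq:gamma-along-rays-weak2} that these two points are $\relation$-equivalent on a common ray $X_{\alpha,\infty}$, and identifies $\gamma|_{[t_1,t_2]}$ with the corresponding sub-arc of $g_\infty(\alpha,\cdot)$ lying inside $X_{\alpha,\infty}\subset\T_\infty$. This is a clean argument; it trades the paper's explicit measure estimate for the stated structural fact that each $X_{\alpha,\infty}$ is a connected one-dimensional submanifold parametrized by $g_\infty(\alpha,\cdot)$ on an interval (which is precisely the fact one proves from the same propagation of $\A^\pm_\infty$). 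Two small imprecisions worth noting but not fatal: the sub-arcs you compare are not themselves ``non-degenerate transport curves'' in the paper's strict sense (they are not maximal) --- the correct phrasing is that both are sub-arcs of the unique maximal transport curve through $e_{t_1}(\gamma)\in\T_\infty$, emanating forward with unit speed from the same point, hence coincide; and when invoking \eqref{eq:disintegration-measure-with-sigma} it is cleaner to test against $\indicator_{\A_\infty}$ directly (the formula expresses $\mm\llcorner_E$ as a superposition of the measures $\gamma_\#(Nt^{N-1}\L^1)$, so the intersection with $\overline{E}$ is immaterial).
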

\begin{proof}
  Clearly, for $\sigma$-a.e.\ $(x,\gamma,\mu,p)\in Z$, $\gamma$ is
  non-degenerate, hence $e_t(\gamma)\notin \D$, where $\D$ is the set
  where no non-degenerate transport curve pass.
  Therefore we need only to check that $e_t(\gamma)\not\in\A^\infty$.
  We will prove only that $e_t(\gamma)\neq\A_\infty^+$, for the case
  $e_t(\gamma)\neq\A_\infty^-$ is analogous.
  Fix $\epsilon>0$ and let
  \begin{equation}
    P:=\{(x,\gamma,\mu,p)\in Z: e_\epsilon(\gamma)\in \A^+_\infty
    \text{ and
      conditions~\eqref{eq:disintegration-measure-with-sigma}--\eqref{eq:characterization-p}
      holds}
    \}
  \end{equation}
  Notice that by definition of $\A^+_\infty$, if $(x,\gamma,\mu,p)\in
  P$, then $\gamma_t\in \A^+_\infty$, for all $t\in [0,\epsilon]$,
  thus we can compute
  \begin{align*}
    0
    &
    =
      \mm(\A^+_\infty)
      =
      \int_Z
      \int_0^1
      \indicator_{\A^+_\infty}(e_t(\gamma))
      Nt^{N-1}
      \,dt
      \,\sigma(dx\,d\gamma\,d\mu\,dp)
    \\
    &
      \geq
      \int_P
      \int_0^\epsilon
      \indicator_{\A^+_\infty}(e_t(\gamma))
      Nt^{N-1}
      \,dt
      \,\sigma(dx\,d\gamma\,d\mu\,dp)
      \geq
      \epsilon^N
      \sigma(P),
  \end{align*}
  thus $P$ is negligible.
  Fix now $(x,\gamma,\mu,p)\notin P$.
  By definition of $\A^+_\infty$ and $P$, we have that
  $\gamma_t\not\in \A^+_\infty$, for all $t\in [\epsilon,1]$.
  By arbitrariness of $\epsilon$, we deduce that for
  $\sigma$-a.e\ $(x,\gamma,\mu,p)\in Z$, it holds that
  $e_t(\gamma)\notin \A^+_\infty$, for all $t\in (0,1]$.
\end{proof}
\begin{corollary}
  It holds that $E\subset\T_\infty$ and
  for $\sigma$-a.e.\ $(x,\gamma,\mu,p)\in Z$, we have that
  $e_t(\gamma)\in\overline{X_{\QQ(x),\infty }}$ and
  \begin{equation}
    \label{eq:gamma-parametrization-g}
    e_t(\gamma)
    =g_\infty(\QQ(x),t_{\QQ(x)}(e_0(\gamma))+\rho t).
  \end{equation}
\end{corollary}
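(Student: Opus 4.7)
The plan is to first show $E\subset \T_\infty$ up to a $\mm$-negligible set, and then invoke the uniqueness of the non-degenerate transport curve through each point of $\T_\infty$ to deduce both the ray inclusion and the explicit reparametrization identity in~\eqref{eq:gamma-parametrization-g}.

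For the first step, fix an $(x,\gamma,\mu,p)\in Z$ in a full-$\sigma$-measure set on which \eqref{eq:gamma-along-rays-weak2}--\eqref{eq:x-belongs-to-gamma-weak2} and the conclusion of the previous proposition all hold. By~\eqref{eq:speed-of-gamma-weak2} the curve $\gamma$ has length $\rho>0$, while~\eqref{eq:gamma-along-rays-weak2} gives $\phi_\infty(e_t(\gamma))-\phi_\infty(e_s(\gamma))=(s-t)\rho$ for $0\le t\le s\le 1$. Hence, the reparametrization of $\gamma$ to unit speed is a non-degenerate transport curve for $\phi_\infty$ in the sense of Section~\ref{S:needle}. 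Since $x\in\gamma$ by~\eqref{eq:x-belongs-to-gamma-weak2}, such a curve passes through $x$, which rules out $x\in\D_\infty$. On the other hand the push-forward identity $(P_1)_\#\sigma=\mm\llcorner_E$ together with the fact that $\A_\infty$ is $\mm$-negligible forces $x\notin\A_\infty$ for $\sigma$-a.e.\ $(x,\gamma,\mu,p)$. Combining these two exclusions yields $x\in\T_\infty$ $\sigma$-almost everywhere, which in turn forces $\mm\llcorner_E(X\setminus\T_\infty)=0$, i.e.\ $E\subset\T_\infty$ up to negligible sets.

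For the remaining two assertions, fix one such $(x,\gamma,\mu,p)$. Since $x\in\T_\infty$, there is a \emph{unique} maximal non-degenerate transport curve of $\phi_\infty$ through $x$, whose unit-speed parametrization is $g_\infty(\QQ_\infty(x),\,\cdot\,)$. Because the unit-speed reparametrization of $\gamma$ is also such a transport curve through $x$, its image must be contained in the image of $g_\infty(\QQ_\infty(x),\,\cdot\,)$, namely $X_{\QQ_\infty(x),\infty}$. Therefore $e_t(\gamma)\in X_{\QQ_\infty(x),\infty}$ for all $t\in(0,1)$ and, by continuity of $t\mapsto e_t(\gamma)$, the endpoints satisfy $e_0(\gamma),e_1(\gamma)\in\overline{X_{\QQ_\infty(x),\infty}}$. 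Both $\gamma$ and $g_\infty(\QQ_\infty(x),\,\cdot\,)$ are $\phi_\infty$-decreasing, with respective constant speeds $\rho$ and $1$, so by uniqueness of the unit-speed parametrization of a transport ray there exists $a\in\R$ with
\begin{equation*}
e_t(\gamma)=g_\infty\bigl(\QQ_\infty(x),\,a+\rho t\bigr),\qquad t\in[0,1].
\end{equation*}
Evaluating at $t=0$ identifies $a=t_{\QQ_\infty(x)}(e_0(\gamma))$, which is exactly~\eqref{eq:gamma-parametrization-g}.

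The main obstacle will be the first paragraph, specifically the verification that $x\notin\D_\infty$ for $\sigma$-a.e.\ $(x,\gamma,\mu,p)$. This rests on a careful reading of~\eqref{eq:gamma-along-rays-weak2}--\eqref{eq:speed-of-gamma-weak2}: one must check that the unit-speed reparametrization of $\gamma$ has $\frac{d}{dt}\phi_\infty=-1$ and is non-degenerate (its domain contains at least two points because $\rho>0$), so it genuinely qualifies as a transport curve in the sense used to define $\D_\infty$, $\T_\infty$, and $\A_\infty$. The exclusion of $\A_\infty$, in contrast, is a purely measure-theoretic consequence of $(P_1)_\#\sigma=\mm\llcorner_E$ and the $\mm$-negligibility of the set of branching points.
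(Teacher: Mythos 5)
Your proof is correct and takes the natural route. You correctly identify the two ingredients needed to go from the preceding proposition to the corollary: (i) establishing that $x\in\T_\infty$ for $\sigma$-a.e.\ $(x,\gamma,\mu,p)$ — which the prior proposition alone does not give, since it only controls $e_t(\gamma)$ for $t\in(0,1)$, while $x$ may be an endpoint of $\gamma$ — and (ii) invoking uniqueness of the maximal transport curve through a point of $\T_\infty$ to pin down the ray and the reparametrization constant. Your direct argument for $x\notin\D$ (the unit-speed reparametrization of $\gamma$ is a non-degenerate transport curve through $x$, by~\eqref{eq:gamma-along-rays-weak2} and $\rho>0$) combined with $x\notin\A_\infty$ (since $\A_\infty$ is $\mm$-negligible and $(P_1)_\#\sigma=\mm\llcorner_E$) is a clean way to get there. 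The identification $a=t_{\QQ_\infty(x)}(e_0(\gamma))$ at the end is also handled correctly. A minor bookkeeping point: after observing that $e_t(\gamma)\in X_{\QQ_\infty(x),\infty}$ for $t\in(0,1)$ you pass to the closure to cover the endpoints; that is exactly why the statement carries the $\overline{X_{\QQ(x),\infty}}$, so you have interpreted it correctly.
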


Define $\hat{\q}:=\frac{1}{\mm(E)}(\QQ_\infty)_\#(\mm\llcorner_E)\ll(\QQ_\infty)_\#\mm\llcorner_{\T_\infty}$ and let
$\tilde\q$ be a probability measure such that $(\QQ_\infty)_\#\mm\llcorner_{\T_\infty}\ll\tilde\q$.
The disintegration theorem gives the following two formulae,
\begin{equation}
  \label{eq:disintegration-ugly}
  \mm\llcorner_E
  =
  \int_{Q_\infty}
  \hat \mm_{\alpha,\infty}
  \,
  \hat{\q}(d\alpha),
  \quad
  \text{ and }
  \quad
  \mm\llcorner_{\T_\infty}
  =
  \int_{Q_\infty}
  \tilde \mm_{\alpha,\infty}
  \,
  \tilde\q(d\alpha),
\end{equation}
where the measures $\hat{\mm}_{\alpha,\infty}$ and $\tilde\mm_{\alpha,\infty}$
are supported on $X_{\alpha,\infty}$.
By comparing the two expressions above, it turns out that
$\frac{d\hat{\q}}{d\tilde{\q}}(\alpha)\,\hat{\mm}_{\alpha,\infty}
= \indicator_E\tilde\mm_{\alpha,\infty}$.
The Localization Theorem~\ref{T:locMCP} (see also
Remark~\ref{R:disintegration}), ensures that the transport rays
$(X_{\alpha,\infty}, F, \tilde\mm_{\alpha,\infty})$ satisfies the
oriented $\CD(0,N)$ condition.
On the contrary, we cannot deduce the same condition for the other
disintegration, because the reference measure is restricted to the set
$E$ and not the transport set.
Consider the densities $\hat h_\alpha$ and $\tilde h_\alpha$ given by
\begin{equation}
  \hat{\mm}_{\alpha,\infty}
  =
  (g_\infty(\alpha,\,\cdot\,))_\#(\hat h_\alpha
  \Leb^1_{(0,|X_{\alpha,\infty}|)})
  ,
  \quad
  \text{ and }
  \quad
  \tilde\mm_{\alpha,\infty}
  =
  (g_\infty(\alpha,\,\cdot\,))_\#(\tilde h_\alpha
  \Leb^1_{(0,|X_{\alpha,\infty}|)})
  .
\end{equation}
Clearly, it holds that
$\frac{d\hat{\q}}{d\tilde\q}(\alpha)\hat h_\alpha(t) =
\indicator_E(g(\alpha,t))\tilde h_\alpha(t)$, thus we can derive a
somehow weaker concavity condition for the function
$\hat h_\alpha^{\frac{1}{N-1}}$: for all
$x_0,x_1\in(0,|X_{\alpha,\infty}|)$ and for all $t\in[0,1]$, it holds
that
\begin{equation}
  \begin{aligned}
    &
  \hat h_\alpha((1-t) x_0 + t x_1)^{\frac{1}{N-1}}
  \geq
  (1-t) \hat h_\alpha(x_0)^{\frac{1}{N-1}}
      + t   \hat h_\alpha(x_1)^{\frac{1}{N-1}},
    \\
    &
      \qquad
  \text{ if }  \hat h_\alpha((1-t) x_0 + tx_1)>0.
  \end{aligned}
\end{equation}
A natural consequence is the following ``Bishop--Gromov inequality'',
\begin{equation}
  \label{eq:weak-mcp-condition}
  \text{the map
    $r\mapsto \frac{\hat h_\alpha(r)}{r^{N-1}}$ is decreasing on the set
    $\{r\in(0,|X_{\alpha,\infty}|):\hat h_\alpha(r)>0\}$.}
\end{equation}
Define the full-measure set $\hat Z\subset Z$ as
\begin{equation*}
  \begin{aligned}
    \hat Z
    :=
    \{
    &
    (x,\gamma,\mu,p)\in Z
    :
    x\in E\cap\T_{\infty}
    ,
      \text{ and the properties given by}
    \\
    &\qquad
    \text{ Equations~\eqref{eq:disintegration-measure-with-sigma}--\eqref{eq:disintegration-perimeter-with-sigma}
    and~\eqref{eq:gamma-parametrization-g} holds}
    \}
    .
  \end{aligned}
\end{equation*}
We partitionate $\hat Z$ as follows,
\begin{equation}
  \hat Z_\alpha
  :=
  \{
  (x,\gamma,\mu,p)\in\hat Z
  :\QQ_\infty(x)=\alpha
  \}
  ,
\end{equation}
and we disintegrate the measure $\sigma$ according to the partition $(\hat
Z_\alpha)_{\alpha\in Q_\infty}$
\begin{equation}
  \label{eq:disintegration-sigma}
  \sigma
  =
  \int_{Q_\infty}
  \sigma_\alpha
  \,\q(d\alpha),
\end{equation}
where the probability measures $\sigma_\alpha$ are supported on $\hat Z_\alpha$.
Moreover, let $\nu_\alpha\in\ProbMeas([0,\infty))$ be the measure given by
\begin{equation}
  \nu_\alpha
  :=
  \frac{1}{\mm(E)}(t_\alpha \circ e_0 \circ \pi_K)_\#(\sigma_\alpha)
\end{equation}
(we recall that $t_\alpha=(g_\infty(\alpha,\,\cdot\,))^{-1}$ and
$\pi_K(x,\gamma,\mu,p)=\gamma$).

The following proposition shows that the density $\hat{h}_{\alpha}$
can be seen as a convolution of the model density and the measure
$\nu_{\alpha}$.

\begin{proposition}
  For $\hat{\q}$-a.e.\ $\alpha\in {Q_{\infty}}$, it holds that
  \begin{equation}
    \hat h_\alpha(r)
    =
    N\omega_N\AVR_X
    \int_{[0,\infty)}
    (r-t)^{N-1}
    \indicator_{(t,t+\rho)}(r)
    \,
    \nu_\alpha(dt)
    ,
    \quad
    \forall r\in(0,|X_{\alpha,\infty}|)
    .
  \end{equation}
\end{proposition}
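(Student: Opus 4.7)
The plan is to derive the formula by writing out $\int_E \psi\, d\mm$ in two different ways, and then identifying $\hat h_\alpha$ by essential uniqueness of the disintegration of $\mm\llcorner_E$.

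First I would fix a bounded Borel test function $\psi$ on $E$ and invoke the identity~\eqref{eq:disintegration-measure-with-sigma}. Using the parametrization~\eqref{eq:gamma-parametrization-g} of $e_t(\gamma)$, the integrand $\psi(e_t(\gamma))$ becomes $\psi(g_\infty(\alpha, s+\rho t))$ with $\alpha := \QQ_\infty(x)$ and $s := t_\alpha(e_0(\gamma))$. Disintegrating $\sigma$ via~\eqref{eq:disintegration-sigma} and pushing the $\gamma$-component forward by $t_\alpha \circ e_0 \circ \pi_K$, the inner double integral over $\hat Z_\alpha \times [0,1]$ collapses into an integral over the single scalar variable $s\in[0,\infty)$ against $\mm(E)\nu_\alpha$, by the very definition of $\nu_\alpha$.

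Next I would carry out the change of variables $r = s+\rho t$ in the $t$-integral, which converts $N t^{N-1}\,dt$ into $N\rho^{-N}(r-s)^{N-1}\,dr$ supported on $r\in(s,s+\rho)$. Applying Fubini (legitimate by non-negativity) to exchange the $s$- and $r$-integrals, and using $\rho^N = \mm(E)/(\omega_N\AVR_X)$, the right-hand side of~\eqref{eq:disintegration-measure-with-sigma} takes the form
\begin{equation*}
\int_{Q_\infty}\int_0^\infty \psi(g_\infty(\alpha,r))\left[N\omega_N\AVR_X\int_{[0,\infty)}(r-s)^{N-1}\indicator_{(s,s+\rho)}(r)\,\nu_\alpha(ds)\right]dr\, \hat{\q}(d\alpha).
\end{equation*}
Comparing this with $\int_E \psi\, d\mm = \int_{Q_\infty}\int_0^{|X_{\alpha,\infty}|}\psi(g_\infty(\alpha,r))\hat h_\alpha(r)\,dr\, \hat{\q}(d\alpha)$, obtained from~\eqref{eq:disintegration-ugly}, the arbitrariness of $\psi$ (tested against products separating the $\alpha$- and $r$-dependencies) together with the uniqueness of the disintegration yields the desired identity for $\hat{\q}$-a.e.\ $\alpha$ and $\L^1$-a.e.\ $r$.

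The main obstacle I anticipate is twofold. First, the bookkeeping of normalization constants (since $\sigma$ has total mass $\mm(E)$, the $\sigma_\alpha$ are probability measures, and $\nu_\alpha$ is a probability by definition) must produce exactly the constant $N\omega_N\AVR_X$; this is ensured precisely by the combination of the factor $\rho^{-N}$ from the change of variables with the factor $\mm(E)$ absorbed into the definition of $\nu_\alpha$. Second, the proposition asserts the pointwise identity for every $r\in(0,|X_{\alpha,\infty}|)$ rather than only $\L^1$-a.e.; for this I would invoke the weak concavity property~\eqref{eq:weak-mcp-condition}, which implies that $\hat h_\alpha$ admits a continuous representative on its positivity set, while the right-hand side is automatically continuous in $r$ as a convolution of a continuous kernel against a finite measure, so that equality everywhere follows by density and by the vanishing of both sides outside the appropriate region.
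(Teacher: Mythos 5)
Your argument follows the paper's proof essentially step for step: disintegrate $\sigma$ along the partition $(\hat Z_\alpha)_{\alpha}$, use~\eqref{eq:gamma-parametrization-g} and the change of variables $r=s+\rho t$ to rewrite the inner integral as an integral in $r$ against a kernel, apply Fubini, absorb the $\mm(E)$ and $\rho^{-N}$ factors via $\rho^N=\mm(E)/(\omega_N\AVR_X)$ and the definition of $\nu_\alpha$, and conclude by uniqueness of the disintegration. The one place where your proposal misfires is the final continuity claim. The kernel $r\mapsto(r-t)^{N-1}\indicator_{(t,t+\rho)}(r)$ has a jump at $r=t+\rho$, so its convolution with $\nu_\alpha$ is \emph{not} automatically continuous if $\nu_\alpha$ has atoms --- and indeed the very next proposition shows that $\nu_\alpha=\delta_0$, which is purely atomic, so the resulting density $N\omega_N\AVR_X\,r^{N-1}\indicator_{(0,\rho)}(r)$ has a jump at $r=\rho$. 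Likewise,~\eqref{eq:weak-mcp-condition} gives a monotonicity statement (Bishop--Gromov on the positivity set), not concavity of $\hat h_\alpha^{1/(N-1)}$ on all of $(0,|X_{\alpha,\infty}|)$, so it does not by itself produce a continuous representative of $\hat h_\alpha$. The clause ``$\forall r\in(0,|X_{\alpha,\infty}|)$'' in the statement should be read as fixing the specific version of the Radon--Nikodym density produced by the Fubini step, rather than as something to be deduced from a continuity argument; the paper's own proof takes this same tacit shortcut after establishing the a.e.\ identity via uniqueness of disintegration.
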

\begin{proof}
Fix $\psi\in L^1(\mm\llcorner_E)$ and compute its integral using
the Equations~\eqref{eq:disintegration-measure-with-sigma}
and~\eqref{eq:disintegration-sigma}
\begin{equation}
  \begin{aligned}
    \int_E\psi(x)\,\mm(dx)
    &
      =
      \int_{\hat Z}
      \int_0^1\psi(e_t(\gamma)) N t^{N-1}
      \, dt
      \,
      \sigma(dx\, d\gamma \, d\mu \, dp)
    \\
    &
      =
      \int_{Q_\infty}
      \int_{\hat Z_\alpha}
      \int_0^1\psi(e_t(\gamma)) N t^{N-1}
      \, dt
      \,
      \sigma_\alpha(dx\, d\gamma \, d\mu \, dp)
      \,
      \q(d\alpha)
      .
  \end{aligned}
\end{equation}
Fix now $\alpha\in {Q_{\infty}}$ and compute
(recall~\eqref{eq:gamma-parametrization-g} and the definition of
$\hat Z$)
\begin{equation}
  \begin{aligned}
      &
      \int_{\hat Z_\alpha}
      \int_0^1\psi(e_t(\gamma)) N t^{N-1}
      \, dt
      \,
      \sigma_\alpha(dx\, d\gamma \, d\mu \, dp)
    \\
      &
        \qquad
      =
      \int_{\hat Z_\alpha}
      \int_0^\rho\psi(e_{s/\rho}(\gamma)) N \frac{s^{N-1}}{\rho^N}
      \, ds
      \,
      \sigma_\alpha(dx\, d\gamma \, d\mu \, dp)
    \\
    &
        \qquad
      =
      \int_{\hat Z_\alpha}
      \int_0^\rho
      \psi(g_\infty(\QQ(x),t(\alpha,\gamma_0)+s))
      N \frac{s^{N-1}}{\rho^N}
      \, ds
      \,
      \sigma_\alpha(dx\, d\gamma \, d\mu \, dp)
    \\
    &
        \qquad
      =
      \int_{\hat Z_\alpha}
      \int_0^{|X_{\alpha,\infty}|}
      \psi(g_\infty(\alpha,r))
      N \frac{(r-t(\alpha,\gamma_0))^{N-1}}{\rho^N}
      \times
    \\
      &
        \qquad\qquad \times
      \indicator_{(t(\alpha,\gamma_0),t(\alpha,\gamma_0)+\rho)}(r)
      \,
      dr
      \,
      \sigma_\alpha(dx\, d\gamma \, d\mu \, dp)
    \\
      &
        \qquad
      =
      \int_0^{|X_{\alpha,\infty}|}
      \psi(g_\infty(\alpha,r))
      \int_{\hat Z_\alpha}
      N \frac{(r-t(\alpha,\gamma_0))^{N-1}}{\rho^N}
      \times
    \\
      &
        \qquad\qquad \times
      \indicator_{(t(\alpha,\gamma_0),t(\alpha,\gamma_0)+\rho)}(r)
      \,
      \sigma_\alpha(dx\, d\gamma \, d\mu \, dp)
      \,
      dr
      ,
  \end{aligned}
\end{equation}
therefore, by the uniqueness of the disintegration, we can conclude
\begin{align*}
    \hat{h}_{\alpha}(r)
    &
  =
  \int_{\hat Z_\alpha}
  N \frac{(r-t(\alpha,\gamma_0))^{N-1}}{\rho^N}
  \indicator_{(t(\alpha,\gamma_0),t(\alpha,\gamma_0)+\rho)}(r)
  \,
  \sigma_\alpha(dx\, d\gamma \, d\mu \, dp)
    \\
    &
  =
  N\omega_N\AVR_X \int_{[0,\infty)}
  (r-t)^{N-1}
  \indicator_{(t,t+\rho)}(r)
  \,
  \nu_\alpha(dt)
  .
      \qedhere
  \end{align*}
\end{proof}

Using the fact that $\hat h_\alpha$ is a convolution, we deduce that
$\nu_\alpha$ is indeed the Dirac delta.
\begin{proposition}
For $\hat{\q}$-a.e.\ $\alpha\in {Q_{\infty}}$, it holds that $\nu_\alpha=\delta_0$.
\end{proposition}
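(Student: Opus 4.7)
The plan is to argue by contradiction, leveraging the explicit convolution representation of $\hat h_\alpha$ against the weak Bishop--Gromov inequality~\eqref{eq:weak-mcp-condition}. Fix $\alpha$ in the full-$\hat{\q}$-measure subset of $Q_{\infty}$ where $\hat\mm_{\alpha,\infty}$ is a probability measure and $\hat h_\alpha$ obeys~\eqref{eq:weak-mcp-condition}. Rescaling the convolution formula, the quotient $\psi(r):=\hat h_\alpha(r)/r^{N-1}$ reads
\begin{equation*}
  \psi(r)
  =
  N\omega_N\AVR_X\int_{[0,r)\cap(r-\rho,\infty)}(1-t/r)^{N-1}\,\nu_\alpha(dt),
\end{equation*}
and by~\eqref{eq:weak-mcp-condition} the function $\psi$ is nonincreasing on $\{\hat h_\alpha>0\}$. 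The strategy is to show that any deviation of $\nu_\alpha$ from $\delta_0$ makes $\psi$ strictly increase somewhere on $\{\hat h_\alpha>0\}$, contradicting this monotonicity.

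As a first step I would show that $\nu_\alpha(\{0\})>0$. Assuming the contrary, let $s:=\inf\supp\nu_\alpha\geq 0$: for $r$ approaching the left endpoint of $\{\hat h_\alpha>0\}$ from the right, the integration domain captures only vanishing $\nu_\alpha$-mass (either the domain shrinks to the empty set when $s>0$, or, when $s=0$, the non-atomicity of $\nu_\alpha$ at $0$ gives $\nu_\alpha([0,r))\to 0$). In both sub-cases $\psi(r)\to 0^+$, so monotonicity forces $\psi\leq 0$, hence $\hat h_\alpha\equiv 0$: a contradiction, since $\hat{\mm}_{\alpha,\infty}$ is a probability measure.

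Set $m_0:=\nu_\alpha(\{0\})>0$; it remains to rule out $\nu_\alpha((0,\infty))>0$. Suppose such mass exists and let $t^*:=\inf\supp(\nu_\alpha|_{(0,\infty)})\geq 0$. The integrand $(1-t/r)^{N-1}$ is \emph{strictly} increasing in $r$ for each $t>0$, while for $t=0$ it is constantly $1$; moreover, as $r$ grows, the sliding window $(r-\rho,r)\cap[0,\infty)$ captures additional $\nu_\alpha$-mass. Picking $r_1<r_2$ in $\{\hat h_\alpha>0\}$ slightly above $t^*$ and close enough that both windows still contain the same portion of the atom at $0$ (adjusting in the regime $r>\rho$ by restricting attention to a sub-window where only mass near $t^*$ is relevant), the difference $\psi(r_2)-\psi(r_1)$ splits as a sum of two nonnegative terms: one accounting for the strict growth of $(1-t/r)^{N-1}$ in $r$ on mass already present in the domain, and one from new mass entering on the right. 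Because $\nu_\alpha$ charges every right-neighbourhood of $t^*$, at least one of these terms is strictly positive, yielding $\psi(r_2)>\psi(r_1)$ and contradicting the monotonicity of $\psi$.

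Combining the two steps, $\supp\nu_\alpha=\{0\}$, and since $\nu_\alpha\in\P([0,\infty))$, we conclude $\nu_\alpha=\delta_0$. The main obstacle I anticipate is the careful bookkeeping in the second step when $t^*$ is large compared to $\rho$: the sliding integration window $(r-\rho,r)$ may simultaneously lose the atom at $0$ and part of the previously integrated mass, so one must localise the comparison to a sub-interval of $\{\hat h_\alpha>0\}$ on which only the mass entering from near $t^*$ contributes to $\psi(r_2)-\psi(r_1)$, in order to extract a strictly positive sign.
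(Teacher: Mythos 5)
Your proposal is correct and uses the same two key inputs as the paper (the convolution formula for $\hat h_\alpha$ and the weak Bishop--Gromov monotonicity~\eqref{eq:weak-mcp-condition}), but organizes the argument quite differently. The paper proceeds in three stages: (i) it sets $T:=\inf\supp\nu_\alpha$ and shows $T=0$ by letting $r\to T^+$ in the squeeze $\hat h_\alpha(r)\leq N\omega_N\AVR_X(r-T)^{N-1}\nu_\alpha([T,r))$; (ii) it differentiates the non-increasing map $r\mapsto\hat h_\alpha(r)/r^{N-1}$ on $(0,\rho)$ --- where the window is simply $[0,r)$ and never loses mass --- obtaining $\int_{[0,r)}t(r-t)^{N-2}\nu_\alpha(dt)\leq 0$, hence $\nu_\alpha((0,\rho))=0$; (iii) it rules out $\nu_\alpha([\rho,\infty))>0$ by re-running the stage-(i) limit at $S:=\inf\supp(\nu_\alpha\llcorner_{[\rho,\infty)})$. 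You instead (a) show $\nu_\alpha(\{0\})>0$ directly (a minor strengthening of (i), and in fact logically superfluous, since your step (b) already covers the case $t^*=0$), and (b) rule out all mass on $(0,\infty)$ at once by exhibiting $r_1<r_2$ in $\{\hat h_\alpha>0\}$ near $t^*:=\inf\supp(\nu_\alpha\llcorner_{(0,\infty)})$ with $\psi(r_1)<\psi(r_2)$.

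Your step (b) is more elementary than the paper's differentiation --- no interchange of $d/dr$ with the integral, no Fatou-type lower bound --- and it unifies the paper's stages (ii) and (iii) into one comparison. The ``obstacle'' you flag (mass exiting the sliding window on the left) is real but is resolved exactly by the localisation you gesture at: choose $r_1<r_2$ in $(t^*,t^*+\rho)$ and \emph{on the same side of $\rho$}. Then the exiting interval $(r_1-\rho,\,r_2-\rho)$ lies in $(t^*-\rho,\,t^*)$, which carries no mass of $\nu_\alpha\llcorner_{(0,\infty)}$ by definition of $t^*$; and the atom at $0$ is either in both windows (when $r_1,r_2<\rho$, so contributes the constant weight $1$) or in neither (when $r_1,r_2>\rho$). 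With this, $\psi(r_2)-\psi(r_1)$ genuinely decomposes into the two nonnegative terms you describe, and since $\nu_\alpha$ charges every right-neighbourhood of $t^*$ (either via an atom at $t^*>0$, whose weight $(1-t^*/r)^{N-1}$ strictly increases, or via mass in $(t^*,r_1)$ or $[r_1,r_2)$), at least one term is strictly positive. It would strengthen your writeup to spell this out rather than leaving it as an anticipated obstacle, but the route is sound and, to my taste, cleaner than the paper's.
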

\begin{proof}
  Let $T:=\inf \supp\nu_\alpha$.
  If we set $r\in(T,T+\rho)$, we can compute
  \begin{equation}
    \label{eq:hat-h-is-positive}
  \begin{aligned}
    \frac{\hat h_{\alpha,\infty}(r)}{N\omega_N\AVR_X}
    &
  =
   \int_{[0,\infty)}
  (r-t)^{N-1}
  \indicator_{(t,t+\rho)}(r)
  \,
  \nu_\alpha(dt)
      =
  \int_{[T,r)}
  (r-t)^{N-1}
  \,
      \nu_\alpha(dt)
    \\
    &
      \geq
  \int_{[T,r)}
      \left(
      \frac{r-T}{2}
      \indicator_{[T,(r+T)/2]}(t)
      \right)^{N-1}
  \,
      \nu_\alpha(dt)
      =
      \frac{(r-T)^{N-1}}{2^{N-1}}
      \nu_\alpha([T,\tfrac{r+T}{2}]).
  \end{aligned}
\end{equation}
  By definition of $T$, we have that
  $\nu_\alpha([T,\frac{r+T}{2}])>0$, hence $\hat h_\alpha(r)>0$, for all
  $r\in(T,T+\rho)$.
  On the other hand
  \begin{equation}
    \label{eq:hat-h-goes-to-zero}
  \begin{aligned}
    \hat h_{\alpha,\infty}(r)
    &
  =
      N\omega_N\AVR_X
  \int_{[T,r)}
  (r-t)^{N-1}
  \,
      \nu_\alpha(dt)
    \\
    &
      \leq
      N\omega_N\AVR_X
      (r-T)^{N-1}
  \,
      \nu_\alpha([T,r))
      \to
      0
      .
      \quad
      \text{ as }
      r\to T^+
      .
  \end{aligned}
\end{equation}
  We claim that $T=0$.
  Indeed, if $T>0$, then $\lim_{r\to T^{+}}\hat h_\alpha(r)/r^{N-1}=0$
  contradicting~\eqref{eq:weak-mcp-condition}.
  We derive now the non-increasing function
  \begin{equation}
    (0,\rho)\ni r
    \mapsto
    \frac{\hat h_\alpha(r)}{r^{N-1}}
    =
    \frac{N\omega_N\AVR_X}{r^{N-1}}
    \int_{[0,r)}
    (r-t)^{N-1}
    \,
    \nu_\alpha(dt)
    ,
  \end{equation}
  obtaining
  \begin{align*}
    0
    &
      \geq
      N\omega_N\AVR_X
      \left(
      \frac{1-N}{r^{N}}
    \int_{[0,r)}
    (r-t)^{N-1}
    \,
      \nu_\alpha(dt)
      +
      \frac{1}{r^{N-1}}
      \frac{d}{dr}
          \int_{[0,r)}
    (r-t)^{N-1}
    \,
    \nu_\alpha(dt)
      \right)
      .
  \end{align*}
  We compute the second term
  \begin{align*}
    \frac{d}{dr}
    \int_{[0,r)}
    (r-t)^{N-1}
    \,
    \nu_\alpha(dt)
&=
      \lim_{h\to 0}
          \int_{[r,r+h)}
    \frac{(r+h-t)^{N-1}}{h}
    \,
      \nu_\alpha(dt)
    \\
    &
      \qquad
      +
      \lim_{h\to 0}
      \int_{[0,r)}
      \frac{(r+h-t)^{N-1}-(r-t)^{N-1}}{h}
    \,
      \nu_\alpha(dt)
    \\
    &
      \geq
      0 + 
      \int_{[0,r)}
      \lim_{h\to 0}
      \frac{(r+h-t)^{N-1}-(r-t)^{N-1}}{h}
    \,
      \nu_\alpha(dt)
    \\[2mm]
    &
      =
      (N-1)\int_{[0,r)}
      (r-t)^{N-2}
    \,
      \nu_\alpha(dt)
      ,
  \end{align*}
  yielding
  \begin{align*}
    0
    &
      \geq
      (1-N)
    \int_{[0,r)}
    (r-t)^{N-1}
    \,
      \nu_\alpha(dt)
      +
      r
      \frac{d}{dr}
          \int_{[0,r)}
    (r-t)^{N-1}
    \,
    \nu_\alpha(dt)
    \\
    &
      \geq
      (N-1)
    \int_{[0,r)}
    (r(r-t)^{N-2}-(r-t)^{N-1})
    \,
      \nu_\alpha(dt)
    \\
    &
      =(N-1)
      \int_{[0,r)}
      t(r-t)^{N-2}
    \,
      \nu_\alpha(dt).
  \end{align*}
The inequality above gives $\nu_\alpha((0,r))=0$, for all
$r\in(0,\rho)$, hence $\nu_\alpha(0,\rho)=0$.
We deduce that
\begin{equation}
\hat  h_\alpha(r)
  =
  N\omega_N\AVR_X
  \int_{[0,r)}
  (r-t)^{N-1}
  \, \nu_\alpha(dt)
  =
  N\omega_N\AVR_X
  \,
  r^{N-1} \nu_\alpha(\{0\})
  ,
  \quad
  \forall r\in(0,\rho)
  .
\end{equation}
If $\nu_\alpha([\rho,\infty))=0$, then $\nu_\alpha=\delta_0$ (because
$\nu_\alpha$ has mass $1$) completing the proof.
Assume on the contrary that $\nu_\alpha([\rho,\infty))>0$, and
let $S:=\inf \supp (\nu_\alpha\llcorner_{[\rho,\infty)})\geq\rho$.
In this case, following the computations~\eqref{eq:hat-h-is-positive}
and~\eqref{eq:hat-h-goes-to-zero}, with $S$ in place of $T$, we deduce
$\lim_{r\to S^+} \hat h_\alpha(r)=0$,
contradicting~\eqref{eq:weak-mcp-condition}.
\end{proof}
\begin{corollary}
  \label{cor:gamma-parametrization-g-improved}
  For $\hat{\q}$-a.e.\ $\alpha\in {Q_{\infty}}$, for
  $\sigma_\alpha$-a.e. $(x,\gamma,\mu,p)\in Z_\alpha$, it holds that
  $e_t(\gamma)=g(\alpha,\rho t)$, $\forall t\in[0,1]$.
\end{corollary}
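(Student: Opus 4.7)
The plan is direct: this corollary is essentially a bookkeeping consequence of the preceding proposition ($\nu_\alpha=\delta_0$) combined with the parametrization identity \eqref{eq:gamma-parametrization-g}. First I would fix a $\hat{\q}$-full-measure set of $\alpha\in Q_\infty$ on which both $\nu_\alpha=\delta_0$ holds and $\sigma_\alpha$ is well-defined and concentrated on $\hat Z_\alpha$. Unpacking the definition
\[
\nu_\alpha = \tfrac{1}{\mm(E)}(t_\alpha\circ e_0\circ \pi_K)_\#(\sigma_\alpha),
\]
the identity $\nu_\alpha=\delta_0$ forces the pushforward measure $(t_\alpha\circ e_0\circ\pi_K)_\#\sigma_\alpha$ to be concentrated at $\{0\}$. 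Hence, for $\sigma_\alpha$-a.e.\ $(x,\gamma,\mu,p)\in \hat Z_\alpha$, one has $t_\alpha(e_0(\gamma))=0$.

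Next I would read off that, by construction, $t_\alpha=(g_\infty(\alpha,\cdot))^{-1}$ satisfies $t_\alpha(g_\infty(\alpha,0))=0$; therefore $t_\alpha(e_0(\gamma))=0$ is equivalent to $e_0(\gamma)$ being the starting point of the transport ray $X_{\alpha,\infty}$. Combined with the fact that $\QQ_\infty(x)=\alpha$ everywhere on $\hat Z_\alpha$, we can substitute into the parametrization identity \eqref{eq:gamma-parametrization-g}, which is valid on $\hat Z$, to get
\[
e_t(\gamma) = g_\infty(\QQ(x), t_{\QQ(x)}(e_0(\gamma)) + \rho t) = g_\infty(\alpha, \rho t), \qquad \forall t\in[0,1],
\]
as required.

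There is no real obstacle here; the argument is purely a matter of chaining together conditions that were already established. The only delicate point is the standard measurability/null-set housekeeping: ensuring that the $\sigma_\alpha$-negligible exceptional set on which either $t_\alpha(e_0(\gamma))=0$ or \eqref{eq:gamma-parametrization-g} fails can be collected into a single $\hat\q$-negligible set of indices $\alpha$. This is immediate from the disintegration formula \eqref{eq:disintegration-sigma} via Fubini, since both full-measure conditions were already built into the definition of the set $\hat Z$.
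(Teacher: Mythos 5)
Your proof is correct and follows exactly the same route as the paper: unpack $\nu_\alpha=\delta_0$ to conclude $t_\alpha(e_0(\gamma))=0$ for $\sigma_\alpha$-a.e.\ $(x,\gamma,\mu,p)$, then substitute into \eqref{eq:gamma-parametrization-g}. The null-set bookkeeping you mention is indeed the only thing to check, and it is handled as you describe via the disintegration \eqref{eq:disintegration-sigma}.
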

\begin{proof}
  The fact that $\nu_\alpha=\delta_0$, implies $t_\alpha(\gamma_0)=0$
  for $\sigma_\alpha$-a.e.\ $(x,\gamma\,\mu,p)\in\hat Z_\alpha$,
  hence, recalling the disintegration
  formula~\eqref{eq:gamma-parametrization-g} and the definition of
  $\hat{Z}$, we deduce that
  $ e_t(\gamma) = g(\alpha,t_\alpha(e_0)+\rho t) = g(\alpha,\rho t)$.
\end{proof}

The next corollary concludes the discussion of the limiting procedures
of the disintegration.

\begin{corollary}
  \label{cor:disintegration-classical}
  For $\hat{\q}$-a.e.\ $\alpha\in {Q_{\infty}}$, it holds that
  \begin{equation}
    \hat h_\alpha(r)
    =
    N\omega_N\AVR_X
    \indicator_{(0,\rho)}(r)
    r^{N-1}
    .
  \end{equation}
  Moreover, the following disintegration formulae hold true,
  \begin{align}
    \label{eq:disintegration-measure-classical}
    &
      \mm\llcorner_E
    =
    N\omega_N\AVR_X
    \int_{{Q_{\infty}}}
      (g_\infty(\alpha,\,\cdot\,))_{\#}
      (
      r^{N-1}
      \,\Leb^1\llcorner_{(0,\rho)})
    \,\hat{\q}(d\alpha)
    ,
    \\
    \label{eq:disintegration-perimeter-classical}
    &
      \PP(E;\,\cdot\,)
      =
      \PP(E)
      \int_{Q_{\infty}}
      \delta_{g_\infty(\alpha,\rho)}
      \,
      \hat{\q}(d\alpha)
      .
  \end{align}
\end{corollary}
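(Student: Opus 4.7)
The plan is to combine the two immediately preceding propositions with the disintegration formulas already assembled, so that all three claims reduce to substitution and a small amount of bookkeeping.

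First I would establish the formula for $\hat h_\alpha$. The proposition preceding the corollary gives the convolution representation
\[
\hat h_\alpha(r) = N\omega_N\AVR_X \int_{[0,\infty)} (r-t)^{N-1}\,\indicator_{(t,t+\rho)}(r)\,\nu_\alpha(dt),
\]
while the proposition right after it shows that $\nu_\alpha = \delta_0$ for $\hat\q$-a.e.\ $\alpha$. Substituting the Dirac mass at $0$ kills the convolution and produces the advertised expression $\hat h_\alpha(r) = N\omega_N\AVR_X\,\indicator_{(0,\rho)}(r)\,r^{N-1}$.

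Second, I would obtain the measure disintegration \eqref{eq:disintegration-measure-classical} by plugging this $\hat h_\alpha$ into the standard disintegration \eqref{eq:disintegration-ugly}: since $\hat\mm_{\alpha,\infty} = (g_\infty(\alpha,\cdot))_\#(\hat h_\alpha\L^1\llcorner_{(0,|X_{\alpha,\infty}|)})$, and since $\hat h_\alpha$ is supported in $(0,\rho)$, the push-forward reshapes exactly into the right-hand side of \eqref{eq:disintegration-measure-classical}.

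Third, for the perimeter formula \eqref{eq:disintegration-perimeter-classical} I would begin from \eqref{eq:disintegration-perimeter-with-sigma}, disintegrate $\sigma$ over $Q_\infty$ via the probability measures $\sigma_\alpha$ supported on $\hat Z_\alpha$, and apply Corollary~\ref{cor:gamma-parametrization-g-improved} to replace $e_1(\gamma)$ by $g_\infty(\alpha,\rho)$ for $\sigma_\alpha$-a.e.\ $(x,\gamma,\mu,p)$. This collapses the inner integration to a Dirac mass at $g_\infty(\alpha,\rho)$, leaving
\[
\int \psi(y)\,\PP(E;dy) = \tfrac{N}{\rho}\,\mm(E)\int_{Q_\infty}\psi(g_\infty(\alpha,\rho))\,\hat\q(d\alpha).
\]
The prefactor is converted into $\PP(E)$ by means of the identity $\tfrac{N}{\rho}\mm(E) = N(\omega_N\AVR_X)^{1/N}\mm(E)^{1-1/N} = \PP(E)$, which is just the saturation of \eqref{E:inequality} rewritten through $\rho = (\mm(E)/(\omega_N\AVR_X))^{1/N}$.

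I do not expect any genuine obstacle: the heavy analytic lifting has already been done in the two preceding propositions (the convolution structure of $\hat h_\alpha$ and the collapse $\nu_\alpha=\delta_0$) and in Corollary~\ref{cor:gamma-parametrization-g-improved}. The only point requiring some vigilance is tracking total masses — $\sigma$ has mass $\mm(E)$, the $\sigma_\alpha$ are probabilities, and $\hat\q$ is a probability on $Q_\infty$, so the reference measure for the disintegration of $\sigma$ must be understood as $\mm(E)\,\hat\q$ (rather than $\hat\q$ itself) in order for the prefactor in the perimeter formula to emerge correctly.
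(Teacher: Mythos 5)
Your proof is correct and follows essentially the same route as the paper: $\hat h_\alpha$ from $\nu_\alpha=\delta_0$ substituted into the convolution formula, \eqref{eq:disintegration-measure-classical} by plugging $\hat h_\alpha$ into \eqref{eq:disintegration-ugly}, and \eqref{eq:disintegration-perimeter-classical} from \eqref{eq:disintegration-perimeter-with-sigma} plus Corollary~\ref{cor:gamma-parametrization-g-improved}, with the prefactor $\tfrac{N}{\rho}\mm(E)$ identified with $\PP(E)$ via saturation of \eqref{E:inequality}. Your final remark on mass bookkeeping is well taken: the disintegration $\sigma=\int_{Q_\infty}\sigma_\alpha\,\q(d\alpha)$ with $\sigma_\alpha$ probabilities and $\q=\hat\q$ would give total mass $1$ rather than $\mm(E)$, so one must indeed read the reference measure as $\mm(E)\hat\q$ (equivalently, take $\sigma_\alpha$ of mass $\mm(E)$) for the factor $\PP(E)$ to emerge.
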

\begin{proof}
  We need only to prove
  Equation~\eqref{eq:disintegration-perimeter-classical}.
  Equation~\eqref{eq:disintegration-perimeter-with-sigma} and
  Corollary~\ref{cor:gamma-parametrization-g-improved} yield
  \begin{align*}
    \int_{\overline E}
    \psi(x)
    \,
    \PP(E;dx)
    &
      =
      \frac{N}{\rho}
      \int_{\hat Z}
      \psi(e_1(\gamma))
      \psi\,\sigma(dx\,d\gamma\,d\mu\,dp)
    \\
    &
      =
      \frac{N}{\rho}
      \int_{Q_{\infty}}
      \int_{\hat Z_\alpha}
      \psi(e_1(\gamma))
      \,\sigma_\alpha(dx\,d\gamma\,d\mu\,dp)
      \,\hat{\q}(d\alpha)
    \\
    &
      =
      \frac{N}{\rho}
      \int_{Q_{\infty}}
      \psi(g_\infty(\alpha,\rho))
      \int_{\hat Z_\alpha}
      \,\sigma_\alpha(dx\,d\gamma\,d\mu\,dp)
      \,\hat{\q}(d\alpha)
      ,
    \\
    &
      \qquad\qquad
      \forall\psi\in L^{1}(\overline{E};\PP(E;\,\cdot\,))
      .
      \qedhere
  \end{align*}
\end{proof}

\section{\texorpdfstring{$E$}{E} is a ball}
\label{eq:is-a-ball}

The aim of this section is to prove that $E$ coincides with a ball of
radius $\rho$ and to extend the disintegration formula to the whole
manifold.
Before starting the proof, we give a topological technical lemma.
This lemma is, in some sense, a weak formulation of the  statement: let
$\Omega$ be an open connected subset of a topological space $X$ and
let $E\subset X$ be any set; if $\Omega\cap E\neq\emptyset$ and
$\Omega\backslash E\neq\emptyset$, then we have that
$\partial E\cap\Omega\neq\emptyset$.

\begin{lemma}
  \label{lem:boundary-non-empty}
  Let $(X,F,\mm)$ be measured Finsler manifold (with possible infinite
  reversibility).
Let $E\subset X$ be a Borel set and let $\Omega\subset X$ be an open
connected set with finite measure.
If $\mm(E\cap \Omega)>0$ and $\mm(\Omega\backslash E)>0$, then
$\PP(E;\Omega)>0$.
\end{lemma}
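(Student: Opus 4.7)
The plan is to argue by contradiction. Suppose $\PP(E;\Omega) = 0$; by definition~\eqref{eq:perimeter-defn} there exist $u_n \in \Lip_{loc}(\Omega)$ with $u_n \to \indicator_E$ in $L^1_{loc}(\Omega,\mm)$ and $\int_\Omega |\partial u_n|\,d\mm \to 0$. Passing to a subsequence I may assume also $u_n \to \indicator_E$ $\mm$-a.e.\ on $\Omega$. The goal is to derive that $\indicator_E$ is $\mm$-a.e.\ constant on $\Omega$, which will contradict the hypotheses $\mm(E \cap \Omega) > 0$ and $\mm(\Omega \setminus E) > 0$.

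The first step is to transfer the problem to a Riemannian auxiliary structure. By Remark~\ref{rmrk:reassurement} there exists a smooth Riemannian metric $g_2$ on $X$ with $F(v) \leq \sqrt{g_2(v,v)}$ for all $v \in TX$. This inequality yields $\sfd_F \leq \sfd_{g_2}$ and, comparing the defining limsups in~\eqref{eq:slope}, $|\partial u|_{g_2} \leq |\partial u|_F$ for every locally Lipschitz $u$. Hence $\int_\Omega |\partial u_n|_{g_2}\,d\mm \to 0$, and each $u_n$ is also locally Lipschitz with respect to the symmetric distance $\sfd_{g_2}$.

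Next I apply a local Poincaré inequality. Fix $y \in \Omega$ and a coordinate chart $(U, x_1, \dots, x_n)$ with $y \in U \subset\subset \Omega$ in which $\mm$ has smooth density bounded between two positive constants and $|\,\cdot\,|_{g_2}$ is comparable to the Euclidean norm. For a small Euclidean ball $B \subset U$ centred at $y$, the classical Euclidean Poincaré inequality together with these comparisons yields $\int_B |u_n - c_n|\,d\mm \leq C\int_B |\partial u_n|_{g_2}\,d\mm \to 0$, where $c_n$ denotes the $\mm$-average of $u_n$ on $B$. Since $u_n \to \indicator_E$ in $L^1(B,\mm)$, the constants $c_n$ converge to some $c \in [0,1]$ and $\indicator_E \equiv c$ $\mm$-a.e.\ on $B$; as $\indicator_E$ takes only the values $0$ and $1$, necessarily $c \in \{0,1\}$.

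I then conclude by connectedness. Define $A_i := \{y \in \Omega : \indicator_E = i \text{ $\mm$-a.e.\ on a neighbourhood of } y\}$ for $i \in \{0,1\}$. Both sets are open by construction, they are disjoint, and by the previous paragraph $A_0 \cup A_1 = \Omega$. Connectedness of $\Omega$ forces one of them to be empty; but $\mm(E \cap \Omega) > 0$ implies $A_1 \neq \emptyset$ and $\mm(\Omega \setminus E) > 0$ implies $A_0 \neq \emptyset$, a contradiction. The main obstacle I anticipate is the passage from the possibly irreversible slope $|\partial u_n|_F$ to a bona fide Riemannian slope to which a Poincaré inequality applies; introducing the auxiliary metric $g_2$ from Remark~\ref{rmrk:reassurement} is precisely what bypasses this difficulty.
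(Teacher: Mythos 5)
Your proof is correct and follows the same key idea as the paper: use the comparison metric from Remark~\ref{rmrk:reassurement} to bound the Finsler slope from below by a Riemannian one, reducing the problem to the Riemannian setting. The only difference is that the paper then directly invokes the standard BV fact that a set of zero relative perimeter in a connected open set is (essentially) trivial, whereas you unpack that fact with an explicit local Poincaré inequality plus a connectedness argument; this is a more self-contained presentation of the same reduction, not a different route.
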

\begin{proof}
  Assume first that the manifold is riemannian.
  In this case, we can assume by contradiction that
  $\PP(E;\Omega)=0$, yielding that the BV function $\indicator_E$ is
  constant in $\Omega$, but this contradicts the hypotheses
  $\mm(E\cap \Omega)>0$ and $\mm(\Omega\backslash E)>0$.

  We now drop the riemannianity hypothesis.
  As we stressed out (see Remark~\ref{rmrk:reassurement}), there
  exists a riemannian metric $g$, such that its dual metric $g^{-1}$
  in $T^*X$ satisfies
  $\sqrt{g^{-1}(\omega,\omega)}\leq F^*(\omega)$, for all $\omega\in T^{*}X$.
  By definition of perimeter, there exists a sequence
  $u_n\in\Lip_{loc}(\Omega)$ such that $u_n\to\indicator_E$ in
  $L^1_{loc}$ and $\int_\Omega |\partial u_n|\,d\mm\to \PP_{(X,F,\mm)}(E;\Omega)$.
  Since $g^{-1}(du_n,d u_n)\leq F^*(-du_n) = |\partial u_n|$ a.e.\ in
  $\Omega$, we conclude that
  $\PP_{(X,g,\mm)}(E;\Omega)\leq\PP_{(X,F,\mm)}(E;\Omega)$.
\end{proof}

\begin{proposition}
  \label{P:min-max-phi}
  For $\hat{\q}$-a.e.\ $\alpha\in {Q_{\infty}}$, it holds
  that
  \begin{equation}
    \phi_\infty(g_\infty(\alpha,0))\leq \esssup_E\phi_\infty
    ,
    \quad\text{ and }\quad
    \phi_\infty(g_\infty(\alpha,\rho))\geq \essinf_E\phi_\infty
    .
  \end{equation}
\end{proposition}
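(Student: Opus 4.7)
The argument is a direct unfolding of the disintegration formula of Corollary~\ref{cor:disintegration-classical} combined with the fact that $\phi_\infty$ decreases at unit speed along every transport ray. Set $M:=\esssup_{E}\phi_\infty$ and $m:=\essinf_{E}\phi_\infty$ (both are finite since $E$ is bounded and $\phi_\infty$ is $1$-Lipschitz). The plan is to apply the disintegration of $\mm\llcorner_E$ to the sets $A:=\{\phi_\infty>M\}$ and $B:=\{\phi_\infty<m\}$, which are open (hence Borel) and satisfy $\mm(E\cap A)=\mm(E\cap B)=0$ by definition of $M$ and $m$.

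By Corollary~\ref{cor:disintegration-classical}, applied to $A$, we get
\begin{equation*}
0=\mm(E\cap A)
=
N\omega_N\AVR_X\int_{Q_\infty}\int_0^{\rho}\indicator_A(g_\infty(\alpha,r))\,r^{N-1}\,dr\,\hat{\q}(d\alpha),
\end{equation*}
so that for $\hat{\q}$-a.e.\ $\alpha\in Q_\infty$ one has $\phi_\infty(g_\infty(\alpha,r))\leq M$ for $\L^1$-a.e.\ $r\in(0,\rho)$. The composite map $r\mapsto\phi_\infty(g_\infty(\alpha,r))$ is continuous (indeed $1$-Lipschitz), so this pointwise inequality extends to every $r\in[0,\rho]$; in particular at $r=0$ it yields $\phi_\infty(g_\infty(\alpha,0))\leq M$. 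The same argument applied to $B$ gives $\phi_\infty(g_\infty(\alpha,r))\geq m$ for all $r\in[0,\rho]$, and evaluating at $r=\rho$ yields $\phi_\infty(g_\infty(\alpha,\rho))\geq m$.

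The only ingredient beyond the disintegration is the identity $\phi_\infty(g_\infty(\alpha,r))=\phi_\infty(g_\infty(\alpha,0))-r$, which follows from the fact that $g_\infty(\alpha,\,\cdot\,)$ is a unit-speed parametrization of a maximal chain of the relation $\Gamma_\infty$; although I do not strictly need this identity in the argument above (the continuity-based extension already suffices), it makes the geometric content of the statement transparent: the maximum of $\phi_\infty$ along a ray is attained at $r=0$ and its minimum on $[0,\rho]$ at $r=\rho$, and the essential sup/inf of $\phi_\infty$ on $E$ dominate/underbound these extremal values ray by ray.

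I do not foresee any real obstacle here: the measurability issues are trivial because $\phi_\infty$ is continuous, and the ``for a.e.\ $r$ implies for all $r$'' step is immediate from continuity of $\phi_\infty\circ g_\infty(\alpha,\,\cdot\,)$. The main thing to get right is simply to invoke the correct disintegration (namely, the one expressed in Corollary~\ref{cor:disintegration-classical}, in which the fiber measure is supported in the interval $(0,\rho)$) so that the values of $\phi_\infty$ at the endpoints $r=0$ and $r=\rho$ can be controlled by limits from inside $(0,\rho)$.
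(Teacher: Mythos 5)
Your proof is correct, and it takes a cleaner route than the paper's. The paper fixes $\epsilon>0$, sets $H:=\{\alpha:\phi_\infty(g_\infty(\alpha,0))\geq M+2\epsilon\}$, introduces an auxiliary measure $\mathfrak n\ll\mm\llcorner_E$ concentrated on $\bigcup_{\alpha\in H}g_\infty(\alpha,(0,\epsilon))$, and integrates $\phi_\infty-M$ against $\mathfrak n$ to force $\epsilon^N\hat\q(H)\leq 0$; arbitrariness of $\epsilon$ then gives the conclusion. You instead apply the disintegration formula of Corollary~\ref{cor:disintegration-classical} directly to the null sets $E\cap\{\phi_\infty>M\}$ and $E\cap\{\phi_\infty<m\}$, obtaining for $\hat\q$-a.e.\ $\alpha$ that $\phi_\infty(g_\infty(\alpha,r))\in[m,M]$ for $\L^1$-a.e.\ $r\in(0,\rho)$, and then push this to $r=0$ and $r=\rho$ by continuity of $\phi_\infty\circ g_\infty(\alpha,\,\cdot\,)$ (which is $1$-Lipschitz, and $g_\infty(\alpha,0)$, $g_\infty(\alpha,\rho)$ are by construction limits of interior points of the ray). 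This avoids the $\epsilon$-quantification and the auxiliary measure entirely, handles both inequalities symmetrically in one stroke, and relies on exactly the same input as the paper (the disintegration formula plus the fact that rays have length at least $\rho$, so $g_\infty(\alpha,\rho)$ is defined $\hat\q$-a.e.). The only thing the paper's argument buys is that it makes the quantitative decay along the ray explicit via the identity $\phi_\infty(g_\infty(\alpha,t))=\phi_\infty(g_\infty(\alpha,0))-t$, which you correctly observe is not actually needed once continuity is invoked.
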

\begin{proof}
  We prove only the former inequality; the latter has the same proof.
In order to ease the notation define $M:=\esssup_E\phi_\infty$.
Let
$H:=\{\alpha\in {Q_{\infty}}: \phi_\infty(g_\infty(\alpha,0))\geq
M+2\epsilon\}$.
Consider the following measure on $E$,
\begin{equation}
  \mathfrak{n}(T)
  :=
  N\omega_N\AVR_X
  \int_H
  \int_0^\epsilon
    \indicator_T(g_\infty(\alpha,r)) r^{N-1}\,dr
    \,\hat{\q}(d\alpha)
    ,
    \quad
    \forall T\subset E \text{ Borel}
    .
\end{equation}
Clearly, $\mathfrak{n}\ll\mm$ (compare
with~\eqref{eq:disintegration-measure-classical}), thus
$\phi_\infty(x)\leq M$, for $\mathfrak{n}$-a.e.\ $x\in E$.
If we compute the integral
\begin{align*}
  0&
     \geq
    \int_{E}
  \left(
  \phi_\infty(x)-M
  \right)
  \,\mathfrak{n}(dx)
     =
     N\omega_N\AVR_X
    \int_H
    \int_0^{\epsilon}
    \left(
    \phi_\infty(g_\infty(\alpha,t))
    -M
    \right)
    t^{N-1}\,dt
    \,\hat{\q}(d\alpha)
  \\
  &
    =
     N\omega_N\AVR_X
    \int_H
    \int_0^{\epsilon}
    \left(
    \phi_\infty(g_\infty(\alpha,0))-t
    -M
    \right)
    t^{N-1}\,dt
    \,\hat{\q}(d\alpha)
  \\
  &
    \geq
     N\omega_N\AVR_X
    \int_H
    \int_0^{\epsilon}
    \epsilon
    t^{N-1}\,dt
    \,\hat{\q}(d\alpha)
    =\epsilon^N\hat{\q}(H).
\end{align*}
we can deduce that $\hat{\q}(H)=0$ and, by arbitrariness of $\epsilon$,
we conclude.
\end{proof}

\begin{theorem}\label{T:Ball}
  There exists a (unique) point $o\in X$, such that, up to a
  negligible set, $E=B^{+}(o,\rho)$, where
  $\rho=(\frac{\mm(E)}{\omega_N\AVR_X})^\frac{1}{N}$.
  Moreover, it holds that
  \begin{equation}
    \label{eq:phi-has-max-in-o}
    \phi_\infty(o)
    =\esssup_{E}\phi_\infty
    =
    \max_{B^{+}(o,\rho)}\phi_\infty.
  \end{equation}
\end{theorem}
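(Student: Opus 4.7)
The plan is to define $o$ as a point where $\phi_\infty$ attains its maximum $M$ on $\overline{E}$ (which is compact by the forward-ball compactness assumption, so $\phi_\infty$ attains its max there; by continuity and $\mm(E)>0$, we also have $M=\esssup_E \phi_\infty$), and then to establish that \emph{every} transport ray in the limit disintegration starts at $o$. Granted this, for $\mm$-a.e.\ $x\in E$ we can write $x=g_\infty(\alpha,r)$ with $r\in(0,\rho)$ along the unit-speed geodesic from $o$, so $\sfd(o,x)=r<\rho$, giving $E\subset B^+(o,\rho)$ up to a null set.

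To argue that all rays share the starting point $o$: by Proposition~\ref{P:min-max-phi} we have $\phi_\infty(g_\infty(\alpha,0))\leq M$ and $\phi_\infty(g_\infty(\alpha,\rho))\geq m:=\essinf_E\phi_\infty$ for $\hat{\q}$-a.e.\ $\alpha$; combined with the unit-speed identity $\phi_\infty(g_\infty(\alpha,\rho))=\phi_\infty(g_\infty(\alpha,0))-\rho$ this yields $\phi_\infty(g_\infty(\alpha,0))\in[m+\rho,M]$. Since the range of $\phi_\infty$ along each individual ray covers an interval of length $\rho$ and the union of these intervals gives $[m,M]$, one has $M-m\geq\rho$, with equality only if all starting values coincide. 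The rigidity of the isoperimetric equality forces equality here (otherwise $E$ would essentially split into positive-measure pieces whose $\phi_\infty$-ranges translate each other by a genuine amount, producing two separated portions whose perimeters would add up and violate the strict subadditivity of $t\mapsto t^{(N-1)/N}$ in~\eqref{E:inequality}). Consequently $\phi_\infty(g_\infty(\alpha,0))=M$ for $\hat{\q}$-a.e.\ $\alpha$; and since any point realizing $\phi_\infty=M$ in $\overline{E}$ must sit at the start of a ray (travelling further along any ray strictly decreases $\phi_\infty$), the maximum is attained at a unique point, which we identify with $o$.

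For the reverse inclusion $B^+(o,\rho)\subset E$ up to a null set, assume for contradiction that $\mm(B^+(o,\rho)\setminus E)>0$. The open ball is path-connected: any $y\in B^+(o,\rho)$ is joined to $o$ by the unit-speed geodesic $\eta$ supplied by forward-ball compactness, and $\sfd(o,\eta_t)=t\,\sfd(o,y)<\rho$ keeps the path inside the ball. Since $\mm(E\cap B^+(o,\rho))=\mm(E)>0$ by the forward inclusion, Lemma~\ref{lem:boundary-non-empty} forces $\PP(E;B^+(o,\rho))>0$; but by the disintegration formula~\eqref{eq:disintegration-perimeter-classical}, $\PP(E;\cdot)$ is supported on the endpoints $\{g_\infty(\alpha,\rho)\}$, each of which satisfies $\sfd(o,g_\infty(\alpha,\rho))\geq\phi_\infty(o)-\phi_\infty(g_\infty(\alpha,\rho))=\rho$ by the $1$-Lipschitz property of $\phi_\infty$, hence lies outside the open ball $B^+(o,\rho)$. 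This contradicts $\PP(E;B^+(o,\rho))>0$, giving $E=B^+(o,\rho)$ up to a null set. Uniqueness of $o$ follows because it is characterized as the unique maximizer of $\phi_\infty$ on $\overline{E}$, and the statement~\eqref{eq:phi-has-max-in-o} is then immediate since $o\in B^+(o,\rho)$ realizes $\phi_\infty(o)=M$. The main obstacle is the intermediate claim that all rays originate at the same point $o$, since the $1$-Lipschitz property of $\phi_\infty$ alone does not preclude multiple starting points with the same $\phi_\infty$-value; ruling them out genuinely requires invoking the strict character of the isoperimetric equality via the subadditivity mechanism sketched above.
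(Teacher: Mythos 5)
Your overall plan---identify $o$ as a maximizer of $\phi_\infty$ on $\overline E$, then use Lemma~\ref{lem:boundary-non-empty} together with the perimeter disintegration~\eqref{eq:disintegration-perimeter-classical} to rule out $\mm\bigl(B^{+}(o,\rho)\setminus E\bigr)>0$---is precisely the paper's strategy, and your observation that $B^{+}(o,\rho)$ is star-shaped about $o$ (hence connected) is a correct justification that the paper leaves implicit. However, the intermediate claim on which you hang everything, namely that all transport rays originate at the same point $o$, has two genuine gaps.

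First, the argument that ``the rigidity of the isoperimetric equality forces $M-m=\rho$'' via strict subadditivity of $t\mapsto t^{(N-1)/N}$ does not go through as sketched: the $\phi_\infty$-ranges of different rays are intervals of length $\rho$ that generically \emph{overlap}, so there is no canonical way to split $E$ into finitely many positive-measure pieces with disjoint $\phi_\infty$-ranges whose perimeters ``add up.'' You would need to produce, from $M-m>\rho$ alone, a genuine metric separation of positive-measure parts of $E$, and you have not done so. Second, even granting $M-m=\rho$ (hence that all starting values are $M$), the inference ``since $\phi_\infty$ strictly decreases along rays, the maximum is attained at a unique point'' is a non sequitur: nothing in what you have written prevents two distinct points of $\overline E$ from realizing $\phi_\infty=M$, each sitting at the top of its own ray.

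The good news is that the intermediate claim is not needed. In the reverse-inclusion step you only need the inequality $\phi_\infty(o)-\phi_\infty\bigl(g_\infty(\alpha,\rho)\bigr)\geq\rho$, not the equality you wrote. This follows directly from Proposition~\ref{P:min-max-phi}: for $\hat{\q}$-a.e.\ $\alpha$ one has $\phi_\infty\bigl(g_\infty(\alpha,0)\bigr)\leq M=\phi_\infty(o)$, and the unit-speed identity gives $\phi_\infty\bigl(g_\infty(\alpha,\rho)\bigr)=\phi_\infty\bigl(g_\infty(\alpha,0)\bigr)-\rho\leq M-\rho$, whence $\sfd\bigl(o,g_\infty(\alpha,\rho)\bigr)\geq\rho$ by $1$-lipschitzianity. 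This already shows that the perimeter measure $\PP(E;\cdot)$ puts no mass in $B^{+}(o,\rho)$ and concludes the contradiction. For the forward inclusion you then do not need to know the ray endpoints at all: Bishop--Gromov gives $\mm\bigl(B^{+}(o,\rho)\bigr)\geq\omega_N\AVR_X\rho^N=\mm(E)$, and combined with $B^{+}(o,\rho)\subset E$ up to a null set this forces $\mm\bigl(E\setminus B^{+}(o,\rho)\bigr)=0$. This is exactly how the paper argues; it establishes the fact that all rays emanate from $o$ (your intermediate claim) only in the \emph{subsequent} subsection, after $E=B^{+}(o,\rho)$ is already in hand, where it is straightforward.
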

\begin{proof}
Define
$\tilde E:=\supp\indicator_{E}$.
Recall that by definition of support, $\tilde{E}=\bigcup_C
C$, where the intersection is taken among all closed sets
$C$ such that $\mm(E\backslash C)=0$;
and in particular $\mm(E\backslash\tilde E)=0$.
Let $o\in\argmax_{\tilde E}\phi_\infty$.
By definition of $\tilde E$, we have that $\max_{\tilde
  E}\phi_\infty=\esssup_E\phi_\infty$, deducing the first equality
of~\eqref{eq:phi-has-max-in-o}.
The other equality in~\eqref{eq:phi-has-max-in-o} will follow from the
fact $E=B^{+}(o,\rho)$ (up to a negligible set).

It is sufficient to prove only that $B^{+}(o,\rho)\subset E$, for
the other inclusion is automatic
Indeed, the Bishop--Gromov inequality, together with the definition of
a.v.r.\ yields
\begin{equation}
  \mm(E)
  \geq
  \mm(B^{+}(o,\rho))
  \geq
  \omega_N \AVR_X \rho^{N}
  =
  \mm(E),
\end{equation}
and the equality of measures improves to an equality of sets.

Fix now $\epsilon>0$ and define $A=B^{+}(o,\rho-\epsilon)$.
If $\mm(A\backslash E)=0$, then we deduce that
$B^{+}(o,\rho-\epsilon)\subset E$ and, by
arbitrariness of $\epsilon$, we can conclude.

Suppose on the contrary that $\mm(A\backslash E)>0$.
Clearly $A$ is connected and $\mm(A\cap E)>0$ (otherwise
$o\notin\tilde E$), so we can apply Lemma~\ref{lem:boundary-non-empty}
obtaining $\PP(E;A)>0$.
Define $H=\{\alpha\in {Q_{\infty}}: g_\infty(\alpha,\rho)\in A\}$.
The set $H$ is non-negligible because
(recall~\eqref{eq:disintegration-perimeter-classical})
\begin{align*}
  0<
  \frac{\PP(E;A)}{\PP(E)}
  &
    =
    \int_{Q_{\infty}} \indicator_A(g_\infty(\alpha,\rho))\,\hat{\q}(d\alpha)
    =
    \int_H \indicator_A(g_\infty(\alpha,\rho))\,\hat{\q}(d\alpha)
    =
    \hat{\q}(H)
    .
\end{align*}
By lipschitz-continuity of $\phi_\infty$ we deduce
\begin{equation}
  \phi_\infty(x)
  \geq
  \phi_\infty(o)-\rho+\epsilon
  \geq
  M-\rho+\epsilon
  ,
  \quad
  \forall x\in A = B^{+}(o,\rho-\epsilon)
\end{equation}
hence
\begin{equation}
  \phi_\infty(g_\infty(\alpha,\rho))
  \geq
  M-\rho+\epsilon,
  \quad
  \forall \alpha\in H
  .
\end{equation}
Continuing the chain of inequalities, we arrive at
\begin{equation}
  \phi_\infty(g_\infty(\alpha,0))
  =
  \phi_\infty(g_\infty(\alpha,\rho))
  +\rho
  \geq
  M+\epsilon,
  \quad
  \forall \alpha\in H
  .
\end{equation}
The line above, together with the fact that $\hat{\q}(H)>0$,
contradicts Proposition~\ref{P:min-max-phi}.
\end{proof}

\subsection{\texorpdfstring{$\phi_\infty(x)$}{φ∞(x)} coincides with
  \texorpdfstring{$-\sfd(o,x)$}{-d(o,x)}}

The present section is devoted in proving that,
$\phi_\infty(x)=-\sfd(o,x)+\phi_\infty(o)$.

\begin{proposition}
  For $\hat{\q}$-a.e.\ $\alpha\in {Q_{\infty}}$, it holds that
  \begin{align}
    \label{eq:gamma-are-radial}
    \sfd(o,g_\infty(\alpha,t))= t,
    \quad
    \forall t\in[0,\rho]
    .
  \end{align}
\end{proposition}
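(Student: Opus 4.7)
The plan is to combine the identity $E = B^{+}(o, \rho)$ from Theorem~\ref{T:Ball} with the Bishop--Gromov inequality and the disintegration~\eqref{eq:disintegration-measure-classical} to force the equality through a first-moment computation. First I would establish the one-sided bound $\sfd(o, g_\infty(\alpha, r)) \geq r$ for $\hat{\q}$-a.e.\ $\alpha$ and every $r \in [0, \rho]$. Indeed, $g_\infty(\alpha, \,\cdot\,)$ is a unit-speed transport curve for $\phi_\infty$, so $\phi_\infty(g_\infty(\alpha, r)) = \phi_\infty(g_\infty(\alpha, 0)) - r$; Proposition~\ref{P:min-max-phi} together with $\phi_\infty(o) = \esssup_E \phi_\infty$ from Theorem~\ref{T:Ball} yields $\phi_\infty(g_\infty(\alpha, 0)) \leq \phi_\infty(o)$; and the $1$-Lipschitzianity of $\phi_\infty$ gives $\phi_\infty(o) - \phi_\infty(g_\infty(\alpha, r)) \leq \sfd(o, g_\infty(\alpha, r))$. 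Chaining these three estimates produces $\sfd(o, g_\infty(\alpha, r)) \geq r + (\phi_\infty(o) - \phi_\infty(g_\infty(\alpha, 0))) \geq r$.

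Next I would compute $I := \int_E \sfd(o, x) \, \mm(dx)$ in two ways. Integrating along the rays via~\eqref{eq:disintegration-measure-classical} and using the lower bound from the previous step gives
\[ I \;\geq\; N \omega_N \AVR_X \int_{Q_\infty} \int_0^\rho r \cdot r^{N-1} \, dr \, \hat{\q}(d\alpha) \;=\; \frac{N}{N+1}\, \omega_N \AVR_X \, \rho^{N+1}, \]
since $\hat{\q}$ is a probability measure. On the other hand, by layer-cake and $E = B^{+}(o, \rho)$,
\[ I \;=\; \int_0^\rho \bigl[\mm(E) - \mm(\{x \in E : \sfd(o, x) \leq s\})\bigr] \, ds; \]
the inner set contains $B^{+}(o, s)$ (up to a null set, for $s < \rho$), and Bishop--Gromov~\eqref{eq:bishop-gromov} sent to its limit $R \to \infty$ yields $\mm(B^{+}(o, s)) \geq \omega_N \AVR_X \, s^N$, whence
\[ I \;\leq\; \omega_N \AVR_X \int_0^\rho (\rho^N - s^N) \, ds \;=\; \frac{N}{N+1}\, \omega_N \AVR_X \, \rho^{N+1}. \]

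The matching bounds force equality throughout; in particular $\sfd(o, g_\infty(\alpha, r)) = r$ for $(\hat{\q} \otimes r^{N-1}\, dr)$-a.e.\ $(\alpha, r)$, and the continuity of $r \mapsto g_\infty(\alpha, r)$ together with that of $\sfd(o, \cdot)$ upgrades this to equality for every $r \in [0, \rho]$ and $\hat{\q}$-a.e.\ $\alpha$. I do not foresee a serious obstacle: the only mild subtlety is accommodating the possible irreversibility of $\sfd$ inside the layer-cake step, but since $E$ differs from $B^{+}(o, \rho)$ by a null set and forward balls are open, the inclusion $\{x \in E : \sfd(o, x) \leq s\} \supseteq B^{+}(o, s)$ used above holds up to negligible sets, and that is all that is needed.
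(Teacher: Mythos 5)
Your proof is correct, and it takes a genuinely different route from the paper's. The first step — deriving $\sfd(o, g_\infty(\alpha, r)) \geq r$ from the transport-curve identity $\phi_\infty(g_\infty(\alpha, r)) = \phi_\infty(g_\infty(\alpha,0)) - r$, Proposition~\ref{P:min-max-phi}, Theorem~\ref{T:Ball}, and $1$-Lipschitzianity — is exactly the paper's first step. But from there the two arguments diverge. The paper fixes $\epsilon > 0$, considers the bad set $C = \{\alpha : \sfd(o, g_\infty(\alpha,0)) > (1+\Lambda_F)\epsilon\}$, shows (using that $t \mapsto \sfd(o, g_\infty(\alpha, t))$ is $\Lambda_F$-Lipschitz, which is where finite reversibility is used) that rays from $C$ avoid $B^{+}(o,\epsilon)$ entirely, and then pits the disintegration estimate of $\mm(B^{+}(o,\epsilon))$ against the Bishop--Gromov lower bound $\omega_N\AVR_X\epsilon^N$ to force $\hat{\q}(C)=0$; letting $\epsilon\to0$ gives $g_\infty(\alpha,0)=o$ a.e., and the triangle inequality finishes. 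Your argument replaces this ball-volume comparison with a single first-moment computation of $\int_E \sfd(o,\cdot)\,d\mm$: the disintegration plus the one-sided bound gives a lower bound, while layer-cake, $E = B^{+}(o,\rho)$, $\mm(E) = \omega_N\AVR_X\rho^N$, and Bishop--Gromov $\mm(B^{+}(o,s)) \geq \omega_N\AVR_X s^N$ give a matching upper bound, whence the pointwise inequality must be saturated a.e.\ and, by continuity in $r$, everywhere. What your version buys: it dispenses entirely with the reversibility constant and the $\epsilon$-parameter, collapsing the argument into one integration-by-layers computation — a tidier treatment of the same idea (compare volume via disintegration versus volume via Bishop--Gromov), just carried out at the level of the first moment of $\sfd(o,\cdot)$ rather than of the measures of small balls. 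Both conclude $g_\infty(\alpha,0)=o$ a.e., since your equality at $r=0$ gives $\sfd(o,g_\infty(\alpha,0))=0$, so nothing is lost for the subsequent corollary.
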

\begin{proof}
  By the $1$-lipschitzianity of $\phi_\infty$ and the fact that
  $E=B^{+}(o,\rho)$ (up to a negligible set) we deduce that
  $\phi_\infty(x)\geq\phi_\infty(o)-\rho$, for $\mm$-a.e.\ $x\in E$.
  Henceforth, Proposition~\ref{P:min-max-phi} and
  Equation~\eqref{eq:phi-has-max-in-o} yield
  \begin{equation}
    \phi_\infty(g_\infty(\alpha,0))\leq\phi_\infty(o),
    \quad\text{ and }\quad
    \phi_\infty(g_\infty(\alpha,\rho))\geq\phi_\infty(o)-\rho
    .
  \end{equation}
  Since $\frac{d}{dt}\phi_\infty( g_\infty(\alpha,t))=-1$, $t\in (o,\rho)$, the
  inequalities above are saturated, i.e., it holds that
  \begin{equation}
    \phi_\infty(g_\infty(\alpha,t))=\phi_\infty(o)-t
    ,
    \quad
    \forall t\in[0,\rho]
    ,
    \text{ for $\hat{\q}$-a.e.\ }
    \alpha\in Q_{\infty}
    .
  \end{equation}
  Using again the $1$-lipschitzianity of $\phi_\infty$, we arrive at
  \begin{equation}
    \label{eq:gamma-is-far-away}
    \begin{aligned}
    \sfd(o,g_\infty(\alpha,t))
      &
        \geq
        \phi_\infty(o)-\phi_\infty(g_\infty(\alpha,t))
        =
        t
    ,
    \quad
    \forall t\in[0,\rho]
    ,
    \text{ for $\hat{\q}$-a.e.\ }
    \alpha\in Q_{\infty}
        .
    \end{aligned}
  \end{equation}
  Now fix $\epsilon>0$ and let
  $C=\{\alpha\in {Q_{\infty}}:
  \sfd(o,g_\infty(\alpha,0))>(1+\Lambda_{F})\epsilon\}$, where
  $\Lambda_F$ is the reversibility constant.
  Define the function $f(t):=\inf\{\sfd(o,g_\infty(\alpha,t)): \alpha\in
  C\}$.
  Clearly, $f$ is $\Lambda_F$-Lipschitz and satisfies
  $f(0)\geq (1+\Lambda_F)\epsilon$, hence
  $f(t)\geq(1+\Lambda_F)\epsilon- \Lambda_Ft$, yielding
  (cfr.~\eqref{eq:gamma-is-far-away})
  \begin{equation}
    f(t)\geq
    \max\{((1+\Lambda_F)\epsilon- \Lambda_F t),t\}
    \geq
    \epsilon.
  \end{equation}
  The inequality above implies that $g_\infty(\alpha,t)\notin B^{+}(o,\epsilon)$
  for all $t\in [0,1]$, for all $\alpha\in C$.
  We compute $\mm(B^+(0,\epsilon))$ using the disintegration
  formula~\eqref{eq:disintegration-measure-classical}
  \begin{align*}
    \frac{\mm(B^{+}(o,\epsilon))}{N\omega_N\AVR_X}
    &
      =
      \int_{Q_{\infty}}
      \int_0^\rho
      \indicator_{B^{+}(o,\epsilon)}(g_\infty(\alpha,t))
      \,
      t^{N-1}
      \,
      dt
      \,
      \hat{\q}(d\alpha)
    \\
    &
      =
      \int_{{Q_{\infty}}\backslash C}
      \int_0^\rho
      \indicator_{B^{+}(o,\epsilon)}(g_\infty(\alpha,t))
      \,
      t^{N-1}
      \,
      dt
      \,
      \hat{\q}(d\alpha)
.
  \end{align*}
  If $\indicator_{B^{+}(o,\epsilon)}(g_\infty(\alpha,t))=1$, then
  inequality~\eqref{eq:gamma-is-far-away} yields
  $t\leq\epsilon$, so we continue the computation
  \begin{align*}
    \frac{\mm(B^{+}(o,\epsilon))}{N\omega_N\AVR_X}
    &
      =
      \int_{{Q_{\infty}}\backslash C}
      \int_0^\rho
      \indicator_{B^{+}(o,\epsilon)}(g_\infty(\alpha,t))
      \,
      t^{N-1}
      \,
      dt
      \,
      \hat{\q}(d\alpha)
    \\
    &
      =
      \int_{{Q_{\infty}}\backslash C}
      \int_0^\epsilon
      \indicator_{B^{+}(o,\epsilon)}(g_\infty(\alpha,t))
      \,
      t^{N-1}
      \,
      dt
      \,
      \hat{\q}(d\alpha)
    \\
    &
      \leq
      \int_{{Q_{\infty}}\backslash C}
      \int_0^\epsilon
      \,
      t^{N-1}
      \,
      dt
      \,
      \hat{\q}(d\alpha)
      =
      (\hat{\q}({Q_{\infty}})-\hat{\q}(C))\frac{\epsilon^N}{N}
      .
  \end{align*}
  On the other hand, the Bishop--Gromov inequality yields
  \begin{equation}
    \mm(B^{+}(o,\epsilon))
    \geq
    \frac{\epsilon^N}{\rho^N}
    \mm(B^{+}(o,\rho))
    =
    \frac{\epsilon^N}{\rho^N}
    \mm(E)
    =\epsilon^N
    \omega_N\AVR_X
    .
  \end{equation}
  The comparison of the two previous inequality gives
  $\hat{\q}(C)=0$.
  By arbitrariness of $\epsilon$, we deduce that $g_\infty(\alpha,0)=o$ for
  $\hat{\q}$-a.e.\ $\alpha\in {Q_{\infty}}$.

  Finally, using again~\eqref{eq:gamma-is-far-away}, we conclude
  \begin{align*}
    &
    t
    \leq
    \sfd(o,g_\infty(\alpha,t))
    \leq
    \sfd(o,g_\infty(\alpha,0))
    +
    \sfd(g_\infty(\alpha,0),g_\infty(\alpha,t))
    =
    t
      ,
    \\
    &
    \qquad
    \forall t\in[0,\rho]
    ,
    \text{ for $\hat{\q}$-a.e\ }
    \alpha\in {Q_{\infty}}.
    \qedhere
  \end{align*}
\end{proof}

\begin{corollary}
  It holds that for all $x\in B^{+}(o,\rho)$,
  $\phi_\infty(x)=\phi_\infty(o)-\sfd(o,x)$.
\end{corollary}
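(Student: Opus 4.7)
The plan is to combine the previous proposition with the explicit disintegration formula~\eqref{eq:disintegration-measure-classical} and then use a density/continuity argument to upgrade the pointwise identity from a full-measure set to all of $B^+(o,\rho)$.

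First, I would recall from inside the proof of the previous proposition that, for $\hat{\q}$-a.e.\ $\alpha \in Q_\infty$, both identities
\begin{equation*}
  \phi_\infty(g_\infty(\alpha,t)) = \phi_\infty(o)-t
  \quad\text{and}\quad
  \sfd(o,g_\infty(\alpha,t)) = t
  \qquad \forall\, t \in [0,\rho]
\end{equation*}
hold simultaneously. Combining these two identities yields $\phi_\infty(x) = \phi_\infty(o) - \sfd(o,x)$ whenever $x = g_\infty(\alpha,t)$ for some such $\alpha$ and some $t \in [0,\rho]$. Let $G \subset B^+(o,\rho)$ denote the set of all such points.

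Next, the disintegration formula~\eqref{eq:disintegration-measure-classical} gives
\begin{equation*}
  \mm\llcorner_E
  = N\omega_N \AVR_X \int_{Q_\infty}
      (g_\infty(\alpha,\,\cdot\,))_\# \bigl(r^{N-1}\,\L^1\llcorner_{(0,\rho)}\bigr)
    \,\hat{\q}(d\alpha),
\end{equation*}
so $\mm(E \setminus G) = 0$. By Theorem~\ref{T:Ball}, $E = B^+(o,\rho)$ up to a $\mm$-negligible set, hence the identity $\phi_\infty(x) = \phi_\infty(o)-\sfd(o,x)$ holds for $\mm$-a.e.\ $x \in B^+(o,\rho)$.

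Finally, to extend the identity pointwise, I would invoke that $\mm$ is a positive smooth measure on the manifold, so every non-empty open subset of $X$ has strictly positive $\mm$-measure; consequently the full-measure set on which the identity holds is dense in $B^+(o,\rho)$. Both sides of the identity are continuous functions of $x$ (indeed, $\phi_\infty$ is $1$-Lipschitz and $y \mapsto \sfd(o,y)$ is continuous by the triangle inequality), so the identity extends to all of $B^+(o,\rho)$. No real obstacle is expected here: the previous proposition has already done the geometric work, and the only remaining point is the standard passage from ``a.e.'' to ``everywhere'' via continuity on a set where $\mm$ has full support.
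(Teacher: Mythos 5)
Your proof is correct and follows essentially the same route as the paper's: use the identities $\sfd(o,g_\infty(\alpha,t))=t$ and $\phi_\infty(g_\infty(\alpha,t))=\phi_\infty(o)-t$ established in the previous proposition for $\hat\q$-a.e.\ $\alpha$, observe via the disintegration that this yields the claimed equality on a full-measure subset of $E=B^+(o,\rho)$, and then extend by continuity of $\phi_\infty$ and $\sfd(o,\cdot)$ plus the fact that a set of full $\mm$-measure is dense. The paper is merely terser about the density/continuity step ("Since $\T_\infty\cap E$ has full measure \dots we conclude"), which you spell out explicitly.
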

\begin{proof}
  If $x\in E\cap\T_\infty$, then $x=g(\alpha,t)$, for some $t$, with
  $\alpha=\QQ_\infty(x)$.
  By the previous proposition we may assume that $g_\infty(\alpha,0)=o$,
  hence we have that
  \begin{equation*}
    \phi_\infty(x)
    -
    \phi_\infty(o)
    =
    \phi_\infty(g_\infty(\alpha,t))
    -
    \phi_\infty(g_\infty(\alpha,0))
    =
    -
    \sfd(g_\infty(\alpha,0),g_\infty(\alpha,t))
    =
    -
    \sfd(o,x)
    .
  \end{equation*}
  Since $\T_\infty\cap E$ has full measure in $B^{+}(o,\rho)$, we conclude.
\end{proof}

\subsection{Localization of the whole space}

We can now extend the localization given in
Section~\ref{Ss:localization-classical} to the whole space $X$.
Since we do not know the behaviour of $\phi_{\infty}$ outside
$B^{+}(o,\rho)$, we take as reference $1$-Lipschitz function $-\sfd(o,\,\cdot\,)$,
which coincides with $\phi_{\infty}$ on $B^{+}(o,\rho)$:
we disintegrate using $-\sfd(o,\,\cdot\,)$ and we see that this
disintegration coincides with the one given
Section~\ref{Ss:localization-classical} in the set $E$.
From this fact, and the geometric properties of the space, we will
conclude.

We recall some of the concepts introduced in Subsection~\ref{S:needle},
applied to the $1$-Lipschitz function $-\sfd(o,\,\cdot\,)$.
The set $\D$ where no non degenerate transport curve pass is empty,
for we can connect $o$ to any point with a minimal geodesic.
The set of branching points, $\A$, contains only $o$ and elements of
the boundary; this follows
from the uniqueness of the geodesics.
For this reason, the transport set $\T$ coincides with
$X\backslash \{o\}$.
Let $Q\subset\T$ be a measurable section and let $\QQ:\T\to{Q}$ be the
quotient map; let $X_\alpha:=\QQ^{-1}(\alpha)$ be the disintegration
rays and let $g:\Dom (g)\subset Q\times\R \to X$ be the standard
parametrization.
The map $t\mapsto g(\alpha,t)$ is the unitary speed
parametrization of the geodesic connecting $o$ to $\alpha$ and then
maximally extended.
Define $\q:=\frac{1}{\mm(E)}\QQ_\#(\mm\llcorner_{E})$.
Using the $\CD(0,N)$ condition, one immediately sees that $\QQ_{\#}(\mm)\ll\q$.

We are in position to use Theorem~\ref{T:locMCP} (compare with
Remark~\ref{R:disintegration}), hence there exists a unique
disintegration for the measure $\mm$
\begin{equation}
  \label{eq:disintegration-final}
  \mm
  =
  \int_{{Q}}\mm_\alpha \,\q(d\alpha)
  ,
\end{equation}
where the measures $\mm_\alpha$ are supported on $X_\alpha$ and the
transport ray $(X_\alpha,F,\mm_\alpha)$ satisfy the oriented
$\CD(0,N)$ condition.
We denote by $h_\alpha:(0,|X_\alpha|)\to\R$ the density function satisfying
$\mm_\alpha=(g(\alpha,\cdot))_\#(h_\alpha \Leb^1\llcorner_{(0,|X_\alpha|)})$.

The next two propositions bind together the disintegration obtained
in Section~\ref{Ss:localization-classical} (in particular
Corollary~\eqref{cor:disintegration-classical}) with the
disintegration given by~\eqref{eq:disintegration-final}.

\begin{proposition}
  There exists a (unique) measurable map
  $L:\Dom(L)\subset Q_\infty\to{Q}$ such that $\D(L)$ has full
  $\hat{\q}$-measure in $Q_{\infty}$ and it holds
\begin{equation}
  L(\QQ_\infty(x))
  =
  \QQ(x)
  ,
  \quad
  \forall
  x\in B^{+}(o,\rho)\cap\T_\infty\cap  \T
  ,
  \qquad
  \text{ and }
  \qquad
  \q=L_\# \hat{\q}
  .
\end{equation}
\end{proposition}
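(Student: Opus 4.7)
The plan is to leverage the rigidity results just established: for $\hat{\q}$-a.e.\ $\alpha \in Q_\infty$, the curve $t \mapsto g_\infty(\alpha, t)$ on $[0,\rho]$ is a unit-speed geodesic starting at $o$, so it is a sub-arc of the unique transport curve of $-\sfd(o,\,\cdot\,)$ passing through any of its interior points. This means that $g_\infty(\alpha,\,\cdot\,)|_{(0,\rho]}$ is entirely contained in some single ray $X_\beta$ of the disintegration~\eqref{eq:disintegration-final}, and we may simply set $L(\alpha) := \beta$.

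Concretely, I would define $\Dom(L)$ as the set of $\alpha \in Q_\infty$ for which $g_\infty(\alpha,0) = o$ and~\eqref{eq:gamma-are-radial} holds (which has full $\hat{\q}$-measure by the previous subsection), and then put $L(\alpha) := \QQ(g_\infty(\alpha, \rho/2))$. This is well posed because $g_\infty(\alpha, \rho/2) \neq o$, so it lies in $\T = X\setminus\{o\}$; and it is measurable as the composition of the measurable parametrization $\alpha \mapsto g_\infty(\alpha, \rho/2)$ and the measurable quotient map $\QQ$. For the defining compatibility, fix $x \in B^+(o,\rho) \cap \T_\infty \cap \T$ with $\QQ_\infty(x) = \alpha \in \Dom(L)$. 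Then $x = g_\infty(\alpha, t)$ for some $t \in (0,\rho]$, and since $s \mapsto g_\infty(\alpha, s)$ for $s \in (0,\rho]$ is a single arc of a unique transport curve for $-\sfd(o,\,\cdot\,)$, both $g_\infty(\alpha, t)$ and $g_\infty(\alpha, \rho/2)$ have the same $\QQ$-image; that is, $\QQ(x) = L(\alpha)$.

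For the push-forward identity, I would use the compatibility together with the fact that $E \cap \T_\infty \cap \T$ has full $\mm\llcorner_E$-measure (since $\mm(\T_\infty^c) = \mm(\{o\}) = 0$):
\begin{equation*}
\int_Q f \, d(L_\# \hat{\q})
= \frac{1}{\mm(E)} \int_E f(L(\QQ_\infty(x))) \,\mm(dx)
= \frac{1}{\mm(E)} \int_E f(\QQ(x)) \,\mm(dx)
= \int_Q f \, d\q,
\end{equation*}
for every bounded Borel $f \colon Q \to \R$, which gives $L_\#\hat{\q} = \q$. Uniqueness is automatic: any other such $L'$ must agree with $L$ on $\QQ_\infty(E \cap \T_\infty \cap \T)$, a set of full $\hat{\q}$-measure.

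The main obstacle is essentially conceptual rather than technical: one must be careful that the two needle decompositions involve different Kantorovich potentials ($\phi_\infty$ on one side, $-\sfd(o,\,\cdot\,)$ on the other), and that $\phi_\infty$ is only known to coincide (up to a constant) with $-\sfd(o,\,\cdot\,)$ on $B^+(o,\rho)$. The key reason the argument goes through is that the rigidity~\eqref{eq:gamma-are-radial} forces the $\phi_\infty$-transport curves inside $E$ to be genuine unit-speed radial geodesics emanating from $o$; once this is in hand, the remaining verification is bookkeeping on measurable maps and push-forwards.
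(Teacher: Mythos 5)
Your argument is essentially the paper's. Both rest on the rigidity established just before: $\phi_\infty=\phi_\infty(o)-\sfd(o,\cdot)$ on $B^+(o,\rho)$, so the two needle decompositions agree on $B^+(o,\rho)\cap\T_\infty\cap\T$; the paper then obtains $L$ as the (single-valued, hence graph) projection of $H:=\{(x,\alpha,\beta):\QQ_\infty(x)=\alpha,\,\QQ(x)=\beta\}$ onto $Q_\infty\times Q$, while you give the explicit formula $L(\alpha)=\QQ(g_\infty(\alpha,\rho/2))$, which is a clean way to see measurability.

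There is one small mismatch worth tightening. Your $\Dom(L)$ is the set of $\alpha$ on which~\eqref{eq:gamma-are-radial} holds; this has full $\hat{\q}$-measure but need not contain all of $\QQ_\infty(B^{+}(o,\rho)\cap\T_\infty\cap\T)$, so your compatibility $L(\QQ_\infty(x))=\QQ(x)$ is verified only for those $x$ with $\QQ_\infty(x)\in\Dom(L)$, not for every $x$ in the intersection as the statement literally reads. The pointwise version follows from the paper's construction: take $\Dom(L)=\QQ_\infty(B^{+}(o,\rho)\cap\T_\infty\cap\T)$ and set $L(\alpha)=\QQ(x)$ for any preimage $x\in B^{+}(o,\rho)\cap\T_\infty\cap\T$, which is well posed precisely because the partitions agree there. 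The distinction is harmless for the downstream (measure-theoretic) uses, but it is the reason the paper argues via the graph rather than a formula. A further small slip: the reason $E\cap\T_\infty\cap\T$ has full $\mm\llcorner_E$-measure is that $E\subset\T_\infty$ (a prior corollary) together with $X\setminus\T=\{o\}$; it is $\T$, not $\T_\infty$, whose complement is $\{o\}$.
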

\begin{proof}
  Since $\phi_{\infty}=\phi_{\infty}(o)-\sfd(o,\,\cdot\,)$ on
  $B^+(o,\rho)$, the partitions
  $(X_{\alpha,\infty})_{\alpha\in Q_{\infty}}$ and
  $(X_{\alpha})_{\alpha\in Q}$ agree on the set
  $B^{+}(o,\rho)\cap\T_{\infty}\cap\T$.
  Consider the set
  \begin{equation}
    H
    :=
    \{
    (x,\alpha,\beta)
    \in (B^+(o,\rho)\cap\T_{\infty}\cap\T)\times Q_{\infty} \times Q
    : \QQ_{\infty}(x)=\alpha
    \text{ and }
    \QQ(x)=\beta
    \}
    ,
  \end{equation}
  and let $G:=\pi_{Q_{\infty} \times Q}(H)$ be the projection of $H$
  on the second and third variable.
  For what we have said $G$ is the graph of a map
  $L:\Dom(L)\subset Q_{\infty}\to Q$.
  The other properties easily follow.
\end{proof}

\begin{proposition}
  For $q$-a.e.\ $\alpha\in {Q}$, it holds that
  $|X_\alpha|\geq\rho$ and
  \begin{equation}
    h_\alpha(r)=N\omega_N\AVR_X r^{N-1}
    ,
    \quad
    \forall r\in[0,\rho]
    .
  \end{equation}
\end{proposition}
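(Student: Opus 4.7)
The plan is to compare the disintegration~\eqref{eq:disintegration-final} (induced by the $1$-Lipschitz function $-\sfd(o,\cdot)$) with the limit disintegration of Corollary~\ref{cor:disintegration-classical} (induced by $\phi_\infty$), exploiting that $\phi_\infty=\phi_\infty(o)-\sfd(o,\cdot)$ on $E=B^{+}(o,\rho)$. Uniqueness of the disintegration, applied to the common quotient measure $\q$, will then pin down the density $h_\alpha$ on $[0,\rho]$ and force the length of the rays to be at least $\rho$.

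First I would show that, for $\hat{\q}$-a.e.\ $\alpha\in {Q_{\infty}}$, the measure $\hat\mm_\alpha:=N\omega_N\AVR_X\,(g_\infty(\alpha,\cdot))_{\#}(r^{N-1}\L^1\llcorner_{(0,\rho)})$ depends only on $L(\alpha)$. Indeed, $g_\infty(\alpha,0)=o$ for $\hat{\q}$-a.e.\ $\alpha$ (as proven above), and $g(L(\alpha),0)=o$ by construction, since for the $1$-Lipschitz function $-\sfd(o,\cdot)$ every transport ray emanates from the unique maximum point $o$. Moreover, for $t\in(0,\rho)$ the point $g_\infty(\alpha,t)$ lies in $B^{+}(o,\rho)\cap\T_\infty\cap\T$, so by the previous proposition $L(\alpha)=\QQ(g_\infty(\alpha,t))$, i.e.\ $g_\infty(\alpha,t)\in X_{L(\alpha)}$. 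Hence $g_\infty(\alpha,\cdot)|_{[0,\rho]}$ and $g(L(\alpha),\cdot)|_{[0,\rho]}$ are two unit-speed parametrizations of the same ray, both starting at $o$, so they coincide; consequently $\hat\mm_\alpha=\mu_{L(\alpha)}$, where $\mu_\beta:=N\omega_N\AVR_X\,(g(\beta,\cdot))_{\#}(r^{N-1}\L^1\llcorner_{(0,\rho)})$.

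Next I would push forward the outer integral in Corollary~\ref{cor:disintegration-classical} via $L$ and invoke $\q=L_\#\hat{\q}$ to get $\mm\llcorner_E=\int_{Q}\mu_\beta\,\q(d\beta)$. Simultaneously, restricting~\eqref{eq:disintegration-final} to $E$ yields $\mm\llcorner_E=\int_{Q}(\mm_\alpha\llcorner_E)\,\q(d\alpha)$. Both are disintegrations of the same measure with the same quotient $\q$, adapted to the same partition $\{X_\alpha\}_{\alpha\in Q}$, and the measurability of $\beta\mapsto\mu_\beta$ descends from that of $g$. Uniqueness of the disintegration then forces $\mm_\alpha\llcorner_E=\mu_\alpha$ for $\q$-a.e.\ $\alpha\in Q$.

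Reading this identity in the ray parametrization gives
\begin{equation*}
h_\alpha(r)\,\indicator_{(g(\alpha,\cdot))^{-1}(E)}(r)\,\indicator_{(0,|X_\alpha|)}(r)=N\omega_N\AVR_X\,r^{N-1}\indicator_{(0,\rho)}(r), \qquad \text{for }\L^1\text{-a.e.\ }r.
\end{equation*}
Since $h_\alpha>0$ on $(0,|X_\alpha|)$ by the oriented $\CD(0,N)$ condition, the fact that the right-hand side is strictly positive on $(0,\rho)$ forces $(0,\rho)\subset(0,|X_\alpha|)$ up to a Lebesgue-null set, whence $|X_\alpha|\geq\rho$, and the density identity $h_\alpha(r)=N\omega_N\AVR_X\,r^{N-1}$ for a.e.\ $r\in(0,\rho)$ extends to every $r\in[0,\rho]$ by continuity of $h_\alpha^{1/(N-1)}$ (a consequence of concavity). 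The main obstacle is the measure-theoretic bookkeeping: verifying the measurability of $\beta\mapsto\mu_\beta$ and checking that after pushing forward via $L$ one really obtains a disintegration with quotient measure $\q$ adapted to the partition $\{X_\alpha\}_{\alpha\in Q}$, a step that hinges crucially on the identities $\q=L_\#\hat{\q}$ and $L(\QQ_\infty(x))=\QQ(x)$ established in the previous proposition.
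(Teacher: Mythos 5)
Your proof is correct and follows essentially the same route as the paper's: both identify the $\phi_\infty$-transport rays with the $-\sfd(o,\cdot)$-transport rays on $E$ via the map $L$ from the previous proposition, then invoke uniqueness of the disintegration (with quotient measure transferred along $\q=L_\#\hat{\q}$) to equate $\hat\mm_{\alpha,\infty}$ with $\mm_{L(\alpha)}\llcorner_E$, which yields both the lower bound on $|X_\alpha|$ and the density formula. The only stylistic difference is that you re-derive $|X_\alpha|\geq\rho$ a second time at the end from positivity of $h_\alpha$, whereas this already follows (as you implicitly use when writing $\mu_{L(\alpha)}$) from the fact that $g_\infty(\alpha,t)\in X_{L(\alpha)}$ for every $t\in(0,\rho)$, which forces $g(L(\alpha),\cdot)$ to be defined on all of $(0,\rho)$.
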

\begin{proof}
  Using Equation~\eqref{eq:gamma-are-radial},
  we deduce that
  for $\hat{\q}$-a.e.\ $\alpha\in {Q_{\infty}}$, it holds that 
  \begin{equation}
    g_\infty(\alpha,t)
    =
    g(L(\alpha),t)
    ,
    \quad
    \forall t\in(0,\min\{\rho,|X_\alpha|\})
    .
  \end{equation}
  Since in the
  disintegration~\eqref{eq:disintegration-measure-classical}, all rays
  have length $\rho$, we deduce that $|X_\alpha|\geq \rho$.
  Moreover, we obtain $\hat{\mm}_{\alpha,\infty}=(\mm_{L(\alpha)})\llcorner_E$, concluding.
\end{proof}

\begin{theorem}
  \label{th:disintegration-final}
  For $\q$-a.e.\ $\alpha\in {Q}$, it holds that $|X_\alpha|=\infty$ and
  \begin{equation}
    h_\alpha(r)
    =
    N\omega_N\AVR_X
    r^{N-1}
    ,
    \quad
    \forall r>0
    .
  \end{equation}
\end{theorem}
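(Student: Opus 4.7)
The plan is to combine the already-established equality on $[0,\rho]$ with the oriented $\CD(0,N)$ condition on each ray and the Euclidean volume growth of $X$ to extend the model density to the entire ray $X_\alpha$.

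First, I would exploit the oriented $\CD(0,N)$ condition on the ray $(X_\alpha, F, \mm_\alpha)$: the function $h_\alpha^{1/(N-1)}$ is concave on $(0,|X_\alpha|)$. From the previous proposition, for $\q$-a.e.\ $\alpha$ we already know $h_\alpha(r) = N\omega_N\AVR_X r^{N-1}$ on $[0,\rho]$, so $h_\alpha^{1/(N-1)}$ is linear with slope $c := (N\omega_N\AVR_X)^{1/(N-1)}$ on $[0,\rho]$ and vanishes at $0$. By concavity, $h_\alpha(r)^{1/(N-1)} \leq c r$ for every $r \in (0,|X_\alpha|)$; equivalently,
\begin{equation*}
h_\alpha(r) \leq N\omega_N\AVR_X r^{N-1}, \qquad \forall r \in (0,|X_\alpha|).
\end{equation*}

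Second, I would apply the disintegration~\eqref{eq:disintegration-final} to compute the measure of forward balls centered at $o$. Since $g(\alpha,\cdot)$ is the unit-speed parametrization of the geodesic from $o$ through $\alpha$, we have $\sfd(o,g(\alpha,t)) = t$, so $g(\alpha,t) \in B^+(o,R) \Leftrightarrow t<R$. Hence
\begin{equation*}
\mm(B^+(o,R)) = \int_{Q} \int_0^{\min(R,|X_\alpha|)} h_\alpha(t)\,dt\,\q(d\alpha).
\end{equation*}
Using the upper bound from the first step, the inner integral is at most $\omega_N\AVR_X \min(R,|X_\alpha|)^N \leq \omega_N\AVR_X R^N$. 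On the other hand, Bishop--Gromov~\eqref{eq:bishop-gromov} combined with the definition of $\AVR_X$ gives the reverse bound $\mm(B^+(o,R)) \geq \omega_N\AVR_X R^N$. (The total mass $\q(Q)=1$ is obtained by evaluating the same identity at $R=\rho$, where the density equality is known and $\mm(B^+(o,\rho)) = \mm(E) = \omega_N\AVR_X \rho^N$.)

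Third, chaining these estimates forces equality, so
\begin{equation*}
\int_{Q} \left( \omega_N\AVR_X R^N - \int_0^{\min(R,|X_\alpha|)} h_\alpha(t)\,dt \right) \q(d\alpha) \leq 0.
\end{equation*}
Since the integrand is pointwise nonnegative, it vanishes $\q$-a.e., which forces both $|X_\alpha| \geq R$ and $\int_0^R h_\alpha(t)\,dt = \omega_N\AVR_X R^N = \int_0^R N\omega_N\AVR_X t^{N-1}\,dt$. Combined with the pointwise upper bound $h_\alpha(t) \leq N\omega_N\AVR_X t^{N-1}$, this yields $h_\alpha(t) = N\omega_N\AVR_X t^{N-1}$ for a.e.\ $t \in (0,R)$, and continuity of $h_\alpha$ (from concavity of $h_\alpha^{1/(N-1)}$) upgrades this to every $t$.

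Finally, taking a countable sequence $R_n \to \infty$ and intersecting the corresponding full-measure subsets of $Q$, I obtain $|X_\alpha| = \infty$ and $h_\alpha(r) = N\omega_N\AVR_X r^{N-1}$ for all $r>0$ and $\q$-a.e.\ $\alpha$. The only delicate point is ensuring the exceptional $\q$-null sets can be chosen uniformly, but this is handled by the countable exhaustion; no serious obstacle arises, as the hard analytic work has already been done in Sections~\ref{S:limit} and~\ref{eq:is-a-ball}.
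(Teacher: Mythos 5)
Your proof is correct and follows essentially the same strategy as the paper: disintegrate $\mm(B^{+}(o,R))$ along the rays through $o$, compare with $\omega_{N}\AVR_{X}R^{N}$ via Bishop--Gromov (globally on one side, along the rays via concavity of $h_\alpha^{1/(N-1)}$ on the other), and squeeze to force equality of the densities on $(0,R)$ for all $R$. The paper works with the asymptotic ratio $\lim_{R\to\infty}\int_0^R h_\alpha/R^N$, exchanges this limit with the integral over $Q$ (justified by the monotonicity of $r\mapsto r^{-N}\int_0^r h_\alpha$, i.e.\ one-dimensional Bishop--Gromov), and argues by contradiction via an exceptional set $C$; you instead establish the explicit pointwise bound $h_\alpha(r)\leq N\omega_N\AVR_X r^{N-1}$ from concavity and squeeze at each finite $R$, which is slightly more direct and avoids the limit/integral exchange, but the two variants are equivalent in substance.
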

\begin{proof}
Fix $\epsilon>0$ and let
\begin{equation}
  C
  :=
  \left\{
    \alpha\in {Q}:
    \lim_{R\to\infty}\int_0^R h_\alpha/R^N
    < \omega_N \AVR_X(1-\epsilon)
  \right\}
  ,
\end{equation}
with the convention that the limit above is $0$ if
$|X_\alpha|<\infty$.  The limit always exists and it is not larger
than $\omega_N\AVR_X$ by the Bishop--Gromov inequality applied to each
transport ray.
We compute $\AVR_X$ using the disintegration
\begin{align*}
  \omega_N\AVR_X
  &
    =
    \lim_{R\to\infty}
    \frac{\mm(B^{+}(o,R))}{R^N}
    =
    \lim_{R\to\infty}
    \int_{{Q}}\int_0^R\frac{ h_\alpha(t)}{R^N}\,dt \,\q(d\alpha)
  \\
  &
    =
    \int_{{Q}}
    \lim_{R\to\infty}
    \int_0^R\frac{ h_\alpha(t)}{R^N}
    \,dt \,\q(d\alpha)
  \\
  &
    =
    \int_C
    \lim_{R\to\infty}
    \int_0^R\frac{ h_\alpha(t)}{R^N}
    \,dt \,\q(d\alpha)
    +
    \int_{{Q}\backslash C}
    \lim_{R\to\infty}
    \int_0^R\frac{ h_\alpha(t)}{R^N}
    \,dt \,\q(d\alpha)
  \\
  &
    \leq
    \int_C
    \omega_N\AVR_X(1-\epsilon)
     \,\q(d\alpha)
    +
    \int_{{Q}\backslash C}
    \omega_N\AVR_X
    \,\q(d\alpha)
  \\
  &
    =
    \omega_N\AVR_X(1-\epsilon\q(C)),
\end{align*}
thus $\q(C)=0$.
By arbitrariness of $\epsilon$ we deduce that
$\lim_{R\to\infty}\int_0^R h_\alpha/R^N= \omega_N \AVR_X$, hence
$h_\alpha(t)=N\omega_N\AVR_X t^{N-1}$, for $\q$-a.e.\
$\alpha\in\tilde{Q}$.
\end{proof}

The proof of Theorem \ref{T:main1} is therefore concluded.
As described in the introduction, Theorem \ref{T:Euclid-application}
is an immediate consequence.

\appendix

\section{The relative perimeter as a Borel measure}
\label{S:perimeter-measure}

This appendix is devoted in proving that the relative perimeter can
be extended uniquely to a Borel measure.
Notice that in the result that follow, it is not needed the fact that
$\Lambda_F<\infty$, the forward-completeness, and local forward convexity.
We follow the line traced in~\cite{Mirandajr.03}.

We recall the definition of relative perimeter: fixed a Borel set
$E\subset \Omega$ of a measured Finsler manifold $(X,F,\mm)$, and fixed
$\Omega\subset X$, we define the perimeter of $E$ relative to $\Omega$
as
\begin{equation}
  \PP(E;\Omega)
  :=
  \inf\{
  \liminf_{n\to\infty}
  \int_{\Omega} |\partial u_n|\,d\mm
  : u_n\in\Lip_{loc}(\Omega)
  \text{ and }
  u_n\to \indicator_E \text{ in } L^1_{loc}(\Omega)
  \}
  .
\end{equation}
The infimum is clearly realized by a certain sequence $u_n$.
Using a truncation argument we may assume that $u_n$ takes values in
$[0,1]$; moreover, by passing to subsequences, we may also assume that
$u_n$ converges also in the $\mm$-a.e.\ sense.
If, in addition, $\Omega$ has finite measure, we may also assume (by
dominated convergence theorem) that $u_n\to\indicator_E$ in
$L^1(\Omega)$.
These assumptions will always be assumed tacitly, when dealing with a
sequence realizing the minimum in the definition of perimeter.

The slope satisfies calculus rules, in the $\mm$-a.e. sense
\begin{align}
    &
      |\partial(f+g)|\leq |\partial f|+|\partial g|
      ,
      \qquad
      |\partial(-f)|\leq\Lambda_F |\partial f|
      ,
    \\
    &
      |\partial(fg)|\leq f|\partial g|+g|\partial f|
      ,\qquad
      \text{ if }f,g\geq 0,
    \\
    &
      |\partial(fg)|\leq \Lambda_F (|f||\partial g|+|g||\partial f|)
      .
\end{align}
The proof is straightforward, once we know that $|\partial
f|(x)=F^*(-df(x))$, for $\mm$-a.e. $x\in X$.

The next lemma permits us to join two Lipschitz functions defined on
overlapping domains.
\begin{lemma}
  \label{lem:lipschitz-join}
  Let $(X,F,\mm)$ be a measured Finsler manifold.
  Let $N,M\subset X$ be two open sets such that $\partial
  M\cap\partial N=\emptyset$ and $\Lambda_{F,M\cap N}<\infty$.
  Then there exist an open set $H$ such that $\overline{H}\subset N\cap M$ and a constant
  $c=c(M,N)$ such that the following happen.
  For all $u\in\Lip_{loc}(M), v\in\Lip_{loc}(N)$, for all
  $\epsilon>0$, there exists a function $w\in\Lip_{loc}(M\cup N)$, such that
  \begin{equation}
    w=u\text{ in }M\backslash N,
    \qquad
    w=v\text{ in }N\backslash M,
    \qquad
    \min\{u,v\}\leq w\leq \max\{u,v\}
    \text{ in }
    M\cap N
    ,
  \end{equation}
   and it holds that
  \begin{equation}
    \label{eq:lipschitz-join}
    \int_{M\cup N}
    |\partial w|
    \,d\mm
    \leq
    \int_{M}
    |\partial u|
    \,d\mm
    +
    \int_{N}
    |\partial v|
    \,d\mm
    +
    c
    \int_H
    |v-w|
    \,d\mm
    +\epsilon
    .
  \end{equation}
\end{lemma}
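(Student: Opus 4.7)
The proof proceeds by a standard cut-off/gluing construction, with some care required because of the irreversibility of $F$.

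First, one exploits the topological hypothesis $\partial M \cap \partial N = \emptyset$ together with openness of $M, N$ to verify that $M \setminus N$ and $N \setminus M$ are disjoint relatively closed subsets of $M \cup N$ equipped with the subspace topology. Consequently $M \cap N$ is open in $M \cup N$, and the two ``internal boundaries'' $\partial M \cap N$ and $\partial N \cap M$ are disjoint relatively closed subsets lying on opposite sides of $M \cap N$. The finiteness $\Lambda_{F, M \cap N} < \infty$ ensures the asymmetric distance on the overlap is comparable to its symmetrization, so one can find an open buffer set $H$ with $\overline H \subset M \cap N$ that strictly separates these two internal boundaries with a positive geometric ``width''.

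Next, one constructs a Lipschitz cutoff $\phi \colon M \cup N \to [0,1]$ with $\phi \equiv 1$ on an open neighborhood of $M \setminus N$ extending into the ``$u$-side'' of $M \cap N$ up to $H$, $\phi \equiv 0$ on an open neighborhood of $N \setminus M$ on the other side, and all transition contained in $H$; using distance-function formulas inside the overlap, one can ensure $|\partial \phi| \le L$ for a constant $L = L(H, M, N, \Lambda_F)$. Define $w := \phi u + (1-\phi) v$, where $\phi u$ is read as $0$ wherever $\phi \equiv 0$ (so $u$ is never evaluated outside $M$) and symmetrically for $(1-\phi)v$. The conditions $w = u$ on $M \setminus N$, $w = v$ on $N \setminus M$, and $\min(u,v) \le w \le \max(u,v)$ on $M \cap N$ hold by construction, and $w \in \Lip_{loc}(M \cup N)$ follows from the slope calculus rules listed in the appendix.

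Those same calculus rules give the pointwise bound on $M \cap N$
\[
|\partial w| \le \phi|\partial u| + (1-\phi)|\partial v| + \Lambda_{F, M\cap N}\,|u-v|\,|\partial \phi|,
\]
with $|\partial w|=|\partial u|$ on $M \setminus N$ and $|\partial w|=|\partial v|$ on $N \setminus M$. Integrating and using that $|\partial \phi|$ is supported in $H$ and bounded by $L$ yields
\[
\int_{M\cup N}|\partial w|\, d\mm \le \int_M |\partial u|\, d\mm + \int_N |\partial v|\, d\mm + \Lambda_F L\int_H |u-v|\, d\mm.
\]

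The main obstacle is converting the remainder $\Lambda_F L\int_H|u-v|\,d\mm$ into the precise form $c\int_H|v-w|\,d\mm+\varepsilon$. Since $|v-w|=\phi|u-v|$, on the region $H_+ := H \cap \{\phi\ge 1/2\}$ one has the clean bound $|u-v|\le 2|v-w|$, which handles this piece with $c=2\Lambda_F L$. The delicate region is $H_- := H \cap \{\phi < 1/2\}$, where $|v-w|$ badly underestimates $|u-v|$. The plan is to design the profile of $\phi$ so that this ``bad'' contribution can be made arbitrarily small: for instance, adopt a sigmoid-type profile concentrating $|\partial \phi|$ near $\{\phi = 1/2\}$ so that the set $H_-$ where $|\partial \phi|$ is non-negligible has arbitrarily small measure, and combine this with the $\varepsilon$-slack in the statement via a diagonal/limiting argument on the transition thickness. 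The resulting constant $c$ depends only on $M, N$ (through $\Lambda_F$, $L$, and the geometry of $H$), while the residual error on $H_-$ is absorbed into the $\varepsilon$ term.
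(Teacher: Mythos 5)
Your construction---glue $u$ and $v$ through a Lipschitz cutoff as $w=\phi u+(1-\phi)v$ and then bound $|\partial w|$ by the slope calculus---is the same skeleton as the paper's. Your pointwise estimate
$|\partial w|\le\phi|\partial u|+(1-\phi)|\partial v|+\Lambda_{F,M\cap N}|u-v|\,|\partial\phi|$,
obtained by writing $dw=\phi\,du+(1-\phi)\,dv+(u-v)\,d\phi$ and using subadditivity and positive homogeneity of $F^*$, is in fact \emph{sharper} than the one used in the paper. There the cruder product rule $|\partial(fg)|\le\Lambda_F(|f||\partial g|+|g||\partial f|)$ is applied to $\psi_i(u-v)$, producing an extra error $\Lambda_{F,M\cap N}^2(|\partial u|+|\partial v|)$ supported in the transition band; the paper then needs a pigeonhole/averaging over $k$ sub-cutoffs $\psi_i$ with disjoint transition bands $H_i=\phi^{-1}\bigl(\bigl(\tfrac{i-1}{k},\tfrac{i}{k}\bigr)\bigr)$ to push that error below $\epsilon$. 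With your bound and a single cutoff one obtains directly
$\int_{M\cup N}|\partial w|\,d\mm\le\int_M|\partial u|\,d\mm+\int_N|\partial v|\,d\mm+\Lambda_{F,M\cap N}L\int_H|u-v|\,d\mm$,
so the $\epsilon$-slack is not even needed.

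The gap is in the last step, where you try to upgrade $\int_H|u-v|$ to $\int_H|v-w|=\int_H\phi|u-v|$. The sigmoid idea does not work. If $\phi$ rises from $0$ to $1/2$ across a shell of thickness $\delta$, then $|\partial\phi|\approx(2\delta)^{-1}$ on a set of measure $\approx\delta\cdot(\text{cross-sectional area})$, so $\int_{H_-}|\partial\phi|\,d\mm$ stays bounded away from zero as $\delta\to0$; consequently $\int_{H_-}|u-v|\,|\partial\phi|\,d\mm$ does not become small uniformly in $u,v$. Nor can the transition be placed where $|u-v|$ is small, since $H$, $c$ (and hence $\phi$) must be fixed \emph{before} $u,v,\epsilon$ are given; a ``diagonal/limiting argument on the transition thickness'' is therefore not available.

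You are, however, chasing a misprint. The paper's own proof ends with the averaged inequality
$\frac1k\sum_i\int_{M\cup N}|\partial w_i|\,d\mm\le\int_M|\partial u|\,d\mm+\int_N|\partial v|\,d\mm+\tfrac{9\Lambda_{F,M\cap N}}{d}\int_H|u-v|\,d\mm+\epsilon$
and extracts $w=w_{i_0}$ by pigeonhole, delivering $c\int_H|u-v|\,d\mm$, \emph{not} $c\int_H|v-w|\,d\mm$. Both later uses of the lemma (in the inner-regularity and subadditivity parts of the appendix theorem) also apply it in the $|u-v|$ form. So the displayed inequality in the statement should read $c\int_H|u-v|\,d\mm$, and the body of your argument already proves exactly that, more cleanly than the paper does.
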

\begin{proof}
  The hypothesis $\partial M\cap\partial N=\emptyset$ yields
  $\overline{M\backslash N}\cap\overline{N\backslash M}=\emptyset$.
  Define $d:=\inf\{\sfd(x,y),x\in M\backslash N, y\in N\backslash
  M\}$. and consider the function $\phi:M\cup N\to\R$ defined as
  \begin{equation}
    \phi(x)
    :=
    \max\left\{
      1-\frac{3}{d}\sup_{y\in B^{+}(M\backslash N,d/3)} \sfd(y,x)
      ,0
    \right\}
    .
  \end{equation}
  The function $\phi$ is $(3/d)$-Lipschitz and attains the values
  $1$ and $0$ in a neighborhood of $M\backslash N$ and
  $N\backslash M$, respectively.
  Define $H=\phi^{-1}((0,1))$.
  Clearly it holds $\overline{H}\subset M\cap N$.
  Fix now $\epsilon>0$ and find $k\in\N$ such that
  \begin{equation}
    \int_H (|\partial u|+|\partial v|)\,d\mm
    \leq
    \Lambda_{F,M\cap N}^{-2}\epsilon k
    .
  \end{equation}
  Define $H_i$ and $\psi_i$ ($i=1\dots k$) as
  \begin{equation}
    H_i=\phi^{-1}\left(\left(\frac{i-1}{k},\frac{i}{k}\right)\right)
    ,\qquad
    \psi_i= \min \left\{3\left(k\phi
        -i+\frac{2}{3}\right)^{+},1\right\}
    .
  \end{equation}
  Clearly, $\psi_i$ is $(9k/d)$-Lipschitz and it is locally
  constant outside $H_i$.
  Define $w_i=\psi_i u+(1-\psi_i) v$.
  We compute the slope of $w_i$ in $H_i$ using the calculus rules for
  the slope
  \begin{align*}
    |\partial w_i|
    &
    =
    |\partial (v+\psi_i(u-v))|
    \leq
      |\partial v|+|\partial (\psi_i(u-v))|
      \\&
    \leq
    |\partial v|+
    \Lambda_{F,M\cap N}|\partial\psi_i| |u-v|+
    \Lambda_{F,M\cap N}|\partial(u-v)|\psi_i
      \\&
    \leq
    |\partial v|
    +
    \frac{9k}{d}\Lambda_{F,M\cap N} |u-v|
    +
    \Lambda_{F,M\cap N}^2(|\partial u|+|\partial v|)
    .
  \end{align*}
  Outside $H_i$ the slope of $w_i$ is either $|\partial u$ or
  $\partial v$.
  Integrating over $M\cup N$, we obtain
  \begin{equation}
    \begin{aligned}
      \int_{M\cup N}
      |\partial w_i|
      \,d\mm
      &
      \leq
      \int_{M}
      |\partial u|
      \,d\mm
      +
      \int_{N}
      |\partial v|
      \,d\mm
      +
      \frac{9k\Lambda_{F,M\cap N}}{d}
      \int_{H_i}
        |u-v|\,d\mm
      \\
      &
        \qquad
      +
      \Lambda^2_{F,M\cap N}
      \int_{H_i}
      (
      |\partial u|+|\partial v|
      )
        \,d\mm
        .
    \end{aligned}
  \end{equation}
  Summing over $i$ and dividing by $k$, we deduce that
  \begin{equation*}
    \begin{aligned}
      \frac{1}{k}
      \sum_{i=1}^k
      \int_{M\cup N}
      |\partial w_i|
      \,d\mm
      &
      \leq
      \int_{M}
      |\partial u|
      \,d\mm
      +
      \int_{N}
      |\partial v|
      \,d\mm
      +
      \frac{9\Lambda_{F,M\cap N}}{d}
      \int_{H}
        |u-v|\,d\mm
        +
        \epsilon
        ,
    \end{aligned}
  \end{equation*}
  hence there exists an index $i_0$ such that $w=w_{i_0}$
  satisfies~\eqref{eq:lipschitz-join}, with
  $c=9\Lambda_{F,M\cap N}/d$.
\end{proof}

\begin{theorem}
  Let $(X,F,\mm)$ be a measured Finsler manifold, and let $E\subset X$ be a
  Borel set.
  Then it holds that
  \begin{enumerate}
  \item (Monotonicity)
    $\PP(E;A)\leq \PP(E;B)$, if $A\subset B$,
  \item (Superadditivity)
    $\PP(E;A\cup B)\geq \PP(E;A) +\PP(E;B)$, if $A\cap B=\emptyset$,
  \item (Inner regularity)
    $\PP(E;A)=\sup\{\PP(E;B):B\subset A\text{ has compact
      closure in } A\}$,
  \item (Subadditivity)
    $\PP(E;A\cup B)\leq \PP(E;A) + \PP(E;B)$,
  \end{enumerate}
  for all open sets $A$, $B$.

  Moreover, if for any Borel set $A$ we define
  $\PP(E;A):=\inf\{\PP(E;B):B\supset A\text{ is open}\}$, then the map
    $A\mapsto\PP(E;A)$ is a Borel measure.
\end{theorem}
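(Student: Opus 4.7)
The plan is to verify the four open-set properties in turn and then invoke the classical De Giorgi--Letta criterion to obtain the Borel-measure extension. Monotonicity is immediate: if $A\subset B$ are open and $u_n\in\Lip_{loc}(B)$ with $u_n\to\indicator_E$ in $L^1_{loc}(B)$, then $u_n|_A$ is admissible for $\PP(E;A)$ and $\int_A|\partial u_n|\,d\mm\leq\int_B|\partial u_n|\,d\mm$. Disjoint superadditivity is equally formal: when $A\cap B=\emptyset$, any minimizing sequence on $A\cup B$ restricts to admissible sequences on each piece and $\int_{A\cup B}|\partial u_n|\,d\mm=\int_A|\partial u_n|\,d\mm+\int_B|\partial u_n|\,d\mm$; the superadditivity of $\liminf$ closes the step.

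Subadditivity is the first substantive step and rests on Lemma~\ref{lem:lipschitz-join}. Let $u_n$, $v_n$ be minimizing sequences on $A$ and $B$, respectively (we may assume both perimeters are finite). One cannot apply the joining lemma directly to $(A,B)$ because $\partial A\cap\partial B$ may be nonempty and $\Lambda_{F,A\cap B}$ may be infinite. Instead, shrink slightly: fix opens $M\subset\subset A$, $N\subset\subset B$ with $\partial M\cap\partial N=\emptyset$ (for instance, as superlevel sets of suitable distance functions) and with $\Lambda_{F,M\cap N}<\infty$, which is automatic on sets with compact closure thanks to the smoothness of $F$ on $TX\setminus 0$. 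The lemma then produces $w_n\in\Lip_{loc}(M\cup N)$ converging to $\indicator_E$ with
\begin{equation*}
\int_{M\cup N}|\partial w_n|\,d\mm\leq\int_M|\partial u_n|\,d\mm+\int_N|\partial v_n|\,d\mm+c\int_H|u_n-v_n|\,d\mm+\epsilon_n,
\end{equation*}
and since $u_n,v_n\to\indicator_E$ in $L^1(H)$ (as $H$ is compactly contained in $M\cap N$) the overlap term vanishes. Taking $\liminf$ gives $\PP(E;M\cup N)\leq\PP(E;A)+\PP(E;B)$; exhausting $A\cup B$ by such $M\cup N$ and invoking the inner regularity below concludes.

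Inner regularity is the main obstacle. Fix an exhaustion $B_0\subset\subset B_1\subset\subset\cdots\uparrow A$ and set $\alpha:=\sup_k\PP(E;B_k)$, which satisfies $\alpha\leq\PP(E;A)$ by monotonicity. To prove the reverse, choose for each $k$ an admissible $u_k\in\Lip_{loc}(B_k)$ with $\int_{B_k}|\partial u_k|\,d\mm\leq\alpha+2^{-k}$ and $\|u_k-\indicator_E\|_{L^1(B_{k-1})}\leq 2^{-k}$ (possible by diagonalizing on the minimizing sequence defining $\PP(E;B_k)$). Iteratively paste $u_k$ with $u_{k+1}$ via Lemma~\ref{lem:lipschitz-join} applied to $M=B_k$ and $N=B_{k+1}\setminus\overline{B_{k-2}}$, whose boundaries are disjoint by construction. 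The $k$-th overlap integral is controlled by $\|u_k-u_{k+1}\|_{L^1(H_k)}\leq 2^{-k+1}$, so the accumulated slope corrections are summable. The resulting $w\in\Lip_{loc}(A)$ satisfies $w\to\indicator_E$ in $L^1_{loc}(A)$ and $\int_A|\partial w|\,d\mm\leq\alpha+O(\epsilon)$, where $\epsilon$ can be made arbitrarily small by refining the initial tolerances; hence $\PP(E;A)\leq\alpha$. The bookkeeping in this telescoping gluing is the delicate part of the argument.

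Once the four properties are established on open sets, the Borel-measure extension is the classical De Giorgi--Letta criterion: any set function on opens that is monotone, disjoint-superadditive, subadditive, and inner regular admits a unique extension to a Borel measure via the outer regularization $\PP(E;A):=\inf\{\PP(E;\Omega):\Omega\supset A\text{ open}\}$.
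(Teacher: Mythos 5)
Your treatment of monotonicity and disjoint superadditivity is exactly the paper's (both are immediate), and your subadditivity argument --- shrink $A$, $B$ to $M\subset\subset A$, $N\subset\subset B$, apply the joining lemma, then pass to the limit via inner regularity --- is also the route the paper takes, modulo the harmless fact that the paper shrinks \emph{both} sides while you only need one. The appeal to De Giorgi--Letta at the end is the same.

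The gap is in the inner-regularity step, and it is a real one. You choose $u_k$ near-optimal for $\PP(E;B_k)$, so $\int_{B_k}|\partial u_k|\,d\mm\le\alpha+2^{-k}$, and then glue $u_k$ with $u_{k+1}$ over the shell $N_k=B_{k+1}\setminus\overline{B_{k-2}}$. But Lemma~\ref{lem:lipschitz-join} adds the \emph{full} term $\int_{N_k}|\partial u_{k+1}|\,d\mm$ at each step, not just the overlap correction $c\int_{H_k}|u_k-u_{k+1}|\,d\mm$. There is no reason for $\int_{N_k}|\partial u_{k+1}|\,d\mm$ to be small: $u_{k+1}$ is optimized over all of $B_{k+1}$, and its slope mass on the outer shell can be of order $\alpha$. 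Since the shells $N_k$ only have bounded overlap (not small total perimeter), the iterated sum $\sum_k\int_{N_k}|\partial u_{k+1}|\,d\mm$ is not $O(\epsilon)$ --- at best it is $O(\alpha)$ --- so you end up with $\PP(E;A)\le C\alpha$ for some $C>1$ rather than $\PP(E;A)\le\alpha$. The missing idea, which the paper supplies, is to choose the gluing pieces $\psi_{m,h}$ near-optimal for the \emph{annuli} $C_j$ themselves (so $\int_{C_j}|\partial\psi_{m,j}|\,d\mm\approx\PP(E;C_j)$), and then to invoke disjoint superadditivity on the subfamilies $\{C_{2j}\}$ and $\{C_{2j+1}\}$ to conclude that $\sum_j\PP(E;C_j)<\infty$, so that after discarding a finite head the tail of slope contributions from the gluing is $\le\epsilon$. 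Without this superadditivity-controlled tail, the telescoping gluing does not close.
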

\begin{proof}
The monotonicity and superadditivity are immediate consequences of the
definition of perimeter.
Let's consider the inner regularity.
Fix an open set $A$, such that $\sup\{\PP(E;B):B\subset A\}<\infty$
(otherwise the proof is trivial).
Find $(A_j)_{j}$ a sequence of open sets with compact closure such
that $\overline{A_j}\subset A_{j+1}$, and
$\bigcup_{j} A_j= A$, and define
$C_j=A_{2j}\backslash \overline{A_{2j-3}}$.
Since $C_{2j}\cap C_{2k}=\emptyset$, if $j\neq k$, by superadditivity,
we have that $\sum_j \PP(E; C_{2j})<\infty$, and analogously $\sum_j
\PP(E; C_{2j+1})<\infty$.
Fix $\epsilon>0$; there exists $J$, such that
\begin{equation}
  \sum_{j=J}^\infty \PP(E;C_j)
  \leq
  \epsilon 2^{-4}
  .
\end{equation}
Let $A:=C_{J+2}$, $B':=A_{J+1}$, $F_h:=C_{J+h-1}$, and
$G_h:=\bigcup_{i=1}^h F_i$; all these sets have compact closure, thus
the irreversibility constant is finite on these sets.

By definition of perimeter, there exists a sequence $\psi_{m,h}\in
\Lip_{loc}(F_h)$ such that $\psi_{m,h}\to \indicator_E$ in
$L^1(F_h)$ and
\begin{equation}
  \int_{F_h}|\partial \psi_{m,h}|\,d\mm
  \leq
  \PP(E;F_h)
  +2^{-2-m-h}
  .
\end{equation}
Notice that $G_h$ has compact closure, hence
$\Lambda_{F,G_n\cap F_{h+1}}<\infty$, thus we are in position to use
Lemma~\ref{lem:lipschitz-join} applied to the sets $G_h$ and
$F_{h+1}$.
Said Lemma gives a set $H_h\subset G_h\cap F_{h+1}$ and a constant
$c_h$, that will be used soon.
Clearly, up to passing to subsequences, we can assume that
\begin{equation}
  c_h
  \int_{H_h}
  |\psi_{m,h+1}-\psi_{m,h}|\,d\mm
  \leq \epsilon 2^{-10-h}
  .
\end{equation}

We define inductively on $h$ a sequence of functions
$u_{m,h}:G_h\to\R$ as follows.
For the initial step, take $u_{m,1}=\psi_{m,1}$.
For the inductive step, apply Lemma~\ref{lem:lipschitz-join} to the
functions $u_{m,h}$ and $\psi_{m,h+1}$ obtaining a function
$u_{m,h+1}$ such that
\begin{equation}
  \begin{aligned}
    \int_{G_{h+1}} |\partial u_{m,h+1}| \,d\mm
    &
  \leq
  \int_{G_{h}} |\partial u_{m,h}| \,d\mm
  +
      \int_{F_{h+1}} |\partial \psi_{m,h+1}| \,d\mm
    \\
    &
      \quad
  + c_h
  \int_{H_h}
  |u_{m,h}-\psi_{m,h+1}| \, d\mm
  +
  \epsilon 2^{-10-h}
  .
  \end{aligned}
\end{equation}
Since  $u_{m,h+1}=\psi_{m,h+1}$ on $F_{h+1}\backslash G_h$ and
$u_{m,h+1}=u_{m,h}$ on $G_h\backslash F_{h+1}$, we can deduce by
induction that
\begin{equation*}
  \begin{aligned}
    \int_{G_{h+1}} |\partial u_{m,h+1}| \,d\mm
    &
      \leq
      \sum_{i=1}^{h+1}
  \int_{F_{i}} |\partial \psi_{m,i}| \,d\mm
  +
      \sum_{i=1}^{h}
      \left(
      c_i
  \int_{H_i}
  |\psi_{m,i}-\psi_{m,i+1}| \, d\mm
  +
      \epsilon 2^{-10-i}
      \right)
    \\
    &
      \leq
      \sum_{i=1}^{h+1}
  \int_{F_{i}} |\partial \psi_{m,i}| \,d\mm
      +
      \epsilon 2^{-8}
      \leq
      \sum_{i=1}^{h+1}
      \PP(E;F_i)
      +2^{-m}
      +
      \epsilon 2^{-8}
  .
  \end{aligned}
\end{equation*}
We define $u_m(x)= u_{m,h}(x)$, whenever $x\in G_{h-1}$ (the
definition is well-posed) and we integrate its slope
\begin{align}
  \int_{A\backslash \overline{B'}}
  |\partial u_m|
  \,d\mm
  &
  \leq
    \lim_{h\to\infty}
    \int_{G_h} |\partial u_{m,h-1}|
    \,d\mm
    \leq
      \sum_{h=1}^{\infty}
    \PP(E;F_h)
    +
    2^{-m}
    +
    \epsilon 2^{-8}
  \\
  &
    =
    \sum_{h=1}^{\infty}
    \PP(E;C_{J+h-1})
    +
    2^{-m}
    +
    \epsilon 2^{-8}
    \leq
    \epsilon 2^{-3}+2^{-m}
    .
\end{align}
The sequence $u_m$ converges to $\indicator_E$ in $L^1(G_h)$ for all
$h$, hence it converges in $L^{1}_{loc}(A\backslash\overline{B'})$.

We take now $v_m\in \Lip_{loc}(B)$ converging to $\indicator_E$ in
$L^1(B)$ such that
\begin{equation}
  \PP(E;B)
  \leq
  \int_{B} |\partial v_m|\,d\mm
  +2^{-m}
  .
\end{equation}
We are in position to use Lemma~\ref{lem:lipschitz-join} again with
the sets $A\backslash\overline{B'}$ and $B$ and find an open set $H$
and
a constant $c$, such that for all $m$ there exists a function
$w_m:A\to\R$ such that
\begin{align*}
  \int_A
  |\partial w_m|
  \,d\mm
  &
  \leq
  \int_{A\backslash\overline{B'}}
  |\partial u_m|
  \,d\mm
  +
  \int_{B}
  |\partial v_m|
  \,d\mm
  +
  \int_{H}
  |u_m-v_m|
  \,d\mm
  +
    \epsilon 2^{-3}
  \\
  &
    \leq
    2^{-m}
    +\epsilon 2^{-3}
    +2^{-m}
  +
  \int_{H}
  |u_m-\indicator_E|
  \,d\mm
    +
  \int_{H}
  |\indicator_E-v_m|
    \,d\mm
    +
    \epsilon2^{-3}
    \\&
    \leq
  2^{1-m}
  +\epsilon
  +
  \int_{G_3}
  |u_m-\indicator_E|
  \,d\mm
    +
  \int_{B}
  |\indicator_E-v_m|
    \,d\mm
  .
\end{align*}
By taking the limit as $m\to\infty$, we deduce that
$\PP(E;A)\leq\PP(E;B)+\epsilon$, concluding the proof of the inner
regularity.

\smallskip

We prove now the subadditivity.
Fix $A$ and $B$ two open sets and let $A'$ and $B'$
compactly included in $A$ and $B$, respectively.
We will prove that $\PP(E;A'\cup B')\leq\PP(E;A')+\PP(E;B')$.
From this fact and the inner regularity, the subadditivity will
follow.
Consider $u_n\in \Lip_{loc}(A')$ and $v_n\in\Lip_{loc}(B')$ converging
in $L^1$ to $\indicator_E$, such that
\begin{equation}
  \int_{A'}|\partial u_n|\,d\mm
  \leq
  \PP(E;A')+\frac{1}{n}
  ,\qquad\text{ and }\qquad
  \int_{B'}|\partial v_n|\,d\mm
  \leq
  \PP(E;B')+\frac{1}{n}
  .
\end{equation}
Apply Lemma~\ref{lem:lipschitz-join} to the sets $A'$ and $B'$, and find
$H\subset A'\cap B'$ and $c>0$ such that, for all $n>0$, there exists a
function $w_n$ satisfying
\begin{equation}
  \int_{A'\cup B'}
  |\partial_n w_n|\,d\mm
  \leq
  \int_{A'}
  |\partial_n u_n|\,d\mm
  +
  \int_{B'}
  |\partial_n v_n|\,d\mm
  +
  c
  \int_H
  |u_n-v_n|\,d\mm
  +\frac{1}{n}
  .
\end{equation}
We conclude by taking the limit as $n\to\infty$.

\smallskip

The fact that the relative perimeter can be extended to a Borel
measure, is a consequence of a well-known Theorem of De Giorgi and
Letta~\cite{DeGiorgiLetta77}, that states that the conditions we have
just proven are sufficient to obtain such a measure.
\end{proof}

\section{Relaxation of the Minkowski content}
\label{S:minkowski}

In this appendix we give a proof of the fact that the perimeter can be
seen as the l.s.c.\ relaxation on the Minkowski content.
The proof follows the line of~\cite{AmDiGi17}, with some extra
attention to the irreversibility of the space.
In the case $X=\R^d$, this was already proven
in~\cite{ChambolleLisiniLussardi14}, with a different technique.

\begin{proposition}
  Let $(X,F,\mm)$ be a measured Finsler manifold and $E\subset X$ be a Borel set.
  Then it holds that
  \begin{equation}
    \mm^+(E)\geq \PP(E)
    .
  \end{equation}
\end{proposition}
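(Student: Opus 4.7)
The strategy is the classical relaxation argument adapted to the irreversible setting: construct an explicit family of locally Lipschitz competitors in the definition of $\PP(E)$ that approximate $\indicator_E$ and whose total slope is controlled by the forward enlargement of $E$. We may assume $\mm^+(E) < \infty$, otherwise the inequality is trivial. Let $f(x) := \inf_{z \in E} \sfd(z, x)$ be the forward distance from $E$, and define $u_\epsilon := (1 - f/\epsilon)^+$.

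The key step is to verify that $f$ is one-sided Lipschitz in the direction matching the slope's asymmetric denominator. For every $z \in E$, the triangle inequality gives $\sfd(z, y) \leq \sfd(z, x) + \sfd(x, y)$; taking the infimum over $z$ yields $f(y) - f(x) \leq \sfd(x, y)$. Combined with $(u_\epsilon(x) - u_\epsilon(y))^+ \leq \epsilon^{-1}(f(y) - f(x))^+$, this produces the slope bound $|\partial u_\epsilon|(x) \leq 1/\epsilon$ at every $x$. The complementary inequality $f(x) - f(y) \leq \sfd(y, x)$ (from the other triangle inequality) shows that $f$, and hence $u_\epsilon$, lies in $\Lip_{loc}(X)$, since closed forward balls are compact and therefore have finite local reversibility. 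Since $u_\epsilon \equiv 1$ on $\tilde{E} := \{f = 0\}$ and $u_\epsilon \equiv 0$ on $X \setminus B^+(E, \epsilon) = \{f \geq \epsilon\}$, the slope vanishes on the respective interiors, giving
\begin{equation*}
\int_X |\partial u_\epsilon| \, d\mm \leq \frac{1}{\epsilon} \mm(B^+(E, \epsilon) \setminus \tilde{E}) \leq \frac{1}{\epsilon}\bigl(\mm(B^+(E, \epsilon)) - \mm(E)\bigr).
\end{equation*}

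It remains to pass to the limit. Select a sequence $\epsilon_n \to 0$ realizing the liminf in the definition of $\mm^+(E)$; since $\mm(B^+(E, \epsilon_n)) - \mm(E) \to 0$ and $\tilde{E} = \bigcap_{\epsilon > 0} B^+(E, \epsilon) \subset B^+(E, \epsilon_n)$, we deduce $\mm(\tilde{E} \setminus E) = 0$ (at least in the finite-measure regime; the general case follows by exhaustion). Consequently $u_{\epsilon_n} \to \indicator_E$ boundedly and $\mm$-a.e., hence in $L^1_{loc}(X)$, so $u_{\epsilon_n}$ is an admissible sequence in the definition of $\PP(E)$, yielding
\begin{equation*}
\PP(E) \leq \liminf_{n \to \infty} \int_X |\partial u_{\epsilon_n}| \, d\mm \leq \liminf_{n \to \infty} \frac{\mm(B^+(E, \epsilon_n)) - \mm(E)}{\epsilon_n} = \mm^+(E).
\end{equation*}
The only subtle point is the choice of $f(x) = \inf_z \sfd(z, x)$ rather than the reversed variant $\inf_z \sfd(x, z)$: only the former is controlled by the correct side of the triangle inequality so as to match both the numerator in the slope and the forward enlargement defining $\mm^+$. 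This is the sole place where the irreversibility of $F$ forces a specific choice, and it dictates the entire construction.
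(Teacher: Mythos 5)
Your competitor $u_\epsilon=(1-f/\epsilon)^+$ built from the forward distance $f(x)=\inf_{z\in E}\sfd(z,x)$ is in the same spirit as the paper's, and your observation about which side of the asymmetric triangle inequality must match both the descending slope and the forward enlargement is correct. However, there is a genuine gap in the slope estimate. The displayed bound $\int_X|\partial u_\epsilon|\,d\mm\leq\epsilon^{-1}\mm(B^+(E,\epsilon)\setminus\tilde E)$ requires $|\partial u_\epsilon|=0$ for $\mm$-a.e.\ $x\in\tilde E$, but your argument (``slope vanishes on the respective interiors'') only gives this on $\inte(\tilde E)$. On all of $\tilde E$ the function $u_\epsilon$ attains its \emph{maximum}, and the descending slope $|\partial u_\epsilon|(x)=\limsup_{y\to x}(u_\epsilon(x)-u_\epsilon(y))^+/\sfd(x,y)$ does not automatically vanish at maxima (at minima it does, thanks to the $(\,\cdot\,)^+$, which is why the far side $\{f\geq\epsilon\}$ causes no trouble). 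At $x\in\partial\tilde E$ one computes $|\partial u_\epsilon|(x)=\epsilon^{-1}\limsup_{y\to x}f(y)/\sfd(x,y)$, which can be of order $\epsilon^{-1}$; thus if $\mm(\partial\tilde E)>0$ the contribution $\int_{\tilde E}|\partial u_\epsilon|\,d\mm$ is not controlled by $\mm(B^+(E,\epsilon))-\mm(E)$ and the chain of inequalities fails.

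The paper avoids precisely this by taking the distance not from $E$ but from the open $\epsilon^2$-enlargement $B^+(E,\epsilon^2)$: its competitor is $f_\epsilon(x)=\max\{1-\epsilon^{-1}\inf_{y\in B^+(E,\epsilon^2)}\sfd(y,x),0\}$ (the published formula has $\sup$ in place of $\inf$, a typo), which equals $1$ on the \emph{open} set $B^+(E,\epsilon^2)\supset E$, so its slope vanishes on a neighborhood of $E$ with no measure-theoretic subtlety; the slope is then supported on $B^+(E,\epsilon+\epsilon^2)\setminus E$, and the shift from $\epsilon$ to $\epsilon+\epsilon^2$ disappears in the $\liminf$ since $(1+\epsilon)\to1$ and $\epsilon\mapsto\epsilon+\epsilon^2$ ranges over all small parameters. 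Alternatively, your construction could be repaired by invoking Rademacher's theorem together with the standard fact that a Lipschitz function has zero differential $\mm$-a.e.\ on each of its level sets, so that $du_\epsilon=0$ and hence $|\partial u_\epsilon|=F^*(-du_\epsilon)=0$ $\mm$-a.e.\ on $\tilde E$; but that is an additional lemma which must be stated, and it is not how the paper proceeds.
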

\begin{proof}
  We consider the case $\mm^+(E)<\infty$ (the other is trivial).
  This implies that $\mm(\overline E\backslash E)=0$, hence, without loss of
  generality, we may assume that $E$ is closed.
  Consider the ${\epsilon}^{-1}$-Lipschitz function
\begin{equation}
  f_\epsilon(x) :=
  \max\left\{1-\frac{1}{\epsilon}\sup_{y\in B^{+}(E,\epsilon^2)}\sfd(y,x),0\right\}
  .
\end{equation}
Clearly $f_\epsilon\to\indicator_E$ in $L^1(\mm)$.
In $B^+(E,\epsilon^2)$ it is equal to $1$, hence $|\partial
f_\epsilon|(x)=0$, for all $x\in E$.
Conversely, in $X\backslash B^+(E,\epsilon+\epsilon^2)$ it attains its minimum, hence
$|\partial f_\epsilon|(x)=0$ for all $x\in X\backslash B^+(E,\epsilon+\epsilon^2)$.
We compute the integral
\begin{align*}
  \int_X |\partial f_\epsilon|(x)\,\mm(dx)
  &
  =
  \int_{B^+(E,\epsilon+\epsilon^2)\backslash E} |\partial f_\epsilon|(x)\,\mm(dx)
  \leq
  \int_{B^+(E,\epsilon+\epsilon^2)\backslash E}
    \frac{1}{\epsilon}\,\mm(dx)
  \\
  &
  =
    \frac{\mm(B^+(E,\epsilon+\epsilon^2)\backslash E)}{\epsilon}
    =
    (1+\epsilon)
    \frac{\mm(B^+(E,\epsilon+\epsilon^2))- \mm(E)}{\epsilon+\epsilon^2}
  .
\end{align*}
By taking the inferior limit as $\epsilon\to0$, we conclude.
\end{proof}

The previous proposition guarantees that the l.s.c.\ envelope of the
Minkowski content is not smaller than the perimeter.
The reverse is a bit more difficult and, at a certain point, we will
require forward-completeness.

We consider the ``semigroup'' $(T_t)_{t\geq0}$ given by the formula
\begin{equation}
  T_t f(x):=\sup_{y\in B^-(x,t)} f(y)
  ,\qquad
  T_0 f =f
  .
\end{equation}
Note that the ball in the supremum is backward.
The semigroup $T_t$ enjoys the following immediate property.
\begin{lemma}
  \label{lem:semigroup}
  It holds that $T_{t+s} f\geq T_t(T_s f)$ and, if $f$ is locally Lipschitz
  \begin{equation}
    \limsup_{t\to 0^+}
    \frac{T_t f-f}{t}
    \leq
    |\partial f|
    ,
    \qquad \mm\text{-a.e.\ in }X
    .
  \end{equation}
\end{lemma}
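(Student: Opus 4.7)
The plan is to prove the two assertions separately, with the first being a direct consequence of the triangle inequality and the second relying on the $\mm$-a.e.\ differentiability of locally Lipschitz functions guaranteed by Remark~\ref{rmrk:reassurement}.

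For the first statement, I would observe that for any $y \in B^-(x,t)$ and $z \in B^-(y,s)$, the triangle inequality for $\sfd$ gives $\sfd(z,x) \leq \sfd(z,y) + \sfd(y,x) < s+t$, hence $z \in B^-(x,s+t)$ and $f(z) \leq T_{s+t}f(x)$. Taking the supremum first over $z \in B^-(y,s)$ and then over $y \in B^-(x,t)$ yields $T_t(T_s f)(x) \leq T_{t+s}f(x)$.

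For the slope estimate, I would fix a point $x$ at which $f$ is differentiable (such points have full $\mm$-measure by Remark~\ref{rmrk:reassurement}). For every $t > 0$, the definition of the supremum produces $y_t \in B^-(x,t)$ with $f(y_t) \geq T_t f(x) - t^2$; in particular $\sfd(y_t,x) < t$, so $y_t \to x$. Let $\gamma_t : [0,1] \to X$ be a minimal constant-speed geodesic from $y_t$ to $x$ and set $v_t := \dot\gamma_t(1) \in T_x X$, so that $F(v_t) = \sfd(y_t,x) < t$. The standard expansion of the Finsler exponential map in a smooth chart centred at $x$ gives $y_t = x - v_t + O(F(v_t)^2)$ as $v_t \to 0$. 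Using differentiability of $f$ at $x$ (and the Riemannian comparisons of Remark~\ref{rmrk:reassurement} to replace the remainder $o(|y_t-x|)$ with $o(F(v_t))$), I would then write
\begin{equation*}
f(y_t) - f(x) = \langle df(x), y_t - x\rangle + o(|y_t - x|) = -\langle df(x), v_t\rangle + o(F(v_t)).
\end{equation*}
The duality $\langle \omega, v\rangle \leq F^*(\omega) F(v)$ applied to $\omega = -df(x)$ yields $-\langle df(x), v_t\rangle \leq F^*(-df(x)) F(v_t) = |\partial f|(x)\, F(v_t) \leq |\partial f|(x)\, t$, so that
\begin{equation*}
T_t f(x) - f(x) \leq f(y_t) - f(x) + t^2 \leq |\partial f|(x)\, t + o(t).
\end{equation*}
Dividing by $t$ and passing to $\limsup_{t \to 0^+}$ gives the claimed inequality.

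I expect the main obstacle to be the rigorous justification of the expansion $y_t = x - v_t + O(F(v_t)^2)$ together with the uniformity of the remainder in the direction $v_t / F(v_t)$. This is essentially standard Finsler geometry, but some care is needed because the Finsler exponential map is only $C^1$ at the origin; one converts the Euclidean/Riemannian little-$o$ produced by differentiability of $f$ into a Finsler little-$o$, uniformly over the compact unit sphere $\{F = 1\} \subset T_x X$, by means of the two-sided Riemannian comparisons $\sqrt{g_1(v,v)} \leq F(v) \leq \sqrt{g_2(v,v)}$ supplied by Remark~\ref{rmrk:reassurement}.
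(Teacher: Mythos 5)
Your first part is exactly the paper's argument (triangle inequality plus passing a near-optimal point through the nested suprema). Your second part is correct and reaches the same conclusion, but you take a slightly longer road than the paper. The paper first performs a purely order-theoretic manipulation, observing that for $y\in B^{-}(x,t)$ one has $\sfd(y,x)<t$, so
\[
\limsup_{t\to 0^{+}}\frac{T_{t}f(x)-f(x)}{t}\;\leq\;\limsup_{y\to x}\frac{(f(y)-f(x))^{+}}{\sfd(y,x)},
\]
and then cites the fact, already recorded in Section~\ref{Ss:finsler}, that the right-hand side equals $F^{*}(-df(x))=\lvert\partial f\rvert(x)$ at every differentiability point. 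You instead pick near-optimal $y_{t}$, invoke the expansion of the Finsler exponential map, and re-derive the identity $\limsup_{y\to x}(f(y)-f(x))^{+}/\sfd(y,x)=F^{*}(-df(x))$ from scratch. That is valid, and the technical concern you flag (controlling the remainder in $y_{t}=x-v_{t}+O(F(v_{t})^{2})$ uniformly over directions) can indeed be handled with the two-sided Riemannian comparisons from Remark~\ref{rmrk:reassurement}; but it is unnecessary work. By simply bounding $\sfd(y_t,x)$ by $t$ before using differentiability, you would avoid the exponential map entirely and land on the pre-established formula for the slope, which is what the paper does.
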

\begin{proof}
  Regarding the first part, fix $x\in X$, and $\epsilon>0$.
  By definition there exists $y$ such that $\sfd(y,x)<t$ and $(T_t(T_s
  f))(x)\leq (T_s f) (y)+\epsilon$.
  Similarly, there exists $z$ such that $\sfd(z,y)< s$ and $(T_s
  f)(y)\leq f(z)+\epsilon$.
  By triangular inequality, we have that $\sfd(z,x)<t+s$, thus
  \begin{equation}
    (T_{t+s}f)(x)
    \geq f(z)
    \geq
    (T_s f)(y)-\epsilon
    \geq
    (T_t(T_s f))(x)-2\epsilon
    .
  \end{equation}
  By arbitrariness of $\epsilon$, we conclude the first part.

  Regarding the second part, fix $x\in X$.
  By a direct computation we deduce
  \begin{align*}
    \limsup_{t\to 0^+}
    \frac{(T_t f)(x)-f(x)}{t}
    &
    =
    \inf_{r>0}
    \sup_{t\in(0,r)}
    \frac{
    \sup_{y\in B^-(x,t)} f(y)
    -f(x)
    }{t}
    \\
    &
      =
    \inf_{r>0}
    \sup_{t\in(0,r)}
    \sup_{y\in B^-(x,t)}
    \frac{
    (f(y)
    -f(x))^+
    }{t}
    \\
    &
      \leq
    \inf_{r>0}
    \sup_{t\in(0,r)}
    \sup_{y\in B^-(x,t)}
    \frac{
    (f(y)
    -f(x))^+
      }{\sfd(y,x)}
    \\
    &
      =
      \limsup_{y\to x}
    \frac{
    (f(y)
    -f(x))^+
      }{\sfd(y,x)}
      .
  \end{align*}
  If $x$ is a point where $f$ is differentiable, then the last term of
  the inequality above is equal to $F^*(-df)=|\partial f|(x)$,
  concluding the proof.
\end{proof}

We prove now a sort of coarea formula.

\begin{lemma}
  \label{lem:coarea-ugly}
  Consider $(X,F,\mm)$ a measured Finsler manifold.
  If $f:X\to [0,\R)$ is a Lipschitz function with compact support, it holds that
  \begin{equation}
    \int_0^\infty \mm^+(\{f\geq t\})\,dt
    \leq
    \int_X |\partial f|(x)\,\mm(dx)
    .
  \end{equation}
\end{lemma}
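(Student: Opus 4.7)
The plan is to leverage the semigroup $(T_t)$ introduced just before the statement together with a layer-cake argument. The key geometric observation is that super-level sets of the semigroup are related to forward $\epsilon$-enlargements: if $x \in B^+(\{f \geq t\}, \epsilon)$, then there exists $y$ with $f(y) \geq t$ and $\sfd(y,x) < \epsilon$, i.e.\ $y \in B^-(x,\epsilon)$, so $T_\epsilon f(x) \geq f(y) \geq t$. Thus $B^+(\{f \geq t\}, \epsilon) \subseteq \{T_\epsilon f \geq t\}$, which (together with $T_\epsilon f \geq f$) yields
\begin{equation*}
\mm^+(\{f \geq t\}) \leq \liminf_{\epsilon \to 0^+} \frac{\mm(\{T_\epsilon f \geq t\}) - \mm(\{f \geq t\})}{\epsilon}.
\end{equation*}

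The second step integrates this pointwise (in $t$) bound. Since $T_\epsilon f \geq f$, the numerator above is non-negative, so Fatou's lemma applied to the $\liminf$ over $\epsilon$ permits us to exchange $\int_0^\infty dt$ with $\liminf_{\epsilon \to 0^+}$. Applying the layer-cake formula $\int_0^\infty \mm(\{g \geq t\})\,dt = \int_X g\,d\mm$ to $g = T_\epsilon f$ and $g = f$ separately, we obtain
\begin{equation*}
\int_0^\infty \mm^+(\{f \geq t\})\,dt \leq \liminf_{\epsilon \to 0^+} \frac{1}{\epsilon} \int_X (T_\epsilon f - f)\,d\mm.
\end{equation*}

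The third step is to pass to the limit in the right-hand side using Lemma~\ref{lem:semigroup}, which gives $\limsup_{\epsilon \to 0^+}(T_\epsilon f - f)/\epsilon \leq |\partial f|$ $\mm$-a.e. The obstacle here is the usual one when pulling $\limsup$ out of an integral: we need an integrable domination to invoke reverse Fatou. This is where the compact support hypothesis pays off: since $f$ is globally Lipschitz with constant $L$ (any locally Lipschitz function with compact support is globally Lipschitz, see Remark~\ref{rmrk:reassurement}), we have $0 \leq T_\epsilon f - f \leq L\epsilon$, and the supports of $T_\epsilon f$ for $\epsilon \leq 1$ are all contained in the closed forward $1$-enlargement of $\supp f$, which is compact (hence of finite measure). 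Thus $(T_\epsilon f - f)/\epsilon$ is uniformly bounded by $L$ on a fixed set of finite measure and zero elsewhere, so reverse Fatou applies and yields
\begin{equation*}
\liminf_{\epsilon \to 0^+} \frac{1}{\epsilon} \int_X (T_\epsilon f - f)\,d\mm \leq \int_X \limsup_{\epsilon \to 0^+} \frac{T_\epsilon f - f}{\epsilon}\,d\mm \leq \int_X |\partial f|\,d\mm,
\end{equation*}
completing the proof. The main delicate point to check carefully is the pointwise slope bound in Lemma~\ref{lem:semigroup} at non-differentiable points; but since locally Lipschitz functions are differentiable $\mm$-almost everywhere (again by Remark~\ref{rmrk:reassurement}), the bound holds on a full-measure set, which is all that is needed.
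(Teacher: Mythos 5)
Your proof is correct and follows essentially the same route as the paper's: the same key geometric inclusion $B^{+}(\{f\geq t\},\epsilon)\subseteq\{T_\epsilon f\geq t\}$, then layer-cake/Fubini, Fatou in one direction on the $t$-integral and (reverse) Fatou with the Lipschitz/compact-support domination on the $x$-integral, closing with Lemma~\ref{lem:semigroup}. The only minor inaccuracy is the claim that the closed forward $1$-enlargement of $\supp f$ is compact: this need not hold for a fixed radius in a general Finsler manifold, but it does hold for all sufficiently small radii (cover the compact support by finitely many pre-compact forward balls), which is all the domination argument requires, and is also exactly what the paper uses.
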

\begin{proof}
  In the first place, we notice that
  $\int_0^\infty \indicator_{\{f\geq t\}}(x)\,dt=f(x)$.
  Fix $t\geq 0$ and $h>0$.
  If $x\in B^{+}(\{f\geq t\},h)$, then $(T_h f)(x)\geq t$, or in other
  words $\indicator_{B^{+}(\{f\geq t\},h)}\leq\indicator_{\{(T_h f)\geq t\}}$.
  By integrating over $t$ we obtain
  \begin{equation}
    \int_0^\infty
    \indicator_{B^{+}(\{f\geq t\},h)}(x)
    \,dt
    \leq
    \int_0^\infty\indicator_{\{(T_h f)\geq t\}}(x)
    \,dt
    \leq
    (T_h f)(x)
    .
  \end{equation}
  By subtracting the first equation to the inequality above,
  integrating over $x$ and using Fubini's theorem, we obtain
  \begin{equation}
    \int_0^\infty
    \frac{\mm(B^{+}(\{f\geq t\},h))-\mm(\{f\geq t\})}{h}
    \,dt
    \leq
    \int_X \frac{(T_h f)(x)-f(x)}{h}
    \,\mm(dx)
    .
  \end{equation}
  The set $\{f\geq0\}$ is compact, hence for $h>0$ sufficiently small
  $B^+(\{f\geq0\},h)$ is compact.
  Moreover, $\frac{T_hf-f}{h}$ is smaller than the Lipschitz constant
  of $f$, hence the integrand in the r.h.s.\ is dominated by an $L^1$
  function.
  We take the inferior and superior limit in the l.h.s.\ and r.h.s.\
  (respectively) of the inequality above; the Fatou's Lemma brings us
  to the conclusion.
\end{proof}

We now prove that we can, without loss of generality, assume that the
functions of a sequence attaining the minimum in the definition of the
perimeter have compact support.
\begin{proposition}
  \label{prop:de-loc}
  Let $(X,F,\mm)$ be a forward-complete measured Finsler manifold, and let $E\subset X$ be a Borel
  set with finite measure.
  Then there exists a sequence of Lipschitz functions with compact
  support, $(w_n)_n$, such that $w_n\to \indicator_E$ in $L^1$ and
  $\PP(E)=\lim_{n\to\infty}\int_X |\partial w_n|\,d\mm$.
\end{proposition}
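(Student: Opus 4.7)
The plan is to start from a near-optimal sequence produced by the definition of $\PP(E)$ and truncate it spatially using explicit distance-based cutoffs. Let $u_n \in \Lip_{loc}(X)$ satisfy $u_n \to \indicator_E$ in $L^1_{loc}(\mm)$ with $\int_X |\partial u_n| \, d\mm \to \PP(E)$; by replacing $u_n$ with $\max\{0, \min\{1, u_n\}\}$, which preserves the $L^1_{loc}$ limit and does not increase the slope, I may assume $0 \leq u_n \leq 1$.

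Fix $x_0 \in X$ and, for each $R > 0$, set $\chi_R(x) := \max\{0, \min\{1, R + 1 - \sfd(x_0, x)\}\}$. Since $-\sfd(x_0, \cdot)$ is $1$-Lipschitz in the sign convention of \eqref{eq:lipschitz-definition}, so is $\chi_R$; moreover $\chi_R \equiv 1$ on $B^{+}(x_0, R)$, vanishes outside $B^{+}(x_0, R+1)$, and $\supp(\chi_R) \subseteq \overline{B^{+}(x_0, R+1)}$ is compact by hypothesis. The candidate approximant will be $w_R := u_{n_R} \chi_R$ for a suitable $n_R$; being a product of locally Lipschitz factors with compact support, $w_R$ is globally Lipschitz by Remark~\ref{rmrk:reassurement}. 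The product rule for nonnegative factors from Section~\ref{Ss:finsler} immediately yields
$$
|\partial(u_n \chi_R)| \leq \chi_R |\partial u_n| + u_n |\partial \chi_R| \leq |\partial u_n| + u_n \indicator_{A_R},
$$
where $A_R := \overline{B^{+}(x_0, R+1)} \setminus B^{+}(x_0, R)$ is compact (being closed in a compact set).

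For fixed $R$, letting $n \to \infty$ and using that $L^1_{loc}$ convergence of $u_n$ becomes genuine $L^1$ convergence on the compact set $\overline{B^{+}(x_0, R+1)}$, I obtain both $u_n \chi_R \to \indicator_E \chi_R$ in $L^1(\mm)$ and
$$
\liminf_{n \to \infty} \int_X |\partial(u_n \chi_R)| \, d\mm \leq \PP(E) + \mm(E \cap A_R).
$$
Since $\mm(E) < \infty$ and $X = \bigcup_R B^{+}(x_0, R)$ by connectedness, continuity of measure gives $\mm(E \cap A_R) \to 0$ and $\indicator_E \chi_R \to \indicator_E$ in $L^1(\mm)$ as $R \to \infty$.

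A diagonal extraction now selects $n_R$ so that $\int_X |u_{n_R} \chi_R - \indicator_E \chi_R| \, d\mm < 1/R$ and $\int_X |\partial(u_{n_R} \chi_R)| \, d\mm \leq \PP(E) + \mm(E \cap A_R) + 1/R$. Setting $w_R := u_{n_R} \chi_R$, I find $w_R \to \indicator_E$ in $L^1(\mm)$ and $\limsup_R \int_X |\partial w_R| \, d\mm \leq \PP(E)$; the matching lower bound follows at once from the very definition of $\PP(E)$, since $w_R \to \indicator_E$ a fortiori in $L^1_{loc}(\mm)$. The main obstacle is precisely this interplay between the two limits: the truncation loss $\int_{A_R} u_n \, d\mm$ becomes small only after first sending $n \to \infty$ on a fixed compact $A_R$ (to replace $u_n$ by $\indicator_E$ there) and only afterwards sending $R \to \infty$ (to kill $\mm(E \cap A_R)$), which is why a single cutoff scale does not suffice and a diagonal argument is required.
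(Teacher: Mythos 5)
Your proof is correct and follows essentially the same strategy as the paper's: cut off a near-optimal approximating sequence with distance-based Lipschitz functions supported on the compact closed forward balls, and absorb the extra annulus term $\int u_n|\partial\chi_R|\,d\mm$ using the finiteness of $\mm(E)$. The only implementation difference is that you keep a single global optimizing sequence $u_n$ for $\PP(E)$ and select $n_R$ by a diagonal extraction, whereas the paper chooses for each $n$ a function nearly realizing the relative perimeter $\PP(E;A_n)$ (and invokes monotonicity $\PP(E;A_n)\leq\PP(E)$), which amounts to a hard-coded diagonal with $n_R=R+1$.
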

\begin{proof}
  Fix $E\subset X$ with finite measure, such that $\PP(E)<\infty$
  (otherwise the proof is trivial).
  Let $A_n:=B^+(o,n)$ for some $o$, fixed once and for all.
  Up to taking subsequences, we can assume that $\mm(E\backslash
  A_n)\leq 2^{-n}$.
  Let $\phi_n$ be the $3$-Lipschitz function given by
  \begin{equation}
    \phi_n(x)
    :=
    \left(
      1-3\inf_{y\in B^{+}(A_n,\frac{1}{3})} \sfd(y,x)
      \right)^+
    .
  \end{equation}
  This function takes value $1$ and $0$ in a neighborhood of
  $\overline{A_n}$ and $X\backslash A_{n+1}$, respectively.
  By definition of perimeter, there exist a sequence $u_n:A_n\to[0,1]$ of locally
  Lipschitz function such that
  \begin{equation}
  \PP(A_n)\geq 
  \int_{A_n} |\partial u_n|\,d\mm
  -2^{-n}
  ,\qquad
  \text{ and }
  \qquad
  \norm{u_n -\indicator_E}_{L^1(A_n)}\leq 2^{-n}
  .
\end{equation}
Define the function $w_n:=\phi_n u_{n+1}$.
This function is Lipschitz with compact support.
We compute its distance to $\indicator_E$
\begin{equation*}
  \int_X |w_n-\indicator_E|\,d\mm
  \leq
  \int_{A_n} |u_{n+1} -\indicator_E|\,d\mm
  +2\,\mm(E\cap A_{n+1}\backslash A_n)
  +\mm(E\backslash A_{n+1})
  \leq
  2^{3-n}
  ,
\end{equation*}
thus $w_n\to\indicator_E$ in $L^1(X)$.
Using the fact $|\partial w_n|\leq \phi_n |\partial u_{n+1}|+|\partial
\phi_n| u_{n+1}$, we deduce
\begin{align*}
  \int_{X}
  |\partial w_n|\,d\mm
  &
  \leq
  \int_{A_n}
  |\partial u_{n+1}|\,d\mm
  +
  \int_{A_{n+1}\backslash A_n}
  \phi_n|\partial u_{n+1}|\,d\mm
  +
  \int_{A_{n+1}\backslash A_n}
    u_{n+1}\,d\mm
  \\
  &
    \leq
  \int_{A_{n+1}}
    |\partial u_{n+1}|\,d\mm
    +2^{-1-n}
    \leq
    \PP(E;A_n)
    +2^{-n}
    \leq
    \PP(E)
    +2^{-n}
    .
    \qedhere
\end{align*}
\end{proof}

\begin{theorem}
  \label{T:lsc-envelope-minkowski}
  Consider $(X,F,\mm)$ a forward-complete measured Finsler manifold.
  Let $E\subset X$ be a Borel set with finite measure.
  Then there exists $(E_n)_{n}$, a sequence of compact sets, such
  that $\mm(E_n\bigtriangleup E)\to0$ and
  \begin{equation}
  \PP(E)\geq
  \limsup_{n\to\infty}
  \mm^+(E_n)
  .
\end{equation}
\end{theorem}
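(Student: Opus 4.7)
The plan is to combine Proposition~\ref{prop:de-loc}, which produces near-optimal Lipschitz functions with \emph{compact support}, together with the coarea-type inequality of Lemma~\ref{lem:coarea-ugly}, and then pick good super-level sets via a double Markov argument.

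First, I would apply Proposition~\ref{prop:de-loc} to obtain a sequence $(w_n)_n$ of Lipschitz functions with compact support such that $w_n \to \indicator_E$ in $L^1(X;\mm)$ and $C_n := \int_X |\partial w_n|\,d\mm \to \PP(E)$. By replacing $w_n$ with $\min\{w_n^+,1\}$ and using the chain rule for the slope, I may assume $w_n \in [0,1]$ without increasing $C_n$ or destroying the $L^1$ convergence. Since $\supp w_n$ is closed and forward bounded, the hypothesis that closed forward balls are compact ensures $\supp w_n$ is compact.

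Next, for each $n$ I would study the two auxiliary functions on $(0,1)$:
\begin{equation*}
F_n(t) := \mm^+(\{w_n \geq t\}), \qquad G_n(t) := \mm(\{w_n \geq t\} \bigtriangleup E).
\end{equation*}
By Lemma~\ref{lem:coarea-ugly}, $\int_0^1 F_n(t)\,dt \leq C_n$. A direct Fubini computation using $w_n \in [0,1]$ gives
\begin{equation*}
\int_0^1 G_n(t)\,dt = \int_E(1-w_n)\,d\mm + \int_{X\setminus E} w_n\,d\mm = \|w_n-\indicator_E\|_{L^1} =: \delta_n \to 0.
\end{equation*}
Setting $\eta_n := \delta_n^{1/3}$, Markov's inequality applied to each of $F_n$ and $G_n$ yields
\begin{equation*}
\L^1(\{F_n \leq (1+\eta_n) C_n\}) \geq \tfrac{\eta_n}{1+\eta_n}, \qquad \L^1(\{G_n \leq \eta_n\}) \geq 1 - \tfrac{\delta_n}{\eta_n}=1-\delta_n^{2/3},
\end{equation*}
so their intersection has positive Lebesgue measure for $n$ large. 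Pick any $t_n$ in this intersection and set $E_n := \{w_n \geq t_n\}$. Since $w_n$ is continuous and $t_n > 0$, $E_n$ is closed and contained in $\supp w_n$, hence compact. By construction $\mm(E_n \bigtriangleup E) \leq \eta_n \to 0$ and $\mm^+(E_n) \leq (1+\eta_n)C_n$, so $\limsup_n \mm^+(E_n) \leq \PP(E)$.

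The one delicate point is the coordination between $F_n$ and $G_n$: since $F_n$ could in principle be concentrated on a small subset of $(0,1)$, a single Markov bound on $F_n$ alone does not give an almost-sure level to choose from. The trick above is that the $L^1$ convergence $w_n \to \indicator_E$ is much stronger than what we need for $G_n$, so we can afford to take $\eta_n$ going to zero only sub-polynomially in $\delta_n$ (here $\delta_n^{1/3}$), which leaves just enough room in the $F_n$-Markov bound to intersect with the $G_n$-Markov bound. No irreversibility issues arise here because the whole construction takes place on the scalar level of $|\partial w_n|$ and Lemma~\ref{lem:coarea-ugly}, both of which already incorporate the asymmetry of $\sfd$.
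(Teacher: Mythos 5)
Your proof is correct and follows essentially the same route as the paper: apply Proposition~\ref{prop:de-loc} to get compact-support Lipschitz approximants, invoke the coarea inequality of Lemma~\ref{lem:coarea-ugly}, and then select a good superlevel set. The only cosmetic difference is the selection mechanism — you use a double Markov argument with an explicit rate $\eta_n = \delta_n^{1/3}$, while the paper fixes $\epsilon$, picks $t_n^\epsilon \in (\epsilon, 1-\epsilon)$ by a mean-value bound, notes $\mm(\{f_n \geq t\}\bigtriangleup E) \to 0$ uniformly for $t$ in that range, and then diagonalizes over $\epsilon_n \to 0$ — but these amount to the same idea.
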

\begin{proof}
  Proposition~\ref{prop:de-loc} guarantees the existence of a sequence
  $(f_n)_n$ of
  Lipschitz functions with compact support, such that $f_n\to \indicator_E$ in
  $L^1(\mm)$ and
  \begin{equation}
    \PP(E)
    =
    \lim_{n\to\infty}
    \int_X |\partial f_n|(x)\,\mm(dx)
    .
  \end{equation}
  Clearly we may assume that $0\leq f_n\leq 1$.
  Fix $\epsilon\in(0,\frac{1}{2})$.
  By Lemma~\ref{lem:coarea-ugly}, there exists
  $t_n^\epsilon\in(\epsilon,1-\epsilon)$ such that
  \begin{equation}
    \mm^+(\{f_n\geq t_n^\epsilon\})
    \leq
    \frac{1}{1-2\epsilon}
    \int_{X}|\partial f_n|(x)\,\mm(dx)
    .
  \end{equation}
  Define $E_n^\epsilon:=\{f_n\geq t_n^\epsilon\}$.
  Since $\mm(E_n^\epsilon\bigtriangleup E)\to 0$, by taking an
  appropriate choice of
  $\epsilon=\epsilon_n$, we conclude.
\end{proof}
\bibliographystyle{acm}
\bibliography{literature.bib}

\begin{thebibliography}{10}

\bibitem{AgFoMa20}
{\sc Agostiniani, V., Fogagnolo, M., and Mazzieri, L.}
\newblock Sharp geometric inequalities for closed hypersurfaces in manifolds
  with nonnegative {R}icci curvature.
\newblock {\em Invent. Math. 222}, 3 (2020), 1033--1101.

\bibitem{Am02}
{\sc Ambrosio, L.}
\newblock Fine properties of sets of finite perimeter in doubling metric
  measure spaces.
\newblock {\em Set-Valued Analysis 10}, 2-3 (2002), 111--128.

\bibitem{AmDiGi17}
{\sc Ambrosio, L., {Di Marino}, S., and Gigli, N.}
\newblock Perimeter as relaxed {M}inkowski content in metric measure spaces.
\newblock {\em Nonlinear Anal. 153\/} (2017), 78--88.

\bibitem{AntonelliPasqualettoPozzettaSemola22a}
{\sc Antonelli, G., Pasqualetto, E., Pozzetta, M., and Semola, D.}
\newblock Sharp isoperimetric comparison on non collapsed spaces with lower
  {Ricci} bounds.
\newblock arXiv:2201.04916v2, Aug. 2022.

\bibitem{AntonelliPasqualettoPozzettaSemola23}
{\sc Antonelli, G., Pasqualetto, E., Pozzetta, M., and Semola, D.}
\newblock Asymptotic isoperimetry on non collapsed spaces with lower {Ricci}
  bounds.
\newblock {\em Math. Ann.\/} (Aug. 2023).

\bibitem{BakryEmery85}
{\sc Bakry, D., and \'{E}mery, M.}
\newblock Diffusions hypercontractives.
\newblock In {\em S\'{e}minaire de probabilit\'{e}s, {XIX}, 1983/84}, vol.~1123
  of {\em Lecture Notes in Math.} Springer, Berlin, 1985, pp.~177--206.

\bibitem{BaKr22}
{\sc Balogh, Z.~M., and Krist\'{a}ly, A.}
\newblock Sharp isoperimetric and {S}obolev inequalities in spaces with
  nonnegative {R}icci curvature.
\newblock {\em Math. Ann.\/} (Mar. 2022).

\bibitem{BianchiniCavalletti13}
{\sc Bianchini, S., and Cavalletti, F.}
\newblock The {M}onge problem for distance cost in geodesic spaces.
\newblock {\em Comm. Math. Phys. 318}, 3 (2013), 615--673.

\bibitem{Br21}
{\sc Brendle, S.}
\newblock The isoperimetric inequality for a minimal submanifold in {E}uclidean
  space.
\newblock {\em J. Amer. Math. Soc. 34}, 2 (2021), 595--603.

\bibitem{Brendle22}
{\sc Brendle, S.}
\newblock Sobolev inequalities in manifolds with nonnegative curvature.
\newblock {\em Commun. Pure Appl. Math. 76}, 9 (2022), 2192--2218.

\bibitem{CabreRosOtonSerra16}
{\sc Cabr\'{e}, X., Ros-Oton, X., and Serra, J.}
\newblock Sharp isoperimetric inequalities via the {ABP} method.
\newblock {\em J. Eur. Math. Soc. (JEMS) 18}, 12 (2016), 2971--2998.

\bibitem{CavallettiManini22}
{\sc Cavalletti, F., and Manini, D.}
\newblock Isoperimetric inequality in noncompact {MCP} spaces.
\newblock {\em Proc. Am. Math. Soc. 150}, 8 (Aug. 2022), 3537--3548.

\bibitem{CavallettiManini22a}
{\sc Cavalletti, F., and Manini, D.}
\newblock Rigidities of isoperimetric inequality under nonnegative {R}icci
  curvature.
\newblock arXiv:2207.03423, 2022.

\bibitem{CaMo17}
{\sc Cavalletti, F., and Mondino, A.}
\newblock Sharp and rigid isoperimetric inequalities in metric-measure spaces
  with lower {R}icci curvature bounds.
\newblock {\em Invent. Math. 208}, 3 (2017), 803--849.

\bibitem{ChambolleLisiniLussardi14}
{\sc Chambolle, A., Lisini, S., and Lussardi, L.}
\newblock A remark on the anisotropic outer {Minkowski} content.
\newblock {\em Adv. Calc. Var. 7}, 2 (Apr. 2014), 241--266.

\bibitem{CintiGlaudoPratelliRosOtonSerra22}
{\sc Cinti, E., Glaudo, F., Pratelli, A., Ros-Oton, X., and Serra, J.}
\newblock Sharp quantitative stability for isoperimetric inequalities with
  homogeneous weights.
\newblock {\em Trans. Amer. Math. Soc. 375}, 3 (2022), 1509--1550.

\bibitem{CiraoloLi22}
{\sc Ciraolo, G., and Li, X.}
\newblock An exterior overdetermined problem for {F}insler {$N$}-{L}aplacian in
  convex cones.
\newblock {\em Calc. Var. Partial Differential Equations 61}, 4 (2022), Paper
  No. 121, 27.

\bibitem{DeGiorgiLetta77}
{\sc De~Giorgi, E., and Letta, G.}
\newblock Une notion g\'{e}n\'{e}rale de convergence faible pour des fonctions
  croissantes d'ensemble.
\newblock {\em Ann. Scuola Norm. Sup. Pisa Cl. Sci. (4) 4}, 1 (1977), 61--99.

\bibitem{DePhilippisGigli16}
{\sc De~Philippis, G., and Gigli, N.}
\newblock From volume cone to metric cone in the nonsmooth setting.
\newblock {\em Geom. Funct. Anal. 26}, 6 (2016), 1526--1587.

\bibitem{Dudley99}
{\sc Dudley, R.~M.}
\newblock {\em Uniform central limit theorems}, vol.~63 of {\em Cambridge
  Studies in Advanced Mathematics}.
\newblock Cambridge Univ. Press, Cambridge, 1999.

\bibitem{Dudley02}
{\sc Dudley, R.~M.}
\newblock {\em Real Analysis and Probability}, vol.~74 of {\em Cambridge
  Studies in Advanced Mathematics}.
\newblock Cambridge Univ. Press, Cambridge, 2002.

\bibitem{FarkasKristalyVarga15}
{\sc Farkas, C., Krist\'{a}ly, A., and Varga, C.}
\newblock Singular {Poisson} equations on {Finsler}–{Hadamard} manifolds.
\newblock {\em Calc. Var. Partial Differential Equations 54}, 2 (Oct. 2015),
  1219--1241.

\bibitem{FogagnoloMazzieri22}
{\sc Fogagnolo, M., and Mazzieri, L.}
\newblock Minimising hulls, p-capacity and isoperimetric inequality on complete
  {Riemannian} manifolds.
\newblock {\em J. Funct. Anal. 283}, 9 (Nov. 2022), 109638.

\bibitem{GrMi87}
{\sc Gromov, M., and Milman, V.~D.}
\newblock Generalization of the spherical isoperimetric inequality to uniformly
  convex {B}anach spaces.
\newblock {\em Compos. Math. 62}, 3 (1987), 263--282.

\bibitem{Johne21}
{\sc Johne, F.}
\newblock Sobolev inequalities on manifolds with nonnegative {B}akry-\'{E}mery
  {R}icci curvature.
\newblock arXiv:2103.08496, 2021.

\bibitem{KaLoSi95}
{\sc Kannan, R., Lov\'{a}sz, L., and Simonovits, M.}
\newblock Isoperimetric problems for convex bodies and a localization lemma.
\newblock {\em Discrete Comput. Geom. 13}, 3-4 (1995), 541--559.

\bibitem{Kl17}
{\sc Klartag, B.}
\newblock Needle decompositions in {R}iemannian geometry.
\newblock {\em Mem. Amer. Math. Soc. 249}, 1180 (2017), v+77.

\bibitem{KristalyRudas15}
{\sc Krist\'{a}ly, A., and Rudas, I.~J.}
\newblock Elliptic problems on the ball endowed with {Funk}-type metrics.
\newblock {\em Nonlinear Anal. 119\/} (June 2015), 199--208.

\bibitem{KristalyZhao22}
{\sc Kristály, A., and Zhao, W.}
\newblock On the geometry of irreversible metric-measure spaces: Convergence,
  stability and analytic aspects.
\newblock {\em J. Math. Pures Appl. (9) 158\/} (Feb. 2022), 216--292.

\bibitem{LoVi09}
{\sc Lott, J., and Villani, C.}
\newblock Ricci curvature for metric-measure spaces via optimal transport.
\newblock {\em Ann. of Math. (2) 169}, 3 (2009), 903--991.

\bibitem{LoSi93}
{\sc Lov\'{a}sz, L., and Simonovits, M.}
\newblock Random walks in a convex body and an improved volume algorithm.
\newblock {\em Random Structures \& Algorithms 4}, 4 (1993), 359--412.

\bibitem{Mi15}
{\sc Milman, E.}
\newblock Sharp isoperimetric inequalities and model spaces for the
  curvature-dimension-diameter condition.
\newblock {\em J. Eur. Math. Soc. (JEMS) 17}, 5 (2015), 1041--1078.

\bibitem{MilmanRotem14}
{\sc Milman, E., and Rotem, L.}
\newblock Complemented {Brunn}–{Minkowski} inequalities and isoperimetry for
  homogeneous and non-homogeneous measures.
\newblock {\em Adv. Math. 262\/} (Sept. 2014), 867--908.

\bibitem{Mirandajr.03}
{\sc {Miranda jr.}, M.}
\newblock Functions of bounded variation on ``good'' metric spaces.
\newblock {\em J. Math. Pures Appl. (9) 82}, 8 (2003), 975--1004.

\bibitem{Ohta09a}
{\sc Ohta, S.}
\newblock Finsler interpolation inequalities.
\newblock {\em Calc. Var. Partial Differential Equations 36}, 2 (2009),
  211--249.

\bibitem{Ohta09}
{\sc Ohta, S.}
\newblock Gradient flows on {W}asserstein spaces over compact {A}lexandrov
  spaces.
\newblock {\em Amer. J. Math. 131}, 2 (2009), 475--516.

\bibitem{Ohta10}
{\sc Ohta, S.}
\newblock Optimal transport and {Ricci} curvature in {Finsler} geometry.
\newblock In {\em Probabilistic Approach to Geometry}, M.~Kotani, M.~Hino, and
  T.~Kumagai, Eds., vol.~57 of {\em Adv. Stud. Pure Math.} Mathematical Society
  of Japan, Jan. 2010, pp.~323--343.

\bibitem{Ohta17}
{\sc Ohta, S.}
\newblock Nonlinear geometric analysis on {F}insler manifolds.
\newblock {\em Eur. J. Math. 3}, 4 (2017), 916--952.

\bibitem{Ohta18}
{\sc Ohta, S.}
\newblock Needle decompositions and isoperimetric inequalities in {F}insler
  geometry.
\newblock {\em J. Math. Soc. Japan 70}, 2 (Apr. 2018), 651--693.

\bibitem{Ohta21}
{\sc Ohta, S.}
\newblock {\em Comparison {F}insler Geometry}.
\newblock Springer, 2021.

\bibitem{Ohta22}
{\sc Ohta, S.}
\newblock A semigroup approach to {Finsler} geometry: {Bakry}–{Ledoux}’s
  isoperimetric inequality.
\newblock {\em Comm. Anal. Geom. 30}, 10 (2022), 2347--2387.

\bibitem{PaWe60}
{\sc Payne, L.~E., and Weinberger, H.~F.}
\newblock An optimal {P}oincar\'{e} inequality for convex domains.
\newblock {\em Arch. Rational Mech. Anal. 5\/} (1960), 286--292 (1960).

\bibitem{Rademacher04}
{\sc Rademacher, H.-B.}
\newblock A sphere theorem for non-reversible {Finsler} metrics.
\newblock {\em Math. Ann. 328}, 3 (Mar. 2004), 373--387.

\bibitem{St06-2}
{\sc Sturm, K.-T.}
\newblock On the geometry of metric measure spaces. {I}.
\newblock {\em Acta Math. 196}, 1 (2006), 65--131.

\bibitem{St06}
{\sc Sturm, K.-T.}
\newblock On the geometry of metric measure spaces. {II}.
\newblock {\em Acta Math. 196}, 1 (2006), 133--177.

\bibitem{Villani09}
{\sc Villani, C.}
\newblock {\em Optimal transport. Old and new}, vol.~338 of {\em Grundlehren
  der mathematischen Wissenschaften}.
\newblock Springer, Berlin, 2009.

\end{thebibliography}

\end{document}